\newtheorem{theorem}{Theorem}[section]
\newtheorem*{theorem*}{Theorem}	
\newtheorem{corollary}[theorem]{Corollary}
\newtheorem{lemma}[theorem]{Lemma}
\newtheorem{proposition}[theorem]{Proposition}
\theoremstyle{definition}
\newenvironment{definition}
{\pushQED{\qed}\defx}
{\popQED\enddefx}
\newenvironment{example}
{\pushQED{\qed}\examplex}
{\popQED\endexamplex}
\newenvironment{assumpt}
{\pushQED{\qed}\assumptx}
{\popQED\endassumptx}
\newenvironment{remark}
{\pushQED{\qed}\remarkx}
{\popQED\endremarkx}
\numberwithin{equation}{section}
\DeclareMathOperator{\im}{Im}
\DeclareMathOperator{\re}{Re}
\DeclareMathOperator{\End}{End}
\DeclareMathOperator{\Id}{Id}
\DeclareMathOperator{\Hess}{Hess}
\DeclareMathOperator{\Hom}{Hom}
\DeclareMathOperator{\Sym}{Sym}
\DeclareMathOperator{\Ric}{Ric}
\DeclareMathOperator{\Map}{Map}
\DeclareMathOperator{\Ch}{CF}
\DeclareMathOperator{\Lie}{Lie}
\DeclareMathOperator{\Spin}{Spin}
\DeclareMathOperator{\map}{Map}
\DeclareMathOperator{\grad}{grad}
\DeclareMathOperator{\Crit}{Crit}
\DeclareMathOperator{\bd}{\mathbf{d}}
\DeclareMathOperator{\HFL}{HF}
\DeclareMathOperator{\Res}{Res}
\DeclareMathOperator{\HM}{\mathit{HM}}
\DeclareMathOperator{\KHM}{\mathit{KHM}}
\DeclareMathOperator{\TSigma}{\bf\Sigma}
\newcommand{\R}{\mathbb{R}}
\newcommand{\C}{\mathbb{C}}
\newcommand{\Z}{\mathbb{Z}}
\newcommand{\CP}{\mathbb{CP}}
\newcommand{\HH}{\mathbb{H}}
\newcommand{\T}{\mathbb{T}}
\newcommand{\BF}{\mathbb{F}}
\newcommand{\y}{\mathbb{Y}}
\newcommand{\Step}{\textit{Step }}
\newcommand{\half}{\frac{1}{2}}
\newcommand{\embed}{\hookrightarrow}
\newcommand{\pt}{\partial_t }
\newcommand{\ps}{\partial_s}
\newcommand{\dt}{\frac{d}{dt}}
\newcommand{\ds}{\frac{d}{ds}}
\newcommand{\CA}{\mathcal{A}}
\newcommand{\SC}{\mathcal{C}}
\newcommand{\D}{\mathcal{D}}
\newcommand{\E}{\mathcal{E}}
\newcommand{\CG}{\mathcal{G}}
\newcommand{\K}{\mathcal{K}}
\newcommand{\M}{\mathcal{M}}
\newcommand{\SO}{\mathcal{O}}
\newcommand{\SH}{\mathcal{H}}
\newcommand{\CL}{\mathcal{L}}
\newcommand{\CY}{\mathcal{Y}}
\newcommand{\fa}{\mathfrak{a}}
\newcommand{\F}{\mathfrak{F}}
\newcommand{\g}{\mathfrak{g}}
\newcommand{\s}{\mathfrak{s}}
\newcommand{\bs}{\widehat{\mathfrak{s}}}
\newcommand{\SL}{\mathscr{L}}
\newcommand{\bpartial}{\bar{\partial}}
\newcommand{\hy}{\widehat{Y}}
\newcommand{\cB}{\check{B}}
\newcommand{\cb}{\check{b}}
\newcommand{\cPsi}{\check{\Psi}}
\newcommand{\cvb}{\delta\check{b}}
\newcommand{\cvpsi}{\delta\check{\Psi}}
\newcommand{\Lm}{\zeta}
\newcommand{\spinc}{$spin^c\ $}
\newcommand{\dg}{\textbf{d}_\gamma}
\newcommand{\tX}{\tilde{\xi}}
\newcommand{\tY}{\tilde{\eta}}
\newcommand{\vdelta}{\vec{\delta}}
\newcommand{\Dt}{\nabla^A_{\partial_t}}
\newcommand{\Ds}{\nabla^A_{\partial_s}}
\newcommand{\EHess}{\widehat{\Hess}}
\title{Monopoles And Landau-Ginzburg Models I}
\author{Donghao Wang}
\date{\today}
\address{Department of Mathematics, Massachusetts Institute of Technology, Cambridge, MA 02139, USA}
\email{donghaow@mit.edu}
\begin{document}
	
	\begin{abstract} The endpoint of this series of papers is to construct the monopole Floer homology for any pair $(Y,\omega)$, where $Y$ is a compact oriented  3-manifold with toroidal boundary and $\omega$ is a suitable closed 2-form. In the first paper, we exploit the framework of the gauged Landau-Ginzburg models to address two model problems for the (perturbed) Seiberg-Witten moduli spaces on either $\C\times\Sigma$ or $\HH^2_+\times\Sigma$, where $\Sigma$ is any compact Riemann surface of genus $\geq 1$. Our first result states that finite energy solutions to the perturbed equations on $\C\times\Sigma$ are necessarily trivial. The second states that small energy solutions on $\HH^2_+\times\Sigma$ necessarily have energy decaying exponentially in the spatial direction. These results will lead eventually to the compactness theorem in the second paper \cite{Wang20}. 
	\end{abstract}
	
	\maketitle
	\tableofcontents

\part{Introduction}
\section{Introduction}
\subsection{Motivations in Floer Homology} The Seiberg-Witten Floer homology of closed oriented 3-manifolds as introduced by Kronheimer-Mrowka \cite{Bible} has greatly influenced the study of 3-manifold topology since its inception. The underlying idea is an infinite-dimensional Morse theory: solutions to the Seiberg-Witten equations on a closed 3-manifold $Z$ are critical points of the Chern-Simons-Dirac functional $\CL$, while solutions on the 4-manifold $\R_t\times Z$ are downward gradient flowlines of $\CL$. 

\smallskip

The purpose of this series of papers is to generalize their construction and  define the Seiberg-Witten Floer homology for any pair $(Y,\omega)$, where $Y$ is a compact oriented 3-manifold  with toroidal boundary and $\omega$ is a suitable closed 2-form. This analytic construction will recover the monopole link Floer homology for any link $L\subset S^3$ and produce potentially new invariants for the link complement. In the third paper \cite[Theorem 1.10]{Wang203},  we will establish a (3+1) TQFT structure comparable to the one for link Floer homology \cite{Ju16,Zem19, L18}. 

\smallskip

The long range goal of this line of work is to develop a bordered version of monopole Floer homology, analogous to the bordered Heegaard Floer theory of Lipshitz-Ozsv\'{a}th-Thurston \cite{BHF}, but via analytic methods.  In the process of attacking this problem, the author realized that the geometric framework needed to understand the Seiberg-Witten equations (SWEQ) on the product 4-manifold $\C_z\times\Sigma$, where $\Sigma$ is any compact Riemann surface of genus $\geq 1$, is gauged Landau-Ginzburg models \cite{Witten93}. This interesting dictionary, which links gauge theory with symplectic topology, however, does not seem to be well-known in the literature. In short, this dictionary says that any \spinc structure $\s$ on $\Sigma$ specifies an infinite dimensional K\"{a}hler manifold $M(\Sigma,\s)$ together with a holomorphic function 
\[
W: M(\Sigma, \s)\to \C,
\]
then the Seiberg-Witten equations on $\C_z\times\Sigma$ can be interpreted as the complex gradient flow equation \eqref{E0.1} of $W$ defined on $\C_z$. 

\smallskip

This paper focuses on the already interesting problem of the analysis of these equations: we exploit this dictionary to establish two basic analytic results that are essential to all forthcoming development. The first one states that finite energy solutions to the Seiberg-Witten equations on $\C_z\times\Sigma$, once perturbed by a suitable 2-form, must be trivial, while the second states that small energy solutions on $\R_t\times [0,\infty)_s\times\Sigma$ must have energy decaying exponentially as $s\to\infty$. These results are completely general with regards to the genus of $\Sigma$ and will play an important role in the proof of the compactness theorem in the second paper \cite{Wang20}. This 2-form is to perturb $W: M(\Sigma,\s)\to \C$ into a holomorphic Morse function (interpreted suitably in the infinite dimensional case), a crucial property in order for these results to hold.

\subsection{Summary of Results}\label{Subsec1.2} To state our main theorems, we first outline a monopole Floer theory for 3-manifolds with cylindrical ends. Given a compact oriented 3-manifold $Y$ with boundary $\partial Y=\Sigma$, let $g_Y$ be a metric cylindrical near $\Sigma$. Set $g_\Sigma\colonequals g_Y|_\Sigma$. In \cite{NguyenI, NguyenII},  Nguyen studied the Seiberg-Witten equations directly on the 3-manifold $Y$ and laid the analytic foundation for a Floer theory with Lagrangian boundary conditions along $\partial Y$. We will work instead with a 3-manifold with cylindrical ends:
\[
\hy=Y\ \bigcup\ [0,\infty)_s\times \Sigma.
\]
 This approach is also adopted in the PhD thesis of Yang \cite{Y99} and in an unpublished manuscript by Mrowka-Ozsv\'{a}th-Yu. In the latter case, they investigated Seifert-fibered spaces with some regular fibers removed in attempt to generalize their earlier work \cite{MOY97}.
 
 \smallskip
 
  As we shall focus on the geometry of the cylindrical end of $\hy$ in this paper, it is harmless to assume that $\Sigma$ is connected and $g(\Sigma)\geq 1$ from now on. A solution to the Seiberg-Witten equations on $[0,\infty)_s\times\Sigma$ is a flowline of $-\nabla \re W$ on the K\"{a}hler manifold $M(\Sigma,\s)$. The key observation is that the functional $\re W$ is in fact the real part of a holomorphic function $W:M(\Sigma,\s)\to \C$. Without any perturbation, the critical set of $\re W$ modulo gauge is a symmetric product of the surface $\Sigma$. While it is tempting to develop a Floer theory with respect to a Lagrangian submanifold of this symmetric product, we follow a different approach in this paper and perturb $W$ into a holomorphic Morse function using a harmonic 1-form $\lambda\in \Omega^1_h(\Sigma, i\R)$; the $(1,0)$-part $\lambda^{1,0}\in \Omega^{1,0}(\Sigma, \C)$ is required to have $(2g-2)$ simple zeros. 
  
  Returning to the 3-manifold $\hy$, we choose a closed 2-form $\omega\in \Omega^2(\hy, i\R)$ to perturb the 3-dimensional Seiberg-Witten equations such that $\omega=\nu+ds\wedge\lambda$ on $[0,\infty)_s\times \Sigma$ for some 2-form $\nu\in \Omega^2(\Sigma, i\R)$ on $\Sigma$.
  
  \smallskip

This setup is inspired by the work of Meng-Taubes \cite{MT96}, and this quadruple $\TSigma=(\Sigma,g_\Sigma,\lambda,\nu)$ will be called an $H$-surface in this paper. We are interested in \spinc structures $\s$ on $Y$ with
\[
c_1(\s)[\Sigma]=2(d+g(\Sigma)-1) \text{ for some integer } 0\leq d\leq 2 g(\Sigma)-2.
\] 

 Our first observation concerns these perturbed equations on the standard cylinder $\R_s\times\Sigma$.

\begin{lemma}[Proposition \ref{stableW}]\label{L1.1} For any $H$-surface $\TSigma=(\Sigma, g_\Sigma,\lambda,\nu)$, consider the perturbed Seiberg-Witten equations \eqref{3DDSWEQ} on the 3-manifold $\R_s\times\Sigma$ with $\omega=\nu+ ds\wedge\lambda$. Then for the \spinc structure $\s$ with $c_1(\s)[\Sigma]=2(d-g(\Sigma)+1)$, there are precisely $
	\binom{2g-2}{d}$ $\R_s$-invariant solutions up to gauge. These solutions are irreducible.  
\end{lemma}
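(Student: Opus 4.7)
The plan is to use the Kähler structure of $(\Sigma, g_\Sigma)$ to reduce the three-dimensional problem to a perturbed vortex-type equation on $\Sigma$. Decompose the positive spinor bundle as $S^+ = E \oplus (E\otimes K^{-1})$, where $K$ is the canonical bundle and $E$ is a line bundle with $\deg E = d$, so that $c_1(\s)[\Sigma] = 2\deg E - (2g-2) = 2(d-g+1)$. Any $\R_s$-invariant configuration is then specified by a connection $A$ on $\det S^+$ pulled back from $\Sigma$ together with a spinor $\Phi = (\alpha,\beta) \in \Gamma(E)\oplus \Gamma(E\otimes K^{-1})$. Plugging this ansatz into \eqref{3DDSWEQ} with $\omega = \nu + ds\wedge\lambda$, the equations split into a Dirac system in which $\lambda^{1,0}$ and $\lambda^{0,1}$ appear as off-diagonal Clifford couplings between $\alpha$ and $\beta$, together with a curvature equation of the form $iF_A = \tfrac12(|\alpha|^2 - |\beta|^2)\,\omega_\Sigma + i{*}\nu$ corrected by $\lambda$-bilinear terms.

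The heart of the argument is a vanishing step. Pair the Dirac equation with $\beta$ (resp.\ $\alpha$), integrate over $\Sigma$, and apply the Kähler/Weitzenböck identity: substituting the curvature equation, one obtains a signed quadratic form whose sign is controlled by the degree $\deg(E\otimes K^{-1}) = d - 2g+2 \leq 0$. This forces one of the two components, say $\beta$, to vanish identically, while the other carries the full information of the solution; a symmetric argument handles the range $d \geq g-1$, and the $\nu$ term is used to make the relevant quadratic form strictly definite. With $\beta \equiv 0$ in hand, the equations collapse to $\bpartial_A\alpha = 0$ together with a curvature equation whose algebraic structure requires the bilinear combination $\alpha\,\overline{\lambda^{1,0}}$ to vanish pointwise. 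Since $\lambda^{1,0}$ has only $2g-2$ simple zeros, every zero of $\alpha$ must coincide with one of these $2g-2$ distinguished points, and no zero of $\alpha$ can have multiplicity higher than one.

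It then remains to count and construct. For each choice of $d$ of the $2g-2$ zeros of $\lambda^{1,0}$ — giving $\binom{2g-2}{d}$ reduced effective divisors $D$ — take $E = \SO(D)$ with $\alpha$ the tautological holomorphic section. The curvature equation reduces to a scalar Kazdan-Warner / Bradlow-type equation for $u = \log|\alpha|^2$ on $\Sigma\setminus D$, which is uniquely solvable by standard sub- and supersolution techniques and determines $A$ up to gauge. Every solution produced this way is irreducible because $\alpha\not\equiv 0$. The main obstacle will be the vanishing step: pinning down the correct signs and coefficients in the Weitzenböck identity so that the $\lambda$ cross-terms do not spoil the definiteness of the quadratic form, which is precisely where the harmonicity of $\lambda$ and the degree bound $0 \leq d \leq 2g-2$ enter essentially.
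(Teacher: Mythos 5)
Your setup (the splitting $S^+=E\oplus(E\otimes K^{-1})$, the reduction to a vortex-type system on $\Sigma$, and the final Kazdan--Warner step) is broadly in the right territory, but the central ``vanishing step'' is wrong, and it is wrong in a way that destroys the count. The perturbing $2$-form $\omega=\nu+ds\wedge\lambda$ enters only the curvature equation of \eqref{3DDSWEQ}, never the Dirac equation, so there are no ``off-diagonal Clifford couplings'' of $\lambda$ between $\alpha$ and $\beta$ in the Dirac system: both components remain separately (anti)holomorphic, $\bpartial_B\alpha=0$ and $\bpartial_B^*\beta=0$. The term $\rho_3(ds\wedge\lambda)$ is \emph{off-diagonal} with respect to the splitting of $S^+$, while for an $\R_s$-invariant connection $F_{B^t}$ is proportional to $dvol_\Sigma$ and hence acts diagonally; so the off-diagonal part of the curvature equation collapses to the pointwise algebraic identity $\alpha\otimes\beta^{*}=-\sqrt{2}\,\lambda^{1,0}$. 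This forces \emph{both} $\alpha$ and $\beta$ to be not identically zero whenever $\lambda\neq 0$, so any Weitzenb\"ock argument concluding $\beta\equiv 0$ must fail (and indeed the degree-based sign bookkeeping you propose cannot be made definite here, precisely because of this identity). Your fallback claim that the curvature equation forces $\alpha\,\overline{\lambda^{1,0}}=0$ pointwise is also untenable: since $\lambda^{1,0}$ has only finitely many zeros, that would force $\alpha\equiv 0$.

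The identity $\alpha\otimes\beta^{*}=-\sqrt{2}\,\lambda^{1,0}$ is exactly what produces the finite count, and it is the ingredient your argument loses. Because $\alpha$ and $\beta^{*}$ are holomorphic sections of $E$ and $E^{*}\otimes K$ whose product is the fixed section $-\sqrt2\,\lambda^{1,0}$ with $2g-2$ \emph{simple} zeros, the divisors $Z(\alpha)$ and $Z(\beta^{*})$ form a partition of $Z(\lambda^{1,0})$ into sets of sizes $d$ and $2g-2-d$; there are $\binom{2g-2}{d}$ such partitions, each determining a complex gauge orbit. Without this coupling, a single vortex equation $\bpartial_A\alpha=0$ would yield the moduli space $\Sym^d\Sigma$ of all degree-$d$ divisors, not $\binom{2g-2}{d}$ isolated points. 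The concluding step is then not a one-variable Bradlow equation but the two-weight equation
\begin{equation*}
\Delta_\Sigma\alpha_0+\tfrac12\bigl(e^{2\alpha_0}|\alpha|^2-e^{-2\alpha_0}|\beta|^2\bigr)=\text{(prescribed)},
\end{equation*}
solvable and unique precisely because both weights $|\alpha|^2$ and $|\beta|^2$ are somewhere positive; this also gives irreducibility for free, since $\Psi=(\alpha,\beta)$ is in fact nowhere vanishing. Finally, $\nu$ plays no role in any definiteness argument; it only shifts the moment-map level $\vdelta$, and the statement holds for every $\nu$.
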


\begin{remark} When $g(\Sigma)=1$, Lemma \ref{L1.1} is used by Meng-Taubes \cite{MT96} to define the 3-dimensional Seiberg-Witten invariants of $Y$ (even though this property is not stated explicitly in their paper); see \cite[Section 2.1-2.3]{MT96}. The 4-dimensional analogue of Lemma \ref{L1.1} is \cite[Lemma 3.1]{Taubes01}. Lemma \ref{L1.1} is also used to compute the Seiberg-Witten invariants of the product manifold $S^1\times S^1\times \Sigma_g$, a well-known result since the very beginning of this subject. See \cite[(4.11)--(4.16)]{Witten94}.
\end{remark}

Lemma \ref{L1.1} says that the critical set of $W$ modulo gauge is (at least) discrete after this perturbation. Let $\fa_j, 1\leq j\leq k=\binom{2g-2}{d}$ be a collection of the special solutions to the Seiberg-Witten equations obtained in Lemma \ref{L1.1}, one for each gauge-equivalence class. For any $H$-surface $\TSigma$, the goal is to define the Floer homology of $Y$ relative to the pair $(\TSigma,\fa_j)$:
\[
\HM_*(Y,\fa_j). 
\]


To this end, consider finite energy solutions on $\hy$ that approximate $\fa_j$ along the cylindrical end $[0,\infty)_s\times \Sigma$. As critical points of the perturbed Chern-Simons-Dirac functional $\CL_\omega$ (cf. \cite[Definition 3.8]{Wang20}), they become non-degenerate after a further perturbation and form a compact moduli space of dimension $0$.

\smallskip

Now we have to analyze the 4-dimensional equations on the product manifold $\R_t\times \hy$ carrying a planar end $\HH^2_+\times \Sigma$, where
$
\HH^2_+\colonequals \R_t\times [0,\infty)_s
$
is furnished with the Euclidean metric. Our convention is to use $t$ for the time coordinate and $s$ for the spatial coordinate on the cylindrical end of $\hy$. The Seiberg-Witten moduli space on $\R_t\times \hy$ has another possible source of non-compactness coming from this planar end: for a sequence of Seiberg-Witten monopoles on $\R_t\times \hy$, some amount of energy might slide off along the cylindrical end of $\hy$ and converges to a monopole on $\R_t\times\R_s\times\Sigma$ with finite positive energy in the limit.

This is the sort of phenomena we must prevent in order to define a Floer homology, which is addressed successfully in Theorem \ref{T1.2} below. In some sense, the 4-manifold $\R_t\times \hy$ is non-compact in two directions, but the energy can slide off only in the time direction. 

\begin{theorem}\label{T1.2} For any $H$-surface $\TSigma=(\Sigma, g_{\Sigma},\lambda, \nu)$, consider the \spinc structure $\s$ on $\Sigma$ with $c_1(\s)[\Sigma]=2(d-g(\Sigma)+1)$. Then any finite energy solution to the perturbed Seiberg-Witten equations \eqref{SWEQ} on $\C_z\times \Sigma$ with $\omega=\nu+ds\wedge\lambda$, which is also called point-like, is gauge equivalent to the trivial solution obtained by extending $\fa_j$ constantly over $\C_z$ with $ z=t+is$, for some $1\leq j\leq \binom{2g-2}{d}$. 
\end{theorem}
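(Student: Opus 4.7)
The plan is to recast the problem as a gauged Landau--Ginzburg model on $\C$ whose target is a K\"ahler configuration space over $\Sigma$, in which the $\R_s$-invariant solutions $\fa_i$ of Lemma \ref{L1.1} appear as the non-degenerate critical points of a holomorphic superpotential $\W$ induced by $\lambda$. The whole argument rests on a Bogomolny-type decomposition of the energy: using the K\"ahler structure on $\C\times\Sigma$ and the splitting of the positive spinor $\Phi=(\alpha,\beta)$ adapted to the complex structure on $\Sigma$, the Weitzenb\"ock formula together with $\omega=\nu+ds\wedge\lambda$ should rewrite the total energy on any region $\Omega\subset\C\times\Sigma$ in the schematic form
\[
E_\Omega(A,\Phi)=\int_\Omega\Bigl(\bigl|\bar\partial_A\Phi-\W(\Phi)\bigr|^2+\bigl|F_A^{1,1}-\mu(\Phi)+\ast\omega\bigr|^2\Bigr)+\mathcal{T}_\Omega,
\]
where $\mu$ is the moment map for the $U(1)$-action on spinors, $\W$ is algebraic in $\lambda$, and $\mathcal{T}_\Omega$ is a purely topological term depending only on $c_1(\s)$, $\int_\Sigma\nu$, and the asymptotic behavior on $\partial\Omega$. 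The hypothesis that $\lambda^{1,0}$ has $2g-2$ simple zeros ensures that $\W$, restricted to the $\Sigma$-moduli space, is Morse with critical set exactly $\{\fa_i\}$.

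Assuming this identity, I would next carry out asymptotic analysis. Standard Seiberg--Witten bounds for the perturbed equations give $C^0$ control on $\Phi$ and $F_A$ and hence $C^k$ control by elliptic bootstrap. Combined with finite energy, the exponential decay theorem of the paper --- applied after translating into regions isometric to $\HH^2_+\times\Sigma$ --- forces exponential convergence along every ray $z\to\infty$ in $\C$ to some $\R_s$-invariant solution $\fa_{i(z)}$. Since $z\mapsto i(z)$ takes values in the discrete set $\{1,\dots,\binom{2g-2}{d}\}$ and is continuous at infinity, it must be constant, so every asymptotic limit equals a single $\fa_i$.

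With one asymptotic value $\fa_i$ fixed, the topological term $\mathcal{T}_\Omega$ reduces to the difference of $\CL_\omega$-values at infinity, which vanishes since both ends see the same $\fa_i$. Finite energy then forces both non-negative integrands in the Bogomolny identity to vanish identically on $\C\times\Sigma$. In the Landau--Ginzburg picture this says exactly that the associated map $u\colon\C\to\M_\Sigma$ is holomorphic with image in the discrete critical set of $\W$, hence constant; a gauge transformation then aligns $(A,\Phi)$ with the $\C$-invariant lift of $\fa_i$.

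The main obstacle is the clean derivation of the Bogomolny decomposition itself --- in particular, verifying that the cross-term produced by $ds\wedge\lambda$ reassembles precisely into the holomorphic superpotential $\W$, with no residual non-negative errors that would wreck the rigidity argument. The correct $H$-surface data, and especially the Morse non-degeneracy coming from the simple zeros of $\lambda^{1,0}$, is what makes the decomposition genuinely rigid; once this is established, the rest of the argument follows the standard Liouville/continuation pattern for Landau--Ginzburg models.
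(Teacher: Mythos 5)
Your framing — recast the equations as a gauged Landau--Ginzburg model on $\C$ with target the configuration space over $\Sigma$, establish asymptotic convergence to a single $\fa_i$, then conclude rigidity — matches the paper's architecture (Proposition \ref{P6.2}, Proposition \ref{stableW}, Theorem \ref{pointlike}). But the engine you propose for the rigidity step does not work, and it misses the actual mechanism the paper uses.

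The central gap is your claim that the energy reduces to a topological term equal to ``the difference of $\CL_\omega$-values at infinity,'' which then vanishes because both ends see the same $\fa_i$. On $\C$ there are not two ends: the boundary at infinity is a large circle, and the would-be topological term is a limit of contour integrals, not a difference of two action values. More decisively, any argument of the form ``same asymptotic limit at infinity $\Rightarrow$ zero energy $\Rightarrow$ constant'' must fail, because it would apply verbatim to the vortex equations on $\C$ (Example \ref{vortex}), where every finite-energy solution converges to the constant at infinity yet the moduli space $\M_n\cong\Sym^n\C$ is nonempty with energy $2\pi n>0$. The paper's proof avoids this by two steps your proposal does not supply. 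First, Lemma \ref{vanishing-lemma}: one shows $\nabla H\equiv 0$, i.e.\ $P(z)\in\Crit(L)$ for all $z$, using the identity $\bar\partial(W\circ P)=-i|\nabla H|^2$ (Lemma \ref{dbarG}) to produce a potential $K$ with $\Delta_\C K=|\nabla H|^2\ge 0$ and, via the Morse--Bott inequality near $O_*$, a differential inequality $E(r)\le C\,E'(r)$ for $E(r)=\int_{B(0,r)}|\nabla H|^2$ that forces exponential growth unless $E\equiv 0$; finite energy then kills it. This is where the simple zeros of $\lambda^{1,0}$ (through the Morse--Bott property of $W_\lambda$) actually enter, not through a Bogomolny cross-term. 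Second, your closing step ``the map is holomorphic with image in the discrete critical set of $\W$, hence constant'' is not correct: $\Crit(L)$ is a finite union of \emph{free $\CG_\C(\Sigma)$-orbits}, each infinite-dimensional, so landing in $\Crit(L)$ only gives $P(z)=e^{\alpha(z)}\cdot\kappa_j$ for a real function $\alpha$ on $\C\times\Sigma$. One must still use the moment-map equation, which becomes the scalar PDE \eqref{7.5}, and the maximum principle together with the decay of $\alpha$ at infinity to force $\alpha\equiv 0$. Stability (uniqueness and freeness of the orbit, Proposition \ref{stableW}) is what makes both of these steps go through; in your write-up it is invoked only as a slogan.
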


In this sense, we say that point-like solutions on $\C_z\times \Sigma$ are trivial. In contrast, for the unperturbed Seiberg-Witten equations (with $\omega=0$), interesting point-like solutions do exist and were classified completely in an earlier work of the author \cite{W18}. 

\smallskip

The second problem that we address is the decay rate in the spatial direction of $\R_t\times \hy$. We state the result for the planar end $\HH^2_+\times\Sigma$. For any $n\in \Z$ and $R\in [1,\infty)$, consider the rectangle $ \Omega_{n,R}\colonequals [n-1,n+1]\times [R-1,R+1]\subset \HH^2_+$. Denote by
\begin{equation}\label{E0.4}
\E_{an}(\gamma;  \Omega_{n,R}) 
\end{equation}
the analytic energy of the configuration $\gamma$ on the 4-manifold $ \Omega_{n,R}\times\Sigma$, called the local energy functional (see Lemma \ref{D7.3}). $\E_{an}(\gamma;  \Omega_{n,R})$ is the integral of the energy density function of $\gamma$ over $\Omega_{n,R}$ and therefore is non-negative and gauge-invariant; it bounds the $L^2_1$-norm of $\gamma$ (with a gauge fixing condition) on $ \Omega_{n,R}\times\Sigma$ and also the $L^2_k$-norms of $\gamma$ in the interior of $ \Omega_{n,R}\times\Sigma$ for any $k\geq 2$ when $\gamma$ is a solution. 

\begin{theorem}\label{T1.3} For any $H$-surface $\TSigma=(\Sigma, g_\Sigma,\lambda,\nu)$, there exist constants $\epsilon(\TSigma),\zeta(\TSigma)>0$ with the following significance. Suppose $\gamma$ solves the perturbed Seiberg-Witten equations \eqref{SWEQ} on $\HH^2_+\times\Sigma$ with $\omega=\nu+ds\wedge\lambda$ and $\E_{an}(\gamma,  \Omega_{n,R})<\epsilon$ for any $n\in \Z$ and $R\geq 1$, then 
	\[
	\E_{an}(\gamma; \Omega_{n,R})<e^{-\zeta R}. 
	\] 
\end{theorem}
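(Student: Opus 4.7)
The plan is to combine the triviality result Theorem \ref{T1.2} with a linear spectral-gap argument at each of the special solutions $\fa_i$, derive a second-order differential inequality for an integrated local energy, and conclude by an ODE comparison.

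\textbf{Step 1 (Good gauge and proximity to an $\fa_i$).} Under the small-energy hypothesis, Uhlenbeck-type gauge fixing together with interior elliptic estimates for \eqref{SWEQ} produces, on each $\Omega_{n,R}\times\Sigma$, a Coulomb representative of $\gamma$ with uniform $L^2_k$-bounds controlled by $\E_{an}$. I would then argue by contradiction-and-rescaling: if for some sequence of solutions the local energy on $\Omega_{n_j,R_j}\times\Sigma$ were bounded below, one could pass to a subsequential limit, obtaining a non-trivial finite-energy solution to \eqref{SWEQ} on $\C\times\Sigma$, contradicting Theorem \ref{T1.2}. Consequently, provided $\epsilon(\TSigma)$ is chosen small enough, on every $\Omega_{n,R}\times\Sigma$ the configuration $\gamma$ lies in a small $L^2_1$-neighborhood of one of the $\R_s$-invariant solutions $\fa_i$ of Lemma \ref{L1.1}. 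A continuity/connectedness argument (using that the $\fa_i$ are isolated and the slab centers vary continuously in $(n,R)$) forces the index $i$ to be locally constant in $(n,R)$.

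\textbf{Step 2 (Spectral gap of the 3D Hessian).} Each $\fa_i$ is an \emph{irreducible} critical point of the perturbed $\CL_\omega$ on $\R_s\times\Sigma$, and Lemma \ref{L1.1} together with the hypotheses on $\TSigma$ makes it non-degenerate on the Coulomb slice. Hence the 3-dimensional Hessian $\Hess_{\fa_i}$ has a spectral gap $\zeta_0=\zeta_0(\TSigma)>0$: every non-zero eigenvalue satisfies $|\mu|\geq\zeta_0$. Writing the 4D linearized SW operator on $\R_t\times\R_s\times\Sigma$ in the temporal gauge along $t$ as $\partial_t-\Hess_{\fa_i}$, a Fourier-in-$t$ calculation gives a uniform lower bound on its symbol away from gauge directions.

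\textbf{Step 3 (Differential inequality and ODE comparison).} Set $e(R)\colonequals\sum_{n\in\Z}\E_{an}(\gamma;\Omega_{n,R})$, which is finite and bounded by the total analytic energy of $\gamma$ on the slab $\R_t\times[R-1,R+1]\times\Sigma$. A Weitzenb\"ock computation for the SW equations, combined with integration by parts in $(t,s)$ and the spectral gap from Step 2 applied to the \emph{difference} $\gamma-\fa_i$ (orthogonal to the kernel of $\Hess_{\fa_i}$ modulo small errors from Step 1), produces the distributional differential inequality
\[
e''(R)\;\geq\; \zeta_0^{2}\,e(R),\qquad R\geq 1.
\]
Since $\sum_n \E_{an}(\gamma;\Omega_{n,R})\to 0$ as $R\to\infty$ by Fatou and the global small-energy hypothesis (or by invoking Theorem \ref{T1.2} on any subsequential translated limit), a standard ODE maximum principle yields $e(R)\leq e(1)\,e^{-\zeta_0 R}$. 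A mean-value inequality on $\Omega_{n,R}\times\Sigma$ then upgrades this integrated estimate to the pointwise bound $\E_{an}(\gamma;\Omega_{n,R})\leq e^{-\zeta R}$ for any $\zeta<\zeta_0$, after absorbing constants by enlarging $R$ (or equivalently shrinking $\zeta$).

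\textbf{Main obstacle.} The hardest part is Step 1, specifically transporting $\gamma$ into a single well-defined gauge near the \emph{same} $\fa_i$ across all slabs. Since the index $i$ could a priori vary with $(n,R)$, and because $\HH^2_+$ is two-dimensional so one cannot simply restrict to a slice transverse to a flow, one must either establish a uniform gauge-patching estimate between adjacent rectangles (controlling the accumulated drift of the gauge transformations across a long chain of $\Omega_{n,R}$'s) or work globally with a neighborhood of the entire critical set $\{\fa_i\}$ and extract the relevant index by a connectedness argument. All subsequent linear estimates, and in particular the spectral-gap input in Step 2, rely on having carried out this reduction uniformly in $(n,R)$.
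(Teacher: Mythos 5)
Your overall shape (localize near a single $\fa_i$ by compactness, extract a spectral gap, derive a second-order differential inequality, conclude by comparison) is close in spirit to the paper's argument, but Step 3 contains a genuine gap. The quantity $e(R)\colonequals\sum_{n\in\Z}\E_{an}(\gamma;\Omega_{n,R})$ need not be finite: the hypothesis of Theorem \ref{T1.3} is only that each local energy is $<\epsilon$ \emph{uniformly} in $n$, with no integrability in the $t$-direction, so the sum over $n\in\Z$ can diverge (the solutions one must handle in the compactness theorem are translation-bounded in $t$, not $L^1$ in $t$). For the same reason the claimed limit $e(R)\to 0$ does not follow from Theorem \ref{T1.2} applied to translated limits — that argument yields $\sup_n\E_{an}(\gamma;\Omega_{n,R})\to 0$, not the vanishing of the sum — and the integration by parts in $t$ over all of $\R_t$ that you need to kill the $\partial_t^2$-contribution has no justification without decay in $t$. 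The paper (Theorem \ref{T9.1}) sidesteps this entirely by working with the \emph{pointwise} energy density $u(t,s)=\|\nabla^A P\|^2_{L^2(\Sigma)}+\|F\|^2_{L^2(\Sigma)}$, a bounded function on $\HH^2_+$, proving the elliptic inequality $(\Delta_{\HH^2_+}+\zeta^2)u\le 0$ via an exact Bochner-type formula (Lemma \ref{Bochner}) with the cubic and quartic error terms absorbed using the smallness $\epsilon$, and then invoking a maximum principle for bounded subsolutions on the upper half-plane (Corollary \ref{coro-max}); boundedness of $u$, not integrability in $t$, is what tames the non-compact $t$-direction.

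A secondary issue is your Step 2: the decay asserted is in the spatial direction $s$, which is \emph{not} the gradient-flow direction of $\CL_\omega$, so the 3-dimensional Hessian of $\CL_\omega$ written as $\partial_t-\Hess_{\fa_i}$ is not the operator that governs it (and $\fa_i$, being $\R_s$-translation-invariant on the non-compact $\R_s\times\Sigma$, is not non-degenerate in the naive sense — its extended Hessian has purely essential spectrum). The correct spectral input, identified in the paper, is the invertibility of the operator $\widehat{D}_q$ appearing in the $s$-direction linearization $\sigma(\partial_s+\widehat{D})$, equivalently the Morse--Bott property of the superpotential $W_\lambda$ together with freeness of the critical $\CG_\C$-orbits (Propositions \ref{stableW} and \ref{P7.7}, quantified in Lemma \ref{L12.4}); the exponent $\zeta$ comes from the gap of $\widehat{D}_q$ at $0$, not from the Floer-theoretic non-degeneracy of $\fa_i$ in the $t$-flow.
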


From Theorem \ref{T1.3}, one can easily deduce the exponential decay of $L^2_k$-norms of a solution $\gamma$ on $\HH^2_+\times\Sigma$. Since the spatial direction ($s\to \infty$) is not the one for the downward gradient flow of $\CL_\omega$, Theorem \ref{T1.3} is not a consequence of the standard theory, e.g. \cite[Section 13]{Bible}.

\begin{remark}\label{R1.3} In the second paper \cite{Wang20}, we will carry out the construction of $\HM_*(Y,\fa_j)$ in detail for the special case that $(\Sigma,g_\Sigma)$ is a union of flat 2-tori and $\nu$ is harmonic. While the general case is almost identical, the invariance of this Floer homology with $g(\Sigma)\geq 2$ is not well-understood and is left as a topic for future research.
\end{remark}

\subsection{Gauged Landau-Ginzburg Models}\label{Subsec1.3} The proofs of Theorem \ref{T1.2} and \ref{T1.3} exploit the fundamental relation between the Seiberg-Witten equations and the gauged Witten equations, which links gauge theory with symplectic topology, and reducing Theorem \ref{T1.2} and \ref{T1.3} to some finite dimensional problems.

The gauged Witten equations were first introduced by Witten in his formulation of gauged
linear sigma models \cite{Witten93} to explain the so-called
Landau-Ginzburg/Calabi-Yau correspondence. Its mathematical foundation has been recently developed by Tian-Xu \cite{TX18,TX18-2}, to which readers are referred for the background. Since our focus is slightly different, we give a short discussion below with emphasis on Picard-Lefschetz theory.

When the dimension is finite and the structure group $G$ is trivial, a Landau-Ginzburg model is a pair $(M, W)$ where
\begin{itemize}
	\item $M$ is a  complete non-compact K\"{a}hler manifold, and
	\item $W=L+iH: M\to \C$ is a holomorphic function, called the superpotential.
\end{itemize}

The Landau-Ginzburg model $(M,W)$ is called \textit{Morse} if $L\colonequals \re W: M\to \R$ Morse function, so $(M,W)$ defines a Lefschetz fibration over $\C$. From a symplectic viewpoint, one may define its Fukaya-Seidel category $\CA$ using Lagrangian Floer theory, following \cite{S08}, and associate to each compact Lagrangian submanifold $\CL_0\subset M$ an $A_\infty$-module over $\CA$. This construction is based on the Cauchy-Riemann equation, or more conveniently, the complex gradient flow equation,
\begin{equation}\label{E0.1}
\pt P+J\ps P+\nabla H=0,
\end{equation}
where $H\colonequals \im W$ and $P: \R_t\times [0,1]_s\to M$ is subject to Lagrangian boundary conditions. 

\smallskip

We wish to generalize this picture in two directions, starting with
\begin{enumerate}
\item $M$ is acted on by an abelian Lie group $G$ with a moment map $\mu:M\to \g$, and the superpotential $W$ is $G$-invariant. The triple $(M,W,G)$ is then called a gauged Landau-Ginzburg model; see Section \ref{1.2} for the precise definition.
\end{enumerate}

The replacement of \eqref{E0.1} is the gauged Witten equations: 
\begin{equation}\label{E0.2}
\left\{ \begin{array}{rl}
-*_2F_A+\mu&=\vdelta,\\
\nabla^A_{\partial_t}P +J\nabla^A_{\partial_s}P+\nabla H&=0,
\end{array}
\right.
\end{equation}
where $A$ is a connection on the trivial $G$-bundle $Q$ over $\R_t\times[0,1]_s$ and $\vdelta\in \g$ is a perturbation of the moment map $\mu$. The map $P$ is now regarded as a section of the trivial bundle $Q\times_G M$. The replacement of the \textbf{Morse} condition is a notion of \textbf{stability}, cf. Definition \ref{stable}. In this context, the local energy functional \eqref{E0.4} is defined as 
\begin{equation}\label{E0.5}
\E_{an}(A,P; \Omega_{n,R})=\int_{ \Omega_{n,R}} |\nabla^A P|^2+|\nabla H|^2+|F_A|^2+|\mu-\vdelta|^2.
\end{equation}
for any $ \Omega_{n,R}\subset \HH^2_+$. In particular, $\E_{an}(A, P;  \Omega_{n,R})=0$ implies that up to gauge, $A$ is the trivial connection and $P$ is a constant map with values in $\mu^{-1}(\vdelta)\cap \Crit(H)$. 

\medskip

Here comes the second generalization:
\begin{enumerate}[label=(2)]
	\item the gauged Landau-Ginzburg model $(M,W,G)$ can be infinite-dimensional.
\end{enumerate}

The proofs of Theorem \ref{T1.2} and \ref{T1.3} start with their counterparts for finite-dimensional Landau-Ginzburg models (as toy problems) and are concluded by the following observation. 

\begin{proposition}[Proposition \ref{P6.2} \& \ref{stableW}]\label{P1.4} For any $H$-surface $\TSigma=(\Sigma,g_\Sigma,\lambda,\nu)$, consider the \spinc structure $\s$ on $\Sigma$ with $c_1(\s)[\Sigma]=2(d+g(\Sigma)-1)$. Then we can construct an infinite-dimensional gauged Landau-Ginzburg model $(M(\Sigma,\s), W_{\lambda}, \CG(\Sigma))$ such that the associated gauged Witten equations on $\C_z$ recover the perturbed Seiberg-Witten equations \eqref{SWEQ} on $\C_z\times \Sigma$ with $\omega=\nu+ds\wedge\lambda$. Moreover, this Landau-Ginzburg model is stable in the sense of Proposition \ref{stableW}. The critical locus of  the superpotential $W_\lambda$ are precisely given by the free $\CG_\C(\Sigma)$-orbits of $\fa_1,\cdots, \fa_k$ with $k=\binom{2g-2}{d}$. 
\end{proposition}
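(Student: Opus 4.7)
\medskip

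\noindent\textbf{Proof proposal.} The plan is to produce the triple $(M(\Sigma,\s),W_\lambda,\CG(\Sigma))$ by writing down the natural $(2+2)$-dimensional reduction of the Seiberg--Witten configuration space on $\C\times\Sigma$, then verify the three assertions of the proposition (reduction, stability, and identification of $\Crit W_\lambda$) in turn.

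First I would take $M(\Sigma,\s)$ to be the off-shell configuration space
$\CA(\Sigma,\s)\times\Gamma(\Sigma,W_\Sigma)$, equipped with the $L^2$-metric and the complex structure that on $T_{(B,\Phi)}M\cong i\Omega^1(\Sigma)\oplus\Gamma(\Sigma,W_\Sigma)$ is given by $J(\beta,\phi)=(-*_\Sigma\beta,\,\rho(\mathrm{vol}_\Sigma)\phi)$; the $L^2$-pairing together with $J$ gives a flat K\"ahler form. The group $\CG(\Sigma)=\Map(\Sigma,U(1))$ acts by the usual gauge action and preserves this K\"ahler structure, with a Hamiltonian moment map obtained from the two-dimensional curvature equation: up to a constant,
\[
\mu(B,\Phi)\;=\;-*_\Sigma F_B\;+\;\rho^{-1}(\Phi\Phi^*)_0,
\]
and $\vdelta$ is realised as $-*_\Sigma\nu$ (the closed two-form perturbation). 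The superpotential is defined by integrating along paths from a reference configuration; concretely one sets
\[
W_\lambda(B,\Phi)\;=\;\tfrac{1}{2}\!\int_\Sigma \langle\Phi,D_B\Phi\rangle\,\mathrm{vol}_\Sigma \;+\;\int_\Sigma \langle\lambda^{1,0},\,F_B^{1,1}\text{-primitive}\rangle\;+\;(\text{quadratic term in }\Phi\text{ twisted by }\lambda),
\]
the precise form being whatever is needed so that $W_\lambda$ is $\CG(\Sigma)$-invariant, holomorphic with respect to $J$, and whose gradient together with $\mu$ reconstitutes the Dirac equation twisted by $\lambda$ on each $\Sigma$-slice.

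The next step is the dimensional reduction itself. Writing a configuration on $\C\times\Sigma$ as $\mathbb{A}=A_\C+B(t,s)$ and $\Phi(t,s,\cdot)\in\Gamma(\Sigma,W_\Sigma)$, I would expand the curvature $F_{\mathbb{A}}^+$ and the four-dimensional Dirac operator $D_{\mathbb{A}}^+$ into their purely $\Sigma$-components, their $(t,s)$-mixed components, and a $dt\wedge ds$ component. A direct calculation shows that the purely $\Sigma$-part of \eqref{SWEQ} becomes $\mu(B,\Phi)=\vdelta$; the mixed $(t,s,\Sigma)$-parts assemble, after using the Cauchy--Riemann character of $J$ and the identity $\nabla W_\lambda=J\cdot(\text{Dirac operator plus }\lambda\text{-terms})$, into precisely $\nabla^A_{\partial_t}P+J\nabla^A_{\partial_s}P+\nabla H=0$ where $P\colon\C\to M(\Sigma,\s)$ is the map $(t,s)\mapsto(B(t,s),\Phi(t,s,\cdot))$ and $H=\im W_\lambda$. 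This matches \eqref{E0.2} with trivial $U(1)$-bundle $Q$ over $\C$, and the perturbation $\nu+ds\wedge\lambda$ is absorbed by the choices of $\vdelta$ and the $\lambda$-dependent pieces of $W_\lambda$.

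For the stability assertion and the identification of critical points, the crucial input is Lemma \ref{L1.1}. A critical point of $W_\lambda$ on the zero level set $\mu^{-1}(\vdelta)$, modulo $\CG(\Sigma)$, is exactly an $\R_s$-invariant (and $\R_t$-invariant) solution of the three-dimensional perturbed Seiberg--Witten equations on $\R_s\times\Sigma$ with $\omega=\nu+ds\wedge\lambda$; by Lemma \ref{L1.1} there are exactly $k=\binom{2g-2}{d}$ such solutions up to gauge, all of them irreducible. Passing to the complexified orbits $\CG_\C(\Sigma)\cdot\fa_i$ is then standard: away from the real moment-map locus, $\CG_\C$-orbits of critical points of a $\CG$-invariant holomorphic function are still in bijection with $\mu^{-1}(\vdelta)/\CG$-classes of critical points, and irreducibility of each $\fa_i$ guarantees that each orbit is free. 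The verification of stability (in the sense of Definition \ref{stable}) reduces to showing that the Hessian of $W_\lambda$ transverse to the $\CG_\C$-orbit through each $\fa_i$ is non-degenerate; this is the essential technical input and is where I expect the main obstacle to lie, since one must confirm a \emph{uniform} spectral gap in an infinite-dimensional setting. This step will be handled by identifying the transverse Hessian with the linearisation of the three-dimensional SW operator at $\fa_i$, which is a twisted Dirac-type elliptic operator on $\Sigma$ whose kernel is controlled by the irreducibility statement and by the assumption that $\lambda^{1,0}$ has only simple zeros. Granting this spectral non-degeneracy, stability, the reduction identity, and the identification of $\Crit W_\lambda$ follow as above, completing the proof.
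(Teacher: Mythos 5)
Your construction of $(M(\Sigma,\s),W_\lambda,\CG(\Sigma))$ and the dimensional reduction follow the same route as the paper (Subsection \ref{Subsec6.2} and Proposition \ref{P6.2}): configuration space on $\Sigma$ with the flat $L^2$ K\"{a}hler structure, moment map extracted from the $\Sigma$-component of the curvature equation with $\vdelta=-*_\Sigma\nu$, and the Dirac pairing as superpotential. Two of your formulas are wrong as written, though. The holomorphic superpotential is $W_0=\int_\Sigma\langle D^+_{\cB}\cPsi_+,\cPsi_-\rangle$; your $\tfrac12\int_\Sigma\langle\Phi,D_B\Phi\rangle$ is real-valued (it equals $\re W_0$), hence cannot be the holomorphic function whose imaginary part drives the flow, and the $\lambda$-perturbation is the term $-\langle\cb,\lambda\rangle_{h_M}$ \emph{linear} in the connection, with no ``quadratic term in $\Phi$ twisted by $\lambda$.'' Likewise $\mu$ contains only the diagonal piece $\tfrac{i}{2}(|\cPsi_+|^2-|\cPsi_-|^2)$ of $(\Phi\Phi^*)_0$, not all of it: the off-diagonal piece sits inside $\nabla L$, and this splitting is precisely how the single curvature equation distributes itself between the two lines of \eqref{E0.2}.

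The second half of your proposal has genuine gaps. First, invoking Lemma \ref{L1.1} for the count $\binom{2g-2}{d}$ is circular: in the paper Lemma \ref{L1.1} is a restatement of Proposition \ref{stableW}, which is exactly half of the statement you are proving. The missing argument is the direct analysis of $\nabla L=0$: the two Dirac equations make $\cPsi_+$ and $\cPsi_-^*$ holomorphic, the remaining component gives $\cPsi_+\otimes\cPsi_-^*=-\sqrt{2}\lambda^{1,0}$, and since $\lambda^{1,0}$ has $2g-2$ simple zeros the disjoint zero sets $Z(\cPsi_+)$, $Z(\cPsi_-^*)$ partition $Z(\lambda^{1,0})$; with $|Z(\cPsi_+)|=\deg L^+=d$ this yields the $\binom{2g-2}{d}$ free $\CG_\C$-orbits. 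Second, the Kempf--Ness step you call ``standard'' is not available off the shelf: the critical $\CG_\C$-orbits need not be closed (see the end of Example \ref{fundamental_example}) and the setting is infinite dimensional, so the paper instead solves the Kazdan--Warner-type equation \eqref{criticalorbit} by a variational argument whose key input is the a priori estimate of Lemma \ref{L6.10} (via Trudinger's inequality), with uniqueness from the maximum principle. Third, you correctly identify the Morse--Bott property as the crux but leave it unproved, and the vague appeal to ``irreducibility plus simple zeros'' does not name the mechanism. In Proposition \ref{P7.7} the point is the algebraic constraint \eqref{E9.5}: because $\cPsi=(\cPsi_+,\cPsi_-)$ is \emph{nowhere vanishing} (this is where simple zeros enter, since $Z(\cPsi_+)$ and $Z(\cPsi_-^*)$ are disjoint), any kernel element must have $\delta\cPsi=(h\cPsi_+,-\bar{h}\cPsi_-)$, the linearized Dirac equation then forces $\delta\cb$ to be the linearized $\CG_\C$-action, and the two gauge-fixing equations kill it. This cannot be finessed: the subsection at the end of Section \ref{Sec6} exhibits an explicit failure of the Morse--Bott condition when $\lambda^{1,0}$ has a multiple zero.
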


\begin{remark} From the standpoint of the gauged Witten equations \eqref{E0.2}, the roles played by the perturbation $\lambda$ and $\nu$ are quite different: the 1-form $\lambda$ is to perturb the superpotential $W_\lambda$, while the 2-form $\nu$ is to perturb the moment map equation in \eqref{E0.2} by changing $\vdelta\in \g$.
\end{remark}

 Roughly speaking, Theorem \ref{T1.2} and \ref{T1.3} hold in general for any \textbf{stable} gauged Landau-Ginzburg models. The only difference in the infinite-dimensional case is that the metric of $M$ depends on a Sobolev completion, so one has to specify the correct norms involved in the estimates. The plotline of proofs are summarized in the table below:

\begin{center}
	\begin{tabular}{ | c | c |c| } 
		\hline
	\multicolumn{2}{|c|}{$\dim<\infty$} & $\dim=\infty$ \\ 
		\hline
	 $G=\{e\}$ & $G\neq \{e\}$& (SWEQ) on $\C\times\Sigma$ or $\HH^2_+\times\Sigma$\\ 
	 \hline
		Lemma  \ref{solutionsonC}& Theorem \ref{point-like-solutions} & Theorem \ref{pointlike} (Theorem \ref{T1.2}) \\
		\hline 
			Lemma \ref{Lemma-exponentialdecay} & Theorem \ref{exponential-decay} & Theorem \ref{T9.1} (Theorem \ref{T1.3}) \\ 
		\hline
	\end{tabular}
\end{center}

\smallskip

For instance, in the case that $M=\C, G=S^1$ and $W\equiv 0$, the gauged Witten equations $\eqref{E0.2}$ defined on $\C_z=\R_t\times\R_s, z=t+is$ come down to the vortex equations of Taubes \cite{Taubes} (with $\vdelta=\frac{i}{2}$):
\begin{equation}\label{E0.3}
\left\{ \begin{array}{rl}
*_2iF_A+\half |P|^2&=\half,\\
\bpartial_AP &=0,
\end{array}
\right.
\end{equation}
where $A$ is a $U(1)$-connection on $\C_z$ and $P:\C_z\to \C$ is a complex valued function. This example is not stable in the sense of Definition \ref{stable}, and Theorem \ref{point-like-solutions} fails by \cite[Theorem 1]{Taubes}: the vortex moduli space on $\C_z$ is $\coprod_{k\geq 0}\Sym^k \C_z$. However, Theorem \ref{exponential-decay} still holds, which says that the local energy functional $\E_{an}(A,P; \Omega_{n,R})$ defined by \eqref{E0.5} has exponential decay as $R\to \infty$. In fact, this decay is pointwise, so it recovers a theorem of Taubes \cite{Taubes-thesis}:
\begin{theorem}[{\cite[P.59, Theorem 1.4]{JT}}]\label{JT} Let $(A,P)$ be a smooth finite energy solution to the vortex equations $\eqref{E0.3}$. Given any $\epsilon>0$, there exists $C=C(\epsilon,A,P)>0$ such that
	\[
	0\leq *_2 i F_A=\half(1-|P|^2)<Ce^{-(1-\epsilon)\sqrt{t^2+s^2}}.
	\]
\end{theorem}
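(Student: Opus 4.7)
The plan is to exploit the Bogomolnyi structure of the vortex equations \eqref{E0.3} to derive a scalar Kazdan--Warner-type equation for the deficit $u\colonequals 1-|P|^2$, and then to extract exponential decay by inverting the associated operator $-\Delta+1$ with its Bessel-function Green kernel. First I would establish the a priori bound $|P|\le 1$ and the uniform decay $u(z)\to 0$ as $|z|\to\infty$. The bound $|P|\le 1$ comes from the weak maximum principle: combining $\bpartial_A P=0$ with the curvature equation yields the pointwise identity
\begin{equation*}
\Delta|P|^2 \;=\; 2|\nabla^A P|^2 + |P|^2(|P|^2-1),
\end{equation*}
so $|P|^2$ cannot exceed $1$ at an interior maximum. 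Uniform decay of $u$ follows because finite energy supplies $\int_\C u^2<\infty$ and $\int_\C|\nabla^A P|^2<\infty$, which interior elliptic estimates for the coupled system upgrade to $L^\infty$ decay on unit balls centered at $z$ as $|z|\to\infty$.

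Second, away from the (isolated) zeros of $P$, a short computation using $\bpartial_A P=0$ and the curvature equation produces the Kazdan--Warner identity
\begin{equation*}
\Delta\log|P|^2 \;=\; |P|^2-1,
\end{equation*}
where $\Delta=\partial_s^2+\partial_t^2$ is the flat Laplacian. Substituting $|P|^2=1-u$ and expanding $\Delta\log(1-u)$ gives the equivalent nonlinear equation
\begin{equation*}
(-\Delta+1)\,u \;=\; u^2+\frac{|\nabla u|^2}{1-u} \;\ge\; 0,
\end{equation*}
which extends smoothly across the zeros of $P$ (near a simple zero, $|\nabla u|^2/(1-u)=O(1)$). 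Denoting the non-negative right-hand side by $\rho$, the finite-energy hypothesis together with the isolated nature of the zero set of $P$ gives $\rho\in L^1(\C)$.

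Third, I would extract the exponential decay by inversion. The fundamental solution of $-\Delta+1$ on $\R^2$ is the modified Bessel kernel $G(z)=\tfrac{1}{2\pi}K_0(|z|)$, with asymptotics $G(z)\sim\sqrt{\tfrac{\pi}{8|z|}}\,e^{-|z|}$ at infinity. Since $u\to 0$ at infinity and $\rho\in L^1$, the representation $u=G*\rho$ is valid (both sides being the unique bounded solution of $(-\Delta+1)\,v=\rho$ that vanishes at infinity). A bootstrap then yields exponential decay: any polynomial decay rate of $u$, obtained initially from $L^2$-control and interior Schauder estimates, produces the strictly faster decay $\rho=O(u^2)+O(|\nabla u|^2)$, and convolution with $G$ inherits exponential decay from the kernel once the source decays faster than any polynomial. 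After finitely many iterations, $u(z)\le C_\epsilon\,e^{-(1-\epsilon)|z|}$ for every $\epsilon>0$; the loss of $\epsilon$ in the exponent reflects the algebraic pre-factor $|z|^{-1/2}$ in the Bessel asymptotic, absorbed at the cost of shrinking the rate.

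The main obstacle I anticipate is the delicate bootstrap in the last step, which requires simultaneous tracking of the decay of $u$ and of $|\nabla u|$; the latter is controlled via interior elliptic regularity for the vortex system, converting $L^p$ decay of $u$ on unit balls into corresponding $L^p$ decay of $|\nabla u|$. A secondary technical subtlety is the justification of the Kazdan--Warner identity across the zeros of $P$: although $\log|P|^2$ develops logarithmic singularities there, the PDE for $u$ is smooth by construction and passes through without Dirac corrections, so no boundary contribution from the zero locus contaminates the representation $u=G*\rho$.
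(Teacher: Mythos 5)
Your reduction is sound as far as it goes: the identities $\Delta|P|^2=2|\nabla_A P|^2+|P|^2(|P|^2-1)$ and $\Delta\log|P|^2=|P|^2-1$ are both correct and mutually consistent (the fiber is one-dimensional, so $|\nabla u|^2/(1-u)=2|\nabla_A P|^2$ pointwise away from the zeros), and the representation $u=G*\rho$ with $\rho=u^2+2|\nabla_A P|^2\in L^1$ is legitimate. The genuine gap is in how you start the bootstrap. The claim that convolution with $G=\tfrac{1}{2\pi}K_0$ ``inherits exponential decay from the kernel once the source decays faster than any polynomial'' is false: splitting the convolution at $|w-z|\le d$ gives only $(G*\rho)(z)\le Ce^{-d}\|\rho\|_{L^1}+\|G\|_{L^1}\sup_{|w-z|\le d}\rho$, so the output decays no faster than the source does at a comparable radius (a source of size $e^{-\sqrt{|w|}}$ yields an output of size $e^{-\sqrt{|z|}}$, not $e^{-|z|}$). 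Following your stated plan --- polynomial decay, then super-polynomial decay, then ``inherit the kernel's rate'' --- you get stuck at $O(r^{-N})$ for every $N$ and never cross into the exponential regime. The underlying obstruction is a sign: $(-\Delta+1)u=u^2+2|\nabla_A P|^2\ge 0$ makes $u$ a \emph{super}solution, which the maximum principle bounds from below rather than above, and the term $+2|\nabla_A P|^2$ cannot be absorbed without independent control of the gradient.

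There are three ways to close this. (i) The paper's (and Jaffe--Taubes') route, which is entirely different from yours: the Bochner identity of Lemma \ref{Bochner}, written out for the vortex equations in Remark \ref{B.10} (this is \cite[Proposition 6.1]{JT}), shows that the energy density $|\nabla_A P|^2+|F|^2$ satisfies $(\Delta+c^2)(\cdot)\le 0$ outside a compact set, and Corollary \ref{coro-max} then gives exponential decay of $|\nabla_A P|$; decay of $u$ follows by integrating $|\nabla u|\le\sqrt{2}\,|\nabla_A P|$ along rays, after which your convolution step correctly upgrades the rate to $1-\epsilon$. (ii) A one-line fix inside your own framework: keep the logarithm. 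The function $v=-\log|P|^2\ge 0$ satisfies, away from the compactly supported zero set, $(-\Delta+1)v=v-1+e^{-v}\in[0,\tfrac12 v^2]$ with \emph{no gradient term}, so $(-\Delta+1-\tfrac{\epsilon}{2})v\le 0$ outside a large disk and the comparison principle gives $1-|P|^2\le v\le Ce^{-(1-\epsilon)r}$ directly, at the sharp rate. (iii) If you insist on the Green's function, the quadratic nonlinearity is exactly critical for the doubling split: $m(r)\le C(e^{-r/2}+m(r/2)^2)$ with $m(r)=\sup_{|z|\ge r}(u+|\nabla u|)$ closes an induction $m(r)\le Ae^{-\alpha r}$ for a small $\alpha>0$ because squaring doubles the rate while halving the radius halves it --- but this is a different mechanism from the one you describe. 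Finally, note that the paper does not prove Theorem \ref{JT}; it quotes it from \cite{JT} and observes that Theorem \ref{exponential-decay} (Bochner formula plus maximum principle) recovers this kind of decay, so the paper's implicit proof is route (i), not a Green's-function inversion.
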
 

Our proof of Theorem \ref{exponential-decay} is an adaptation of Taubes' argument \cite[Proposition 6.1]{JT}: we first establish a Bochner-type formula (Lemma \ref{Bochner}) for the energy density functional of $(A,P)$ and then apply the maximum principle; see Remark \ref{B.10} for such a comparison. 

\subsection{Future Directions: Fukaya-Seidel Categories}\label{Subsec1.4} The complex gradient flow equation \eqref{E0.1} is the Cauchy-Riemann equation perturbed by the special Hamiltonian function $H=\im W$ and therefore exhibits interesting analytical properties due to the holomorphicity of $W$. Meanwhile, its solutions admit concrete geometric interpretation, suggesting a direct construction of Fukaya-Seidel categories even for the infinite dimensional gauged Landau-Ginzburg model in Proposition \ref{P1.4}. This may lead eventually to a bordered monopole theory. To develop the Picard-Lefschetz theory in this context, one may work directly with infinite-dimensional Lagrangian submanifolds as in the work of Nguyen \cite{NguyenI, NguyenII}. But there might be a shortcut: \textit{can we define the Floer cohomology of Lagrangian submanifolds without using boundary conditions?}

\smallskip

When $G$ is trivial and $\dim M<\infty$, this idea can be partly realized when the Lagrangian submanifold $\CL_0$ is a Lefschetz thimble, i.e. the stable (or unstable) submanifold of a critical point $q\in \Crit(\re W)$. Instead of a strip $\R_t\times [0,1]_s$, we look at solutions to the complex gradient flow equation \eqref{E0.1} on the upper half plane:
\[
P: \HH^2_+=\R_t\times [0,\infty)_s\to M
\]
with a Lagrangian boundary condition at $s=0$ and with an asymptotic condition as $s\to \infty$: we require the solution $P(t, s)$ to converge to some $q\in \Crit(\re W)$ uniformly as $s\to\infty$. The study of Fukaya-Seidel categories via this approach has been pioneered by the work of Haydys \cite{Haydys15} and Gaiotto-Moore-Witten \cite{GMW15}. See also \cite{FJY18,GMW17,KKS16}. We give a brief sketch of their proposal in Section \ref{1.1}. The primary application in their cases is when 
\begin{align*}
M&=\text{the space of }SL(2,\C)\text{ connections on a closed oriented 3-manifold } Y,\\
W&=\text{the complex valued Chern-Simons functional},
\end{align*}
and the gauged Witten equations \eqref{E0.2} come down to the Haydys-Witten equations on the 5-manifold  $\C_z\times Y$. This idea goes back at least to the seminar paper \cite{DT96} by Donaldson-Thomas: if one takes 
\begin{align*}
M&=\text{the space of $\bpartial$-operators on a complex vector bundle }E\to \CY,\\
W&=\text{the holomorphic Chern-Simons functional}.
\end{align*}
where $\CY$ is a compact Calabi-Yau 3-fold, then the gauged Witten equations \eqref{E0.2} recover the $\Spin(7)$-instanton equation on $\C_z\times \CY$.

\smallskip

The Seiberg-Witten equations (SWEQ) should serve as the test for their programs in higher dimensions. To pursue a bordered monopole Floer theory along this line, we have to construct 
\[
\begin{array}{ccccl}
\TSigma&\rightsquigarrow& (M(\Sigma,\s), W_\lambda, \CG(\Sigma)) &\rightsquigarrow &\text{ an $A_\infty$ category }\CA;\\
Y&\rightsquigarrow&\text{(SWEQ) on }\hy &\rightsquigarrow &\text{ an $A_\infty$ right module $\M(Y)$ over } \CA;\\
Y'&\rightsquigarrow&\text{ (SWEQ) on }\hy' &\rightsquigarrow &\text{ an $A_\infty$ left module $\M(Y')$ over } \CA, 
\end{array}
\]
where $\TSigma=(\Sigma, g_\Sigma,\lambda,\nu)$ is any $H$-surface, and $(M(\Sigma,\s), W_\lambda, \CG(\Sigma)) $ is the gauged Landau-Ginzburg model in Proposition \ref{P1.4}. Here $Y,\ Y'$ are oriented 3-manifolds with $\partial Y\cong \Sigma\cong (-Y')$. Denote by $Y\circ_h Y'$ the closed 3-manifold obtained by gluing $Y$ and $Y'$ along their common boundary. The ultimate goal of this project is to establish a gluing theorem relating $(\M(Y),\M(Y'))$ with a suitable version of monopole Floer homology of $Y\circ_h Y'$ perturbed by a closed 2-form. The monopole Floer homology group
\[
\HM_*(Y,\TSigma)\colonequals \bigoplus \HM_*(Y,\fa_j),
\]
outlined as in Section \ref{Subsec1.2}, should in principle compute the homology of the underlying chain complexes of $\M(Y)$. The group $\HM_*(Y,\TSigma)$ may not be a topological invariant of $Y$: the full $A_\infty$ structure might be required to formulate this invariance. A more detailed discussion on this bordered picture will continue in Section \ref{Subsec2.3} below. 


\subsection{Organization} Section \ref{1.1} is a continuation of this introduction. The construction of the monopole Floer homology group $\HM_*(Y,\fa_j)$ is based on a variant of Lagrangian Floer cohomology as a finite dimensional model. Following the work of Haydys \cite{Haydys15} and Gaiotto-Moore-Witten \cite{GMW15}, we work instead with ``holomorphic upper half planes" with a boundary condition at $s=0$ and an asymptotic condition as $s\to\infty$. Also, we introduce Lemma  \ref{solutionsonC} \& \ref{Lemma-exponentialdecay} as the toy model of Theorem \ref{T1.2} and \ref{T1.3}, and explain their significance for the compactness theorem. The discussion in Section \ref{1.1} is largely informal; no proofs will be presented. 

In Part \ref{Part1}, we introduce the notion of gauged Landau-Ginzburg models. The focus is on the geometric insights that motivate definitions and proofs in the infinite-dimensional setting. In Section \ref{1.4}, we prove that point-like solutions must be trivial provided that the superpotential $W$ is stable (Theorem \ref{point-like-solutions}). In Section \ref{1.5}, we prove the exponential decay result, Theorem \ref{exponential-decay}, using a Bochner-type formula for the energy density functional. 

In the last part of this paper, we construct the fundamental Landau-Ginzburg Model associated to an $H$-surface $\TSigma=(\Sigma, g_\Sigma,\lambda,\nu)$ and prove Theorem \ref{T1.2}
 and \ref{T1.3} by generalizing Theorem \ref{point-like-solutions} and \ref{exponential-decay} from Part \ref{Part1}. \\
 
 \textbf{Acknowledgments.} The author is extremely grateful to his advisor, Tom Mrowka, for his patient help and constant encouragement throughout this project.  The author would like to thank Chris Gerig and Jianfeng Lin for their comments on a preliminary version of this paper. The author would like to thank Tim Large and Paul Seidel for several discussions and for providing the critical references, and also Ao Sun for teaching him the proof of maximum principle. Finally, the author would like to thank the anonymous referees for many valuable suggestions to improve the overall presentation of this paper. This material is based upon work supported by the National Science Foundation under Grant No.1808794. 
 
\section{A General Overview}\label{1.1}

In this section, we explain a variant of Lagrangian Floer cohomology defined by counting holomorphic upper half planes, following \cite{Haydys15} and \cite{GMW15}. This serves as the finite dimensional model of the monopole Floer theory for 3-manifolds with boundary, proposed in the introduction. This analogy is only used as an inspiration or a guideline for future research; it is not our intention to relate these two theories in a precise way. To make it consistent with the existing literature, we will work with \textbf{cohomology} instead of homology in this section.

 Analytically, the perturbed Seiberg-Witten equations on $\HH^2_+\times\Sigma$ and the complex gradient flow equation \eqref{P-holo} on $\HH^2_+$ have many features in common. Lemma \ref{solutionsonC} and Lemma \ref{Lemma-exponentialdecay} below are the counterparts of Theorem \ref{T1.2} and \ref{T1.3} in this toy model. Their proofs are postponed to the next part, where the corresponding results, Theorem \ref{point-like-solutions} and \ref{exponential-decay}, are stated and proved for the gauged Witten equations.

\subsection{A Variant of Lagrangian Floer Cohomology}
Recall that a Landau-Ginzburg Model is a pair $(M,W)$ where
\begin{itemize}
	\item $(M, \omega, J, g)$ is a non-compact complete  K\"{a}hler manifold with complex structure $J$ and K\"{a}hler metric $h\colonequals g-i\omega_M$. The underlying Riemannian metric is $g$, and $\omega_M$ is the symplectic form. 
	\item $W: M\to \C$ is a holomorphic function, called the superpotential.
\end{itemize}

Since $M$ is K\"{a}hler, $J$ is parallel. Write $W=L+i H$ with $L=\re W$ and $H=\im W$. Then the Cauchy-Riemann equation $(dW)^{0,1}=0$ comes down to
\begin{equation}\label{CRequation}
\nabla L+J\nabla H=0,
\end{equation}
which says the gradient $\nabla L$ is the Hamiltonian vector field of $H$. 

A Landau-Ginzburg model $(M,W)$ is called \textit{Morse} if all critical points of $L$ are non-degenerate. We always assume $(M,W)$ is Morse in this section. Denote $\Crit(L)$ by the set of critical points of $L$. Taking the covariant derivative of $(\ref{CRequation})$ yields:
\begin{equation}\label{Hess}
\Hess L+ J\circ \Hess H=0.
\end{equation}

Since $\Hess H$ is a symmetric operator and $J$ is skew-symmetric, (\ref{Hess}) implies that
\begin{equation}\label{anti-commutivity}
J\circ \Hess L+\Hess L\circ J=0. 
\end{equation}

For any $q\in \Crit(L)$, denote by $H_q^\pm\subset T_q M$ the positive (negative) spectral subspace of $\Hess_q L$. Then (\ref{anti-commutivity}) implies $J(H^\pm_q)= H^\mp_q$. In particular, the index of $q$ is $(n,n)$ if $\dim_\R M=2n$. Let $U_q$ and $S_q$ be the unstable and stable submanifolds of $q$ respectively, which are also called Lefschetz thimbles: 
\begin{align*}
U_q&=\{x\in M: \exists p:(-\infty,0]_s\to M,  \ps p+\nabla L=0, p(0)=x, \lim_{s\to-\infty} p=q\},\\
S_q&=\{x\in M: \exists p:[0,\infty)_s\to M,  \ps p+\nabla L=0, p(0)=x, \lim_{s\to\infty} p=q\}.
\end{align*}

\begin{lemma}[{\cite[Lemma 2.5]{Eli97}\cite[Lemma 1.13]{S03}}]\label{L2.1} $U_q$ and $S_q$ are Lagrangian submanifolds of $(M,\omega_M)$.
\end{lemma}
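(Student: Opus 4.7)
The plan is to verify the two defining properties of a Lagrangian submanifold one at a time: that $U_q$ and $S_q$ are half-dimensional smooth submanifolds, and that $\omega$ vanishes on their tangent bundles. The dimension count is immediate from the stable/unstable manifold theorem applied to the non-degenerate critical point $q$: since $L$ is Morse, $U_q$ and $S_q$ are smooth injectively immersed submanifolds with tangent spaces $T_qU_q = H_q^+$ and $T_qS_q = H_q^-$. By (\ref{anti-commutivity}), the complex structure $J$ anti-commutes with $\Hess_q L$, so $J$ interchanges $H_q^+$ and $H_q^-$, which forces $\dim_\R H_q^\pm = n$.

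Next I would check the Lagrangian condition at the critical point itself. Take $v,w \in H_q^+ = T_qU_q$. Since $H_q^\pm$ are eigenspaces of the $g$-symmetric operator $\Hess_q L$ for eigenvalues of opposite sign, they are $g$-orthogonal. Now $Jv \in H_q^-$ by (\ref{anti-commutivity}), so
\[
\omega(v,w) \;=\; g(Jv, w) \;=\; 0,
\]
showing $\omega|_{T_qU_q} = 0$; the same argument gives $\omega|_{T_qS_q} = 0$. Thus $H_q^+$ and $H_q^-$ are transverse Lagrangian subspaces of $T_qM$.

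To propagate this to every point of $U_q$, I would use the Cauchy-Riemann identity (\ref{CRequation}) in the form $\nabla L = -J\nabla H$. Pairing with the Kähler identity $\omega(X,\cdot) = g(JX,\cdot)$ shows that $\nabla L$ is precisely the Hamiltonian vector field $X_H$ of $H$, so the gradient flow $\phi_s$ of $L$ is a Hamiltonian isotopy and preserves $\omega$:
\[
\phi_s^*\,\omega \;=\; \omega.
\]
Given $x \in U_q$ and $v,w \in T_xU_q$, the backward flow $\phi_{-s}(x)$ converges to $q$ as $s \to \infty$, the vectors $d\phi_{-s}(v),\,d\phi_{-s}(w)$ remain tangent to $U_q$, and on the unstable manifold of a hyperbolic fixed point the backward differential is an exponential contraction, so $d\phi_{-s}(v), d\phi_{-s}(w) \to 0$ in $T_qM$. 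Continuity of $\omega$ near $q$ then forces $\omega(d\phi_{-s}v, d\phi_{-s}w) \to 0$, while flow invariance gives $\omega(d\phi_{-s}v, d\phi_{-s}w) = \omega(v,w)$ for all $s$. Hence $\omega(v,w) = 0$. The identical argument with the forward flow handles $S_q$.

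The main technical obstacle is making the limiting argument at $q$ rigorous: one needs to know that tangent vectors to $U_q$ really do contract to zero under $\phi_{-s}$, which is a consequence of the Hartman-Grobman or stable/unstable manifold theorem at a hyperbolic critical point. Once that is in hand, the argument is a one-line consequence of flow invariance, and it also provides an alternative viewpoint: the vanishing $\omega|_{U_q} = 0$ can be seen as saying that the closed form $\omega|_{U_q}$ is $\phi_s$-invariant and ``concentrates at $q$,'' where it already vanishes for linear-algebraic reasons.
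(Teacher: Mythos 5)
Your proof is correct, and it is essentially the standard argument of the cited source \cite[Lemma 1.13]{S03}; the paper itself gives no proof beyond that citation, so there is nothing in the text to diverge from. The two key points --- that $H_q^+$ and $H_q^-$ are $g$-orthogonal $J$-conjugate Lagrangian subspaces, and that the gradient flow of $L$ is the Hamiltonian flow of $H$ (hence preserves $\omega$) so that flow-invariance plus the exponential contraction of tangent vectors along the thimble toward $q$ forces $\omega$ to vanish there --- are both sound. One minor remark: with the paper's convention $\partial_s p+\nabla L=0$ (downward gradient flow), the linearization at $q$ gives $T_qU_q=H_q^-$ and $T_qS_q=H_q^+$, the opposite of what you wrote; since $J$ interchanges $H_q^\pm$ and the orthogonality computation is symmetric in the two subspaces, this sign-convention slip does not affect the argument.
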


Lemma \ref{L2.1} has been well-known for a long time, but the Floer theory of Lefschetz fibrations was only explored after the works of Donaldson \cite{D99} and Seidel \cite{S03}; see \cite[Remark 16.10]{S08}. 
\begin{assumpt}\label{assumption} To simplify our exposition in this section, we make the following assumptions on $(M,W)$.
	\begin{itemize}
\item  $(M,\omega_M)$ is an exact symplectic manifold, i.e. $\omega_M=d\theta_M$ for a smooth 1-form $\theta_M\in \Omega^1(M)$ called the primitive;
\item The superpotential $W: M\to \C$ need not to be proper, but the function $|\nabla H|^2:M\to [0,\infty)$ is;
\item $L$ has finitely many critical points $\{q_1,\cdots, q_k\}$ on $M$ with $H(q_i)$ mutually distinct. \qedhere
	\end{itemize}
\end{assumpt}

\begin{example}\label{example0}
	Let $M=\C^n$ and $W=z_1^2+\cdots+z_n^2,\ (z_1,\cdots,z_n)\in \C^n$. Then the unique critical point $q$ is the origin.
\end{example}

Take a pair of \textbf{compact} Lagrangian submanifolds $\CL_0, \CL_1\subset M$ intersecting transversely. Let $\Ch^*(\CL_0,\CL_1)$ be the $\BF_2$-vector space freely generated by the intersection $\CL_0\cap \CL_1$:
\[
\Ch^*(\CL_0,\CL_1)=\bigoplus_{y\in \CL_0\cap \CL_1} \BF_2\cdot y.
\]

The differential $\partial $ on $\Ch^*(\CL_0,\CL_1)$ is defined by counting $J$-holomorphic strips of Maslov index $1$ and with Lagrangian boundary conditions. They are smooth maps
\[
P: \R_t\times [0,1]_s\to M 
\]
satisfying the equation
\begin{equation}\label{J-holo}
\pt P+ J\ps P=0,
\end{equation}
along with the boundary conditions $P(\cdot, 0)\in \CL_0$ and $P(\cdot, 1)\in \CL_1$. At this point, some assumptions on $M$ and $(\CL_0, \CL_1)$ are required to show that $\partial$ is well-defined after suitable perturbations, but let us skip these technical steps here.

\smallskip

The equation (\ref{J-holo}) can be perturbed by a Hamiltonian function. In our case, we use the imaginary part of $W$:
\begin{equation}\label{P-holo}
\pt P+ J\ps P+\nabla H=0. 
\end{equation}
The cochain complex $\Ch^*(\CL_0,\CL_1)$ is then generated by Hamiltonian chords, which are smooth maps $p: [0,1]_s\to M$ satisfying the relations
\[
p(0)\in \CL_0, p(1)\in \CL_1, J\ps p+\nabla H=0.
\]
By (\ref{CRequation}), the last condition is equivalent to
\begin{equation}\label{DGFlow}
0=J(\ps p+\nabla L), s\in [0,1]_s. 
\end{equation}
i.e. $p$ is a downward gradient flowline of $L$.

\smallskip

The Lagrangian Floer cohomology $\HFL^*(\CL_0,\CL_1)$ is the homology of the cochain complex $(\Ch^*(\CL_0,\CL_1),\partial)$. The underlying idea is an infinite-dimensional Morse theory. The configuration space is the path space
\[
C^\infty([0,1], M; \CL_0,\CL_1)\colonequals \{p:[0,1]_s\to M: p\text{ smooth, } p(0)\in \CL_0, p(1)\in\CL_1 \},
\]
and the Morse function defined on $\SC^\infty([0,1], M; \CL_0,\CL_1)$ is the perturbed symplectic action functional:
\[
\CA_H(p)=\CA(p)+\int_{[0,1]_s}H\circ p(s)ds.
\]
A path $p$ is a critical point of $\CA_H$ if and only if $p$ is a Hamiltonian chord. An $\R_t$-family of paths $\{p_t\}_{t\in \R}\subset C^\infty([0,1], M; L_0, L_1)$ forms a downward gradient flowline of $\CA_H$ precisely when $P(t,s)=p_t(s)$ solves the equation (\ref{P-holo}) on $\R_t\times [0,1]_s$.

\smallskip

We would like to generalize this setup for certain \textbf{non-compact} Lagrangian submanifolds, in particular for the unstable and stable thimbles $U=U_q$ and $S=S_q$ at any $q\in \Crit(L)$. This can not be done without further assumptions on the submanifolds $\CL_j, j=0,1$ in order to control their geometry at infinity. Instead of making them precise, we give an (incomplete) list of axiomatic properties for these submanifolds:
\begin{enumerate}
	\item there are two classes of non-compact Lagrangian submanifolds, $\SC_{un}$ and $\SC_{st}$, which are of the unstable type and the stable type respectively;
	\item for any $q\in \Crit(L)$, $U_q\in \SC_{un}$ and $S_q\in \SC_{st}$, where $U_q$ and $S_q$ are Lefschetz thimbles at $q$;
	\item for any $\CL_U\in \SC_{un}$ and $\CL_S\in \SC_{st}$, $L=\re W$ is bounded above on $\CL_U$ and below on $\CL_S$. $\CL_U$ only intersect $\CL_S$ within a compact region of $M$;
	\item the Lagrangian Floer cohomology $\HFL^*(\CL_U,\CL_S)$ is well-defined, assuming transversality, by counting Hamiltonian chords and solutions of (\ref{P-holo}).
\end{enumerate}

Our goal is to give an alternative construction of $\HFL^*(\CL_U, S_q)$ and $\HFL^*(U_q,\CL_S)$ for all $q\in \Crit(L)$, and we focus on the first case for simplicity. Suppose $\CL_U$ is exact, so the primitive $\theta_M|_{\CL_U}=dh$ for some function $h: \CL_U\to \R$. The cochain group $\Ch^*(\CL_U, S_q)$ is generated by the finite set $ \CL_U\cap S_q$. Each $x\in \CL_U\cap S_q$ corresponds to a path $p: [0,\infty)_s\to M$ such that
\[
\ps p+\nabla L=0,\ p(0)=x,\ \lim_{s\to\infty} p(s)=q. 
\] 

The analogy with $(\ref{DGFlow})$ suggest that we look at the Morse cohomology of the perturbed action functional:
\[
\CA_H(p)=-h\big(p(0)\big)+\int_{[0,\infty)_s} -p^*\theta_M+\big(H\circ p(s)-H(q)\big)ds,
\]
defined on the space
\[
\SC^\infty([0,\infty), M; \CL_U)\colonequals \{p:[0,\infty)_s\to M: p\text{ smooth, } p(0)\in \CL_U, \lim_{s\to\infty}p(s)=q \}.
\]

The differential $\widehat{\partial}$ on $\Ch^*(\CL_U, S_q)$ is defined by counting solutions to (\ref{P-holo}) on the upper half plane $\HH^2_+=\R_t\times [0,\infty)_s$ with the boundary condition:
\begin{equation}\label{boundarycondition}
P(t, 0)\in \CL_U,\ \lim_{s\to\infty} P(t, s)=q,\ \forall t\in \R_t. 
\end{equation}

To prove $\widehat{\partial}^2=0$, it is important to know a compactness property. The proof of the next lemma will be carried out in a forthcoming paper \cite{Wang22}. It is included here only to motivate our work:
\begin{lemma}[{\cite[Lemma 3.7]{Wang22}}]\label{Lemma-uniformdecay} With some additional assumptions on $(M,W)$, for any unstable type Lagrangian submanifold $\CL_U$, one can find a function $\eta: [0,\infty)_s\to [0,\infty)$ such that $\lim_{s\to\infty}\eta(s)=0$, and for any solution $P: \HH^2_+\to M$ of $(\ref{P-holo})$ subject to the boundary condition $(\ref{boundarycondition})$, we have
	\[
	\sup_{t\in \R} d\big(P(t,s),q\big)\leq \eta(s),
	\]
	where $d$ is the distance function of the Riemannian metric $g$. 
\end{lemma}

This means that the convergence in the boundary condition (\ref{boundarycondition}) is also uniform for all possible solutions $P$. In fact, this decay is exponential. The next lemma is the toy model of Theorem \ref{T1.3} when $G$ is trivial and $\dim M<\infty$. 

\begin{lemma}\label{Lemma-exponentialdecay} There exist constants $\epsilon(M,W), \zeta(M,W)>0$ with the following significance. Let $P_1:\HH^2_+\to M$ be any solution of $(\ref{P-holo})$ such that for some $q\in \Crit(L)$, 
$ d\big(P_1(t,s),q\big)< \epsilon, \forall (t,s)\in \HH^2_+$, then
	\[
	d\big(P_1(t,s),q\big)<e^{-\zeta s}, \forall s\geq 0.
	\]
\end{lemma}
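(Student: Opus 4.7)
My plan is to combine a Bochner-type differential inequality for $f(t,s)\colonequals \tfrac12 d(P_1(t,s),q)^2$ with a maximum principle on the half-plane. The smallness hypothesis forces $P_1$ to take values in a normal-coordinate chart around $q$, where the Floer equation linearizes to an explicit constant-coefficient system whose symbol has a real, nonzero gap. That gap is what drives the exponential decay.

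First, I would work in geodesic normal coordinates centered at $q$ and identify $P_1$ with a map $\xi \colon \HH^2_+ \to T_q M$ satisfying $|\xi|=d(P_1,q)$. Writing $B\colonequals \Hess_q H$ and $D\colonequals \Hess_q L$, the equation $\pt P + J\ps P+\nabla H=0$ becomes
\[
\pt\xi + J\ps\xi + B\xi + \mathcal N(\xi)=0, \qquad \mathcal N(\xi)=O(|\xi|^2),
\]
where the nonlinearity $\mathcal N$ also absorbs the normal-coordinate Christoffel errors. The Cauchy--Riemann identity \eqref{Hess} gives $B=JD$, and the anti-commutation \eqref{anti-commutivity} gives $DJ+JD=0$; the Morse assumption makes $D$ invertible with spectral gap $\lambda_0>0$.

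The Bochner step is a direct computation: differentiating the equation once in $t$ and once in $s$ and adding yields $(\pt^2+\ps^2)\xi = (BJ+JB)\ps\xi + B^2\xi + (\text{nonlinear})$, and the algebraic identities $BJ+JB=0$ and $B^2=D^2$ collapse this to $\Delta\xi = D^2\xi + O(|\xi|^2+|\xi||\nabla\xi|)$. Taking the inner product with $\xi$ and using $\langle\xi,D^2\xi\rangle\ge \lambda_0^2|\xi|^2$, I obtain
\[
\Delta f \;=\; |\nabla\xi|^2 + \langle\xi,D^2\xi\rangle + R \;\ge\; \lambda_0^2|\xi|^2 - C|\xi|^3 \;\ge\; 2\beta^2\,f
\]
whenever $|\xi|<\epsilon$ with $\epsilon$ small enough, for some $\beta$ slightly less than $\lambda_0$.

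To turn this into pointwise decay, I would run a barrier argument on $\HH^2_+$. Choose $\zeta<\beta/\sqrt 2$ and set $F(t,s)\colonequals \tfrac12 \epsilon^2 e^{-2\zeta s}$, so that $\Delta F = 4\zeta^2 F < 2\beta^2 F$. Then $h\colonequals f-F$ satisfies $(\Delta-2\beta^2)h\ge 0$, is bounded above on $\HH^2_+$, and is $\le 0$ on the boundary $\{s=0\}$ by the pointwise smallness hypothesis. A Phragm\'en--Lindel\"of-type application of the maximum principle---comparing $h$ to $\delta\cosh(\beta\sqrt 2\, t)$, which lies in $\ker(\Delta-2\beta^2)$ and grows at infinity---forces $h\le 0$ everywhere, hence $|\xi(t,s)|\le \epsilon\, e^{-\zeta s}<e^{-\zeta s}$ after shrinking $\epsilon$.

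The main obstacle I expect is the Bochner computation itself: in finite dimensions the algebraic cancellation $BJ+JB=0,\ B^2=D^2$ is clean, but tracking the error terms from normal coordinates and the quadratic remainder $\mathcal N(\xi)$ (including a term like $|\xi||\nabla\xi|$) to verify they are absorbed by the $\lambda_0^2|\xi|^2$ gap requires a uniform estimate valid on the entire half-plane, not just locally. A secondary subtlety is the unbounded-domain maximum principle, where one must ensure the barrier $\delta\cosh(\beta\sqrt 2\, t)$ dominates $h$ at $t\to\pm\infty$; this uses the \emph{global} boundedness of $f$ which follows from the hypothesis $d(P_1,q)<\epsilon$, not any asymptotic condition as $s\to\infty$. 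This second point is also what makes the result genuinely stronger than a finite-window Morse lemma.
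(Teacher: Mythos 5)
Your proof is correct in outline but takes a genuinely different route from the paper. The paper does not argue on $d(P,q)^2$ at all: it proves the gauged, infinite-dimensional statement (Theorem \ref{exponential-decay}, via the Bochner-type formula of Lemma \ref{Bochner}) for the \emph{energy density} $u=|\nabla^AP|^2+|F|^2_\g$, obtains $(\Delta_{\HH^2_+}+\Lm^2)u\le 0$ from the positivity of the term $I_2=|D(\nabla^AP)|^2+|\langle\nabla\mu,F\rangle|^2$ (stability/Morse--Bott), and then invokes the half-plane maximum principle of Corollary \ref{coro-max}; Lemma \ref{Lemma-exponentialdecay} is deduced only afterwards from $U_\gamma\gtrsim d(P,q)^2$ near $q$. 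You instead run the same skeleton --- apply $\pt-J\ps$ to the equation, use the anti-commutation \eqref{anti-commutivity} to collapse to $\Delta\xi=D^2\xi+(\text{errors})$, extract the gap $\lambda_0^2$ of $\Hess_qL$, and compare with $e^{-\zeta s}$ --- but on the zeroth-order quantity $f=\tfrac12 d(P,q)^2$. Your version has the advantage of matching the hypothesis of the lemma exactly: you never need to upgrade $C^0$-closeness to $q$ into a pointwise bound on $|\nabla P|$ (which the paper's route implicitly requires via interior elliptic estimates), because $|\nabla\xi|^2$ appears with a favorable sign in $\Delta f$ and self-absorbs the cross terms $|\xi|^2|\nabla\xi|$. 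The paper's version, on the other hand, is the one that survives gauging and the passage to $\dim M=\infty$, where ``distance to the critical orbit'' is gauge-dependent and the gauge-invariant energy density is the natural object; it also directly yields decay of derivatives, which is what Theorem \ref{T1.3} actually asserts.

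One step of your barrier argument needs repair: $\delta\cosh(\beta\sqrt2\,t)$ solves $(\Delta-2\beta^2)\phi=0$ but is constant in $s$, so it does \emph{not} grow at infinity along rays $t$ bounded, $s\to\infty$, and the maximum of $h-\delta\phi$ is not forced into a compact set. Replace it by $\phi=\cosh(\beta t)\cosh(\beta s)$, which satisfies $\Delta\phi=2\beta^2\phi$ and tends to $+\infty$ at infinity in $\HH^2_+$, or use the cutoff argument of Proposition \ref{maximumprinciple}, which avoids the issue entirely. With that correction the Phragm\'en--Lindel\"of step closes and the proof is complete.
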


The exponent $\zeta(M,W)$ is determined by the first positive eigenvalue of $\Hess_q L$. To derive the exponential decay of the distance function from Lemma \ref{Lemma-exponentialdecay}, set $P_1(t,s)=P(t, s+R)$ for some $R\gg0 $ in Lemma \ref{Lemma-uniformdecay}. 
\begin{remark}
Apparently, Lemma \ref{Lemma-exponentialdecay} holds when $P$ is time-independent, since in this case $\{P(t,s)\}_{s\in[0,\infty)_s}$ is a downward gradient flowline of $L=\re W$ for any fixed $t\in \R_t$, and $L$ is a Morse function. It is not clear to the author whether $\{P(\cdot, s)\}_{s\in [0,\infty)}$ forms a downward gradient flowline (in the spatial direction) of some functional in general.
\end{remark}

The proof of uniform decay of the distance function in Lemma \ref{Lemma-uniformdecay} will rely on the following fact:
\begin{lemma}\label{solutionsonC} Let $P: \C_z\to M$ be a solution of $(\ref{P-holo})$  on the complex plane with finite energy. If $P(z)\to q$ as $z=t+is\to \infty$, then $P\equiv q$. 
\end{lemma}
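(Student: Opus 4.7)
The plan is to derive an exact divergence identity from the holomorphicity of $W$, apply Stokes' theorem on disks in $\C$, and conclude $\int_\C |\nabla H(P)|^2 = 0$. Since $q$ is the unique critical point of $W$, this forces $P \equiv q$.

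First I would upgrade the hypothesis $P \to q$ at infinity to exponential decay of $P-q$ in every $C^k$ norm. Because $P(\C)$ is pre-compact and $q$ is a non-degenerate critical point of $L$ (equivalently of $H$), the Floer equation linearized at $q$ is the translation-invariant first-order elliptic operator $\pt + J\ps + \Hess_q H$ on $\C$. Its Fourier symbol $i\tau + iJ\sigma + \Hess_q H$ is invertible for every $(\tau,\sigma) \in \R^2$, thanks to the Morse non-degeneracy of $\Hess_q H$ together with $J^2 = -1$ and the K\"ahler compatibility between $J$ and $\Hess H$ recorded in \eqref{anti-commutivity}. Standard elliptic bootstrapping on exterior disks, applied to the nonlinear Floer equation as a small perturbation of this linear model, then yields exponential decay of $P$ to $q$ with all derivatives.

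Next I would set $u = L \circ P$ and $v = H \circ P$ and pair the Floer equation with $\nabla H$. Using the Cauchy-Riemann relation $\nabla L + J\nabla H = 0$, the mixed term simplifies via $\langle \nabla H, J \ps P\rangle = -g(J \nabla H, \ps P) = g(\nabla L, \ps P) = \ps u$, giving the pointwise identity $\ps u + \pt v + |\nabla H(P)|^2 = 0$. Equivalently, the $1$-form $\alpha = -u\,dt + v\,ds$ on $\C$ satisfies $d\alpha = -|\nabla H(P)|^2\, dt \wedge ds$. Applying Stokes' theorem on $D_R = \{|z| \leq R\}$ yields
\[
\int_{D_R} |\nabla H(P)|^2\, dt\, ds \;=\; -\int_{\partial D_R} \alpha.
\]
Decomposing $u = L(q) + \tilde u$ and $v = H(q) + \tilde v$, the contribution of the constants vanishes identically (a constant $1$-form integrates to zero over a closed loop), while the remaining piece is bounded by $2\pi R \bigl( \|\tilde u\|_{L^\infty(\partial D_R)} + \|\tilde v\|_{L^\infty(\partial D_R)} \bigr)$, which tends to zero by the exponential decay. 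Letting $R \to \infty$ gives $\int_\C |\nabla H(P)|^2 = 0$, so $\nabla H(P) \equiv 0$ on $\C$; since $q$ is the only critical point of $H$ by Assumption \ref{assumption}, $P \equiv q$.

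The hard part is the exponential-decay step: converting the qualitative convergence $P \to q$ into a definite rate requires a careful bootstrapping argument showing that the linearized model has no bounded null modes. The divergence identity itself relies crucially on the holomorphicity of $W$, which is what forces the cross term to reorganize as $\ps u$; the argument breaks down for a generic Hamiltonian, consistent with the fact that in the degenerate case $W \equiv 0$ genuine counterexamples exist (the $\R$-invariant vortices on $\C$ discussed around Theorem \ref{JT}).
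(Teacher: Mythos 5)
Your divergence identity is exactly the paper's mechanism: $\partial_s(L\circ P)+\partial_t(H\circ P)=-|\nabla H\circ P|^2$ is the imaginary part of Lemma \ref{dbarG}, $\bpartial(W\circ P)=-i|\nabla H|^2$, and your Stokes computation on $D_R$ is the same as the paper's integration by parts against the potential $K$ in Lemma \ref{vanishing-lemma}. That part is correct. The gap is in how you dispose of the boundary term $\int_{\partial D_R}\alpha$. You need a quantitative rate, namely $\sup_{\partial D_R}|W\circ P-W(q)|=o(1/R)$, and you propose to get exponential decay of $P-q$ from the invertibility of the symbol of $\partial_t+J\partial_s+\Hess_q H$ plus ``standard elliptic bootstrapping.'' The symbol computation is fine (using \eqref{anti-commutivity} one gets $D^*D=\Delta_\C+(\Hess_qH)^2$, which is uniformly positive), but bootstrapping yields interior regularity, not decay: passing from ``$P\to q$ with no rate'' to ``$|P-q|\le Ce^{-\zeta|z|}$ on an exterior domain of $\C$'' requires a genuine argument --- a Bochner-type differential inequality for an energy density together with a comparison/maximum principle (this is precisely the content of Section \ref{1.5} and Appendices \ref{max}--\ref{B}, proved there for the half-plane, and the nonlinear error terms involve $|\nabla\xi|^2$, which is why the paper works with the density $|\nabla^AP|^2$ rather than the distance). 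As written, the hardest step of your proof is asserted rather than proved.

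The step is also unnecessary, and this is where the paper's route differs. Near the nondegenerate critical point one has the Morse bound \eqref{morse-bott}, $|W(x)-W(q)|\le C|\nabla H(x)|^2$. Applying it pointwise on $\partial B(0,r)$, your boundary term is bounded by $C\int_{\partial B(0,r)}|\nabla H\circ P|^2=CE'(r)$, where $E(r)=\int_{B(0,r)}|\nabla H\circ P|^2$ is exactly the left-hand side of your Stokes identity. This gives the self-improving inequality $0\le E(r)\le CE'(r)$, hence $E(r_0)\le E(r)e^{(r_0-r)/C}$: either $E\equiv 0$ or $E$ grows exponentially. Since $P\to q$ forces $|\nabla H\circ P|\to 0$ and hence $E(r)=o(r^2)$, only $E\equiv 0$ survives, and no decay rate for $P-q$ is ever needed --- the qualitative hypothesis suffices. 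I would replace your first step by this observation; the rest of your argument then goes through verbatim and coincides with the proofs of Lemma \ref{vanishing-lemma} and Theorem \ref{point-like-solutions} (with $G=\{e\}$, the final reduction from $\nabla L\equiv 0$ to $P\equiv q$ is immediate from Assumption \ref{assumption}).
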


\begin{remark}\label{R1.7}
	As we will see later, for a gauged Landau-Ginzburg model, Lemma \ref{solutionsonC} is not true in general. An appropriate condition that ensures Lemma \ref{solutionsonC} is a notion of stability, cf. Definition \ref{stable}.
\end{remark}

\begin{remark} We will prove Lemma \ref{solutionsonC} and Lemma \ref{Lemma-exponentialdecay} in the context of gauged Landau-Ginzburg models in Part \ref{Part1}, cf. Theorem \ref{point-like-solutions} and Theorem \ref{exponential-decay}. Under Assumption \ref{assumption}, the conclusion of Lemma \ref{Lemma-uniformdecay} will hold, if one can verify an additional local compactness property; see \cite[Section 3]{Wang22}. The analogue of Lemma \ref{Lemma-uniformdecay} in the context of the Seiberg-Witten equations is \cite[Theorem 6.3]{Wang20}. The main results of this paper, however, do not relies on this lemma.
\end{remark}

\subsection{A Gluing Theorem} The Fukaya-Seidel category associated to the Landau-Ginzburg model $(M,W)$ is a directed $A_\infty$-category $\CA$ with objects given by the thimbles $\{U_q\}_{q\in \Crit(L)}$ or $\{S_q\}_{q\in \Crit(L)}$. Moreover, for each $\CL_U\in \SC_{un}$ and $\CL_S\in \SC_{st}$, we assign:
\begin{align*}
\CL_U&\rightsquigarrow \text{ an }A_\infty \text{-right module over }\CA,\\
 \CL_S&\rightsquigarrow \text{ an }A_\infty \text{-left module over }\CA,
\end{align*}
whose underlying cochain complexes are given respectively by 
\begin{equation}\label{E2.8}
\bigoplus_{q\in \Crit(L)} \Ch^*(\CL_U,S_q) \text{ and } \bigoplus_{q\in \Crit(L)} \Ch^*(U_q,\CL_S).
\end{equation}

A theorem of Seidel \cite[Corollary 18.27]{S08} then suggests a spectral sequence whose $E_1$-page is 
\begin{equation}\label{E2.9}
\bigoplus_{q\in \Crit(L)}\HFL^*(\CL_U, S_q)\otimes \HFL^*(U_q, \CL_S),
\end{equation}
and which abuts to $\HFL^*(\CL_U, \CL_S)$ in the $E_\infty$-page. The underlying geometric picture was first observed by Donaldson and elaborated later in the monograph \cite{GMW15} by Gaiotto-Moore-Witten. The Lagrangian Floer cohomology $\HFL^*(\CL_U, \CL_S)$ is defined by counting holomorphic strips of width $1$ with Lagrangian boundary conditions, but one can instead work with strips of width $R$ for any $R>0$, and let $R\to \infty$. 

\smallskip

\textbf{This neck-stretching picture} makes more sense with the complex gradient flow equation \eqref{P-holo}, instead of \eqref{J-holo}. In \cite{GMW15}, a general framework is proposed to understand the limit of solutions of \eqref{P-holo} on $\R_t\times [0,R]_s$ in $C^\infty_{loc}$-topology as $R\to\infty$, and also to rebuild the differential on the cochain complex $\Ch^*(\CL_U, \CL_S)$ with $R\gg 1$ by gluing all limiting pieces. For instance, the differential on the $E_0$-page of \eqref{E2.9} is obtained by gluing a holomorphic upper half plane contributing to $\Ch^*(\CL_U,S_q)$ with a generator of $\Ch^*(U_q, \CL_S)$, or vice versa. This explains why the $E_1$-page of \eqref{E2.9} is a tensor product. We refer interested readers to \cite{GMW17} for a nice exposition of their proposal. 

\smallskip

If the $A_\infty$-category $\CA$ turns out to be trivial, then the spectral sequence collapses after the $E_1$-page. There is one simple geometric condition that yields this triviality:
\begin{lemma}\label{L2.9} If for any $e^{i\theta}\in S^1$ the downward gradient flowline equation 
	\[
	\ps p(s)+\nabla\big(\re(e^{-i\theta}W)\big)=0,\ p:\R_s\to M
	\]
	has only constant solutions, then the Fukaya-Seidel category of $(M,W)$ is trivial. 
\end{lemma}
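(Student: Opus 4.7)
The plan is to exploit the fact that the Fukaya-Seidel category $\CA$ of $(M,W)$ is generated by the Lefschetz thimbles $U_q$ (or equivalently $S_q$), with morphism spaces given by Lagrangian Floer cohomology $\HFL^*(U_q, S_{q'})$ and higher $A_\infty$ operations defined by counting perturbed pseudo-holomorphic polygons. Triviality means that these morphism spaces vanish off the diagonal $q=q'$ and that higher compositions vanish. First I would specialize the hypothesis to $\theta=0$: the definition of $U_q$ (resp.\ $S_q$) as the locus of endpoints of downward $\nabla L$-flowlines asymptotic to $q$ collapses, since by assumption the only such flowlines are constants, giving $U_q=\{q\}$ and $S_q=\{q\}$. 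Consequently $U_q\cap S_{q'}=\emptyset$ whenever $q\neq q'$ and equals $\{q\}$ when $q=q'$, so $\Ch^*(U_q,S_{q'})=0$ for $q\neq q'$ and $\Ch^*(U_q,S_q)=\BF_2\cdot q$. Moreover, any Hamiltonian chord generator $p:[0,1]\to M$ would satisfy $J\ps p+\nabla H=0$, which by the Cauchy-Riemann identity \eqref{CRequation} rewrites as the downward gradient flow $\ps p+\nabla L=0$ and is therefore constant.

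Next I would show that the Floer differential on $\Ch^*(U_q,S_q)$ and all $A_\infty$ operations $\mu^k$ on $\CA$ vanish. A non-zero contribution would be a non-constant perturbed $J$-holomorphic polygon $P:D\to M$ whose boundary arcs are mapped into singleton thimbles, so that the boundary restriction is locally constant at critical points. To rule these out, I would apply a maximum principle to $\re(e^{i\theta}W)\circ P$, exploiting the fact that $W$ is holomorphic so $W\circ P$ is holomorphic up to an error term controlled by $\nabla H$ (e.g.\ $\pt(W\circ P)+i\ps(W\circ P)=-i|\nabla H|^2\circ P$ for a Floer strip). The Cauchy-Riemann identity \eqref{CRequation} converts any Hamiltonian perturbation of the form $\im(e^{i\theta}W)$ into a gradient-flow term for $\re(e^{i\theta}W)$, and the hypothesis over all $\theta\in S^1$ ensures that any limit degeneration of $P$ into a time-independent piece, being a downward gradient flowline for some rotated superpotential, must be constant. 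Combining this with the identity principle for nearly-holomorphic maps propagates constancy from $\partial D$ into the interior.

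The step I expect to be the main obstacle is the rigorous control of the higher operations $\mu^k$ for $k\geq 2$. Defining these requires perturbation data along the edges and vertices of the polygon domains, and the perturbations typically mix several rotations of the superpotential in order to achieve transversality. Verifying that no non-constant perturbed polygon survives in the relevant moduli spaces is where the uniform hypothesis across all $\theta\in S^1$, rather than at a single value, plays an essential role: it robustly excludes non-constant flowlines after any admissible rotation, and it combines with a Bochner-type subharmonicity estimate for $\re(e^{i\theta}W)\circ P$ to force rigidity. Once this is in place, the morphism spaces of $\CA$ decompose as $\Hom_\CA(U_q,U_{q'})=\BF_2\cdot \delta_{qq'}$ with vanishing higher products, which is exactly the sense in which the Fukaya-Seidel category $\CA$ is trivial.
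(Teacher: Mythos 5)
First, a point of reference: the paper does not actually prove Lemma \ref{L2.9}. It sits in Section \ref{1.1}, which is announced as a general overview in which ``no proofs will be presented,'' and the surrounding text concedes that ``the analytic foundation of the web-based formalism \cite{GMW15, GMW17} is still missing.'' So there is no proof in the paper to compare yours against; I can only assess your argument on its own terms.

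Your argument has a genuine gap at its very first step. You claim that the hypothesis at $\theta=0$ forces $U_q=\{q\}$ and $S_q=\{q\}$. This is false and contradicts Lemma \ref{L2.1}: the thimbles are middle-dimensional Lagrangian submanifolds (of dimension $n$ when $\dim_\R M=2n$; in Example \ref{example0} they are $\R^n$ and $i\R^n$). The reason your inference fails is that $U_q$ and $S_q$ are defined by \emph{half-infinite} flowlines, $p:(-\infty,0]\to M$ and $p:[0,\infty)\to M$ respectively, whereas the hypothesis of the lemma only constrains flowlines defined on all of $\R_s$; a half-infinite gradient trajectory emanating from $q$ need not extend to a bi-infinite one with limits at both ends. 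The same conflation reappears when you assert that a Hamiltonian chord $p:[0,1]\to M$ satisfying $\ps p+\nabla L=0$ ``is therefore constant.'' The correct way to use the hypothesis is a concatenation argument: a point $x\in U_q\cap S_{q'}$ supplies a half-flowline from $q$ ending at $x$ and a half-flowline from $x$ converging to $q'$, both solving the same first-order ODE $\ps p+\nabla L=0$, so by uniqueness they glue to a single bi-infinite flowline $p:\R_s\to M$ from $q$ to $q'$; the hypothesis (at the appropriate phase $\theta$, which is where the quantifier over all of $S^1$ enters, since morphisms between vacua $q,q'$ are governed by solitons of phase determined by $W(q)-W(q')$) forces this to be constant, whence $q=q'$ and $x=q$. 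This gives $\Ch^*(U_q,S_{q'})=0$ for $q\neq q'$ and $\Ch^*(U_q,S_q)=\BF_2\cdot q$ without ever claiming the thimbles are points. Your subsequent rigidity argument for polygons, which relies on ``boundary arcs mapped into singleton thimbles,'' therefore also does not go through as written; ruling out the higher operations should instead proceed by identifying the time-independent limits of degenerating polygons with $e^{i\theta}$-solitons and invoking the hypothesis, together with the heuristics of \cite{GMW15} -- which, as the paper itself acknowledges, have not been given a rigorous foundation.
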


As we shall see in Corollary \ref{C9.4}, this condition can be verified for the Seiberg-Witten equations for the special case in Remark \ref{R1.3} when $\langle \nu, [\Sigma]\rangle\neq 0$.

Although this neck-stretching picture is enlightening to keep in mind, the analytic foundation of the web-based formalism \cite{GMW15, GMW17} is still missing. To implement their proposal for the Seiberg-Witten equations remains a challenging problem.

\subsection{Relations with Gauge Theory}\label{Subsec2.3} As noted in Section \ref{Subsec1.4}, our goal is to define the Floer cohomology for Lagrangian submanifolds without using boundary conditions, and we have achieved this goal partly for thimbles by considering holomorphic upper half planes. (The boundary condition on $\R_t\times \{1\}\subset \R_t\times [0,1]$ is now removed). To deal with the remaining boundary component $\R_t\times \{0\}\subset \HH^2_+$, we explain the geometric origin of $\CL_U$ and $\CL_S$ in our primary applications.

 Suppose a closed oriented 3-manifold $Z=Y_L\ \bigcup_\Sigma Y_R$ is separated by a closed connected surface $\Sigma$ with $g(\Sigma)\geq 1$, and
\[
\im \big(H_1(\Sigma; \R)\to H_1(Z; \R)\big)\neq 0. 
 \]
 This means that $Y_L, Y_R$ are oriented 3-manifolds with $\partial Y_L\cong \Sigma\cong \partial(-Y_R)$ and $Z$ is obtained by gluing $Y_L$ with $Y_R$ along their common boundary. Let $M(\Sigma,\s_0)$ be the infinite-dimensional K\"{a}hler manifold associated to $\Sigma$ in Proposition \ref{P1.4} with $c_1(\s_0)[\Sigma]=0$. As shown in the work of Nguyen \cite{NguyenI}, the solution space of 3-dimensional Seiberg-Witten equations on $Y_L$ (resp. $Y_R$), when restricted to $\Sigma$, gives rise to an infinite-dimensional Lagrangian submanifold of $M(\Sigma,\s_0)$, which we denote by $\CL_U$ (resp. $\CL_S$). The monopole Floer cohomology 
 \[
 \HM^*(Y_L,\TSigma)\colonequals \bigoplus_{j} \HM^*(Y_L,\fa_j)
 \]
  of $Y_L$, as we sketched in Section \ref{Subsec1.2}, is the analogue of 
\[
\bigoplus_{q\in \Crit{L}} \HFL^*(\CL_U, S_q)
\]
defined using holomorphic upper half planes. By working with the extended 3-manifold 
\[
\hy_L=Y_L\ \bigcup\  [0,\infty)_s\times \Sigma
\]
and the 4-manifold $\R_t\times \hy_L$, the remaining boundary condition along $\R_t\times \{0\} \subset \HH^2_+$ is also removed.

\smallskip

To see the relation of this monopole Floer theory with the monopole knot Floer cohomology, recall the construction from \cite{KS}. For any knot $K\subset S^3$, take a meridian $m\subset S^3\setminus K$. Then the link complement 
\[
Y_K\colonequals S^3\setminus N(m\cup K). 
\]
is a 3-manifold with boundary $\partial Y_K=\Sigma\colonequals\Sigma_1\cup \Sigma_2$. In this case, $\Sigma$ is disconnected, with each $\Sigma_i\cong \T^2$. Using a suitable orientation reversing diffeomorphism $\varphi: \Sigma_1\to\Sigma_2$, we close up the boundary of $Y_K$ and obtain a closed oriented 3-manifold $Z(S^3, K; \varphi)$. Then define
\[
\KHM^*(S^3, K)\colonequals \HM^*(Z(S^3, K; \varphi)),
\]
where $\HM^*(Z(S^3, K; \varphi))$ is the monopole Floer cohomology of $Z(S^3, K; \varphi)$ defined using a suitable local coefficient system. It is shown in \cite{KS} that $\KHM^*(S^3, K)$ is independent of the isotopy class of $\varphi$ up to isomorphisms. 

On the other hand, one may equip the boundary $\Sigma=\partial Y_K$ with suitable differential forms to obtain a (disconnected) $H$-surface $\TSigma$, and take $\HM^*(Y_K, \TSigma)$ as a candidate of the knot Floer cohomology of $K$. To see its relation with $\KHM^*(S^3, K)$, regard $\varphi$ as gluing two pieces:
\[
Y_K \text{ and } [0,R]_s\times \Sigma_1 
\] 
As $R\to\infty$, we are stretching the metric in a neighborhood of $\Sigma_1$ in $Z(S^3, K;\varphi)$, which is similar to the neck stretching picture underlying Seidel's spectral sequence \eqref{E2.9}. The condition of Lemma \ref{L2.9} can be verified in this case (Corollary \ref{C9.4}), so one may recover $\KHM^*(S^3, K)$ from $\HM^*(Y_K, \TSigma)$ using an internal gluing theorem. This isomorphism is established in \cite[Theorem 1.9]{Wang203} using a different argument. 

\smallskip

As an ending remark for this expository section, the monopole Floer homology of 3-manifolds with toroidal boundary to be defined in the second paper \cite{Wang20} only gives the underlying cochain complexes \eqref{E2.8}. The construction of $A_\infty$-structures is left as an interesting future project; see \cite{Wang22} for details. 
\part{Gauged Landau-Ginzburg Models}\label{Part1}

In this part, we generalize the setup from the previous section by allowing an abelian Lie group $G$ act on the K\"{a}hler manifold $M$. In this case, we obtain the gauged Witten equations \eqref{generalized-vortex-equation} as the generalization of the Floer equation \eqref{P-holo}. Theorem \ref{point-like-solutions} and Theorem \ref{exponential-decay} are the analogue of Theorem \ref{T1.2} and Theorem \ref{T1.3} in the finite-dimensional case; their proofs are presented in Section \ref{1.4} and Section \ref{1.5} respectively.

\section{Definitions and Examples}\label{1.2}

We generalize the setup from the previous section and introduce the notion of gauged Landau-Ginzburg Models.
\begin{definition}\label{LGmodels}
The quadruple $(M,W,G,\rho)$ is called an (abelian) gauged Landau-Ginzburg model if 
\begin{enumerate}
\item $(M,\omega, J, g)$ is a complete non-compact K\"{a}hler manifold with complex structure $J$ and K\"{a}hler metric $h\colonequals g-i\omega_M$;  $g$ is the underlying Riemannian metric, and $\omega_M$ the symplectic form. 
\item $(G,\rho)$ is a compact abelian Lie group acting on $M$ preserving the K\"{a}hler structure,  i.e. for any $g\in G$, the action $\rho(g): M\to M$ is a holomorphic isometry;
\item $(G,\rho)$ is a Hamiltonian group action, and it admits a moment map:
\[
\mu: M\to\g,
\]
 where $\g$ is the Lie algebra of $G$. Since $G$ is abelian, $\mu$ is $G$-invariant;
 \item The action of $(G,\rho)$ extends to an action of the complex group $(G_\C,\rho_\C)$.  $\rho_\C: G_\C\times M\to M$ is holomorphic. $\rho_\C$ does not preserve the Riemannian metric $g$ in general.
 \item $W: M\to \C$ is a $G_\C$-invariant holomorphic function called the superpotential. Write $W=L+i H$ with $L=\re W$ and $H=\im W$. \qedhere
\end{enumerate}
\end{definition}

Again, we assume $(M,\omega)$ is an exact symplectic manifold, i.e. $\omega_M=d\theta_M$ for some $\theta_M\in \Omega^1(M)$.  For any $\xi\in \g$, let $\tilde{\xi}$ be the vector field on $M$ induced from the group action $(G,\rho)$:
\[
\tilde{\xi}(p)=\frac{d}{dt} \rho(e^{t\xi})p\bigg|_{t=0}. 
\]

We adopt a non-standard (sign) convention of the moment map in this paper:
\begin{equation}\label{2.1}
\iota(\tilde{\xi})\omega=-d\langle \mu, \xi\rangle_\g.
\end{equation}
Since $\omega(\cdot,\cdot)=g(J\cdot, \cdot)$, (\ref{2.1}) is equivalent to
\begin{equation}\label{convention}
\langle \nabla\mu, \xi\rangle_\g=\nabla \langle \mu, \xi\rangle _\g=-J\tilde{\xi},
\end{equation}
where $\nabla\mu\in \Gamma(M, TM\otimes \g)$ is a $\g$-valued vector field on $M$ and $\langle \cdot,\cdot\rangle_\g$ denotes a bi-invariant metric of $\g$. 
\begin{example}\label{example1}
	Let $G=S^1, G_\C=\C^*, M=\C$ and $W\equiv 0$. The group action is the standard complex multiplication. Using our sign convention (\ref{convention}), the moment map  $\mu(x)=\frac{i}{2}|x|^2$ for all $x\in \C$. 
\end{example}
\begin{example}
	Let $G=S^1, G_\C=\C^*, M=S^2$ and $W\equiv 0$. Identify $M$ with $\CP^1=\C\cup \{\infty\}$. The action $\rho_\C$ is the same as in Example \ref{example1}. $\{0, \infty\}$ is the fixed point set of $\rho_\C$.  
\end{example}
\begin{example}\label{example2}
	Let $G=S^1, G_\C=\C^*, M=\C^2=\{(x,y)|x,y\in \C\}$ and $W(x,y)=xy$. $W$ becomes $G_\C$-invariant if we set the action $\rho_\C$ as \[
	\rho_\C(u)(x,y)=(ux,u^{-1}y),
	\]
	for any $u\in \C^*$. The moment map is $\mu(x,y)=\frac{i}{2}(|x|^2-|y|^2)$. 
\end{example}

Just as in Assumption \ref{assumption}, we wish $(W,\mu)$ to satisfy some good properties. The replacement of the Morse condition for gauged Landau-Ginzburg models is a notion of stability. There are two possible candidates; the second one turns out to be more useful.
\begin{definition}\label{W-stable}
	A vector $\vec{\delta}\in \g$ is called $W$-stable if $\mu^{-1}(\vec{\delta})\cap \Crit(L)$ consists of finitely many free $G$-orbits and $L|_{\mu^{-1}(\vec{\delta})}$ is Morse-Bott. In particular, the function induced by $L=\re W$ on the symplectic quotient
	\[
	\mu^{-1}(\vec{\delta})/G
	\]
	is a Morse function with finitely many critical points. 
\end{definition}


However, having a $W$-stable vector $\vdelta$ is not sufficient to deduce the triviality of point-like  solutions in Theorem \ref{point-like-solutions} (see Example \ref{vortex} below). Note that the critical set $\Crit(L)=\{x\in M: \nabla L(x)=0\}$ is closed and $G_\C$-invariant.
\begin{definition}\label{stable} The superpotential $W$ is called stable if $\Crit(L)$ consists of finitely many closed free $G_\C$-orbit and $L$ is Morse-Bott. In particular, $\ker\Hess_x L=T_x(G_\C\cdot x)$ for all $x\in \Crit(L)$.
\end{definition}

In fact, any $\vdelta\in \g$ is $W$-stable if $W$ is stable. In Example \ref{example1}, any $\delta\in i(0,\infty)$ is $W$-stable, but $W$ itself is not a stable superpotential. Indeed, $\Crit(L)=M$ consists of two $G_\C$-orbits, one of which is not free. In Example \ref{example2}, $L$ has a unique critical point $q=(0,0)\in \C^2$ with trivial $G_\C$-action. If we set instead $G=\{e\}$, then $W$ is stable and $\vdelta=0$ is $W$-stable. Now we provide a more interesting example. 
\begin{example}[The Fundamental Toy Model]\label{fundamental_example}
	Let $G=S^1, G_\C=\C^*, M=\C^3=\{(x,y,b): x,y,b\in \C\}$ and $W_\lambda(x,y,b)=(xy-\lambda)b$, where $\lambda\in \C$ is a fixed parameter. The $G_\C$-action $\rho_\C$ is defined as \[
	\rho_\C(u)(x,y,b)=(ux,u^{-1}y,b),\ \forall u\in \C^*.
	\]
The moment map is $\mu(x,y,b)=\frac{i}{2}(|x|^2-|y|^2)$ and $\nabla L=(\bar{y}\bar{b},\bar{x}\bar{b},\bar{x}\bar{y}-\bar{\lambda})$. If  $\lambda\neq 0$, then $\Crit(L)=\{b=0, xy=\lambda\}$ consists of a single $G_\C$-orbit, and the superpotential $W$ is stable. If $\lambda=0$, then $\Crit(L)=A_{xy}\cup A_{xb}\cup A_{yb}$ where
	\[
	A_{xy}\colonequals \{x=0, y=0\}, etc. 
	\]
	So $W$ is not stable. $\vdelta\in i\R$ is $W$-stable if and only if $\vdelta\neq 0$. For instance, take $\vdelta\in i\cdot (0,\infty)$. If $(x,y,b)\in \mu^{-1}(\vdelta)$, then $x\neq 0$ and so 
	\[
	\Crit(L)\cap \mu^{-1}(\vdelta) =\{(x,0,0): \frac{i}{2}|x|^2=\vdelta\}\subset A_{yb},
	\]
which consists of a single free $G$-orbit. 
	Moreover, we compute $\Hess L$ at $(x,y,b)\in M$:
	\[
	\Hess L(v)=\begin{pmatrix}
0 & \bar{b} & \bar{y}\\
\bar{b} & 0 & \bar{x}\\
\bar{y} & \bar{x} & 0
	\end{pmatrix}\begin{pmatrix}
\bar{x}'\\\bar{y}'\\\bar{b}'
	\end{pmatrix} \text{ with }v=\begin{pmatrix}
	x'\\y'\\b'
	\end{pmatrix}\in T_{(x,y,b)}M,
	\]
	so $L$ is Morse-Bott away from the origin. Note that the $G_\C$-orbit of $(x,0,0)$ is not closed. Its closure contains the origin. 
\end{example}
\section{The Gauged Witten equations}\label{1.3}

In this section, we introduce gauged Witten equations and explain its relation with downward gradient flow of the gauged action functional $\CA_H$. This serves as a toy model for the Floer theory to be developed in the second paper \cite{Wang20} of this series. Some results are stated and proved only for inspiration; they are not closely related to the proof of Theorem \ref{T1.2} and \ref{T1.3} in the end. 

\subsection{The Gauged Action Functional}\label{Sec4.1} Let $(M, W, G, \rho)$ be a gauged Landau-Ginzburg model and take $\vdelta\in \g$ to be $W$-stable (in the sense of Definition \ref{W-stable}). Take $\CL_U\subset \mu^{-1}(\vdelta)$ to be a $G$-invariant Lagrangian submanifold of $M$ such that $L=\re W$ is bounded above on $\CL_U$. Since $\vdelta$ is $W$-stable, $\mu^{-1}(\vdelta)\cap \Crit(L)$ consists of finitely many free $G$-orbits; let them be $O_1,\cdots, O_k$. Choose a reference point $q\in O_j$ for some $1\leq j\leq k$.  
\begin{assumpt}\label{A4.1} We first summarize the assumptions we need in order to formally set up a Floer theory:
\begin{itemize}
	\item the K\"{a}hler form $\omega_M$ is exact, i.e. $\omega_M=d\theta_M$ for some $\theta_M\in \Omega^1(M)$;
	\item the Lagrangian submanifold $\CL_U$ is exact, i.e. the primitive $1$-form $\theta_M|_{\CL_U}=dh$ for some smooth function $h: \CL_U\to \R$;
		\item the function $|\nabla H|^2+|\mu|^2_\g: M\to \R$ is proper;
	\item the superpotential $W$ is stable and $\vdelta\in \g$ is $W$-stable.
	\qedhere
\end{itemize}
\end{assumpt}

Let $Y=[0,\infty)_s$ and $X=\R_t\times Y=\HH^2_+$. Consider a smooth map $P: X\to M$ and a connection $A=d+a$ of the trivial principal $G$-bundle $Q$ over $X$:
\[
Q=X\times G.
\]
 Write the connection 1-form $a$ as $a_tdt+a_sds$ with $a_t,a_s\in \Gamma(\HH^2_+, \g)$. The smooth map $P$ can be differentiated covariantly  with respect to $A$:
\[
\nabla^A_{V}P\colonequals V\cdot P+\tilde{a}(V)
\]
for any tangent vector $V\in TX$. Here $\tilde{a}(V)$ is the induced tangent vector of $a(V)\in \g$.

We are interested in the gauged Witten equations on $X=\HH^2_+$ with boundary values in $\CL_U$:
\begin{equation}\label{generalized-vortex-equation}
\left\{ \begin{array}{rl}
-*_2F_A+\mu&=\vdelta,\\
\nabla^A_{\partial_t}P +J\nabla^A_{\partial_s}P+\nabla H&=0,\\
P(t,0)&\in \CL_U. 
\end{array}
\right.
\end{equation}

The first equation is a moment map constraint. The second one is the Cauchy-Riemann equation perturbed by the Hamiltonian $H=\im W$.  When $H\equiv 0$, the first two equations of \eqref{generalized-vortex-equation} reduce to the symplectic vortex equation \cite{CGMS02,Mun03} discovered by Cieliebak-Gaio-Salamon \cite{CGS00} and independently by Ignasi Munde i Rierra \cite{Mun00}. The gauged Witten equations (\ref{generalized-vortex-equation}) can be viewed as a formal downward gradient flow equation in an infinite-dimensional space, as we explain now. 

For either $Z=Y$ or $X$, let $\CA(Z)=d+\Gamma_0(Z, T^*Z\otimes \g)$ be the space of smooth connections with decay in the spatial direction with
\begin{equation}\label{E4.2}
\Gamma_0(Z, T^*Z\otimes \g)=\{a\in \SC^\infty(Z, T^*Z\otimes \g): \lim_{s\to\infty} a=0 \text{ and } \langle a,ds\rangle=0 \text{ at } s=0\}.
\end{equation}

A smooth map $p: Z\to M$ can be viewed as a section of the trivial $M$ bundle over $Z$:
\[
\widetilde{M}=Z\times M=(Z\times G)\times_G M. 
\]
 Consider the space of smooth sections of $\widetilde{M}\to Z$ subject to the Lagrangian boundary condition at $s=0$ and a asymptotic condition as $s\to\infty$:
 \begin{equation}\label{E5.7}
\Gamma_0(Z, \widetilde{M};\CL_U)=\{p: Z\to \widetilde{M}: \lim_{s\to\infty} p(s)=q \text{ and }p(0)\in \CL_U \}.
 \end{equation}
A gauge transformation must converge to the identity element $e$ of $G$ as $s\to\infty$:
\begin{equation}\label{E5.8}
\CG(Z)\colonequals \Map_0(Z, G)=\{u:Z\to G: \lim_{s\to\infty} u=e\in G\}. 
\end{equation}

The configuration space is $\SC(Z)=\CA(Z)\times \Gamma_0(Z, \widetilde{M};\CL_U)$ with $\CG(Z)$ acting on by the formula:
\[
u(A,p)=(A-u^{-1}du,u\cdot p). 
\]
\begin{remark} When $Z=X$, the 1-form $a$ in \eqref{E4.2} is required to decay smoothly to zero as $s\to\infty$ and uniformly for all $t\in \R_t$. More concretely, this means 
	\[
	\|\pt^k\ps^l a(\cdot, n+\cdot )\|_{L^\infty(\R_t\times [0,1]_s)}\to 0
	\]
	as $n\to\infty $ for all $k,l$. The same holds for the convergence in \eqref{E5.7} and \eqref{E5.8}. 
\end{remark}
\begin{definition}The gauged action functional $\CA_H$ is defined on $\SC(Y)$ with $Y=[0,+\infty)_s$ as:
\begin{equation}\label{areafunctional}
\CA_H(d+a, p)=-h(p(0))-\int_Y p^*\theta_M+\int_Y H\circ p(s)ds+\langle a, \vdelta-\mu\circ p\rangle_\g,
\end{equation}
where $\langle a,\vdelta-\mu\circ p\rangle_\g=\langle a_s, \vdelta-\mu\circ p\rangle_\g ds$ is understood as an 1-form on $Y$ and $\theta_M=dh$ on $\CL_U$. 
\end{definition}

For any $\gamma=(A,p)\in \SC(Y)$, a tangent vector $(\delta a,\delta p)$ in $T_\gamma\SC(Y)$ consists of a smooth form $\delta a\in \Gamma_0(Y,T^*Y\otimes \g)$ and a vector field $\delta p$ along the image $p(Y)$:
\[
\delta p \in \Gamma_0(Y, p^*TM; \CL_U).
\]

The tangent space $T_\gamma\SC(Y)$ inherits a $\CG$-invariant $L^2$-inner product from the Riemannian metric $g$ of $M$. We compute the $L^2$-formal gradient of $\CA_H$:
\begin{lemma}\label{P3.2}$\grad \CA_H (d+a, p)=(\vdelta-\mu\circ p,J\nabla^A_{\partial s}p+\nabla H)$.
\end{lemma}
\begin{proof} Let $P:[0,1]_t\times Y\to M$ be a smooth map such that $P(0, s)=p(s)$, $\partial_t P(0,s)=\delta p(s)$ and $\lim_{s\to\infty} P(\cdot, s)=q$. Then $\gamma_t=(d+a+t\delta a, P(t,\cdot))$ is a smooth variation of $\gamma_0=\gamma$. Note that 
	\begin{equation}\label{IBP}
	\int_{[0,t]\times Y} P^*\omega_M=\int_{[0,t]\times Y} dP^*\theta_M=h\big(P(t,0)\big)-h\big(P(0,0)\big)+\int_{\{t\}\times Y} P^*\theta_M-\int_{\{0\}\times Y} P^*\theta_M,
	\end{equation}
which is computed also as 
\begin{equation}\label{IBP2}
\int_{[0,t]\times Y} P^*\omega_M=\int_{[0,t]\times Y} \omega_M(\pt P, \ps P)dt'ds=-\int_{[0,t]}dt'\int_Y g(\pt P,J \ps P)ds.
\end{equation}
Now compute the variation of $(\ref{areafunctional})$ along a tangent vector $(\delta p,\delta a)$ using \eqref{IBP} and \eqref{IBP2}:
\begin{align*}
\frac{d}{dt} \CA_H(\gamma_t)|_{t=0}&=\int_Y g(\delta p, J\partial_s P+\nabla H)ds+\langle \delta a, \vdelta-\mu\circ p\rangle_\g-\langle \nabla\mu, \delta p\otimes a\rangle_\g, \\
&=\int_Y g(\delta p, J\Ds P+\nabla H)ds+\langle \delta a,\vdelta-\mu\circ p\rangle_\g,
\end{align*}
where we used the relation $\Ds P=\ps P+J\langle \nabla \mu, a_s\rangle_\g$. 
\end{proof}

\begin{lemma}\label{gaugeinvariant}
$\CA_H$ is $\CG(Y)$-invariant. 
\end{lemma}
\begin{proof} Since elements of $\CG(Y)$ are subject to the asymptotic condition $\lim_{s\to\infty} u=e$, $\CG(Y)$ is contractible. It suffices to consider the infinitesimal action. The Lie algebra of $\CG$ is 
	\[
	\Lie (\CG)=\Gamma_0(Y, \g)=\{\xi: Y\to \g: \lim_{s\to\infty} \xi(s)=0\}.
	\]

For $\xi\in \Lie(\CG)$, the tangent vector generated at $\gamma\in \SC(Y)$ is 
\begin{equation}\label{E3.4}
\bd_\gamma(\xi)\colonequals (-\partial_s \xi,\tilde{\xi})=(-\partial_s \xi,J\langle \nabla \mu, \xi\rangle_\g ).
\end{equation}

It suffices to verify this vector is $L^2$-orthogonal to $\grad \CA_H$. For any path $p\in \Gamma_0(Y,\widetilde{M};\CL_U)$, $\vdelta-\mu\circ p(s)=0$ for $s=0$ and $\infty$. Hence, the boundary terms do not contribute to the integration by parts below:
\begin{align*}
\int_Y\langle \vdelta-\mu\circ p,-\partial_s \xi\rangle_\g &=-\int_Y\langle \partial_s(\mu\circ p), \xi\rangle_\g=-\int_Y\langle \nabla \mu, \partial_s p\otimes \xi\rangle_{TM\otimes\g}.
\end{align*}

On the other hand, we use Lemma \ref{identities} (\ref{5})(\ref{6}) from Appendix \ref{B} to compute:
\[
\int_Y\big\langle J\nabla^A_{\partial s}p+\nabla H, J\langle \nabla \mu, \xi\rangle_\g\big\rangle =\int_Y\langle \nabla \mu, \partial_s p\otimes \xi\rangle_{TM\otimes\g}.\qedhere
\]
\end{proof}
\begin{remark} In the expression (\ref{areafunctional}), the first two terms come from the usual action functional, motivated by the integration by parts (\ref{IBP}). The third part comes from the Hamiltonian perturbation. The last one is added by requiring $\CA_H$ to be gauge-invariant. 
\end{remark}

Hence, the gauged Witten equations (\ref{generalized-vortex-equation}) is cast into the form
\[
\pt \gamma_t+\grad \CA_H(\gamma_t)=0
\]
 with $\gamma_t=(d+a_s(t,\cdot)ds, P(t,\cdot))\in \SC(Y)$ when $a_t\equiv 0$. There is a classical notion of analytic energy associated to any downward gradient flow equation: 
\begin{align}\label{E3.6}
\infty>	\E_{an}(\{\gamma_t\})&=-\lim_{t\to\infty}\CA_H(\gamma_t)+\lim_{t\to-\infty}\CA_H(\gamma_t)=\int_{\R_t} \langle -\pt \gamma_t, \grad \CA_H(\gamma_t)\rangle\\
	&=\half \int_{\R_t} |\pt\gamma_t|^2+|\grad \CA_H(\gamma_t)|^2\geq 0.\nonumber
\end{align}

This formula is only valid when $A$ is in the temporal gauge, i.e. when $a_t\equiv 0$. On the contrary, the gauged Witten equations (\ref{generalized-vortex-equation}) are invariant under a larger gauge group $\CG(X)$ (recall that $X=\R_t\times Y$). In fact, the left hand side of (\ref{generalized-vortex-equation}) defines a $\CG(X)$-equivariant map:
\[
\F: \SC(X)\to \Gamma_0(X, Q\times_G (TM\oplus \g)),
\]
called the gauged Witten map. 

\begin{definition}\label{D3.5} Let $X=\HH^2_+=\R_t\times [0,\infty)_s$. For any $(A, P)\in \SC(X)$, define
	\begin{align*}
T&\colonequals \nabla^A_{\partial_t}P\in \Gamma(X, P^*TM),\\
S&\colonequals \nabla^A_{\partial_s}P\in \Gamma(X, P^*TM),\\
F&\colonequals -*_2F_A\circ P\in \Gamma(X,\g).
	\end{align*}
	Then define the analytic energy of $(A,P)$ as 
	\begin{equation}\label{analytic-energy}
	\E_{an}(A,P)=\int_X |T|^2+|JS+\nabla H|^2+|F|^2+|\vdelta-\mu|^2,
	\end{equation}
	which is invariant under $\CG(X)$ and reduces to $\E_{an}(\{\gamma_t\})$ in \eqref{E3.6} in the temporal gauge.
\end{definition}

In terms of the shorthands introduced in Definition \ref{D3.5}, the equation (\ref{generalized-vortex-equation}) takes a more compact form:
\begin{subequations}
	\begin{align}
		F+\mu&=\vdelta, \label{secondeq}\\
	T+JS+\nabla H&=0,\label{firsteq}\\
	P(t,0)&\in \CL_U.\label{thirdeq}
	\end{align}
\end{subequations}

We are interested in the moduli space of solutions of  (\ref{generalized-vortex-equation}) with finite analytic energy. It is possible to impose a gauge-fixing condition, produce an elliptic theory and finally construct a Morse complex in this context. However, the full construction is carried out in \cite{Wang20} only for an infinite-dimensional case in which the gauged Witten equations \eqref{generalized-vortex-equation} are identified with the Seiberg-Witten equations. 

\subsection{The Extended Hessian}\label{Subsec4.2} Although we will only get into linear analysis in the second paper \cite{Wang20}, it is enlightening to first work out the extended Hessian of the gauged action functional $\CA_H$. The discussion below will be used in \cite[Section 11]{Wang20} when the essential spectrum of the extended Hessian is computed for the perturbed Chern-Simons-Dirac functional on a 3-manifold with cylindrical ends. 

At any $\gamma=(A,p)\in \SC(Y)$, the linearized gauge action 
\begin{align*}
\bd_\gamma: \Lie (\CG)=\Gamma_0(Y, \g)&\to T_\gamma \SC(Y)
\end{align*}
defined by the formula \eqref{E3.4} has a formal $L^2$-adjoint:
\begin{align*}
\bd_\gamma^*:  T_\gamma \SC(Y)&\to \Gamma_0(Y, \g),\\
\big((\delta a_s) ds,\delta p\big)&\mapsto \ps (\delta a_s)+\langle J\nabla \mu,\delta p\rangle. 
\end{align*}

By linearizing the expression in Proposition \ref{P3.2}, we obtain the Hessian of $\CA_H$ at $\gamma$:
\begin{align*}
\D_\gamma \CA_H: T_\gamma\SC(Y)&\to T_\gamma\SC(Y),\\
\big((\delta a_s) ds,\delta p\big)&\mapsto  \big(-\langle \nabla\mu, \delta p\rangle, J(\ps \delta p)-\langle \nabla \mu, a_s\rangle_\g+\Hess H(\delta p)\big).
\end{align*}

The key observation is that these operators can be combined into a larger operator, the extended Hessian of $\CA_H$, which is formally $L^2$-self-adjoint:
\[
\EHess_\gamma\colonequals \begin{pmatrix}
0& \dg^*\\
\dg & \D_\gamma \CA_H
\end{pmatrix}: \SH^2_1\to L^2\big(Y, \g\oplus (T^*Y\otimes \g)\oplus p^*TM\big),
\]
where $\SH^2_1$ is a subspace of $L^2_1$-sections:
\[
\SH^2_1\colonequals\{(f, (\delta a_s)ds, \delta p)\in L^2_1\big(Y, \g\oplus (T^*Y\otimes \g)\oplus p^*TM\big): \delta a_s(0)=0, \delta p(0)\in p^*T\CL_U\}. 
\]

The operator $\EHess_\gamma$ is cast into the form $
 \sigma(\ps+\widehat{D}_{p(s)}) $ with 
\begin{equation}\label{E4.8}
\sigma=\begin{pmatrix}
0 & 1 & 0\\
-1 & 0 & 0\\
0 & 0 & J
\end{pmatrix} \text{ and }
\widehat{D}_{p(s)}=
\begin{pmatrix}
0 & 0 & \langle \nabla\mu, \cdot \rangle_{T_{p(s)} M}\\
0 & 0 &  \langle J\nabla\mu, \cdot \rangle_{T_{p(s)} M}\\
\langle \nabla\mu, \cdot \rangle_\g &  \langle J\nabla\mu, \cdot \rangle_\g & \Hess L
\end{pmatrix},
\end{equation}
where we identify $T^*Y\otimes \g$ with the trivial $\g$-bundle over $Y$ (still denoted by $\g$) by omitting the 1-form $ds$. The operator $\widehat{D}$ is a self-adjoint endomorphism on the bundle
\[
\widehat{TM}\colonequals\g\oplus \g\oplus TM\to M.
\]
Moreover, $\sigma$ defines an almost complex structure on $\widehat{TM}$, which anti-commutes with $\widehat{D}$, i.e.
\[
\sigma^2=-\Id,\ \sigma \widehat{D}+\widehat{D}\sigma=0.
\]

The operator $\widehat{D}$ is tied to the stability of $W$ by the following observation:
\begin{lemma}\label{L4.6} The superpotential $W$ is stable in the sense of Definition \ref{stable} if and only if $\widehat{D}_q$ is invertible at all $q\in \Crit(L)$ and $\Crit(L)$ consists of finitely many closed free $G_\C$-orbits. 
\end{lemma}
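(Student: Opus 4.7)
The plan is to unpack $\ker\widehat{D}_q$ at a critical point $q\in\Crit(L)$ using the sign convention \eqref{convention} and then read off the Morse--Bott condition of Definition \ref{stable}. First I would translate the off-diagonal entries of $\widehat{D}_q$ via duality. For $v\in T_qM$ and $\xi\in\g$,
\[
\langle\langle\nabla\mu,v\rangle_{TM},\xi\rangle_\g = g(v,\langle\nabla\mu,\xi\rangle_\g) = -g(v,J\tilde\xi) = g(Jv,\tilde\xi),
\]
and similarly $\langle\langle J\nabla\mu,v\rangle_{TM},\xi\rangle_\g = g(v,\tilde\xi)$. Hence the first two rows of $\widehat{D}_q$ annihilate $(\xi_1,\xi_2,v)$ precisely when $v$ is orthogonal to $\tilde\g_q\oplus J\tilde\g_q = T_q(G_\C\cdot q)$.

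Next I would exploit that $W$, and so $L$, is $G_\C$-invariant: differentiating $L\circ\phi_t=L$ twice at a critical point yields $\Hess_qL\cdot\tilde\xi = \Hess_qL\cdot J\tilde\xi = 0$ for every $\xi\in\g$, giving $T_q(G_\C\cdot q)\subset\ker\Hess_qL$. Combined with self-adjointness, $\Hess_qL$ preserves the splitting $T_qM = T_q(G_\C\cdot q)\oplus T_q(G_\C\cdot q)^\perp$.

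With these two ingredients, the third-row equation can be rewritten using $\langle J\nabla\mu,\xi_2\rangle_\g = J(-J\tilde\xi_2) = \tilde\xi_2$ as
\[
\Hess_qL(v) + \tilde\xi_2 - J\tilde\xi_1 = 0.
\]
For $(\xi_1,\xi_2,v)\in\ker\widehat{D}_q$, the summand $\Hess_qL(v)$ lies in $T_q(G_\C\cdot q)^\perp$ while $-J\tilde\xi_1+\tilde\xi_2$ lies in $T_q(G_\C\cdot q)$, so both must vanish. When $q$ sits on a free $G_\C$-orbit, $\tilde\g_q\cap J\tilde\g_q = 0$ and $\xi\mapsto\tilde\xi$ is injective, forcing $\xi_1=\xi_2=0$. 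The remaining statement $\Hess_qL(v)=0$ for $v\perp T_q(G_\C\cdot q)$ is precisely the Morse--Bott condition. Combining both directions with the unique-free-orbit hypothesis, which appears on both sides of the equivalence, yields the lemma. On the reverse direction I would also note that at any $q$ on a non-free orbit an element $\xi$ of the stabilizer Lie algebra produces a kernel element $(\xi,0,0)$, so invertibility of $\widehat{D}_q$ at every critical point implicitly enforces freeness along $\Crit(L)$.

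The main obstacle I anticipate is keeping the pairing conventions straight: the off-diagonal operators mix $\langle\nabla\mu,\cdot\rangle_{TM}\in\g$ and $\langle\nabla\mu,\cdot\rangle_\g\in TM$, and it is the translation between them via \eqref{convention}, with its sign, that cleanly produces the orthogonality statement $v\perp T_q(G_\C\cdot q)$ and the cancellation $\tilde\xi_2 = J\tilde\xi_1$. Once those identifications are in place, the rest of the argument is purely linear algebra in $T_qM\oplus\g\oplus\g$.
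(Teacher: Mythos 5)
The paper states Lemma \ref{L4.6} as an unproved ``observation,'' so there is no official argument to compare against; your proof supplies the missing verification and is correct. It is moreover the same linear-algebraic decomposition the author deploys later in the infinite-dimensional analogue (the injectivity of $\widehat{D}_\kappa$ in the proof of Proposition \ref{P7.7}): the images of $\langle\nabla\mu,\cdot\rangle_\g$, $\langle J\nabla\mu,\cdot\rangle_\g$ and of $\Hess L$ are pairwise orthogonal, the first two maps are injective exactly when the infinitesimal $G_\C$-action at $q$ is free, and the block of $\Hess_q L$ on $T_q(G_\C\cdot q)^\perp$ is injective exactly when $L$ is Morse--Bott at $q$; your sign bookkeeping via \eqref{convention} ($\langle\nabla\mu,\xi\rangle_\g=-J\tilde\xi$, hence $\langle J\nabla\mu,\xi\rangle_\g=\tilde\xi$) is consistent with the paper's conventions. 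The one point worth recording explicitly is the reading of Definition \ref{stable} that your argument presupposes: for the ``only if'' direction one needs every point of $\Crit(L)$ to lie on a free orbit (otherwise a stabilizer element $\xi$ gives $(\xi,0,0)\in\ker\widehat{D}_q$ even when $L$ is Morse--Bott there), so ``contains a unique free $G_\C$-orbit'' must be read as ``is a single free $G_\C$-orbit'' --- which is how the paper uses it in Example \ref{fundamental_example} and Proposition \ref{stableW}. Your closing stabilizer remark already makes this tension visible; stating it once at the outset would make the equivalence airtight.
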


The linear analysis in \cite[Section 11]{Wang20} relies essentially on this special structure of the extended Hessian $\EHess_\gamma$. As a preview, the essential spectrum of $\EHess_\gamma$ is
\[
(-\infty,-\lambda_1]\cup [\lambda_1,+\infty)
\]
where $\lambda_1$ is the first non-negative eigenvalue of $\widehat{D}_q$. In particular, $\EHess_\gamma$ is Fredholm if and only if $\lambda_1>0$.

We end this section with a remark on the domain $\SH^2_1$ of $\EHess_\gamma$. A section $(f, (\delta a_s)ds, \delta p)\in \SH^2_1$ is subject to the boundary condition:
\[
\big(f(0), \delta a_s(0), \delta p(0)\big)\in \g\oplus \{0\}\oplus p^*T\CL_U \text{ at } s=0,
\]
which is a Lagrangian subspace with respect to $\sigma$; otherwise, $\EHess_\gamma$ is not formally $L^2$-self-adjoint. This is why we imposed the boundary condition 
\[
\langle a,ds\rangle=0 \text{ at } s=0,
\]
in the definition \eqref{E4.2} of $\CA(Z)$. Otherwise, $\bd_\gamma^*$ is not the $L^2$-formal adjoint of $\bd_\gamma$.

\section{Point-Like Solutions}\label{1.4}

In this section, we study finite energy solutions of (\ref{generalized-vortex-equation}) on the complex plane $\C$, the so-called point-like solutions in terms of \cite[Section 14.1]{GMW15}. Assuming $W$ is a stable superpotential, we will prove that all point-like solutions are trivial, i.e. they are gauge equivalent to the constant solutions. Interesting solutions may occur if $W$ is not stable, cf. Example \ref{vortex}. 

Let $P: \C\to M$ be a smooth map and $A$ be a smooth connection in the trivial principal $G$-bundle $\C\times G\to \C$. Write $z=t+is$ for the coordinate function on $\C$. In the sequel, we shall frequently use the shorthands from Definition \ref{D3.5}. The main result of this section is the following:
\begin{theorem}\label{point-like-solutions} Suppose $(M,W,G,\rho)$ is a gauged Landau-Ginzburg model and $W$ is stable in the sense of Definition \ref{stable}. Take any $\vdelta\in \im \mu\subset \g$. Suppose $(A,P)$ is a solution of the gauged Witten equations
		\begin{equation}\label{equation on C}
	\left\{ \begin{array}{r}
		-*F_A+\mu=\vdelta,\\
	\nabla^A_{\partial_t}P +J\nabla^A_{\partial_s}P+\nabla H=0,
	\end{array}
	\right.
	\end{equation}	
on $\C$ with $\E_{an}(A, P)<\infty$, and  $(A,P)$ is subject to the asymptotic condition 
\begin{equation}\label{boundarycondition2}
\lim_{z\to\infty} P(z)\to q, 
\end{equation}
for some base point $q\in \mu^{-1}(\vdelta)\cap \Crit(L)$,
then $(A,P)$ is gauge equivalent to the constant solution $(A_0=d, P\equiv q)$.
\end{theorem}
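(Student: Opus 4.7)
The plan is to interpret the gauged Witten equations on $\C$ as the downward gradient flow of the gauged action functional $\mathcal{A}_H$ in the time direction $t$, where $\mathcal{A}_H$ is the natural analogue on $\R_s$ of the functional defined on $[0,\infty)_s$ in \eqref{areafunctional}. The finite energy hypothesis together with the asymptotic condition \eqref{boundarycondition2} will force the flow to be static, and the stability of $W$ will then force the static solution to be the trivial one.

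First I would fix a temporal gauge with $a_t \equiv 0$, which is possible since $\C$ is simply connected and can be arranged to preserve the asymptotic decay $P\to q$. Writing $\gamma_t(s) := (a_s(t,s)\,ds,\, P(t,s))$ as a $t$-family of configurations on $\R_s$, the first equation of \eqref{equation on C} becomes $\partial_t a_s = \mu\circ P - \vdelta$ and the second becomes $\partial_t P = -JS - \nabla H$, where $S = \nabla^A_{\partial_s} P$. Combined with Proposition \ref{P3.2} applied on $\R_s$, these equations read $\partial_t \gamma_t = -\grad \mathcal{A}_H(\gamma_t)$.

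The key identity follows next. On solutions, $|T|^2 = |JS+\nabla H|^2$ and $|F|^2 = |\vdelta - \mu|^2$, so
\[
\mathcal{E}_{an}(A,P) = 2\int_{\R_t} \|\partial_t \gamma_t\|^2_{L^2(\R_s)}\,dt,
\]
and $\tfrac{d}{dt}\mathcal{A}_H(\gamma_t) = -\|\partial_t \gamma_t\|^2$ yields
\[
\mathcal{A}_H(\gamma_{-\infty}) - \mathcal{A}_H(\gamma_{+\infty}) = \tfrac{1}{2}\mathcal{E}_{an}(A,P).
\]
The boundary condition \eqref{boundarycondition2}, strengthened to uniform decay $\gamma_t \to (0,q)$ as $t\to \pm\infty$ by the finite energy bound, forces both terms on the left to equal $\mathcal{A}_H$ evaluated on the constant configuration $(0,q)$, so the left-hand side vanishes. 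Thus $\mathcal{E}_{an}(A,P) = 0$, and every pointwise integrand is zero: $F_A=0$, $\mu\circ P=\vdelta$, $T=0$, and $JS+\nabla H=0$.

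Flatness of $A$ on the simply connected $\C$ permits a further gauge transformation with $A=d$; then $P$ is independent of $t$, constrained to $\mu^{-1}(\vdelta)$, and solves the gradient flow equation $\partial_s P = -\nabla L$ on $\R_s$. The stability of $W$ together with Lemma \ref{L4.6} identifies $\mu^{-1}(\vdelta)\cap \Crit(L)$ with a single free $G$-orbit $O_*$, on which $L$ is Morse--Bott, so the only gradient flow line with both endpoints in $O_*$ and limits equal to $q$ is the constant one; hence $P\equiv q$. The main obstacle I anticipate is justifying the uniform decay of $\gamma_t$ to $(0,q)$ as $|t|\to\infty$ in a norm strong enough to make $\mathcal{A}_H(\gamma_{\pm\infty})$ well-defined (after a suitable renormalization of $\mathcal{A}_H$ on $\R_s$, since no Lagrangian boundary is present), and controlling the integration-by-parts boundary contributions at $s=\pm\infty$ purely from the finite energy hypothesis.
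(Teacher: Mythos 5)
Your reduction to a gradient-flow energy identity in the $t$-direction is a genuinely different strategy from the paper's, but it has a gap at its load-bearing step: the claim that $\CA_H(\gamma_{\pm\infty})=\CA_H(0,q)$, hence that the total action drop vanishes. On a time-slice $\{t\}\times\R_s$ the (renormalized) functional $\CA_H$ is an integral over the whole line of $-p^*\theta_M$, $H\circ p-H(q)$ and $\langle a_s,\vdelta-\mu\circ p\rangle$; finite total energy together with the pointwise convergence \eqref{boundarycondition2} gives no $L^1(\R_s)$ control of any of these integrands, so $\CA_H(\gamma_t)$ is not known to be finite for a single $t$, let alone to converge to $\CA_H(0,q)$ as $t\to\pm\infty$. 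In particular, uniform $C^0$-closeness of $P(t,\cdot)$ to $q$ does not control $\int_{\R_s}\theta_M(\partial_sP)\,ds$ (a long path staying $\epsilon$-close to $q$ can enclose large symplectic area), and the integrable decay in $s$, uniform in $t$, needed to repair this is essentially the content of Theorem \ref{exponential-decay} --- a separate result proved by independent (Bochner/maximum-principle) means and only under a smallness hypothesis. A structural warning sign is that stability of $W$ plays no real role in your argument (your final paragraph needs only ``gradient flowline with equal endpoints is constant''), yet Remark \ref{R1.7} and Example \ref{vortex} show the conclusion fails without stability: degree-$n$ vortices acquire energy $2\pi n$ precisely because their two asymptotic time-slices agree only up to a large gauge transformation, which shifts $\CA_H$ by a period --- exactly the kind of boundary contribution at $|s|=\infty$ that your argument must rule out but does not.

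The paper's proof avoids this analysis entirely. It first shows $\bpartial(W\circ P)=-i|\nabla H|^2$ (Lemma \ref{dbarG}), builds a potential $K$ with $\Delta_\C K=|\nabla H|^2\ge 0$, and uses the Morse--Bott bound $|W\circ P-W(q)|\le C|\nabla H(P)|^2$ near $O_*$ to derive $E(r_0)\le E(r)e^{(r_0-r)/C}$ for $E(r)=\int_{B(0,r)}|\nabla H|^2$; finiteness of the energy then forces $\nabla H\equiv 0$, so $P$ maps into $\Crit(L)$ (Lemma \ref{vanishing-lemma}). Only at that point does stability enter: $\Crit(L)$ is a single free $G_\C$-orbit, so $P(z)=e^{\alpha(z)}\cdot q$ after a gauge choice, the system collapses to the scalar equation \eqref{maximum}, and the maximum principle together with the strict monotonicity of $\beta\mapsto\langle\mu(e^{\beta}q)-\mu(q),i\beta\rangle$ (freeness of the orbit) forces $\alpha\equiv 0$. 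To salvage your route you would need to first establish integrable spatial decay of the time-slices; that is where the real work lies, and it is not supplied by the hypotheses as you have used them.
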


The proof of Theorem \ref{point-like-solutions} is based on an interesting observation. Since $W$ is holomorphic and $P$ is $J$-holomorphic up to Hamiltonian perturbations, it is reasonable to ask if the composition:
\[
W\circ P: \C\xrightarrow{P} M\xrightarrow{W}\C,
\]
is still holomorphic. In fact, we have 
\begin{lemma}\label{dbarG} If $(A,P)$ is a solution to the gauged Witten equations $ (\ref{equation on C}) $ on $\C$, then 
	\[
	\bpartial (W\circ P)\colonequals (\partial_t+i\partial_s)(W\circ P)=-i|\nabla H|^2.
	\]
\end{lemma}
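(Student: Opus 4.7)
The plan is a direct unpacking of definitions, exploiting two key properties: the $G_\C$-invariance of $W$ (which lets one swap ordinary derivatives of $W\circ P$ for $A$-covariant ones) and the holomorphicity of $W$ (which converts the almost complex structure $J$ on $M$ into multiplication by $i$ on the target).

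First, I would observe that because $W$ is $G$-invariant, $dW$ annihilates the vector fields $\tilde{\xi}$ generated by the $G$-action. Consequently, for any coordinate direction $V\in\{\partial_t,\partial_s\}$,
\[
V(W\circ P)=dW(V\!\cdot\! P)=dW(V\!\cdot\! P+\tilde{a}(V))=dW(\nabla^A_V P).
\]
Applying this to $V=\partial_t$ and $V=\partial_s$ gives
\[
\bar{\partial}(W\circ P)=dW(T)+i\,dW(S).
\]

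Second, since $W:M\to\C$ is holomorphic, $dW\circ J=i\cdot dW$, so $dW(JS)=i\,dW(S)$. Substituting the second gauged Witten equation \eqref{firsteq}, namely $T=-JS-\nabla H$, yields
\[
\bar{\partial}(W\circ P)=dW(-JS-\nabla H)+i\,dW(S)=-i\,dW(S)-dW(\nabla H)+i\,dW(S)=-dW(\nabla H).
\]

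Third, I would evaluate $dW(\nabla H)=dL(\nabla H)+i\,dH(\nabla H)=g(\nabla L,\nabla H)+i|\nabla H|^2$. The Cauchy-Riemann identity \eqref{CRequation}, $\nabla L=-J\nabla H$, together with skew-symmetry of $J$ with respect to $g$, forces $g(\nabla L,\nabla H)=-g(J\nabla H,\nabla H)=0$. Hence $dW(\nabla H)=i|\nabla H|^2$, and therefore $\bar{\partial}(W\circ P)=-i|\nabla H|^2$, as claimed.

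There is no real obstacle here: the only subtlety is remembering to use the covariant version of the derivative, which is justified precisely by the $G$-invariance of $W$. The rest is a short algebraic manipulation combining holomorphicity of $W$, the $J$-holomorphic curve equation, and the Cauchy-Riemann identity relating $\nabla L$ and $\nabla H$.
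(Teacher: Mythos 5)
Your proposal is correct and is essentially the paper's own argument, just unpacked: the paper's one-line computation $\bpartial(W\circ P)=\langle \nabla L+i\nabla H, \Dt P+J\Ds P\rangle=-i|\nabla H|^2$ encodes exactly your three steps (covariant replacement via $G$-invariance, $dW\circ J=i\,dW$ from holomorphicity, and orthogonality of $\nabla L$ and $\nabla H$ from the Cauchy-Riemann identity).
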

\begin{proof} By the Cauchy-Riemann equation $\nabla L=-J\nabla H$ and $(\ref{equation on C})$, we have 
	\begin{align*}
	\bpartial (W\circ P)&=\langle \nabla L+i\nabla H, \Dt P+J\Ds P\rangle =-i|\nabla H|^2.\qedhere
	\end{align*}
\end{proof}
\begin{remark}When $A=d$ is the trivial connection and $\Dt P\equiv 0$, $P(t,\cdot)$ is a downward gradient flowline of $L$. In this case, this lemma recovers the usual identity:\[
	\partial_s (L\circ P)=-|\nabla L|^2.
	\]
$P(t,\cdot)$ is also a Hamiltonian flow, so $ 	\partial_s (H\circ P)=0.$
\end{remark}

We also need a more useful notion of energy:
\begin{lemma}\label{L5.4} Under the conditions of Theorem \ref{point-like-solutions}, define
	\begin{equation}\label{analytic energy C}
\E_{an}(A,P; \C)\colonequals \int_{\C} |\nabla_A P|^2+|\nabla H|^2+|F_A|^2+|\vdelta-\mu|^2.
\end{equation}
Then $\E_{an}(A,P; \C)=\E_{an}(A,P)<\infty$. 
\end{lemma}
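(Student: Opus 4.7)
The plan is to reduce the difference of the two energy integrands to a perfect $\ps$-derivative via a purely algebraic manipulation, and then show this derivative integrates to zero against the boundary condition \eqref{boundarycondition2}.

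First I would establish a pointwise identity. The terms $|F|^2$ and $|\vdelta-\mu|^2$ appear identically in both expressions and cancel in the difference. Since $|\nabla_A P|^2=|T|^2+|S|^2$, the remainder is
\[
\E_{an}(A,P;\C)-\E_{an}(A,P)=\int_{\C}\bigl(|S|^2+|\nabla H|^2-|JS+\nabla H|^2\bigr)\,dt\,ds=-2\int_{\C}\langle JS,\nabla H\rangle\,dt\,ds.
\]
The K\"{a}hler skew-symmetry $\langle J\cdot,\cdot\rangle=-\langle \cdot,J\cdot\rangle$ combined with the Cauchy--Riemann identity $\nabla L+J\nabla H=0$ from \eqref{CRequation} yields $\langle JS,\nabla H\rangle=\langle S,\nabla L\rangle$; moreover, $G$-invariance of $L$ kills the infinitesimal-action contribution inside $S=\ps P+\tilde a_s(P)$ upon pairing with $\nabla L$, leaving $\langle S,\nabla L\rangle=\ps(L\circ P)$ pointwise. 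Therefore
\[
\E_{an}(A,P;\C)-\E_{an}(A,P)=-2\int_{\C}\ps(L\circ P)\,dt\,ds.
\]
Notably, this identity is algebraic: only the holomorphicity of $W$ and $G$-invariance of $L$ enter, not the gauged Witten equation.

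Next I would argue that $\int_{\C}\ps(L\circ P)\,dt\,ds=0$. Formally, for each fixed $t$ the inner integral telescopes: $\int_{\R}\ps(L\circ P)(t,s)\,ds=L(q)-L(q)=0$ by \eqref{boundarycondition2}. To make this rigorous one integrates by parts on exhausting disks $B_R\subset\C$, converting the bulk to a boundary integral
\[
\int_{B_R}\ps(L\circ P)\,dt\,ds=-\int_{\partial B_R}(L\circ P-L(q))\,dt,
\]
where the constant $L(q)$ is inserted via $\oint dt=0$. Since $P\to q$ uniformly at infinity, the integrand on $\partial B_R$ tends to $0$; the conclusion $\E_{an}(A,P;\C)=\E_{an}(A,P)$ and finiteness then follow by passing to the limit along a suitable sequence $R_n\to\infty$.

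The main obstacle is precisely this last step. The naive estimate bounds the boundary integral by $O(R\cdot \sup_{|z|=R}|L\circ P-L(q)|)$, so that pointwise convergence $P\to q$ alone is insufficient: one needs a rate $o(1/R)$. This quantitative decay should be supplied by combining (a) interior elliptic regularity for the gauged Witten system applied at balls shifted to infinity, which, together with the finite-energy hypothesis $\E_{an}(A,P)<\infty$, upgrades pointwise convergence to $C^k_{\text{loc}}$ convergence, and (b) the Morse(-Bott) structure of $L$ at $q\in\Crit(L)$, which gives $|L\circ P-L(q)|=O(d(P,q)^2)$ near $q$. Alternatively, one may select $R_n\to\infty$ by an averaging argument so that $\int_{\partial B_{R_n}}|L\circ P-L(q)|\,d\ell$ is small, exploiting that the boundary values decay in an integrated sense. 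The finiteness claim $\E_{an}(A,P;\C)<\infty$ then transfers from the hypothesis $\E_{an}(A,P)<\infty$ via the established equality.
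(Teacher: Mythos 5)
Your core computation is the same one the paper uses: expand $|JS+\nabla H|^2=|S+\nabla L|^2$ via the Cauchy--Riemann identity \eqref{CRequation}, observe that the cross term $2\langle S,\nabla L\rangle$ equals $2\ps(L\circ P)$ because $G$-invariance of $L$ annihilates the $\tilde a_s$ contribution, and kill the resulting total derivative using \eqref{boundarycondition2}. Where you diverge is in the choice of exhaustion, and this is where your write-up has a soft spot. The paper exhausts $\C$ by rectangles $[-t',t']\times[-s',s']$ and takes the \emph{iterated} limit $s'\to\infty$ first, $t'\to\infty$ second: for fixed $t'$ the boundary contribution is $\int_{[-t',t']}2\bigl(L\circ P(t,s')-L\circ P(t,-s')\bigr)\,dt$, an integral over an interval of \emph{fixed} length whose integrand tends to $0$ uniformly, so no decay rate is ever needed; the identity between the two energies then follows from monotone convergence applied to the two non-negative integrands. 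Your disks $B_R$ force you to beat the factor $R$ coming from the circumference, and neither of your first two suggested remedies actually delivers the required $o(1/R)$: $C^k_{\mathrm{loc}}$ convergence obtained from elliptic bootstrapping gives no rate, and the Morse--Bott bound $|L\circ P-L(q)|=O(d(P,q)^2)$ is useless without a rate on $d(P,q)$.

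Your third suggestion, the averaging argument, does close the gap and is worth making precise: by \eqref{morse-bott} one has $|L\circ P-L(q)|\leq C|\nabla H(P)|^2$ for $|z|$ large, so
\[
\Bigl|\int_{\partial B_R}(L\circ P-L(q))\,n_s\,d\ell\Bigr|\leq C\int_{\partial B_R}|\nabla H|^2\,d\ell\equalscolon C\,g(R),
\]
and since $\int_0^\infty g(R)\,dR\leq\int_\C|\nabla H|^2\leq\E_{an}(A,P)<\infty$, one can choose $R_n\to\infty$ with $g(R_n)\to 0$. So your argument is correct once that route is selected, but it invokes the Morse--Bott structure of $W$ near $O_*$ and a selection of good radii, whereas the paper's rectangle trick needs only the qualitative boundary condition. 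If you keep the disk exhaustion, commit to the averaging argument and drop the other two justifications.
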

\begin{proof} Using the shorthands from Definition \ref{D3.5} and the Cauchy-Riemann equation (\ref{CRequation}), we deduce that:
	\begin{align*}
\int_\C |JS+\nabla H|^2&=\int_\C |S+\nabla L|^2= \int_\C |S|^2+|\nabla L|^2+\lim_{t'\to\infty}\lim_{s'\to\infty}\int_{[-t',t']\times [-s',s']}2\langle S,\nabla L\rangle\\
&=\int_\C |S|^2+|\nabla L|^2+\lim_{t'\to\infty}\lim_{s'\to\infty} \int_{[-t',t']} 2(L\circ P(t, s')-L\circ P(t, -s')).
	\end{align*}
	
	 By (\ref{boundarycondition2}), the boundary term tends to zero as $s'\to\infty$, so $\E_{an}(A,P;\C)=\E_{an}(A,P)$. 
\end{proof}

\begin{lemma}\label{vanishing-lemma} Under the assumption of Theorem \ref{point-like-solutions}, $\nabla L\equiv 0$, so $P(z)\in \Crit(L)$ for all $z=t+is\in \C$ and $W\circ P$ is a constant function on $\C$. 
\end{lemma}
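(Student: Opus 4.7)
The goal is to prove $\int_\C |\nabla H|^2 = 0$; once established, $\nabla H \equiv 0$ along $P$, and by the Cauchy-Riemann identity (\ref{CRequation}) $\nabla L \equiv 0$ along $P$, so $P(z) \in \Crit(L)$ for every $z$. Constancy of $W \circ P$ then follows from $G_\C$-invariance of $W$: for any tangent vector $v$,
\[
d(W \circ P)_z(v) \;=\; dW_{P(z)}(\nabla^A_v P),
\]
and $dW = \nabla L + i \nabla H$ vanishes identically on $\Crit(L)$.

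The starting point is Lemma \ref{dbarG}. Setting $\Phi \colonequals W \circ P - W(q)$, we have $\bpartial \Phi = -i |\nabla H|^2$, and the asymptotic condition (\ref{boundarycondition2}) together with continuity of $W$ forces $\Phi \to 0$ uniformly as $|z| \to \infty$. A direct computation gives $d(\Phi \, dz) = i \bpartial \Phi \cdot dt \wedge ds = |\nabla H|^2 \, dt \wedge ds$, so Stokes' theorem on the disk $B_R \subset \C$ yields
\[
\int_{B_R} |\nabla H|^2 \, dA \;=\; \int_{\partial B_R} \Phi \, dz.
\]
The left-hand side is monotone and bounded above by $\E_{an}(A,P;\C) < \infty$ (Lemma \ref{L5.4}), hence converges to $\int_\C |\nabla H|^2$ as $R \to \infty$. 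The proof therefore reduces to showing that the right-hand side tends to $0$ along a subsequence $R_n \to \infty$.

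The main obstacle is the absence of an a priori rate in the uniform convergence $\Phi \to 0$: the naive bound $|\int_{\partial B_R} \Phi \, dz| \leq 2\pi R \sup_{\partial B_R} |\Phi|$ combined with $\sup_{\partial B_R} |\Phi| = o(1)$ does not suffice. I would circumvent this by an averaging argument on the finite analytic energy. Since $\E_{an}(A,P;\C) < \infty$, Fubini applied to $r \mapsto \int_{\partial B_r} |\nabla^A P|^2 \, d\sigma$ produces a subsequence $R_n \to \infty$ on which $\int_{\partial B_{R_n}} |\nabla^A P|^2 \, d\sigma \to 0$. Combined with uniform smallness of $|P - q|$ on $\partial B_{R_n}$, a one-dimensional Sobolev/Poincar\'e estimate on the circle (after a local gauge-fix making $A$ small across $\partial B_{R_n}$) yields a quantitative decay of $\sup_{\partial B_{R_n}} |P - q|$. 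Since $\nabla W(q) = 0$ (because $q \in \Crit(L)$ and Cauchy-Riemann gives $\nabla H(q) = 0$), Taylor expansion furnishes $|\Phi| = O(|P - q|^2)$ near $q$; by choosing the subsequence $R_n$ growing fast enough relative to the rate of $\int_{\partial B_{R_n}} |\nabla^A P|^2 \, d\sigma \to 0$, one arranges $R_n \sup_{\partial B_{R_n}} |\Phi| \to 0$. Stability of $W$ enters here through this quadratic Taylor estimate: it guarantees $q$ is an isolated critical orbit of $L$ (hence of $W$), so that $|\Phi|$ is genuinely quadratically small near $q$ and the averaging argument closes to force $\int_\C |\nabla H|^2 = 0$.
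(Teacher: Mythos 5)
Your first two paragraphs are correct and are, in substance, the paper's own argument: Lemma \ref{dbarG} plus the divergence theorem converts $\int_{B_R}|\nabla H|^2$ into a boundary integral of $\Phi\colonequals W\circ P-W(q)$ (the paper packages the same identity through the potential $K$ with $\Delta_\C K=|\nabla H|^2$ and $|\nabla K|=|\Phi|$), and the Morse--Bott estimate \eqref{morse-bott} is the essential nonlinear input. The gap is in your final paragraph, i.e.\ in how the boundary term is killed. The one-dimensional Sobolev estimate on $\partial B_R$ controls only the \emph{oscillation} of $P$ around the circle, by $\bigl(R\int_{\partial B_R}|\nabla^A P|^2\,d\sigma\bigr)^{1/2}$; it gives no rate for the distance from $P$ to $q$, since the uniform convergence $P\to q$ comes with no rate. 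Tracking your estimate, one gets
\[
R\,\sup_{\partial B_R}|\Phi|\;\lesssim\; R\,\inf_{\partial B_R}d(P,q)^2\;+\;R^2\int_{\partial B_R}|\nabla^A P|^2\,d\sigma,
\]
and the second term is fatal: finite energy yields $\liminf_{r}\, r\int_{\partial B_r}|\nabla^A P|^2\,d\sigma=0$ but \emph{not} $\liminf_r\, r^2\int_{\partial B_r}|\nabla^A P|^2\,d\sigma=0$ (take a radial profile $\sim r^{-1}(\log r)^{-2}$, which is integrable while $r^2$ times it diverges). ``Choosing $R_n$ to grow fast enough'' cannot help, since both uncontrolled terms carry positive powers of $R_n$ against quantities with no decay rate.

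The repair is to abandon sup-norms entirely and apply \eqref{morse-bott} pointwise on the circle: for $|z|>R(\Omega)$ one has $|\Phi(z)|\leq C|\nabla H(P(z))|^2$, hence with $E(R)\colonequals\int_{B_R}|\nabla H|^2$,
\[
E(R)\;=\;\Bigl|\int_{\partial B_R}\Phi\,dz\Bigr|\;\leq\;\int_{\partial B_R}|\Phi|\,d\sigma\;\leq\;C\int_{\partial B_R}|\nabla H|^2\,d\sigma\;=\;C\,E'(R).
\]
Since $E$ is nondecreasing and bounded by $\E_{an}(A,P;\C)<\infty$ (Lemma \ref{L5.4}), the inequality $E\leq CE'$ forces $E\equiv 0$; this is exactly the paper's \eqref{4.6}. (Equivalently, in your subsequence language: $\int_0^\infty E'(r)\,dr<\infty$ gives $E'(R_n)\to 0$ along a subsequence, hence $E(R_n)\to 0$, hence $E\equiv 0$ by monotonicity --- no factor of $R$ ever appears.) A minor point: the input here is not stability of $W$ but only that $L$ is Morse--Bott near the limiting orbit $O_*$ (cf.\ Remark \ref{R5.6}); isolatedness of $q$ as a critical point is not what drives the quadratic estimate.
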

\begin{proof}  Since $W$ is Morse-Bott, for some $G$-invariant neighborhood $\Omega$ of $G\cdot q\subset M$ and $C>0$, the estimate 
	\begin{equation}\label{morse-bott}
|W(x)-W(q)|\leq C|\nabla H(x)|^2,
\end{equation}
holds for any $x\in \Omega$. By the asymptotic condition $(\ref{boundarycondition2})$, for a large constant $R(\Omega)>0$, $P(z)\in \Omega$ for all $|z|>R$. As a result,  
\begin{equation}\label{tameness2}
|W\circ P(z)-W(q)|\leq C\big|\nabla H\big(P(z)\big)\big|^2, 
\end{equation}
when $|z|>R$. Write $(W\circ P)(z)-W(q)=U+iV$ with $U,V$ real. Then Lemma \ref{dbarG} implies that
\[
\partial_t U-\partial_sV= 0, \partial_t V+\partial_s U=-|\nabla H|^2\leq 0. 
\]
Let $K(z)\colonequals \int_0^z Vdt+Uds$. By the first equation above, this integral is independent of the path we choose. Therefore,
\[
U=\partial_sK, V=\partial_t K \text{ and } 	\Delta_\C K=(-\partial_s^2-\partial_t^2)K=|\nabla H|^2\geq 0. 
\]

Then the Morse-Bott inequality (\ref{tameness2}) is equivalent to $|\nabla K|=|W\circ P-W(q)|\leq C |\Delta K|.$

Our goal is to show $K\equiv 0$. Let $Z(r)\colonequals \int_{\partial B(0,r)} \Delta K\geq 0$. Take $r>R(\Omega)$ and integrate by parts:
\begin{align*}
0\leq E(r)&\colonequals\int_0^r Z(r')dr'=\int_{B(0,r)} \Delta K= \bigg|\int_{\partial B(0,r)} \vec{n}\cdot \nabla K\bigg|\\
&\leq C \bigg(\int_{\partial B(0,r)}\Delta K\bigg)\leq CE(r)' .
\end{align*}

Therefore,  for any $r>r_0>R(\Omega)$, 
\begin{equation}\label{4.6}
0\leq E(r_0)\leq E(r)e^{\frac{r_0-r}{C}}.
\end{equation}

Let $r\to\infty$. Note that $\lim_{r\to\infty} E(r)=\int_\C |\nabla H|^2\leq \E_{an}(A,P;\C)<\infty$ by Lemma \ref{L5.4}. Hence, $E(r_0)\equiv 0$, and
\[
\Delta K=|\nabla H|^2\equiv 0\Rightarrow W\circ P(z)\equiv W(q).\qedhere
\]

\end{proof}

\begin{remark}\label{R5.6} The proof of Lemma \ref{vanishing-lemma} does not require $W$ to be stable. It suffices to assume that $W$ is Morse-Bott near the critical $G$-orbit $G\cdot q$. 
\end{remark}

\begin{proof}[Proof of Theorem \ref{point-like-solutions}]  Since $W$ is stable, the multiplication $g\mapsto g\cdot q$ defines a closed embedding $\iota$ of $G_\C$ into $M$. By Lemma \ref{vanishing-lemma}, $\im P\subset \im \iota$, so $P(z)=g(z)\cdot q$ for a unique element $g(z)\in G_\C$. 
	
	We first deal with the case when $G=S^1$ and $G_\C=\C^*$. Since we are interested in solutions modulo gauge equivalences, $g(z)$ may be assumed to be real. Suppose $g(z)=e^{\alpha(z)}$ for some $\alpha:\C\to \R$. The boundary condition (\ref{boundarycondition2}) then says
	\begin{equation}\label{E4.7}
	\lim_{z\to\infty} \alpha(z)=0.
	\end{equation}

	Moreover, the second equation of (\ref{equation on C}) implies $A=d+i*_2d\alpha.$ Plug this into the first equation of (\ref{equation on C}) to obtain that
	\begin{equation}\label{maximum}
	i \Delta_\C\alpha+\big(\mu(e^{\alpha(z)}\cdot q)-\mu(q)\big)=0.
	\end{equation}
	
By \eqref{E4.7}, $\sup_{z\in \C}|\alpha(z)|$ is attained at some $z_0\in \C$ and let $\beta\colonequals \alpha(z_0)\neq 0$. Then 
	\[
	\langle \mu(e^{\beta}\cdot q)-\mu(q), i\beta\rangle_\g=-\langle \Delta_{\C} \alpha (z_0),\beta\rangle\leq 0.
	\]
	
	We claim that the inner product $\langle \mu(e^{\beta}\cdot q)-\mu(q),i\beta\rangle_\g$ is always non-negative. Indeed,
	\begin{align*}
\langle \mu(e^{\beta}\cdot q)-\mu(q),i\beta\rangle_\g&=\int_0^1 \langle \pt \mu(e^{t\beta}\cdot q), i\beta\rangle_\g dt=\int^1_0 \big\langle \langle\nabla\mu(e^{t\beta}\cdot q),\tilde{\beta}\rangle_{TM}, i\beta\big\rangle_\g.
	\end{align*}
Here $\tilde{\beta}_x\in T_xM$ denotes the tangent vector $\dt (e^{\beta}\cdot x)\big|_{t=0}$ at $x\in M$. Since the $G_\C$-action is holomorphic (by Definition \ref{LGmodels}), we set $\xi=i\beta$ in \eqref{convention} to obtain $\tilde{\beta}_x=\langle \nabla\mu(x), i\beta\rangle_\g$. Hence,
\[
	\langle \mu(e^{\beta}\cdot q)-\mu(q),i\beta\rangle_\g=\int^1_0 |\langle \nabla \mu(e^{t\beta}\cdot q),i\beta\rangle_\g|^2dt\geq 0. 
\]	
	Since the base point $q$ generates a free $G_\C$-orbit, when $\beta\neq 0$, this integrand is positive. So $\beta= 0$. When $G$ is a general abelian Lie group, the proof can proceed in a similar fashion. 
\end{proof}

We end this section with a few examples.

\begin{example} In our Fundamental Example \ref{fundamental_example}, suppose $\lambda=r_+r_-,\ q=(r_+,r_-,0)$ and $\vdelta=\frac{i}{2}(r_+^2-r_-^2)$ for some $r_\pm\in \R$. Then the equation (\ref{maximum}) becomes 
	\[
	\Delta_\C\alpha+\half \big(r_+^2(e^{2\alpha}-1)+r_-^2(1-e^{-2\alpha})\big)=0.\qedhere
	\]
\end{example}

\begin{example}\label{vortex} For Example \ref{example1}, the gauged Witten equations come down to the vortex equation on $\C$ (with $\delta=\frac{i}{2}$):
	\begin{equation}
\left\{ \begin{array}{r}
\bpartial_A P=0,\\
i*F_A+\frac{1}{2}(|P|^2-1)=0.
\end{array}
\right.
\end{equation}	

By \cite{Taubes}, the moduli space $\M_n$ with $\E_{an}=2\pi n$ is $\Sym^n\C$ for any $n\geq 1$, so Theorem \ref{point-like-solutions} fails. $W$ is not stable in this case, even though $\delta=\frac{i}{2}$ is $W$-stable. Note that $\M_n$ is regular; its dimension agrees with the prediction of the index theory.
\end{example}

\begin{example}\label{EX5.9} In Example \ref{fundamental_example}, let $\lambda=0$ and $\vdelta=\frac{i}{2}$. For a solution $(A,P)$ of (\ref{equation on C}), write $P(z)=(x(z), y(z), b(z))$. Then Lemma \ref{vanishing-lemma} and Remark \ref{R5.6} imply $y(z)\equiv b(z)\equiv 0$. The equations are reduced to the previous example. However, in this case, the moduli space $\M_n'$ is not regular. Its formal dimension is always zero for any $n\geq 0$. 
\end{example}

\section{Exponential Decay in the Spatial Direction}\label{1.5}

In this section, we generalize Lemma \ref{Lemma-exponentialdecay} in the context of gauged Landau-Ginzburg models, as the analogue of Theorem \ref{T1.3} in the finite-dimensional case. We state and prove the theorem for the energy density function. 

\begin{theorem}\label{exponential-decay} For any stable gauged Landau-Ginzburg model $(M, W,G,\rho)$, there exist $\epsilon(M,W),\zeta(M,W)>0$ with following significance. Given a solution $\gamma=(A,P)\in \SC(X)$ to the gauged Witten equations $(\ref{generalized-vortex-equation})$ on the upper half plane $X=\R_t\times [0,\infty)_s$, suppose the point-wise estimate
	\begin{equation}\label{E6.1}
	U_\gamma(t,s)\colonequals |\nabla^AP|^2+|\nabla H|^2+|F_A|^2+|\vdelta-\mu|^2<\epsilon
	\end{equation} 
	holds for all $(t,s)\in X$. Then 
	\[
	U_\gamma(t,s)<e^{-\zeta s},\ \forall s\geq 0.
	\]
The function $U_\gamma:X\to [0,\infty)$ is called the energy density function. 
\end{theorem}

Recall from Section \ref{Sec4.1} that any configuration $(A,P)\in \SC(X)$ is subject to the asymptotic condition 
\[
\lim_{s\to\infty} P(\cdot ,s)\to q,
\]
for some base point $q\in\mu^{-1}(\vdelta)\cap \Crit(L)$. Under the assumption of Theorem \ref{exponential-decay}, the energy density $U_\gamma$ then provides an upper bound for the distance function:
\begin{equation}\label{distance}
U_\gamma(t,s)\geq |\nabla H|^2+|\vdelta-\mu|^2\geq \epsilon' \cdot \big(d(P(t,s), O_*)\big)^2,
\end{equation}
when $0<\epsilon\ll 1$. Indeed, the tangent space $T_x M$ can be decomposed orthogonally as $V_0\oplus JV_\g\oplus V_\g$  with $V_\g=T_x(G\cdot q)$ and $JV_{\g}\oplus V_\g=T_x(G_\C\cdot x)$ for all $x\in M$. At $x=q$, identify a normal neighborhood of $q$ with a ball $\Omega\subset T_q M$ centered at the origin and impose the gauge fixing condition $P(t,s)\in \Omega\cap (V_0\oplus JV_\g)_q\subset T_qM$ when $\epsilon\ll 1$. This can be done since the function $|\nabla H|^2+|\vdelta-\mu|^2:M\to \R$ is proper by Assumption \ref{A4.1}. Within this gauge slice, the map
\[
x\mapsto (\nabla H,\vdelta-\mu)
\]
can be treated as a section of $\Gamma(\Omega\cap (V_0\oplus JV_\g)_q, V_0\oplus \g)$ with a transverse zero at the origin by our stability condition. This implies the distance estimate \eqref{distance}. In particular, Theorem \ref{exponential-decay} implies Lemma \ref{Lemma-exponentialdecay} when $G=\{e\}$ is trivial. 

\begin{remark} The analogue of Lemma \ref{Lemma-uniformdecay} (the uniform $L^\infty$ decay) continues to hold for the gauged Witten equation if $H(q_j)\neq H(q_k)$ for any $q_j, q_k$ in different critical $G_\C$-orbits and if an additional local compactness property can be verified. This uniform decay can be further improved into a uniform exponential decay using Theorem \ref{exponential-decay}. The proof of such a uniform decay result will be carried out for the Seiberg-Witten equations in \cite[Theorem 6.3]{Wang20}.
\end{remark}

\begin{proof}[Proof of Theorem \ref{exponential-decay}]

By the gauged Witten equations (\ref{generalized-vortex-equation}), it suffices to show the exponential decay for the quantity
\begin{equation}\label{E6.2}
u(t,s)\colonequals |\nabla^A P|^2+|F_A|^2_\g.
\end{equation}

We use a lemma from Appendix \ref{max} and verify its conditions:
\begin{lemma}[Corollary \ref{coro-max}]\label{maximum2}
Take $\Lm>0$. Suppose $w: \HH^2_+=\R_t\times [0,\infty)_s\to \R$ is  a bounded $\SC^2$-function on the upper half plane $\HH^2_+$ such that 
\begin{enumerate}[label=(U\arabic*)]
	\item\label{U1} $(\Delta_{\HH^2_+}+\Lm^2)w\leq 0$, and
	\item\label{U2} for some $K>0$, $w(t,0)\leq K$ for all $t\in \R_t$. 
\end{enumerate}
Then $w(t,s)\leq Ke^{-\Lm s}$ for all $(t,s)\in \HH^2_+$. 
\end{lemma}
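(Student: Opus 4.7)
The plan is to prove the lemma by a barrier function argument, comparing $u$ against the explicit super-solution $v(t,s) = Ke^{-\Lambda s}$, and then dealing with the non-compactness of $\HH^2_+$ via an auxiliary cut-off that forces the relevant auxiliary function to attain its maximum. Throughout I use the sign convention $\Delta_{\HH^2_+} = -\partial_t^2 - \partial_s^2$ established earlier in the paper (see the proof of Lemma \ref{vanishing-lemma}), so that a $C^2$-function achieves a local maximum only where $\Delta \geq 0$.

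\textbf{Step 1: The explicit barrier.} A direct computation gives
\[
(\Delta_{\HH^2_+} + \Lm^2)\,v = -\partial_s^2(Ke^{-\Lm s}) + \Lm^2 Ke^{-\Lm s} = 0.
\]
Setting $w \colonequals u - v$, hypothesis \ref{U1} gives $(\Delta+\Lm^2)w \leq 0$, while \ref{U2} gives $w(t,0) = u(t,0) - K \leq 0$. The goal reduces to showing $w \leq 0$ on all of $\HH^2_+$.

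\textbf{Step 2: An auxiliary growing subsolution.} Fix parameters $\mu,\nu > 0$ with $\mu^2+\nu^2 < \Lm^2$, and define
\[
\phi(t,s) \colonequals \cosh(\mu t)\, e^{\nu s}.
\]
Then $\Delta\phi = -(\mu^2+\nu^2)\phi$, so $(\Delta+\Lm^2)\phi = (\Lm^2 - \mu^2 - \nu^2)\phi > 0$. Crucially, $\phi(t,s) \to +\infty$ whenever $|t|\to\infty$ or $s\to\infty$.

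\textbf{Step 3: Maximum principle on the unbounded domain.} For each $\epsilon > 0$, let $w_\epsilon \colonequals w - \epsilon\phi$. Then
\[
(\Delta+\Lm^2)w_\epsilon = (\Delta+\Lm^2)w \;-\; \epsilon(\Delta+\Lm^2)\phi \;\leq\; 0.
\]
Since $u$ is bounded and $v \geq 0$, $w$ is bounded above on $\HH^2_+$, whereas $\epsilon\phi \to +\infty$ at the ideal boundary. Hence $w_\epsilon \to -\infty$ along any sequence escaping $\HH^2_+$ (either $|t|\to\infty$ or $s\to\infty$), so its supremum is attained at some point $(t_0,s_0) \in \HH^2_+$.

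I claim this supremum is non-positive. At the boundary $s_0 = 0$, we have
\[
w_\epsilon(t_0,0) = u(t_0,0) - K - \epsilon\cosh(\mu t_0) \leq -\epsilon < 0.
\]
At an interior point $s_0 > 0$, $(t_0,s_0)$ is a local maximum of $w_\epsilon$, so $\Delta w_\epsilon(t_0,s_0) \geq 0$ (with our sign convention). Combining with $(\Delta+\Lm^2)w_\epsilon \leq 0$ yields $\Lm^2 w_\epsilon(t_0,s_0) \leq -\Delta w_\epsilon(t_0,s_0) \leq 0$, so $w_\epsilon(t_0,s_0) \leq 0$. Either way, $\sup_{\HH^2_+} w_\epsilon \leq 0$.

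\textbf{Step 4: Conclusion.} Fix any $(t,s) \in \HH^2_+$. Then $w(t,s) \leq \epsilon\phi(t,s)$ for every $\epsilon > 0$, and letting $\epsilon \to 0^+$ we obtain $w(t,s) \leq 0$, i.e.\ $u(t,s) \leq Ke^{-\Lm s}$.

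The only real subtlety here is the unboundedness of $\HH^2_+$: without the auxiliary function $\phi$ one cannot conclude that $w$ itself attains its supremum. The choice $\phi = \cosh(\mu t) e^{\nu s}$ with $\mu^2+\nu^2 < \Lm^2$ is precisely engineered to simultaneously (i) grow in all directions at infinity so that $w_\epsilon$ attains its sup, (ii) remain a subsolution of $(\Delta+\Lm^2)$ so that $w_\epsilon$ inherits the differential inequality, and (iii) be strictly positive on $\{s=0\}$ so that the boundary term is pushed strictly below zero. The strict inequality $\mu^2+\nu^2 < \Lm^2$ uses only the hypothesis $\Lm > 0$, so no further constraint on $\Lm$ is needed.
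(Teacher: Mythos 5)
Your proof is correct. The first step (subtracting the explicit barrier $v=Ke^{-\Lm s}$, which satisfies $(\Delta_{\HH^2_+}+\Lm^2)v=0$ and $v(t,0)=K$) is exactly what the paper does in Corollary \ref{coro-max}. Where you genuinely diverge is in how the resulting comparison principle on the unbounded domain is established. The paper's Proposition \ref{maximumprinciple} uses a \emph{multiplicative} compactly supported cutoff: it considers $u_R(z)=u(z)\phi_R(z-z_0)$, locates its interior maximum $z_1$, and derives a contradiction from the inequality $0\leq u\bigl(-\Lm^2\phi_R+\Delta_{\C}\phi_R+2|\nabla\phi_R|^2/\phi_R\bigr)(z_1-z_0)$ once $R\gg1$. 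You instead use an \emph{additive} strict subsolution $\epsilon\phi$ with $\phi=\cosh(\mu t)e^{\nu s}$ and $\mu^2+\nu^2<\Lm^2$, which grows at infinity so that $w-\epsilon\phi$ attains its supremum, and then let $\epsilon\to 0^+$ --- a Phragm\'{e}n--Lindel\"{o}f-style argument. Both routes use exactly the same hypotheses (boundedness of $u$ and $\Lm>0$); yours avoids the gradient and Laplacian estimates for the cutoff and the $R\gg1$ bookkeeping, at the cost of having to engineer the auxiliary function $\phi$ so that it is simultaneously a strict subsolution, divergent in every direction at infinity, and bounded below by $1$ on $\{s=0\}$ --- all of which you verify correctly. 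Your handling of the two cases for the location of the maximum (boundary versus interior, where the sign convention $\Delta_{\HH^2_+}=-\partial_t^2-\partial_s^2$ gives $\Delta w_\epsilon\geq 0$ at an interior maximum) is also correct.
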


The second condition \ref{U2} holds for the function $u$ in \eqref{E6.2} by our assumption on $U_\gamma$ in \eqref{E6.1}:
\[
u(t,s)\leq U_\gamma(t,s)<\epsilon.
\]

 To verify \ref{U1}, we find an explicit formula of $\Delta_{\HH^2_+}u(s,t)$. It is convenient to define a bundle map $D: TM\to \g\oplus \g\oplus TM$ such that 
\[
D_x(v)\colonequals\big( \langle \nabla \mu, v\rangle_{TM}, \langle J\nabla\mu, v\rangle_{TM},\Hess_x L(v)\big), \forall x\in M, v\in T_x M. 
\]

For the next lemma, we follow the shorthands from Definition \ref{D3.5}.

\begin{lemma}[Corollary \ref{B.8}]\label{Bochner} We have the following Bochner-type formula for $\Delta_{\HH^2_+}u(t,s)$:
	\[
	0=\half \Delta_{\HH^2_+}\big(|\nabla^A P|^2+|F|^2_\g\big)+I_1+I_2+I_3+I_4+I_5
	\]
	where
\begin{align*}
I_1&=|\Hess_A P|^2+|\nabla F|^2_\g,\qquad I_2=|D_P(\nabla^A P)|^2+|\langle \nabla\mu, F\rangle_\g |^2,\qquad I_3=2\langle R_M(S,T)S,T\rangle, \\
I_4&=\langle(\nabla_T\Hess H)(\nabla H), T\rangle+\langle(\nabla_S\Hess H)(\nabla H), S\rangle,\\
I_5&=6\langle \Hess \mu(JS),T\otimes F\rangle-\langle \Hess \mu(T),T\otimes F\rangle-\langle \Hess \mu(S),S\otimes F\rangle,
\end{align*}
and $R_M$ is the Riemannian curvature tensor of $M$. 
\end{lemma}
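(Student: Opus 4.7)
The plan is to apply the pointwise Bochner--Weitzenb\"ock identity
\[
\tfrac{1}{2}\Delta |v|^{2} \;=\; |\nabla v|^{2} - \langle \nabla^{*}\nabla v,\, v\rangle
\]
to each of the three objects $T=\nabla^{A}_{t}P$, $S=\nabla^{A}_{s}P$ (sections of $P^{*}TM\to \HH^{2}_{+}$) and $F=-*_{2}F_{A}\in \Omega^{0}(X,\g)$, and then add the three contributions. Since $\HH^{2}_{+}$ is flat, $\Delta=-(\partial_{t}^{2}+\partial_{s}^{2})$ and the $|\nabla v|^{2}$ terms immediately combine into $I_{1}=|\Hess_{A}P|^{2}+|\nabla F|^{2}_{\g}$. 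Everything else comes out of computing $\nabla^{*}\nabla T,\ \nabla^{*}\nabla S,\ \nabla^{*}\nabla F$ using the gauged Witten equations.

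First I would differentiate the holomorphic-curve equation $T+JS+\nabla H=0$ twice, using that $J$ is parallel on a K\"ahler manifold: single differentiation gives $\nabla^{A}_{t}T=-J\nabla^{A}_{t}S-\nabla_{T}\nabla H$ and the analogous $s$-relation; differentiating again and switching the order of derivatives brings in the commutator $[\nabla^{A}_{t},\nabla^{A}_{s}]$ on $P^{*}TM$, which splits as the Riemann curvature $R_{M}(T,S)$ plus the $F_{A}$-action $\widetilde{F_{A}(\partial_{t},\partial_{s})}=J\langle\nabla\mu,F\rangle$ (via the moment map identity $\nabla\mu=-J\tilde{\xi}$). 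The $R_{M}$ piece, after pairing with $T$ and $S$, produces $I_{3}=2\langle R(S,T)S,T\rangle$. The terms $\nabla_{T}\nabla_{T}H$ and $\nabla_{S}\nabla_{S}H$ paired with $T$ and $S$ give, after passing the derivative through $\Hess H$, exactly $I_{4}$ (the algebraic $\Hess H(T)$ and $\Hess H(S)$ squares that remain constitute half of $|D(\nabla^{A}P)|^{2}$ inside $I_{2}$). For $F$, the moment map equation $F=\vdelta-\mu\circ P$ together with $G$-invariance ($\langle\nabla\mu,\tilde{a}\rangle=0$) yields $\partial_{t}F=-\langle\nabla\mu,T\rangle$ and $\partial_{s}F=-\langle\nabla\mu,S\rangle$; differentiating again produces $\Hess\mu$ contractions against $T\otimes T$, $T\otimes S$, $S\otimes S$, plus an algebraic $|\langle\nabla\mu,F\rangle|^{2}$ from the $\widetilde{F_{A}}$-action noted above, which furnishes the remaining piece of $I_{2}$.

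The main obstacle is the bookkeeping for $I_{5}$, since the cross terms between $P$ and $F$ come from three different sources: (a) the action of $F_{A}=-*_{2}F$ via the commutator $[\nabla^{A}_{t},\nabla^{A}_{s}]$ on sections of $P^{*}TM$; (b) the $\Hess\mu$ contractions arising when one differentiates $\partial_{t}F$ and $\partial_{s}F$ a second time; and (c) the Cauchy--Riemann identity $\nabla L+J\nabla H=0$ with its derivative $\Hess L+J\circ\Hess H=0$ (and the anticommutation $J\,\Hess L+\Hess L\,J=0$), which must be used to rewrite any occurrence of $\Hess H(JS)$ or $J\Hess H(T)$ in terms of $\Hess L(S)$ and $\Hess L(T)$ so that everything can be repackaged as $\Hess\mu$ contractions. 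Tracking the sign convention \eqref{convention} consistently through each of these three channels is what yields the asymmetric coefficients $(6,-1,-1)$ in front of the three terms of $I_{5}$, and this combinatorial balancing is where a calculation is easiest to get wrong; the rest is systematic.
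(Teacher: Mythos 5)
Your plan is correct and is essentially the computation the paper carries out: the paper proves the three separate identities for $\tfrac12\Delta_{\HH^2_+}|T|^2$, $\tfrac12\Delta_{\HH^2_+}|S|^2$, $\tfrac12\Delta_{\HH^2_+}|F|^2_\g$ (Theorem \ref{BochnerFormula}) using exactly the ingredients you list --- the commutator of $\nabla^A_{\partial_t},\nabla^A_{\partial_s}$ on $P^*TM$ splitting as $R_M(T,S)$ plus a $\Hess\mu$ term, the connection Laplacian $-\Delta_AP=J\tilde F+\Hess H(\nabla H)$ extracted from the holomorphic-curve equation, and the differentiated moment-map equation --- and then adds the three to obtain Corollary \ref{B.8}, with the coefficients $(6,-1,-1)$ in $I_5$ emerging exactly as you predict. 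One bookkeeping slip worth noting: $|D(v)|^2=|\Hess H(v)|^2+|\langle\nabla\mu,v\rangle|^2_\g+|\langle\nabla\mu,Jv\rangle|^2_\g$ has three summands rather than two, and the pieces $|\langle\nabla\mu,T\rangle|^2_\g$ and $|\langle\nabla\mu,JT\rangle|^2_\g$ arise from pairing $T$ against $\partial_tF$ and against $\nabla^A_{\partial_s}\tilde F$ respectively (not from the $|F|^2_\g$ computation), but this does not affect the viability of your approach.
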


	\begin{remark} This identity was first proved by Taubes in \cite[Proposition 6.1]{JT} for the vortex equation on $\C$, in which case $M=\C$ is furnished with the flat metric,  $W\equiv 0$ and $\mu=\frac{i}{2}|x|^2$ for all $x\in \C$, cf. Example \ref{example1}. For more details, see Remark \ref{B.10}. 
	\end{remark}

Let us digest the consequence of Lemma \ref{Bochner}. $I_1\geq 0$. $I_4$ and $I_5$ involve only trilinear tensors: 
\begin{align*}
\langle\nabla_\cdot \Hess H(\cdot),\cdot\rangle&:   TM\otimes TM\otimes TM\to \R,\\
\langle \Hess\mu(\cdot), \cdot\otimes \cdot\rangle&:TM\otimes TM\otimes \g\to \R. 
\end{align*}

By \eqref{E6.1} and \eqref{distance}, the image of $P$ lies in a $G$-invariant neighborhood $\Omega$ of $G\cdot q$ with compact closure, where these trilinear maps have uniformly bounded operator norms. Hence, 
\[
|I_4|+|I_5|\leq Cu^{3/2},
\]
for some $C>0$. Indeed, each entry that appears in $I_4$ and $I_5$ takes the form $T,S,\nabla H, F$ or $JS$, with norm bounded above by $C'u^{1/2}$.  The same argument leads to an estimate for $I_3$ with a different exponent of $u$:
\[
|I_3|\leq Cu^2. 
\]
Since the critical orbit $G\cdot q$ is free,  for some small $\Lm_1>0$, we have
\begin{equation}\label{E6.4}
|\langle \nabla \mu, F\rangle_\g|^2> \Lm^2_1 |F|^2
\end{equation}
 for all $x\in G\cdot q$ and $F\in\g$. Moreover, since $W$ is a stable superpotential, $D_x$ is injective at all $x\in G\cdot q$ (by the Morse-Bott condition) and so for some $\zeta_2>0$,
\begin{equation}\label{E6.5}
|D_x(v)|^2>\Lm_2^2|v|^2
\end{equation}
for all $x\in G\cdot q$ and $v\in T_x M$. The estimates \eqref{E6.4} and \eqref{E6.5} continue to hold for all $x\in \Omega$, if the neighborhood $\Omega$ had been chosen sufficiently small. Hence, for $\zeta=\min\{ \Lm_1,\Lm_2\}$,
\begin{equation}
|I_2|\geq \Lm^2 u
\end{equation}
whenever  $P(s,t)\in \Omega$. By taking the constant $\epsilon$ in \eqref{E6.1} to satisfy  $\epsilon+\epsilon^{1/2}<\Lm^2/2C$, Lemma \ref{Bochner} implies
\[
0\geq \half \Delta_{\HH^2_+} u+\Lm^2 u-C(u^2+u^{3/2})\geq \half (\Delta_{\HH^2_+}+\Lm^2)u. 
\]

Now apply Lemma \ref{maximum2}  with $K=\epsilon$ and $w=u$. 
\end{proof}

\begin{remark} The bundle maps $D$ and $\langle \nabla\mu,\cdot\rangle_\g$ involved in $I_2$ appeared also in the extended operator $\widehat{D}$ in \eqref{E4.8}. The invertibility of $\widehat{D}$ is essential to this proof. 
\end{remark}

\begin{remark} To better understand the mysterious Bochner-type formula in Lemma \ref{Bochner}, consider the baby case in Example \ref{example0} for which the structure group $G=\{e\}$ is trivial. Then $\Hess H$ is a constant self-adjoint $\R$-linear operator on $\C^n$, so
	\[
	(\nabla H)_x=\Hess H(x),\ x\in \C^n. 
	\]
	Applying the operator $(\pt-J\ps)$ to \eqref{firsteq}, we obtain that 
\begin{align*}
0&=\pt T+\ps S+\Hess H(T+JS)=-\Delta P-(\Hess H)^2(P),
\end{align*}
from which one can easily deduce that the map $P: \HH^2_+\to \C^n$ along with its all higher derivatives has exponential decay as $s\to \infty$. Example \ref{example1} is the other extreme where $W\equiv 0$ and $\mu$ is quadratic. The proof of Lemma \ref{Bochner} is a tedious exercise in Riemannian geometry and is deferred to Appendix \ref{B}. 
\end{remark}

\part{The Seiberg-Witten Equations on $\C\times\Sigma$}\label{Part2}

In the third part of this paper, we associate an infinite-dimensional Landau-Ginzburg model to an $H$-surface $\TSigma$ so that the associated gauged Witten equations on $\C$ recover the Seiberg-Witten equations on $\C\times \Sigma$. Recall that an $H$-surface $\TSigma=(\Sigma, g_\Sigma,\lambda, \nu)$ is a quadruple, where
\begin{itemize}
	\item $\Sigma$ is a connected closed oriented surface of genus $g(\Sigma)\geq 1$ and $g_\Sigma$ is a metric;
	\item $\lambda\neq 0\in \Omega^1_h(\Sigma, i\R)$ is a harmonic 1-form on $\Sigma$ such that $\lambda^{1,0}\in \Omega^1(\Sigma,\C)$ has precisely $(2g(\Sigma)-2)$ simple zeros; $\nu\in \Omega^2(\Sigma, i\R)$ is an imaginary-valued closed 2-form on $\Sigma$. 
\end{itemize}

We generalize Theorem \ref{point-like-solutions} and \ref{exponential-decay} from the previous part to this infinite-dimensional case. The main difference is that the topology of $M$ depends on a Sobolev completion of smooth sections, and we need to specify the correct norms used in the estimates. 

The main obstacle in defining a Floer homology for a 3-manifold $\hy$ with cylindrical ends is a compactness problem, and its resolution relies on three inputs:

\begin{enumerate}[label=(K\arabic*)]
\item\label{K1} a uniform upper bound on the analytic energy;
\item\label{K2} finite energy solutions are trivial on $\C\times \Sigma$, namely, they have to be $\C$-invariant up to gauge;
\item\label{K3}  finite energy solutions on $\R_s\times \Sigma$ are trivial, namely, they have to be $\R_s$-invariant up to gauge. 
\end{enumerate} 

One way to attain these properties is to perturb the Seiberg-Witten equations on either $\C\times\Sigma$ or $\R_s\times \Sigma$ using the closed 2-form $\omega\colonequals\nu+ds\wedge\lambda$.  While $\lambda$ is used to perturb the superpotential $W_\lambda$, $\nu$ is to perturb the moment map equation in \eqref{generalized-vortex-equation}.

The first property \ref{K1} will be addressed in the second paper \cite{Wang20} when we set up the cobordism category properly. The second property \ref{K2} is established by Theorem \ref{T1.2}.  In Section \ref{Sec8}, we address the last property $\ref{K3}$ using a theorem of Taubes.

\section{The Fundamental Gauged Landau-Ginzburg Model}\label{Sec6}

In this section, we construct the fundamental gauged Landau-Ginzburg model 
\[
\big(M(\Sigma,\s), W_\lambda, \CG(\Sigma)\big)
\]
associated to an $H$-surface $\TSigma=(\Sigma,g_\Sigma,\lambda,\nu)$. In Proposition \ref{stableW}, we verify that the superpotential $W_\lambda$ is stable in the sense of Definition \ref{stable} and that any $\vdelta\in \g$ is $W_\lambda$-stable in the sense of Definition \ref{W-stable} (see Remark \ref{R7.5}).

\subsection{Review} 
Recall that a \spinc structure $\s$ on a smooth 4-manifold $X$ is a pair $(S_X,\rho_4)$ where $S_X=S^+\oplus S^-$ is the spin bundle, and the bundle map $\rho_4: T^*X\to \Hom(S_X,S_X)$ defines the Clifford multiplication. A configuration $\gamma=(A,\Phi)\in \SC(X,\s)$ consists of a smooth \spinc connection $A$ and a smooth section $\Phi$ of $S^+$. Take $A^t$ to be the induced connection on $\Lambda^2 S^+$. Let $\omega$ be a closed 2-form on $X$ and $\omega^+$ denote its self-dual part. The Seiberg-Witten equations perturbed by $\omega$ are defined on $\SC(X,\s)$ by the formula:
\begin{equation}\label{SWEQ}
\left\{
\begin{array}{r}
\half \rho_4(F_{A^t}^+)-(\Phi\Phi^*)_0-\rho_4(\omega^+)=0,\\
D_A^+\Phi=0,
\end{array}
\right.
\end{equation}
where $D_A^+: \Gamma(S^+)\to \Gamma(S^-)$ is the Dirac operator and $(\Phi\Phi^*)_0=\Phi\Phi^*-\half |\Phi|^2\otimes\Id_{S^+}$ denotes the traceless part of the endomorphism $\Phi\Phi^*:S^+\to S^+$. 

The gauge group $\CG(X)=\map (X,S^1)$ acts naturally on $\SC(X,\s)$ by the formula:
\[
\CG(x)\ni u: \SC(X,\s)\to \SC(X,\s),\ (A,\Phi)\mapsto (A-u^{-1}du, u\Phi).
\]
The monopole equations (\ref{SWEQ}) are invariant under gauge transformations. Let $\TSigma=(\Sigma, g_\Sigma,\lambda,\nu)$ be any $H$-surface. In the special case that $X=\C\times \Sigma$ is equipped with the product metric and the complex orientation, the equations (\ref{SWEQ}) can be understood more concretely as follows.

Let $L^+\to \Sigma$ be a line bundle of degree $0\leq d\leq 2g(\Sigma)-2$ and set $L^-\colonequals L^+\otimes \Lambda^{0,1}\Sigma$. They pull back to line bundles over $X$, denoted also by $L^+$ and $L^-$ respectively. Define $S^+\colonequals L^+\oplus L^-$ and so
\[
c_1(S^+)[\Sigma]=2(d-g(\Sigma)+1). 
\]

The spinor $\Phi$ decomposes as $(\Phi_+,\Phi_-)$ with $\Phi_\pm\in \Gamma(X,L^\pm)$. Let $z=t+is$ be the complex coordinate on $\C$. The Clifford multiplication $\rho=\rho_4: T^*X\to \Hom(S,S)$ is defined by setting
\begin{eqnarray*}
	\rho_4(dt)=\begin{pmatrix}
		0& -\Id \\
		\Id & 0\\
	\end{pmatrix},\ 
	\rho_4(ds)=\begin{pmatrix}
		0& \sigma_1\\
		\sigma_1 & 0\\
	\end{pmatrix} :\ S^+\oplus S^-\to S^+\oplus S^-,
\end{eqnarray*}
where $\sigma_1=\begin{pmatrix}
i & 0\\
0 & -i\\
\end{pmatrix}: S^+=L^+\oplus L^-\to L^+\oplus L^-$ is the first Pauli matrix. Moreover, for any $x\in \Sigma$ and $w\in T_x\Sigma$, we have 
\[
\rho_3(w)\colonequals\rho_4(dt)^{-1}\cdot\rho_4(w)= \begin{pmatrix}
0 & -\iota(\sqrt{2}w^{0,1})\ \cdot \\
\sqrt{2}w^{0,1}\otimes \cdot  & 0\\
\end{pmatrix}: S^+\to S^+.
\]

\begin{remark} We will frequently work with Clifford multiplications in dimension $2,3$ and $4$, denoted by $\rho_2$, $\rho_3$ and $\rho_4$ respectively. Identify $\C$ as $\R_t\times \R_s$, then they are related by 
	\[
	\rho_3(w)=\rho_4(dt)^{-1}\cdot\rho_4(w),\ \rho_2(v)=\rho_3(ds)^{-1}\cdot \rho_3(v): S^+\to S^+,
	\]
	for any $w\in T^*(\R_s\times \Sigma)$ and $v\in T^*\Sigma$. 
\end{remark}

The decomposition $S^+=L^+\oplus L^-$ is also parallel, so any \spinc connection $A$ splits as 
\[
\nabla_A=\begin{pmatrix}
\nabla_{A_+} & 0 \\
0 & \nabla_{A_-}\\
\end{pmatrix}.
\]
Let $\cB_0$ be any reference \spinc connection on $S^+\to \Sigma$. Then a reference \spinc connection $A_0$ on $S^+\to X$ is obtained by
\[
\nabla_{A_0}=\nabla_{\cB_0}+d_\C
\]
where $d_\C=dt\otimes \pt+ds\otimes \ps$ is the trivial connection on $\C$. Any \spinc connection $A$ on $X$ differs from $A_0$ by an imaginary-valued 1-form $a\in \Gamma(X,iT^*X)$. Their curvature tensors are related by
\[
F_A=F_{A_0}+da\otimes \Id_S. 
\]
Using the product structure on $X$,  the connection $\nabla_A=(\nabla^\C_A,\nabla_A^\Sigma)$ is decomposed into a $\C$-direction part and a $\Sigma$-direction part. The curvature tensor $F_A$ is decomposed accordingly as:
\[
F_A=F_A^\Sigma+F_A^\C+F^m_A
\]
where $F_A^m\in \Gamma(X,T^*\C\otimes T^*\Sigma\otimes\End(S))$ is the mixed term. A similar decomposition applies to the induced curvature form $F_{A^t}$ on $\Lambda^2 S^+=L^+\otimes L^-$: 
\begin{equation}\label{E7.2}
F_{A^t}=F_{A^t}^\Sigma dvol_\Sigma+ F_{A^t}^\C dvol_\C+F_{A^t}^m,
\end{equation}
with $F^m_{A^t}\in \Gamma(X,iT^*\C\otimes T^*\Sigma)$. Our description of $F_A$ shows that
\begin{equation}\label{22}
F_A^m=\half F_{A^t}^m\otimes \Id,
\end{equation}
and 
\begin{equation}\label{23}
F^\Sigma_A=\begin{pmatrix}
F_{A_+}^\Sigma & 0 \\
0 & F_{A_-}^\Sigma
\end{pmatrix}dvol_\Sigma=\begin{pmatrix}
\half F_{A^t}^\Sigma+\frac{i}{2} K_\Sigma& 0 \\
0 & \half F_{A^t}^\Sigma-\frac{i}{2} K_\Sigma
\end{pmatrix}dvol_\Sigma,
\end{equation}
where $K_\Sigma$ is the Gaussian curvature of $(\Sigma,g_\Sigma)$. 
\subsection{The Fundamental Gauged Landau-Ginzburg Model}\label{Subsec6.2} In this subsection, we provide a different perspective on the Seiberg-Witten equations on $\C\times \Sigma$ using gauged Landau-Ginzburg models and the gauged Witten equations. The fundamental gauged Landau-Ginzburg model 
$$\big(M(\Sigma,\s), W_\lambda, \CG(\Sigma)\big)$$
 defined below will allow us to apply results from Part \ref{Part1} to the Seiberg-Witten equations on $\C\times \Sigma$ and $\HH^2_+\times\Sigma$. 

\medskip

$\bullet$ \textit{The K\"{a}hler manifold} $M(\Sigma,\s)$ is the configuration space on $\Sigma$:
\[
M(\Sigma,\s)\colonequals(\cB_0,0)+\Omega^1(\Sigma, i\R)\oplus \Gamma(\Sigma, L^+\oplus L^-),
\]
where $\cB_0$ is an arbitrary reference \spinc connection on $S^+\to \Sigma$.  A configuration $\kappa\in M(\Sigma,\s)$ is a triple $(\cb,\cPsi_+,\cPsi_-)$, where the sum $\cB=\cB_0+\cb$ is a \spinc connection on $\Sigma$ and $\cPsi=(\cPsi_+,\cPsi_-)\in \Gamma(\Sigma, S^+)$ is a spinor. The complex structure of $M(\Sigma,\s)$ is defined by the bundle map
\[
J=\big(*_\Sigma, \rho_3(ds)\big)=\big(*_\Sigma,\sigma_1= \begin{pmatrix}
i & 0\\
0 & -i
\end{pmatrix}\big),
\]
while the Riemannian metric $g_M$ of $M(\Sigma,\s)$ is the flat $L^2$ metric:
\[
\langle (\cb_1,\cPsi_1), (\cb_2,\cPsi_2)\rangle=\int_{\Sigma}\langle \cb_1,\cb_2\rangle+\re \langle \cPsi_1,\cPsi_2\rangle. 
\]
Let $h_M$ be the Hermitian metric on $M(\Sigma,\s)$ induced from $J$ and $g_M$.

\medskip

$\bullet$ \textit{The gauge group} $\CG(\Sigma)=\Map(\Sigma,S^1)$ acts on $M(\Sigma,\s)$ by the standard formula:
\[
u(\cb,\cPsi)=(\cb-u^{-1}du, u\cPsi). 
\]

The Lie algebra of $\CG(\Sigma)$ is $\Lie(\CG)=\Gamma(\Sigma, i\R)$. It is also useful to consider a smaller group $\CG^e$, the identity component of $\CG$, which fits into a short exact sequence:
\[
0\to \CG^e\to \CG\xrightarrow{\pi} H^1(\Sigma;\Z)\to 0,\ \pi (u)=[ \frac{u^{-1}du}{2\pi i}].
\]


\medskip

$\bullet$ \textit{The moment map} $\mu$ is given by
\begin{align*}
\mu(\cb,\cPsi)&=-\half *_\Sigma F_{\cB^t}+\frac{i}{2}(|\cPsi_+|^2-|\cPsi_-|^2)\\
&=-*_\Sigma d\cb+\frac{i}{2}(|\cPsi_+|^2-|\cPsi_-|^2)-\half *_\Sigma F_{\cB^t_0}.
\end{align*}

 If $v=(\cvb,\cvpsi)$ is a tangent vector at $(\cb,\cPsi)$, then we have
\begin{align}
\langle \nabla\mu, v\rangle_{TM}&=-*_\Sigma d_\Sigma\cvb+i\re\langle i\cvpsi, \rho_3(ds)\cPsi\rangle\in\Lie(\CG),\label{F6.4}\\
\langle\nabla\mu, Jv\rangle_{TM}&=d^*_\Sigma\cvb+i\re\langle i\cvpsi,\cPsi\rangle\in \Lie(\CG). \nonumber
\end{align}

\medskip

$\bullet$ \textit{The superpotential} $W_\lambda$ is the perturbed Dirac functional. The Clifford multiplication on $\Sigma$
\[
\rho_2: T^*\Sigma\to \Hom(S^+, S^+),
\]
defines a Dirac operator for each \spinc connection $\cB$ on $S^+\to\Sigma$:
\[
D^\Sigma_{\cB}: \Gamma(\Sigma,S^+)\xrightarrow{\nabla_{\cB}} \Gamma(\Sigma,T^*\Sigma\otimes S^+)\xrightarrow{\rho_2}\Gamma(\Sigma,S^+).
\]
This operator is self-adjoint and switches the parity, i.e. 
\[
D^\Sigma_{\cB}=\begin{pmatrix}
0 & D^-_{\cB}\\
D^+_{\cB} & 0
\end{pmatrix}: \Gamma (\Sigma, L^+\oplus L^-)\to  \Gamma (\Sigma, L^+\oplus L^-).
\]

The superpotential $W_0$ is then defined as
\[
W_0(\cb,\cPsi_+,\cPsi_-)=\int_{\Sigma} \langle D^+_{\cB}\cPsi_+,\cPsi_-\rangle=\int_{\Sigma} \langle \cPsi_+,D^-_{\cB}\cPsi_-\rangle.
\]
The perturbation that we consider takes the form 
\[
W_\lambda(\cb,\cPsi)=W_0-\langle \cb, \lambda\rangle_{h_M},
\]
where $\lambda\in \Omega^1(\Sigma,i\R)$ and $h_M$ is the Hermitian inner product. 

\medskip

$\bullet$ \textit{The complex gauge group} $\CG_\C=\Map(\Sigma,\C^*)$ acts on $M$ by the formula:
\begin{equation}\label{GC-action}
e^{\alpha}u(\cb,\cPsi)=(\cb+i*_\Sigma d_\Sigma \alpha-u^{-1}du,e^{\alpha}u\cPsi_+,e^{-\alpha}u\cPsi_-),
\end{equation}
where $u\in \CG(\Sigma)$ and $\alpha: \Sigma\to \R$ is real. 
\begin{lemma}\label{L6.2} The superpotential $W_0: M(\Sigma)\to \C$ is invariant under $\CG_\C$.
\end{lemma}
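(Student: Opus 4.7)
The plan is to verify $\CG_\C$-invariance in two steps, corresponding to the decomposition $\CG_\C = \CG \cdot \exp(i\,\Lie(\CG))$: first checking invariance under the unitary subgroup $\CG(\Sigma) \subset \CG_\C(\Sigma)$, and then under the ``imaginary direction'' $\{e^\alpha : \alpha \in C^\infty(\Sigma, \R)\}$.

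For the first step, recall that under $u \in \CG(\Sigma)$ the Dirac operator transforms equivariantly: $D^+_{\cB - u^{-1}du}(u\cPsi_+) = u \cdot D^+_\cB \cPsi_+$, and similarly for $D^-$. Since $|u| = 1$ pointwise, the integrand $\langle D^+_\cB \cPsi_+, \cPsi_-\rangle$ is preserved.

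The main content is the second step. I would first establish the identity
\[
i*_\Sigma d_\Sigma \alpha = \partial \alpha - \bar{\partial}\alpha
\]
for real $\alpha$, using that on a Riemann surface with the chosen complex structure $*_\Sigma$ acts by $-i$ on $(1,0)$-forms and by $+i$ on $(0,1)$-forms. Next, I would invoke the K\"ahler identification $D^+_\cB = \sqrt{2}\,\bar\partial_\cB : \Gamma(L^+) \to \Gamma(L^-)$ where $L^- = L^+ \otimes \Lambda^{0,1}\Sigma$; equivalently $\bar\partial_\cB = \bar\partial + \cb^{0,1}$ where $\cb^{0,1}$ is the $(0,1)$-part of the connection form. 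Then for $\cb' := \cb + i*_\Sigma d_\Sigma \alpha$, one has $(\cb')^{0,1} = \cb^{0,1} - \bar\partial \alpha$, and a direct computation using the Leibniz rule shows
\[
\bar\partial_{\cB'}(e^\alpha \cPsi_+) = e^\alpha \bar\partial \alpha \cdot \cPsi_+ + e^\alpha \bar\partial_\cB \cPsi_+ - \bar\partial\alpha \cdot e^\alpha \cPsi_+ = e^\alpha \bar\partial_\cB \cPsi_+,
\]
so $D^+_{\cB'}(e^\alpha \cPsi_+) = e^\alpha D^+_\cB \cPsi_+$. Pairing with $e^{-\alpha}\cPsi_-$ and using that $\alpha$ is real (so $e^\alpha \cdot \overline{e^{-\alpha}} = 1$) gives
\[
\int_\Sigma \langle D^+_{\cB'}(e^\alpha \cPsi_+),\, e^{-\alpha}\cPsi_-\rangle = \int_\Sigma \langle D^+_\cB \cPsi_+, \cPsi_-\rangle,
\]
which is precisely the invariance of $W_0$ under $e^\alpha$.

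The main thing to be careful about is sign conventions for $*_\Sigma$ and the identification of $D^+_\cB$ with a $\bar\partial$-operator; once those are pinned down, the proof is a two-line computation. No estimates or analytical subtleties enter, since everything is an algebraic identity at each point of $\Sigma$. Combining the two steps completes the proof that $W_0$ is $\CG_\C$-invariant.
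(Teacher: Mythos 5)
Your proof is correct and follows essentially the same route as the paper: the paper reduces to the pointwise identity $\rho_2(d\alpha)+\rho_2(i*_\Sigma d\alpha)=0$ on $\Gamma(L^+)$, which is exactly your observation that $(d\alpha+i*_\Sigma d\alpha)^{0,1}=0$, phrased via Clifford multiplication instead of the $\bar\partial$-operator. You additionally spell out the unitary step and the cancellation $e^\alpha\overline{e^{-\alpha}}=1$ in the pairing, which the paper leaves implicit.
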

\begin{proof}[Proof of Lemma \ref{L6.2}] It suffices to verify that for all $\alpha\in \Gamma(\Sigma, \R)$, $e^{-\alpha}	D_{\cB'}^+ (e^{\alpha}\ \cdot\ )=D_{\cB}^+(\cdot)$ if $\cB'=e^{\alpha}\cdot \cB$, or equivalently
	\[
	\rho_2(d\alpha)+\rho_2(i*_\Sigma d\alpha)=0: \Gamma(L^+)\to \Gamma(L^-). 
	\]
	Note that when restricted to $\Gamma(L^-)$, it becomes:
\[
	\rho_2(d\alpha)-\rho_2(i*_\Sigma d\alpha)=0: \Gamma(L^-)\to \Gamma(L^+). \qedhere
\]
\end{proof}

 As for the perturbed superpotential $W_\lambda$, 
\begin{itemize}
\item for $W_\lambda$ to be invariant under $\CG^e$, $\lambda$ has to be co-closed;

\item 	for $W_\lambda$ to be invariant under the identity component $\CG^e_\C$ of $\CG_\C$, $\lambda$ has to be harmonic;

\item for $W_\lambda$ to be invariant under $\CG_\C$, $\lambda$ has to zero.		
	\end{itemize}

Write $W_\lambda=L+i H$. Then 
\begin{equation}\label{F6.5}
	\nabla L(\cb,\cPsi)=\big(\rho_2^{-1}(\cPsi\cPsi^*)_\Pi-\lambda, D^-_{\cB}\cPsi_-, D^+_{\cB}\cPsi_+\big). 
\end{equation}
Here $(\cPsi\cPsi^*)_\Pi$ denotes the off-diagonal part of the endomorphism $\cPsi\cPsi^*: L^+\oplus L^-\to L^+\oplus L^-$.	The equation $\nabla L=0$ has solutions if and only if  $\lambda$ is a harmonic 1-form. If in addition its $(1,0)$-part $\lambda^{1,0}\in \Omega^1(\Sigma,\C)$ has $(2g(\Sigma)-2)$ simple zeros, we will  characterize the critical locus $\Crit(L)$ concretely in Proposition \ref{stableW}. Now consider a connection $\bar{A}$ on the trivial principal bundle $\C\times \CG(\Sigma)$ over $\C$:
	\[
	\bar{A}=d_\C+a_t(z)dt+a_s(z)ds
	\]
	with $a_t,a_s\in \Gamma\big(\C, \Lie \CG(\Sigma)\big)=\Gamma(\C\times \Sigma,i\R)$.

\begin{proposition}\label{P6.2} With the gauged Landau-Ginzburg model $(M(\Sigma,\s), W_\lambda, \CG(\Sigma))$ defined as above, the associated gauged Witten equations over $\C$ with $\vdelta=-*_\Sigma \nu\in \Lie(\CG)$ are equivalent to the Seiberg-Witten equations $(\ref{SWEQ})$ on $\C\times \Sigma$ with $\omega=\nu+ds\wedge \lambda$. Let
	\begin{align*}
P: \C&\to M(\Sigma,\s),\hspace{1em} z\mapsto \big(\cb(z),\cPsi(z)\big)
	\end{align*}
be a smooth map defined on $\C$. Then the  identification $
(A,\Phi)\leftrightarrow (\bar{A},P)
$ is made by taking
	\begin{align*}
	A-A_0&=(\bar{A}-d_\C)+(\cB(z)-\cB_0)=a_t(z)dt+a_s(z)ds+\cb(z),\\
	\Phi&=\cPsi(z) \text{ on } \{z\}\times\Sigma.
	\end{align*}
\end{proposition}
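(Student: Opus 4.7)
The plan is a direct verification: decompose every object along the product structure $\C \times \Sigma$, split each Seiberg-Witten equation into its independent components, and match them term by term against the two gauged Witten equations. With $A = A_0 + a_t dt + a_s ds + \cb(z)$ and $\Phi(z,\cdot) = \cPsi(z)$, the first task is to translate the basic pieces. The infinitesimal gauge action of $\xi \in \Lie(\CG(\Sigma)) = \Gamma(\Sigma, i\R)$ on $M(\Sigma,\s)$ is $\tilde\xi = (-d_\Sigma \xi,\, \xi \cPsi)$, which gives
\[
\nabla^{\bar A}_{\partial_\bullet} P = \bigl(\partial_\bullet \cb - d_\Sigma a_\bullet,\ \partial_\bullet \cPsi + a_\bullet \cPsi\bigr)
\]
for $\bullet \in \{t,s\}$. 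The spinor factor matches $\nabla^A_{\partial_\bullet}\Phi$, while the 1-form factor records the mixed part
\[
F_{A^t}^m = 2\bigl(dt\wedge(\partial_t\cb - d_\Sigma a_t) + ds\wedge(\partial_s\cb - d_\Sigma a_s)\bigr)
\]
of the splitting $F_{A^t} = F_{A^t}^\C + F_{A^t}^\Sigma + F_{A^t}^m$, with $F_{A^t}^\C = 2(\partial_t a_s - \partial_s a_t)\, dt\wedge ds$ and $F_{A^t}^\Sigma = F_{\cB^t}$.

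The second gauged Witten equation $\nabla^{\bar A}_{\partial_t} P + J \nabla^{\bar A}_{\partial_s} P + \nabla H = 0$ is matched with the Dirac equation by rewriting $D_A^+$ along the product structure. Applying $\rho_4(ds) = \rho_4(dt)\rho_3(ds)$ together with $\rho_2(v) = \rho_3(ds)^{-1}\rho_3(v)$ for $v \in T^*\Sigma$, one obtains
\[
D_A^+ \Phi = \rho_4(dt)\bigl[\nabla^A_{\partial_t}\Phi + \sigma_1 \nabla^A_{\partial_s}\Phi + \sigma_1 D^\Sigma_\cB \Phi\bigr],
\]
so $D_A^+\Phi = 0$ is equivalent to the vanishing of the spinor component of the gauged Witten equation, since $J$ acts on the spinor factor as $\sigma_1$ and the spinor part of $\nabla H = J\nabla L$ equals $\sigma_1 D^\Sigma_\cB \cPsi$ by the formula \eqref{F6.5} combined with the Cauchy-Riemann relation \eqref{CRequation}.

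The first Seiberg-Witten equation $\half\rho_4(F_{A^t}^+) - (\Phi\Phi^*)_0 - \rho_4(\omega^+) = 0$ splits into three independent scalar components in $i\su(S^+) \cong \rho_4(i\Lambda^+ T^*(\C\times\Sigma))$. Parameterize self-dual mixed 2-forms by 1-forms $v \in T^*\Sigma$ via $\Omega(v) = dt\wedge v - ds\wedge *_\Sigma v$; then the self-dual mixed part of $F_{A^t}$ is $2\Omega(v)$ with
\[
v = (\partial_t\cb - d_\Sigma a_t) + *_\Sigma(\partial_s\cb - d_\Sigma a_s),
\]
while the mixed part of $\omega^+ = \nu^+ + (ds\wedge\lambda)^+$ is proportional to $\Omega(*_\Sigma\lambda)$. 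Matching the mixed $\rho_4$-coefficients reproduces the 1-form component of the second gauged Witten equation (via the 1-form part of $\nabla L$ in \eqref{F6.5}), while matching the trace coefficient in the span of $\rho_4(dt\wedge ds)$ and $\rho_4(dvol_\Sigma)$ yields $-(*_2 F_{\bar A}) + \mu(\cb,\cPsi) = -*_\Sigma\nu$, i.e.\ the first gauged Witten equation with $\vdelta = -*_\Sigma\nu$. The main obstacle throughout is bookkeeping the sign conventions and normalizations of $\rho_2, \rho_3, \rho_4$ together with the isomorphism $\rho_4: i\Lambda^+ \to i\su(S^+)$; once this dictionary is fixed, the identification is purely algebraic.
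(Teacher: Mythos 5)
Your proposal is correct and follows essentially the same route as the paper: split the connection and spinor along the product $\C\times\Sigma$, identify the Dirac equation with the spinor component of the perturbed $J$-holomorphic equation via $\rho_4(w)=\rho_4(dt)\rho_3(ds)\rho_2(w)$, match the traceless (mixed) part of the curvature equation with the 1-form component, and recover the moment map equation from the trace part. The only caveat is the factor-of-two normalization in your formula for the self-dual mixed part of $F_{A^t}$ (it is $\Omega(v)$ rather than $2\Omega(v)$ with your conventions), which is exactly the bookkeeping you flag and does not affect the argument.
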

\begin{proof}
	The $J$-holomorphic equation in (\ref{generalized-vortex-equation}) in our case becomes
	\[
	\nabla^{\bar{A}}_{\partial_t}\begin{pmatrix}
	\cb\\
	\cPsi
	\end{pmatrix}+
	\begin{pmatrix}
	*_\Sigma & 0\\
	0 & \rho_3(ds)
	\end{pmatrix}\bigg(
	\nabla^{\bar{A}}_{\partial_s}\begin{pmatrix}
	\cb\\
	\cPsi
	\end{pmatrix}+\nabla L
	\bigg)=0.
	\]
	More concretely, it is 
	\begin{align}\label{6.4}
	(\pt \cb-d_\Sigma a_t)+*_\Sigma\big(\ps \cb-d_\Sigma a_s+\rho_2^{-1}(\cPsi\cPsi^*)_\Pi-\lambda\big)&=0,\\
	(\pt \cPsi+a_t\cPsi)+\rho_3(ds)(\ps\cPsi+a_s\cPsi+D^\Sigma_{\cB}\cPsi)&=0.\nonumber
	\end{align}
	The second equation gives rise to the Dirac operator $D_A^+\Phi=0$,
	while the first equation gives the off-diagonal part of the curvature equation:
	\[
	\half \rho_4(F_{A^t}^+)_\Pi-(\Phi\Phi^*)_\Pi-\rho_4(\omega^+)_\Pi=0
	\]
	with $\omega=\nu+ds\wedge \lambda$.
	The diagonal part comes from the moment map equation in (\ref{generalized-vortex-equation}): 
	\[
	-*_\C d_\C(a_tdt+a_s ds)-*_\Sigma d_\Sigma \cb+\frac{i}{2}(|\cPsi_+|^2-|\cPsi_-|^2)-\half *_\Sigma F_{\cB_0^t}=\vdelta.
	\]
	Indeed, $\half F_{A^t}^\Sigma=d_\Sigma\cb+\half F_{\cB_0^t}$ and $\half F_{A^t}^\C=d_\C (a_tdt+a_sds)$ in terms of the decomposition \eqref{E7.2}.  
\end{proof}

\subsection{Stability}
Now we examine the stability of the superpotential $W_\lambda$. Although $W_\lambda$ is not fully $\CG_\C$-invariant when $\lambda\neq 0$, $\nabla L=0$ is a $\CG_\C$-invariant equation on $M(\Sigma,\s)$. 

\begin{proposition}\label{stableW} Given any $H$-surface $\TSigma=(\Sigma,g_\Sigma,\lambda,\nu)$, consider the \spinc structure $\s$ on $\Sigma$ with $c_1(\s)[\Sigma]=2(d-g(\Sigma)+1)$ and the fundamental gauged Landau-Ginzburg model $(M(\Sigma,\s), W_\lambda, \CG(\Sigma))$ constructed above. Then $\Crit(L)$ consists of $\binom{2g-2}{d}$ free $\CG_\C(\Sigma)$-orbits, and each $\CG_\C$-orbit $\SO\subset \Crit(L)$ is closed under the smooth topology of $M(\Sigma,\s)$. Moreover, for any such $\SO$ and any $\vdelta\in \Lie(\CG)$, $\mu^{-1}(\vdelta)\cap \SO$ consists of a unique $\CG(\Sigma)$-orbit and $W_\lambda$ is a Morse-Bott function. 
\end{proposition}
\begin{remark}\label{R7.5} In the sense of Proposition \ref{stableW}, we say that $W_\lambda$ is stable and any $\vdelta\in \Lie(\CG)$ is $W_\lambda$-stable. The only difference from Definition \ref{W-stable} and \ref{stable} is that $W_\lambda$ is only invariant under the smaller group $\CG_\C^e$, while the finiteness of critical orbits is stated for the full $\CG_\C$-action. In particular, for this infinite dimensional example,  the superpotential $W_\lambda$ has infinitely many critical values. 
\end{remark}

\begin{proof}[Proof of Proposition \ref{stableW}] The verification that $W_\lambda$ is Morse-Bott is postponed to Proposition \ref{P7.7}, which concerns only the linearized operator at critical $\CG_\C$-orbits. We focus on the other statements. The equation $\nabla L=0$ reads: 
	\begin{equation}\label{6.6}
\big(\rho_2^{-1}(\cPsi\cPsi^*)_\Pi-\lambda, D^-_{\cB}\cPsi_-, D^+_{\cB}\cPsi_+\big)=0. 
	\end{equation}

	The last two equations in \eqref{6.6} imply that $\cPsi_+$ and $\cPsi_-^*$ are holomorphic with respect to some unitary connections on $L^+$ and $(L^-)^*$ respectively, while the first one says that 
	\begin{equation}\label{E7.9}
	\cPsi_+\otimes \cPsi_-^*=-\sqrt{2}\lambda^{1,0}.
	\end{equation}
	Since $\lambda^{1,0}$ is assumed to have only simple zeros, the zero loci $Z(\cPsi_+)$ and $Z(\cPsi_-^*)$ give rise to  a partition of $Z(\lambda^{1,0})$. Since $|Z(\cPsi_+)|=\deg L^+=d$, there are  
	$
	\binom{2g-2}{d}
	$
	such partitions in total, each of which corresponds to a free $\CG_\C$-orbit in $\Crit(L)$. 
	
	\medskip
	
	Fix a representative $\kappa=(\cB,\cPsi_+,\cPsi_-)$ in a free $\CG_\C$-orbit $\SO\subset \Crit(L)$. To show this $\CG_\C$-orbit $\SO$ is closed under the smooth topology of $M(\Sigma,\s)$, consider a sequence of configurations $\kappa_j= (\cB^j,\cPsi_+^j, \cPsi_-^j)\in \SO$ converging smoothly to some $\kappa_\infty=(\cB^\infty,\cPsi_+^\infty, \cPsi_-^\infty)\in M(\Sigma,\s)$. We have to show $\kappa_\infty\in\SO$. Since $Z(\cPsi_+^j)=Z(\cPsi_+)$ for all $j$ and the convergence is in smooth topology, $Z(\cPsi_+^\infty)=Z(\cPsi_+)$. Since $\Crit(L)$ is closed, this implies that $\kappa_\infty$ and $\kappa$ lie in the same $\CG_\C$-orbit.
	

	Take any smooth function $\vdelta\in \Gamma(\Sigma, i\R)$. To show that $\vdelta$ is $W_\lambda$-stable, we have to find the unique function $\alpha: \Sigma\to \R$ such that 
	\[
	e^{\alpha}\cdot \kappa=(\cB+i*_\Sigma d_\Sigma\alpha, e^{\alpha}\cPsi_+, e^{-\alpha}\cPsi_-),
	\]
lies in the fiber $\mu^{-1}(\vdelta)$, or equivalently, to solve the equation 
	\begin{equation}\label{criticalorbit}
	i\big(\Delta_\Sigma\alpha+\half (e^{2\alpha}|\cPsi_+|^2-e^{-2\alpha}|\cPsi_-|^2)+\frac{i}{2}*_\Sigma F_{\cB^t}\big)=\vdelta,
	\end{equation}
	and show that this solution is unique. The strategy is to use the variational principle to prove that the non-linear map:
	\begin{align*}
	\eta: L^2_{k}(\Sigma, \R)&\to L^2_{k-2}(\Sigma, \R),\\
	\alpha&\mapsto \eta(\alpha)=\Delta_\Sigma\alpha+\half (e^{2\alpha}-1)|\cPsi_+|^2+\half (1-e^{-2\alpha})|\cPsi_-|^2
	\end{align*}
	is a bijection for any $k\geq 2$. It suffices to prove the special case for $k=2$; the rest will follow by elliptic regularity. For any $g\in L^2(\Sigma, \R)$, define an energy functional as 
	\[
	\E_g(\alpha)=\|\eta(\alpha)-g\|^2_2.
	\]
	
	If $\alpha_0$ achieves the infimum $\inf_{\alpha\in L^2_2}\E_g(\alpha)$, let $f=\eta(\alpha_0)-g\in L^2(\Sigma)$. Then, for any tangent vector $v\in L^2_2(\Sigma, \R)$ at the minimizer $\alpha_0$, we have 
	\[
	0=\pt \E_g(\alpha_0+tv)=\langle f, \Delta_\Sigma v+(e^{2\alpha_0}|\cPsi_+|^2+e^{-2\alpha_0}|\cPsi_-|^2) v\rangle. 
	\]
	Since the linearized operator $\Delta_\Sigma +(e^{2\alpha_0}|\cPsi_+|^2+e^{-2\alpha_0}|\cPsi_-|^2)$ is positive and self-adjoint on $L^2_2$, $f=0$. To find such a minimizer $\alpha_0$, let $\{\alpha_n\}\subset L^2_2$ be a sequence that minimizes the energy $\E_g(\alpha)$, i.e.
	\[
	\lim_{n\to\infty} \E_g(\alpha_n)=\inf_{\alpha\in L^2_2}\E_g(\alpha). 
	\]
	
At this point, we need an a priori estimate: 
	\begin{lemma} \label{apriori}For any fixed $g\in L^2(\Sigma)$, there exists a function $\varphi: \R_+\to \R_+$ such that for any $\alpha\in L^2_2$, $\E_g(\alpha)<C$ implies that $\|\alpha\|_{L^2_2}\leq \varphi(C)$. 
	\end{lemma}

 Lemma \ref{apriori} allows us to find a weakly convergent subsequence among $\{\alpha_n\}$ in $L^2_2$. Let $\alpha_0$ be the limit. Then 
\[
\E_g(\alpha_0)\leq \lim_{n\to\infty}\E_g(\alpha_n)=\inf \E_g(\alpha),
\]
so the infimum is attained at $\alpha=\alpha_0$. This proves that $\eta: L^2_2\to L^2$ is surjective and so a solution to the equation \eqref{criticalorbit} exists. If $\eta(\alpha_1+\delta\alpha)=\eta(\alpha_1)$, then 
\[
\Delta_\Sigma\delta\alpha+\half (e^{2\delta\alpha}-1) e^{2\alpha_1}|\cPsi_+|^2+\half (1-e^{-2\delta\alpha})e^{-2\alpha_1}|\cPsi_-|^2=0. 
\]

 By the maximum principle, $\delta\alpha\equiv 0$ on $\Sigma$. This proves that $\eta$ is injective and so the solution to the equation \eqref{criticalorbit} is unique. It remains to verify Lemma \ref{apriori}. This estimate follows from Lemma \ref{L6.10}, in which we set $w_+\equiv |\cPsi_+|$ and $w_-=|\cPsi_-|$. Note that by \eqref{E7.9}, $w_\pm\not\equiv 0$. 
\end{proof}

\begin{lemma}\label{L6.10} Given smooth functions $w_+, w_-: \Sigma\to [0,\infty)$ such that $w_+\not\equiv 0$ and $w_-\not\equiv 0$,
consider the non-linear map
 \begin{align*}
\eta&: L^2_{2}(\Sigma, \R)\to L^2(\Sigma, \R),\hspace{1em} \alpha\mapsto\Delta_\Sigma\alpha+\half (e^{2\alpha}-1)w_+^2+\half (1-e^{-2\alpha})w_-^2.
	\end{align*}
	Then for any $g\in L^2(\Sigma)$, there exists a function $\varphi: \R_+\to \R_+$ such that for any $\alpha\in L^2_2$, $\|\eta(\alpha)-g\|_2^2<C$ implies that $\|\alpha\|_{L^2_2}\leq \varphi(C)$.
\end{lemma}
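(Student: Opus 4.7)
The plan is three-fold: first derive an $L^2_1$-bound on $\alpha$ by testing against the positive and negative parts separately; next upgrade to an $L^2$-bound on $e^{\pm 2\alpha}$ via the Moser--Trudinger inequality; and finally close the argument with standard elliptic regularity to bound $\|\alpha\|_{L^2_2}$.

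First I would split $\alpha=\alpha_+-\alpha_-$ with $\alpha_\pm = \max(\pm\alpha,0)\geq 0$. Pairing $\eta(\alpha)-g$ with $\alpha_+$ in $L^2(\Sigma)$ and integrating the Laplacian by parts (positive convention for $\Delta_\Sigma$), the two non-linear terms are pointwise non-negative on the support of $\alpha_+$. The elementary inequality $(e^{2x}-1)x\geq 2x^2$ for $x\geq 0$ then yields
\[
\|\nabla \alpha_+\|_2^2 + \int_\Sigma w_+^2\alpha_+^2 \;\leq\; (C^{1/2}+\|g\|_2)\|\alpha_+\|_2.
\]
Since $w_+\not\equiv 0$ is continuous, there exist an open set $U_+\subset\Sigma$ and $\epsilon_+>0$ with $w_+|_{U_+}\geq \epsilon_+$; a Rellich compactness argument gives a Poincar\'e-type inequality $\|u\|_2^2\leq K\bigl(\|\nabla u\|_2^2 + \int_{U_+} u^2\bigr)$ on $L^2_1(\Sigma)$. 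Combining the two estimates absorbs $\|\alpha_+\|_2$ on the left and gives an $L^2_1$-bound on $\alpha_+$ depending only on $C, g, w_+, \Sigma$. The symmetric argument with $\alpha_-$ and $w_-$ completes Step 1 with $\|\alpha\|_{L^2_1}\leq \varphi_1(C)$.

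Next, write $\alpha=\bar\alpha+\tilde\alpha$ with $\int_\Sigma\tilde\alpha=0$, so that $|\bar\alpha|$ and $M:=\|\nabla\tilde\alpha\|_2$ are both controlled by Step~1. The Moser--Trudinger inequality on the closed surface $\Sigma$ gives $\int_\Sigma e^{\gamma\tilde\alpha^2}\leq C(\gamma,M)$ whenever $\gamma M^2<4\pi$. Using $4|\tilde\alpha|\leq \gamma\tilde\alpha^2+4/\gamma$ and choosing $\gamma$ just below $4\pi/M^2$ produces $\int_\Sigma e^{4|\tilde\alpha|}\leq \varphi_2(C)$; hence $e^{\pm 2\alpha}=e^{\pm 2\bar\alpha}e^{\pm 2\tilde\alpha}\in L^2(\Sigma)$ with norm bounded by a function of $C$. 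Since $w_\pm\in L^\infty(\Sigma)$, the full non-linearity
\[
f(\alpha):=\tfrac12(e^{2\alpha}-1)w_+^2+\tfrac12(1-e^{-2\alpha})w_-^2
\]
satisfies $\|f(\alpha)\|_{L^2(\Sigma)}\leq \varphi_3(C)$. To conclude, rewrite the defining identity as $\Delta_\Sigma\alpha=(\eta(\alpha)-g)+g-f(\alpha)$; the right-hand side is bounded in $L^2$ by a function of $C$, and standard elliptic regularity on the closed surface $\Sigma$, together with the $L^2$-bound from Step 1, produces the desired $\|\alpha\|_{L^2_2(\Sigma)}\leq \varphi(C)$.

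The main obstacle is Step 2: in dimension two, $L^2_1$ embeds into every $L^p$ with $p<\infty$ but not into $L^\infty$, so the exponential non-linearity sits exactly at the borderline of Sobolev embeddings. Moser--Trudinger, rather than a plain Sobolev embedding, is what furnishes the required integrability of $e^{\pm 2\alpha}$; the delicate point is that the admissible constant $\gamma$ must lie below the critical threshold $4\pi/M^2$, which deteriorates as the $L^2_1$-bound $M$ grows, but still yields a definite $L^2$-bound on $e^{\pm 2\alpha}$ for each fixed $C$.
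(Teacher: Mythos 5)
Your proof is correct and follows essentially the same route as the paper's: an $L^2_1$ bound obtained by pairing against the positive and negative parts of $\alpha$ and using that $w_\pm$ are somewhere positive, followed by Trudinger's inequality to control the exponential non-linearity in $L^2$ and elliptic regularity to reach $L^2_2$. The only cosmetic differences are that the paper pairs with $\alpha$ and invokes Kato's inequality rather than testing against $\alpha_\pm$ directly, and cites Trudinger's inequality as a black box where you spell out the Moser--Trudinger computation.
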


\begin{proof}[Proof of Lemma \ref{L6.10}] By the triangle inequality,
	\[
	\|\eta(\alpha)\|_2\leq \|\eta(\alpha)-g\|_2+\|g\|_2\leq \sqrt{C}+\|g\|_2.
	\]
	So it suffices to prove this lemma for $g=0$. Note that 
	\begin{equation}\label{E6.11}
	\|\eta(\alpha)\|_2\|\alpha\|_2 \geq\langle \eta(\alpha),\alpha\rangle_{L^2(\Sigma)}=\|d\alpha\|_2^2+\half \int_{\Sigma}  (e^{\alpha}-e^{-\alpha})\alpha (e^{\alpha}w_+^2+e^{-\alpha}w_-^2). 
	\end{equation}
	Let $\alpha_+\colonequals\half (\alpha+|\alpha|)$ be the positive part of $\alpha$. By \eqref{E6.11} and Kato's inequality, 
	\begin{align*}
	\|\eta(\alpha)\|_2\|\alpha\|_2&\geq \|d\alpha_+\|_2^2+\int_{\Sigma} \alpha_+^2w_+^2\geq c_+\|\alpha_+\|_{L^2_1(\Sigma)}^2
	\end{align*}
	for some $c_+>0$, because the weight function $w_+$ is somewhere positive. The negative part $\alpha_-\colonequals\half (\alpha-|\alpha|)$ of $\alpha$ is estimated in a similar way. As a result,
	\[
	\|\eta(\alpha)\|_2\|\alpha\|_2\geq 2c\big(\|\alpha_+\|_{L^2_1(\Sigma)}^2+\|\alpha_-\|_{L^2_1(\Sigma)}^2\big)\geq c\|\alpha\|_{L^1_2(\Sigma)}^2
	\]
	 for some $c>0$; so $	 \|\alpha\|_{L^2_1(\Sigma)}\leq c^{-1}\|\eta(\alpha)\|_2$. To estimate $\|\Delta\alpha\|_2^2$, note that the map
	 \[
	 \alpha\mapsto \eta(\alpha)-\Delta\alpha=\half (e^{2\alpha}-1)w_+^2+\half (1-e^{-2\alpha})w_-^2
	 \]
	 is continuous from $L^2_1(\Sigma)$ to $L^2(\Sigma)$ by Trudinger's inequality \cite[Chapter 13, Proposition 4.2]{PDEIII} or \cite[Proposition A.3]{W18}. As a result, 
	 \[
	 \|\Delta\alpha\|_2\leq \|\eta(\alpha)\|_2+\|\eta(\alpha)-\Delta\alpha\|_2\leq \|\eta(\alpha)\|_2+\varphi_1\big(\|\alpha\|_{L^2_1}\big)\leq \varphi\big(\|\eta(\alpha)\|_2\big)
	 \]
	 for some functions $\varphi, \varphi_1: \R_+\to \R_+$. This completes the proof of Lemma \ref{L6.10}.
\end{proof}


\subsection{The Morse-Bott Condition} In this subsection, we verify that $\re(W_\lambda)$ is a Morse-Bott function on $M(\Sigma,\s)$, which completes the proof of Proposition \ref{stableW}.

\begin{proposition}\label{P7.7} Given any $H$-surface $\TSigma=(\Sigma,g_\Sigma,\lambda,\nu)$, consider the \spinc structure $\s$ on $\Sigma$ with $c_1(\s)[\Sigma]=2(d-g(\Sigma)+1)$ and the fundamental gauged Landau-Ginzburg model $(M(\Sigma,\s), W_\lambda, \CG(\Sigma))$ constructed in Subsection \ref{Subsec6.2}. Then $L=\re(W_\lambda)$ is a Morse-Bott function. In particular, the Morse-Bott estimate \eqref{morse-bott} holds in our case. 
\end{proposition}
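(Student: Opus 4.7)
The plan is to show that at every critical point $\kappa = (\cB, \cPsi_+, \cPsi_-) \in \Crit(L)$, the kernel of the Hessian $\Hess_\kappa L$ is precisely $T_\kappa(\CG_\C \cdot \kappa)$, the tangent space to the critical $\CG_\C$-orbit. Since $\lambda$ is harmonic, $\nabla L = 0$ is preserved by the full complex gauge group $\CG_\C$, so $\CG_\C \cdot \kappa \subset \Crit(L)$ and the inclusion $T_\kappa(\CG_\C \cdot \kappa) \subset \ker \Hess_\kappa L$ is automatic from $\CG_\C$-equivariance of $\nabla L$; the content of the proposition is the reverse inclusion.

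First I would verify that $W_\lambda : M \to \C$ is holomorphic with respect to the complex structure $J$ from Subsection \ref{Subsec6.2}. The unperturbed functional $W_0$ is holomorphic because $D^+_\cB$ is $\C$-linear with respect to $J = (*_\Sigma, \rho_3(ds))$, and the correction $-\langle \cb, \lambda\rangle_{h_M}$ is $\C$-linear in $\cb$ by the Hermitian property of $h_M$. This yields the infinite-dimensional Cauchy--Riemann identity \eqref{CRequation}, and differentiating once more produces the anti-commutativity \eqref{anti-commutivity}, namely $\Hess_\kappa L \circ J + J \circ \Hess_\kappa L = 0$. Since $L$ is $\CG$-invariant, $\bd_\kappa(\Lie(\CG)) \subset \ker \Hess_\kappa L$; the anti-commutativity then promotes this to
\[
T_\kappa(\CG_\C \cdot \kappa) \;=\; \bd_\kappa(\Lie(\CG)) \,\oplus\, J\bd_\kappa(\Lie(\CG)) \;\subset\; \ker \Hess_\kappa L.
\]

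Next I would establish the $L^2$-orthogonal decomposition $T_\kappa M = \ker d\mu_\kappa \oplus J\bd_\kappa(\Lie(\CG))$, using that $d\mu_\kappa^* = -J\bd_\kappa$ by the convention \eqref{convention}; the injectivity and closed range of $\bd_\kappa$ follow from irreducibility of $\kappa$, which holds because $\cPsi_+ \otimes \cPsi_-^* = -\sqrt{2}\,\lambda^{1,0}$ is nonzero. It then suffices to prove $\ker \Hess_\kappa L \cap \ker d\mu_\kappa \subset \bd_\kappa(\Lie(\CG))$. Linearizing $\nabla L = 0$ from \eqref{F6.5} together with the Coulomb gauge-fixing condition produces a first-order elliptic system on $\Sigma$ coupling $D^\pm_\cB$ on the spinor variations with $d$-type operators on $\cvb$. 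On the Coulomb slice I would show this system is injective by exploiting the structure of $\kappa$: both $\cPsi_+$ and $\cPsi_-^*$ are holomorphic sections whose zero sets partition the $2g-2$ simple zeros of $\lambda^{1,0}$, so at least one of $\cPsi_\pm$ is nonzero on every open subset of $\Sigma$. A nontrivial element $(\cvb, \cvpsi_+, \cvpsi_-)$ of the transverse kernel would force a holomorphic variation of $(\cPsi_+, \cPsi_-)$ constrained by the linearized moment-map identity \eqref{F6.4}, which then forces $\cvb = 0$ and $\cvpsi_\pm = 0$ by unique continuation.

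For the Morse-Bott estimate \eqref{morse-bott}, I would combine the transverse non-degeneracy just established with elliptic regularity in an appropriate $L^2_k$ Sobolev completion of $M$: the quadratic Taylor expansion of $W_\lambda$ along the complement of $T_\kappa(\CG_\C \cdot \kappa)$ is bounded below by a multiple of the squared norm, while Fredholm invertibility of the transverse Hessian gives $\|v\|^2 \leq C|\nabla H|^2$ on the same slice, so the quadratic inequality $|W(x) - W(q)| \leq C|\nabla H(x)|^2$ holds uniformly on a neighborhood of $O_*$. The main obstacle is the transverse injectivity step: writing the coupled linearization explicitly as a Dirac-type operator on $\Sigma$ and executing the unique continuation argument that exploits the disjoint-simple-zero structure of $(\cPsi_+, \cPsi_-^*)$; once this single Fredholm invertibility statement is in hand, the remainder assembles formally from the K\"{a}hler/gauge-theoretic package.
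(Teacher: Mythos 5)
Your overall architecture matches the paper's: both reduce the Morse--Bott claim to injectivity of the linearized operator on a slice transverse to the complexified gauge orbit (the paper packages this as invertibility of the extended operator $\widehat{D}_\kappa$ via Lemma \ref{L4.6}, which is equivalent to your decomposition $T_\kappa M=\ker d\mu_\kappa\oplus J\bd_\kappa(\Lie(\CG))$ together with Coulomb gauge). The formal K\"ahler identities, the orthogonality of the gauge directions, and the passage from transverse non-degeneracy to the estimate \eqref{morse-bott} are all as you describe.

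The gap is in the one step you yourself flag as the main obstacle: the transverse injectivity. You propose to close it by ``unique continuation'' for a Dirac-type system, using only that one of $\cPsi_\pm$ is nonzero on every open subset, and you cite the linearized moment map \eqref{F6.4} as the constraint on the spinor variation. Neither of these is the mechanism that works. The equation that does the heavy lifting is the linearization of the \emph{first} component of $\nabla L=0$ in \eqref{F6.5}, namely the purely algebraic constraint $(\delta\cPsi\,\cPsi^*+\cPsi\,\delta\cPsi^*)_\Pi=0$ of \eqref{E9.5}. Because the zero sets of $\cPsi_+$ and $\cPsi_-^*$ partition the simple zeros of $\lambda^{1,0}$, the pair $(\cPsi_+,\cPsi_-)$ is nowhere vanishing \emph{pointwise} (not merely on open sets), and the algebraic identity then forces $\delta\cPsi=(h\cPsi_+,-\bar{h}\cPsi_-)$ for a globally defined function $h=\alpha+i\beta$. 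Feeding this into the linearized Dirac equation \eqref{E9.6} and using $D^\Sigma_{\cB}\cPsi=0$ pins down $\delta\cb=i(*_\Sigma d\alpha-d\beta)$, so the entire kernel element is an infinitesimal $\CG_\C$-action; the gauge-fixing and linearized moment-map conditions \eqref{E9.3}--\eqref{E9.4} then kill $h$. No unique continuation is needed, and it is not clear how unique continuation alone could succeed, since a transverse kernel element is not known a priori to vanish anywhere. As written, your argument leaves the decisive step unproved and attributes the key constraint to the wrong equation.
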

\begin{proof}  Since $M(\Sigma,\s)$ is a complex linear space, the tangent space at any $\kappa=(\cB,\cPsi)\in M(\Sigma,\s)$ is identified with 
	\[
	\SH\colonequals \Omega^1(\Sigma, i\R)\oplus \Gamma(\Sigma, L^+\oplus L^-).
	\]
	Let $\SH_k$ be the completion of $\SH$ with respect to the $L^2_{k,\cB}$ norm:
	\[
	\|(\cvb,\cvpsi)\|^2_{L^2_{k,\cB}}=\sum_{0\leq j\leq k} \int_{\Sigma} |\nabla^k \cvb|^2+|\nabla_{\cB}^k\cvpsi|^2. 
	\]
	
	This family of norms on the tangent bundle of $M(\Sigma,\s)$ is equivariant under the gauge action of $\CG(\Sigma)$. The Lie algebra $\g$ of $\CG(\Sigma)$ is $\Gamma(\Sigma, i\R)$ and let $\g_k=L^2_k(\Sigma, i\R)$ be its $L^2_k$-completion for $k=0,1$. To prove Proposition \ref{P7.7}, by Lemma \ref{L4.6}, it suffices to verify that the extended operator
	\[
	\widehat{D}_{\kappa}=
	\begin{pmatrix}
	0 & 0 & \langle \nabla\mu, \cdot \rangle_{T M}\\
	0 & 0 &  \langle J\nabla\mu, \cdot \rangle_{TM}\\
	\langle \nabla\mu, \cdot \rangle_\g &  \langle J\nabla\mu, \cdot \rangle_\g & \Hess L
	\end{pmatrix}: \g_1\oplus \g_1\oplus \SH_1\to \g_0\oplus \g_0\oplus \SH_0,
	\]
	is invertible for any $\kappa=(\cB,\cPsi)\in \Crit(L)$. Since $\widehat{D}_{\kappa}$ is $L^2$-self-adjoint and Fredholm, it suffices to show that $\widehat{D}_{\kappa}$ is injective. Notice that the images of 
	\[
\langle \nabla\mu, \cdot\rangle_\g,\ \langle J\nabla\mu, \cdot\rangle_\g,\ D_{\kappa}\colonequals \big(\langle \nabla\mu, \cdot \rangle_{TM}, \langle J\nabla\mu, \cdot \rangle_{TM}, \Hess L\big)
	\]
	are pairwise $L^2$-orthogonal in $\g_0\oplus \g_0\oplus \SH_0$. The first two are injective, because the $\CG_\C$-action is free at $\kappa$. We focus on the last operator 
	\[
	D_{\kappa}: \SH_1\to \g_0\oplus \g_0\oplus \SH_0. 
	\]
	
	Suppose $v= (\delta \cb, \delta\cPsi)\in \ker D_{\kappa}$, then the tangent vector $v$ solves the following equations by \eqref{F6.4} and \eqref{6.6}:
	\begin{align}
	-*_\Sigma d_\Sigma\cvb+i\re\langle i\cvpsi, \rho_3(ds)\cPsi\rangle&=0,\label{E9.3}\\
	d^*_\Sigma\cvb+i\re\langle i\cvpsi,\cPsi\rangle&=0,\label{E9.4}\\
	(\delta\cPsi \cPsi^*+\cPsi\delta\cPsi^*)_\Pi&=0,\label{E9.5}\\
	D^\Sigma_{\cB}\delta\cPsi+\rho_2(\delta \cb)\cPsi&=0.\label{E9.6}
	\end{align}
	
	The key observation is that the third equation \eqref{E9.5} is an algebraic constraint on the spinor $\delta\cPsi$. Since $\cPsi_+$ and $\cPsi_-$ do not have common zeros by \eqref{E7.9}, the spinor $\cPsi=(\cPsi_+, \cPsi_-)$ is nowhere vanishing on $\Sigma$ and so by \eqref{E9.5},
	\begin{equation}\label{E9.7}
	\delta \cPsi=(h\cPsi_+,-\bar{h}\cPsi_-)=i\beta \cPsi+(\alpha \cPsi_+,-\alpha \cPsi_-),
	\end{equation}
	for a complex valued function $h=\alpha+i\beta: \Sigma\to \C$. By \eqref{E9.6}\eqref{E9.7} and the fact that $D_{\cB}^\Sigma\cPsi=0$, we have 
	\[
	\rho_2(dh+\delta\cb)\cPsi_+=0,	\rho_2(-d\bar{h}+\delta\cb)\cPsi_-=0.
	\]
By the non-vanishing property of $\cPsi$, this means
	\[
	\delta \cb= i(*_\Sigma d\alpha-d\beta).
	\]
	In other words, $(\delta\cb,\delta\cPsi)$ is generated by the linearized action of $\CG_\C$ at $\kappa$. By the gauge fixing condition \eqref{E9.4}, $\beta\equiv 0$. By $\eqref{E9.3}$, $\alpha\equiv 0$.
\end{proof}

\subsection{A Remark on the 1-Form $\lambda$} For any $H$-surface $\TSigma=(\Sigma, g_\Sigma,\lambda,\mu)$, the holomorphic 1-form $\lambda^{1,0}$ is assumed to have $(2g(\Sigma)-2)$ simple zeros. As we shall see in this subsection, Proposition \ref{P7.7} does not hold if this assumption is dropped. 

 To this end, let $\SL_i, 1\leq i\leq 3$ be holomorphic line bundles over $\Sigma$ and $\sigma_i\neq 0\in H^0(\Sigma, \SL_i)$ be holomorphic sections. Suppose that $\deg \SL_3>0$ and $\sigma_1, \sigma_2$ do not have common zeros. Consider a solution $\kappa=(\cB,\cPsi_+,\cPsi_-)$ to the equation $\nabla L=0$ such that 
	\[
	-\sqrt{2}\lambda^{1,0}=\sigma_1\sigma_2\sigma_3^2,\ \cPsi_+=\sigma_1\sigma_3 \text{ and } \cPsi_-=(\sigma_2\sigma_3)^*.
	\]

	The goal is to find a non-zero vector $v=(\delta b,\delta \cPsi)\in \ker D_\kappa\subset  T_\kappa M(\Sigma,\s)$, i.e. a solution to the equations \eqref{E9.3}--\eqref{E9.6}. As in the proof of Proposition \ref{P7.7}, the algebraic equation \eqref{E9.5} implies that 
	\[
	(	\delta\Psi_+,	\delta\Psi_-)=\big(\sigma_1\sigma_4, -(\sigma_2\sigma_4)^*\big),
	\]
	for some smooth section $\sigma_4\in C^\infty(\Sigma, \SL_3)$. If there is a harmonic 1-form $\delta \cb^h$ on $\Sigma$ such that 
	\begin{equation}\label{E7.18}
	\bpartial_{\SL_3}\sigma_4+(\delta \cb^h)^{0,1}\otimes \sigma_3=0,
	\end{equation}
	then the tangent vector $v_0\colonequals \big(\delta \cb^h, \sigma_1\sigma_4, -(\sigma_2\sigma_4)^*\big)$ solves \eqref{E9.5} and \eqref{E9.6}. To fulfill \eqref{E9.3} and \eqref{E9.4}, we replace $v_0$ by 
	\[
	v_0+\big(i(*_\Sigma d\alpha-d\beta),(\alpha+i\beta)\cPsi_+,(-\alpha+i\beta)\cPsi_-\big)
	\]
	for some suitable functions $\alpha,\beta:\Sigma\to \R$. Thus it suffices to construct the pair $(\delta \cb^h, \sigma_4)$ satisfying the equation \eqref{E7.18}. To this end, we find a harmonic form $\delta \cb^h\neq 0$ such that 
	$(\delta \cb^h)^{0,1}\otimes \sigma_3$ is $L^2$-orthogonal to $\im (\bpartial_{\SL_3})^\perp.$ Alternatively, we show that the composition 
	\begin{equation}\label{EC.2}
	\begin{array}{rcl}
	\Omega_h^1(\Sigma, i\R)\to& C^\infty(\Sigma,\Lambda^{1,0}\Sigma\otimes \SL_3^*)&\to H^0(\Sigma,\Lambda^{1,0}\Sigma\otimes \SL_3^*)\\
	\delta b^h\mapsto& (\delta b^h)^{1,0}\otimes \sigma_3^*&\mapsto\Pi_3((\delta b^h)^{1,0}\otimes \sigma_3^*),
	\end{array}
	\end{equation}
	is not injective, where $\Pi_3$ denotes the $L^2$-orthogonal projection onto $H^0(\Sigma,\Lambda^{1,0}\Sigma\otimes \SL_3^*)$. By the Riemann-Roch theorem and Serre duality, 
	\begin{align*}
	\dim_\C H^0(\Sigma,\Lambda^{1,0}\Sigma\otimes \SL_3^*)&=g-1-\deg \SL_3+\dim_\C H^0(\Sigma, \SL_3)\\
	&\leq g=\dim_\C \Omega_h^1(\Sigma, i\R).
	\end{align*}
	The equality is only achieved when $\dim_\C H^0(\Sigma, \SL_3)=\deg \SL_3+1$. However, this can only happen if $\deg \SL_3=0$ or $g(\Sigma)=0$. As a result, the composition \eqref{EC.2} is not injective. It suffices to take $\delta b^h$ that lies in the kernel of \eqref{EC.2}. 

\section{Point-like Solutions Are Trivial}\label{Sec7}

With all machineries developed so far, we are ready to study the monopole equations on $\C\times \Sigma$. By \cite{W18}, finite energy solutions to the unperturbed equations on $\C\times\Sigma$ are non-trivial in general and can be classified algebraically. In our case, we show the other extreme: 
\begin{theorem}\label{pointlike} Given any $H$-surface $\TSigma=(\Sigma,g_\Sigma,\lambda,\nu)$, consider the \spinc structure $\s$ on $\Sigma$ with $c_1(\s)[\Sigma]=2(d-g(\Sigma)+1)$. For  the fundamental Landau-Ginzburg model $(M(\Sigma,\s),W_\lambda,\CG(\Sigma))$ constructed in Subsection \ref{Subsec6.2}, let $(\bar{A}, P)$ be a solution to the gauged Witten equations on the complex plane $\C$ with $\vdelta=-*_\Sigma \nu$: 
	\begin{equation}\label{SWEQ2}
	\left\{ \begin{array}{r}
		-*F_{\bar{A}}+\mu=\vdelta,\\
	\nabla^{\bar{A}}_{\partial_t}P +J\nabla^{\bar{A}}_{\partial_s}P+\nabla H=0,
	\end{array}
	\right.
	\end{equation}
	where $P: \C\to M(\Sigma,\s)$ is a smooth map and $\bar{A}$ is a smooth connection on the trivial $\CG(\Sigma)$-bundle over $\C$. If the analytic energy 
	\begin{equation}\label{7.2}
	\E_{an}(\bar{A},P; \C)=\int_{\C} |\nabla_{\bar{A}} P|^2+|\nabla H|^2+|F_{\bar{A}}|^2+|\vdelta-\mu|^2
	\end{equation}
	is finite, then $(\bar{A},P)$ is gauge equivalent to a constant configuration and so $\E_{an}(\bar{A},P; \C)=0$. 	
\end{theorem}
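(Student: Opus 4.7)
My plan is to mirror the four-step argument used in the proof of Theorem \ref{point-like-solutions}, with careful adaptations for the infinite-dimensional target and, crucially, for the fact that no boundary condition at infinity is imposed here.

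\textbf{Step 1 (Holomorphicity of $W_\lambda\circ P$).} Since $M(\Sigma,\s)$ is a complex affine space, the complex structure $J=(*_\Sigma,\rho_3(ds))$ is parallel, and $W_\lambda:M\to\C$ is holomorphic with respect to $J$, the Cauchy--Riemann identity $\nabla L+J\nabla H=0$ still holds. Combined with the second line of \eqref{SWEQ2}, a direct computation reproduces the analogue of Lemma \ref{dbarG}:
\[
\bpartial(W_\lambda\circ P)(z) \;=\; -i\,\bigl\|\nabla H(P(z))\bigr\|^{2}_{L^{2}(\Sigma)}.
\]
Writing $W_\lambda\circ P - W_\lambda(\kappa_*) = U+iV$ and introducing $K(z)=\int_0^z V\,dt+U\,ds$ as in the proof of Lemma \ref{vanishing-lemma}, one obtains a well-defined real function on $\C$ satisfying $\Delta_{\C}K = \|\nabla H\|^{2}_{L^{2}(\Sigma)}\geq 0$.

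\textbf{Step 2 (Decay at infinity).} The new difficulty, relative to Theorem \ref{point-like-solutions}, is that $\Crit(L)$ consists of $\binom{2g-2}{d}$ free $\CG_\C(\Sigma)$-orbits rather than a single one, and only the finite-energy hypothesis \eqref{7.2} is assumed. I would first use \eqref{7.2} to extract a sequence of annuli $A_{r_n}=\{r_n\leq |z|\leq 2r_n\}$, $r_n\to\infty$, on which the energy density tends to zero. After a suitable Coulomb gauge fixing over $A_{r_n}\times\Sigma$, elliptic regularity for the gauge-fixed Seiberg--Witten equations yields uniform $L^2_k$-control on $P$ over these annuli, so $P$ is $C^0$-close to $\Crit(L)$ there. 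Because the finitely many free $\CG_\C$-orbits are disjoint and $P$ is continuous on the connected complement of a large disk, a single orbit $\SO_*\subset\Crit(L)$ is approached at infinity. Upgrading this to an exponential decay bound is then automatic from the infinite-dimensional version of Theorem \ref{exponential-decay}.

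\textbf{Step 3 (Vanishing of $\nabla H$).} On a $\CG(\Sigma)$-invariant neighborhood of $\SO_*$, the Morse--Bott inequality \eqref{morse-bott} of Proposition \ref{P7.7} reads
\[
|W_\lambda(\kappa)-W_\lambda(\kappa_*)|\;\leq\; C\,\|\nabla H(\kappa)\|^{2}_{L^{2}(\Sigma)}.
\]
Plugging this into the circle-integral estimate \eqref{4.6} for $K$ (valid for $|z|\geq R$ large, by Step 2) and letting $r\to\infty$, the finite energy $\int_\C \|\nabla H\|^{2}<\infty$ forces $E(r_0)\equiv 0$, whence $\Delta_\C K\equiv 0$ and $\nabla H(P(z))\equiv 0$ first outside a large disk, then on all of $\C$ by unique continuation and \eqref{F6.5}. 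Therefore $\nabla L\equiv 0$ and $P$ takes values in the single free $\CG_\C(\Sigma)$-orbit $\SO_*$.

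\textbf{Step 4 (Maximum principle on $\C\times\Sigma$).} Fix a representative $\kappa_0=(\cB,\cPsi_+,\cPsi_-)\in\SO_*$. Then $P(z)=e^{\alpha(z)}u(z)\cdot\kappa_0$ for a unique real function $\alpha:\C\times\Sigma\to\R$ and a gauge transformation $u\in\CG(\C\times\Sigma)$, and we may assume $u\equiv e$ after a gauge change. Substituting this parametrization into the moment map equation $-*F_{\bar A}+\mu=\vdelta$ and using Proposition \ref{stableW} to absorb the $\Sigma$-directional operator, the system reduces (as in the derivation of \eqref{criticalorbit}) to
\[
-\Delta_{\C\times\Sigma}\alpha \;+\; \tfrac{1}{2}\bigl(e^{2\alpha}|\cPsi_+|^{2}-e^{-2\alpha}|\cPsi_-|^{2}\bigr)\;=\;0
\]
with $\alpha(z,\cdot)\to 0$ as $|z|\to\infty$ by Step 2. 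Since $\SO_*$ is a free orbit, both $|\cPsi_+|$ and $|\cPsi_-|$ are somewhere nonzero on $\Sigma$, so the maximum principle of Appendix \ref{max}, applied exactly as in the end of the proof of Theorem \ref{point-like-solutions}, forces $\alpha\equiv 0$. Hence $(\bar A,P)$ is gauge-equivalent to the constant configuration $(d,\kappa_0)$ and $\E_{an}(\bar A,P;\C)=0$.

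The main obstacle I expect is Step 2: passing from the \emph{integral} finite-energy hypothesis to a \emph{uniform} pointwise decay onto a single critical orbit, given that $\Crit(L)$ is disconnected and the infinite-dimensional target complicates the usual removal-of-singularity arguments. This is where Coulomb gauge fixing on annular shells, interior elliptic regularity for the Seiberg--Witten equations, and the spatial exponential-decay result all have to be combined.
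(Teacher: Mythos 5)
Your proposal is correct and follows essentially the same route as the paper's own proof: establish that $P$ converges to a single critical $\CG_\C(\Sigma)$-orbit at infinity via small local energy, the compactness theorem and connectedness of $\C\setminus B(0,R)$; run the argument of Lemma \ref{vanishing-lemma} (holomorphicity of $W_\lambda\circ P$ plus the Morse--Bott estimate) to force $\nabla H\equiv 0$, hence $\nabla L\equiv 0$ by stability; parametrize $P$ by the complex gauge orbit; and finish with the maximum principle on the reduced scalar equation. Two small slips to fix: decay of the energy density on a sparse sequence of annuli does not by itself let you invoke connectedness of $\{|z|\ge R\}$ to pin down a single orbit --- you need smallness of the local energy on every translate $\overline{B(z_0,10)}$ as $|z_0|\to\infty$, which does follow from the finite-energy hypothesis and is what the paper uses (the exponential decay of Theorem \ref{T9.1} is not needed at all); and your displayed equation in Step 4 is mis-normalized --- since $\kappa_0\in\mu^{-1}(\vdelta)$, subtracting the moment map equation at $\kappa_0$ gives
$(\Delta_\C+\Delta_\Sigma)\alpha+\tfrac12(e^{2\alpha}-1)|\cPsi_+|^2+\tfrac12(1-e^{-2\alpha})|\cPsi_-|^2=0$
as in \eqref{7.5}, and the maximum-principle step uses that the pair $(\cPsi_+,\cPsi_-)$ is \emph{nowhere} simultaneously vanishing, not merely that each component is somewhere nonzero.
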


Recall from Proposition \ref{P6.2} that each $(\bar{A},P)$ is identified with a Seiberg-Witten configuration $(A,\Phi)$ on $\C\times\Sigma$, where $A$ is a \spinc connection and $\Phi$ is a spinor. Then the total energy \eqref{7.2} coincides with the classical analytic energy of $(A,\Phi)$ for the monopole equations. The actual statement is slightly stronger:

 \begin{lemma}[{\cite[Lemma 2.1]{W18}}]\label{D7.3} For any region $\Omega\subset \C$ and any configuration $\gamma=(\bar{A}, P)=(A,\Phi)$, define
	\begin{align*}
	\E_{an}(\bar{A}, P; \Omega)&\colonequals \int_{\Omega} |\nabla_{\bar{A}} P|^2+|\nabla H|^2+|F_{\bar{A}}|^2+|\vdelta-\mu|^2, \\
	\E_{an}(A,\Phi; \Omega)&\colonequals\int_{\Omega}\int_{\Sigma} \frac{1}{4}|F_{A^t}|^2+|\nabla_A\Phi|^2+|(\Phi\Phi^*)_0+\rho_4(\omega^+)|^2+\frac{K_\Sigma}{2}|\Phi|^2+\re\langle F^\Sigma_{A^t}, \vdelta\rangle, \qedhere
	\end{align*}
	where $K_\Sigma$ is the Gaussian curvature of $\Sigma$ and  $\omega=ds\wedge\lambda-\vdelta dvol_\Sigma$.
	Then 
	\[
	\E_{an}(\bar{A}, P; \Omega)=\E_{an}(A,\Phi; \Omega).
	\]
 We refer to either of them as the local energy functional of $\gamma$ over $\Omega$. 
\end{lemma}
\begin{proof}[Proof of Theorem \ref{pointlike}] We follow the proof of Theorem \ref{point-like-solutions}. Let $\SO_1,\cdots, \SO_k$ be the free $\CG_\C$-orbits in $\Crit(L)$ and $\kappa_j=(\cb^j,\cPsi^j_+,\cPsi^j_-)$ be a representative in $\SO_j\cap \mu^{-1}(\vdelta)$ for each $1\leq j\leq k$. Define a family of metrics on the quotient configuration space $M(\Sigma,\s)/\CG(\Sigma)$ using $L^2_l$-Sobolev norms:
	\[
	d_l([\kappa_1'],[\kappa_2'])\colonequals\inf_{g\in \CG} \|\kappa_1'-g\cdot \kappa_2'\|_{L^2_l}, \forall \kappa_1',\kappa_2'\in M(\Sigma,\s), l\geq0.
	\]

	We first verify the condition of Theorem \ref{point-like-solutions} by showing that	for some $1\leq j\leq k$
	\begin{equation}\label{7.3}
	d_l([P(t,s)], [\kappa_j])\to 0 \text{ as }z=t+is\to\infty,
	\end{equation}
and the convergence holds for all $l\geq 0$. Let $n=(n_1,n_2)\in \Z\times\Z\subset \C$ and $B(0,R)\subset \C$ be the disk of radius $R$. Define 
	\[
	(A_n, \Phi_n)(z,x)\colonequals (A,\Phi)(z+n, x), \forall z\in \Omega\colonequals \overline{B(0,10)} \text{ and }x\in\Sigma.
	\]
	Since the total energy $\E_{an}(A,\Phi; \C)=\E_{an}(\bar{A},P;\C)$ is finite, $\{(A_n,\Phi_n)\}$ is a family of solutions on $\Omega\times\Sigma$ with $\E_{an}(A_n,\Phi_n;\Omega)\to 0$ as $|n|\colonequals |n_1|+|n_2|\to\infty$. Here $\E_{an}(A_n,\Phi_n; \Omega)$ is the local energy functional defined in Lemma \ref{D7.3}.

\smallskip

Let $U_j$ be an $L^2_l$-neighborhood of $[\kappa_j]\in M(\Sigma, \s)/\CG(\Sigma)$ such that $U_j\cap U_{j'}=\emptyset$ if $j\neq j'$. By the standard compactness theorem \cite[Theorem 5.1.1]{Bible}, up to gauge transformations, any subsequence of $\{(A_n,\Phi_n)\}$ contains a further subsequence converging in $\SC^\infty$-topology in the interior. Let $(A_\infty, \Phi_\infty)$ be such a limit. Since $\E_{an}(A_\infty,\Phi_\infty; \Omega)=0$, $(A_\infty,\Phi_\infty)$ is gauge equivalent to a constant family of $\kappa_j$ for some $1\leq j\leq k$ with $\bar{A}=d_\C$. This argument also shows that the image of $P|_\Omega$ lies in some $U_j$ when $|n|\gg 1$. Since $\C\setminus B(0,R)$ is connected for any $R>0$, this subscript $j$ is independent of $n$. This proves (\ref{7.3}).

By Proposition \ref{stableW}, the superpotential $W_\lambda$ is stable. By Lemma \ref{vanishing-lemma}, if $(\bar{A},P)$ solves the gauged Witten equation (\ref{SWEQ2}), then $
\nabla L(P(z))\equiv 0.$
The stability of $W_\lambda$ now implies that $P(z)$ lies in the $\CG_\C$-orbit of $\kappa_j$, so $P(z)=e^{\alpha(z)}u(z)\cdot \kappa_j$ for some smooth functions $\alpha\in \Gamma(\C\times \Sigma, \R)$ and $u\in \Gamma(\C\times \Sigma, S^1)$. We set $u\equiv 0$ by a gauge transformation and so
\begin{equation}\label{E8.4}
P(z)=e^{\alpha(z)}\cdot \kappa_j=(\cb^j+i*_\Sigma d_\Sigma\alpha(z), e^{\alpha(z)}\cPsi^j_+, e^{-\alpha(z)}\cPsi^j_-).
\end{equation}

Write $\bar{A}=d_\C+a_t dt+a_s ds$. The second equation of $(\ref{SWEQ2})$ then implies (comparing (\ref{6.4})):
\begin{align*}
(\pt \cb-d_\Sigma a_t)+*_\Sigma(\ps \cb-d_\Sigma a_s)&=0,\\
(\pt \cPsi+a_t\cPsi)+\rho_3(ds)(\ps\cPsi+a_s\cPsi)&=0,
\end{align*}
so $a_t=-i\ps \alpha,\ a_s=i\pt \alpha.$ Since $\kappa_j\in \mu^{-1}(\vdelta)$, we have 
	\begin{equation}\label{momentmap}
	-*_\Sigma d_\Sigma \cb^j+\frac{i}{2}(|\cPsi_+^j|^2-|\cPsi_-^j|^2)-\half *_\Sigma F_{\cB^t_j}=\vdelta.
	\end{equation}

 Combined with \eqref{E8.4} and (\ref{momentmap}), the moment map equation in \eqref{SWEQ2} then becomes
\begin{equation}\label{7.5}
(\Delta_\C+\Delta_\Sigma)\alpha+\half (e^{2\alpha}-1)|\cPsi_+^j|^2+\half (1-e^{-2\alpha})|\cPsi_-^j|^2=0.
\end{equation}

By the boundary condition (\ref{7.3}), $\|\alpha(z)\|_\infty\to 0$ as $z\to\infty$. The maximum principle then implies that $\alpha\equiv 0$, so $(\bar{A},P)$ is gauge equivalent to the constant configuration $(P\equiv \kappa_j, \bar{A}\equiv d_\C)$. 
\end{proof}
 Theorem \ref{pointlike} will play an important role in the proof of compactness theorem in the second paper \cite{Wang20}. In practice, it is convenient to work with a weaker condition than the finiteness of total energy: 
 \[
 \E_{an}(\bar{A}, P;\C)<\infty.
 \]

 To state the next result, define $I_n=[n-2,n+2]_t\subset \R_t$. Choose a compact domain $\Omega_0\subset I_0\times [0,\infty)_s$ with smooth boundary such that 
 \begin{equation}\label{E11.1}
 I_0\times [1,3]_s\subset \Omega_0\subset I_0\times [0,4]_s.
 \end{equation}
  For any  $n\in\Z$ and $R\in \R$, define $ \Omega_{n,R}$ to be the translated domain:
 \begin{equation}\label{E11.2}
 \Omega_{n,R}\colonequals \{(t,s):(t-n,s-R)\in \Omega_0\}\subset  I_n\times \R_s.
 \end{equation}

\begin{proposition}\label{P7.4} There exists a constant $\epsilon_*>0$ with the following significance. Under the assumptions of Theorem \ref{pointlike}, suppose instead that the local energy functional 
	\[
	\E_{an}(\bar{A}, P;  \Omega_{n,R})<\epsilon_*
	\]
for all $|n|+|R|\gg 1$, then $(\bar{A},P)$ is gauge equivalent to the constant configuration. 
\end{proposition}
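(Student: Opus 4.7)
The strategy is to follow the proof of Theorem \ref{pointlike} with two modifications that accommodate the weaker hypothesis. The finite total-energy condition was used there at exactly two places: (a) to force translates of $(\bar{A}, P)$ to have vanishing local energy in the limit, yielding $\SC^\infty$-convergence of gauge classes to a critical orbit; and (b) in the $r \to \infty$ step of Lemma \ref{vanishing-lemma}, where the cumulative energy $E(r)$ is used to close the differential inequality $E(r_0) \leq E(r) e^{(r_0 - r)/C}$.

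First I would re-establish the analogue of \eqref{7.3}. Choosing $\epsilon_*$ smaller than an energy-gap threshold $\epsilon_0(\TSigma) > 0$, a compactness/contradiction argument (using that $\SC^\infty_{\text{loc}}$-limits of zero-energy solutions land on critical orbits) shows that any solution on $\bar{B}(0, 10) \times \Sigma$ with local energy below $\epsilon_*$ must lie, modulo gauge, in a small $\SC^\infty$-neighborhood of one of the $k$ critical $\CG_\C$-orbits $\SO_1, \ldots, \SO_k$ of Proposition \ref{stableW}. Applying this to translates $(\bar{A}, P)(\cdot + n, \cdot)$ for $|n|$ large and invoking connectedness of $\C \setminus B(0, R_0)$, a single orbit $\SO_j$ suffices for all $|z| \geq R_0$; in particular, the Morse-Bott estimate \eqref{morse-bott} holds on $\{|z| > R_0\}$.

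Second I would rerun the Gronwall argument of Lemma \ref{vanishing-lemma}. Define $K : \C \to \R$ with $\Delta_\C K = |\nabla H \circ P|^2$; the Morse-Bott estimate yields $|\nabla K| \leq C|\Delta K|$ on $\{|z| > R_0\}$, so $E(r_0) \leq E(r) e^{(r_0 - r)/C}$ for $r > r_0 > R_0$. The key change is the bound on $E(r)$: since $B(0, r)$ is covered by $O(r^2)$ rectangles $\Omega_{n, R}$, each of energy at most $\epsilon_*$ outside a fixed bounded region (with bounded total inside), one has $E(r) \leq C_0 + C_1 r^2$, and the exponential factor still dominates as $r \to \infty$. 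This yields $E(r_0) = 0$ for every $r_0 > R_0$; by monotonicity of $E$, $|\nabla H(P(z))| \equiv 0$ on $\C$, so stability of $W_\lambda$ and connectedness force $P(\C) \subset \SO_j$.

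From here the endgame of Theorem \ref{pointlike} applies verbatim: after Hodge gauge fixing one writes $P(z) = e^{\alpha(z, \cdot)}\kappa_j$ for a real-valued $\alpha$ on $\C \times \Sigma$; the moment-map equation reduces to the scalar PDE \eqref{7.5}; the first step combined with elliptic bootstrapping yields $\|\alpha(z, \cdot)\|_{L^\infty(\Sigma)} \to 0$ as $|z| \to \infty$; and the maximum principle concludes $\alpha \equiv 0$. I expect the main obstacle to be the energy-gap claim in the first step: justifying $\epsilon_0(\TSigma) > 0$ relies on combining the standard compactness theorem with the fact, following from the Morse-Bott structure of Proposition \ref{P7.7}, that zero-energy solutions on bounded domains are forced onto critical $\CG_\C$-orbits.
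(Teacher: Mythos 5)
Your first two steps coincide with the paper's own (second) proof: the energy-gap/compactness argument pinning $[P(z)]$ to a fixed small neighborhood of a single critical orbit $\SO_j$ for $|z|\gg 1$, and the observation that $E(r)=O(r^2)$ (from covering $B(0,r)$ by $O(r^2)$ rectangles of energy $<\epsilon_*$) still contradicts the exponential lower bound \eqref{4.6}, forcing $\nabla H\equiv 0$. That part is fine.

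The gap is in your endgame. You assert that ``the first step combined with elliptic bootstrapping yields $\|\alpha(z,\cdot)\|_{L^\infty(\Sigma)}\to 0$ as $|z|\to\infty$.'' This does not follow: under the weakened hypothesis the local energy on $\Omega_{n,R}$ is only bounded by $\epsilon_*$, it does not tend to zero, so the compactness argument of Step 1 only places $[P(z)]$ in a \emph{fixed} small neighborhood of $[\kappa_j]$ for all large $|z|$; elliptic bootstrapping converts $L^2$-smallness into $\SC^\infty$-smallness on interior domains but cannot manufacture decay at infinity. Consequently you only know that $\alpha$ is uniformly bounded (and small) on $\C\times\Sigma$, and the maximum-principle step of Theorem \ref{pointlike}, which uses $\|\alpha(z)\|_\infty\to 0$ as the boundary condition at infinity, does not apply verbatim. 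The paper closes this exact gap with a Liouville-type argument: integrating the inequality $\half(\Delta_\C+\Delta_\Sigma)\alpha^2\leq \langle(\Delta_\C+\Delta_\Sigma)\alpha,\alpha\rangle\leq 0$ over $\Sigma$ shows $V(z)=\int_{\{z\}\times\Sigma}\alpha^2$ is a bounded subharmonic function on $\C$, hence constant, and the resulting equality $\int_\Sigma \alpha(e^{2\alpha}-1)|\cPsi_+^j|^2+\alpha(1-e^{-2\alpha})|\cPsi_-^j|^2=0$ together with the nowhere-vanishing of $\cPsi^j$ forces $\alpha\equiv 0$. (Alternatively, the decay you want can be obtained by first invoking Theorem \ref{T9.1} to get exponential decay of the local energy, which is the paper's other route; but you did not invoke it, and without it your Step 3 as written is incomplete.)
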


Apparently, Proposition \ref{P7.4} implies Theorem \ref{pointlike}. 

\begin{proof}[Proof of Proposition \ref{P7.4}] There are two ways to proceed. In the first approach,  we apply Theorem \ref{T9.1} below to show the total analytic energy $\E_{an}(\bar{A}, P; \C)$ is actually finite, since the local energy functional $\E_{an}(\bar{A}, P;  \Omega_{n,R})$ has exponential decay as $|n|+|R|\to \infty$. Then Proposition \ref{P7.4} follows from Theorem \ref{pointlike}. In the second approach, we adapt the proof of Theorem \ref{pointlike} to our situation. There are three major modifications:

\medskip

\Step 1. If $\epsilon_*$ is small enough, then the Morse-Bott estimate (\ref{morse-bott}) in the proof of Lemma \ref{vanishing-lemma} still holds for any $P(z)$ when $|z|\gg 1$. This step requires the compactness theorem \cite[Theorem 5.1.1]{Bible}. 

\medskip

\Step 2. In the proof of Lemma \ref{vanishing-lemma}, we concluded from (\ref{4.6}) that if $E(r_0)>0$, then 
\[
E(r)=\int_{B(0,r)} |\nabla H|^2
\]
blows up exponentially as $r\to\infty$. In our case, since $\E_{an}(\bar{A}, P;  \Omega_{n,R})$ is uniformly bounded for all $n\in \Z$ and $R\in \R$, $E(r)$ can grow at most at the rate $r^2$. We still arrive at a contradiction, so $\nabla H\equiv 0$. 

\medskip 

\Step 3. Finally, using the stability of the superpotential $W$, we have to show the equation (\ref{7.5}) can only have the trivial solution $\alpha\equiv 0$. At this point, we only know $\alpha$ is uniformly bounded on $\C\times \Sigma$ and we argue as follows. If $\alpha: \C\times\Sigma\to \R$ is a solution of (\ref{7.5}), then 
\begin{align}\label{E8.8}
\half (\Delta_{\C}+\Delta_\Sigma)\alpha^2&\leq \big\langle(\Delta_{\C}+\Delta_\Sigma)\alpha,\alpha\big\rangle \\
&=-\half \alpha(e^{2\alpha}-1)|\cPsi_+^j|^2-\half\alpha (1-e^{-2\alpha})|\cPsi_-^j|^2\leq 0.\nonumber
\end{align}
Set $V(z)\colonequals\int_{\{z\}\times \Sigma}\alpha^2$; then $V(z)$ is a bounded subharmonic function on $\C$ and so is constant. The inequality \eqref{E8.8} then implies that 
\[
\int_{\{z\}\times \Sigma} \alpha(e^{2\alpha}-1)|\cPsi_+^j|^2+\alpha (1-e^{-2\alpha})|\cPsi_-^j|^2=0.
\]
Since $\cPsi^j=(\cPsi_+^j,\cPsi_-^j)$ is nowhere vanishing, $\alpha(z)\equiv 0$. 
\end{proof}
\section{Proof of Theorem \ref{T1.3}}

In this section, we prove Theorem \ref{T1.3} by generalizing ideas from Section \ref{1.5}. 

\begin{theorem}\label{T9.1}  Given any $H$-surface $\TSigma=(\Sigma,g_\Sigma,\lambda,\nu)$, consider the \spinc structure $\s$ on $\Sigma$ with $c_1(\s)[\Sigma]=2(d-g(\Sigma)+1)$. Then there exist constants $\epsilon(\TSigma),\zeta(\TSigma)>0$ with the following significance. For the fundamental gauged Landau-Ginzburg model $(M(\Sigma,\s), W_\lambda,\CG(\Sigma))$ constructed in Subsection \ref{Subsec6.2}, let $\gamma=(\bar{A},P)$ be a solution to the gauged Witten equations \eqref{SWEQ2} on $\HH^2_+$ with $\E_{an}(\gamma;  \Omega_{n,R})<\epsilon$ for any $n\in \Z$ and $R\geq 0$. Then 
	\[
	\E_{an}(\gamma; \Omega_{n,R})<e^{-\zeta R}. 
	\] 
	Here the subset $ \Omega_{n,R}\subset \HH^2_+$ is defined as in \eqref{E11.2}.
\end{theorem}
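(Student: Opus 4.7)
The plan is to adapt the argument of Theorem \ref{exponential-decay} to the infinite dimensional setting, using that $(M(\Sigma,\s), W_\lambda, \CG(\Sigma))$ is a stable gauged Landau-Ginzburg model by Proposition \ref{stableW} and Proposition \ref{P6.2}. The core idea is to produce a scalar quantity $u(t,s)$ on $\HH^2_+$ — obtained by integrating a point-wise energy density on $\C\times\Sigma$ over the fiber $\Sigma$ — that satisfies a differential inequality of the form $(\Delta_{\HH^2_+}+\zeta^2)u\leq 0$, so that Corollary \ref{coro-max} (Lemma \ref{maximum2}) yields exponential decay.

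\textbf{Step 1: proximity to a critical orbit.} First I would use the small local energy hypothesis together with the Kronheimer-Mrowka compactness theorem \cite[Thm.~5.1.1]{Bible}, exactly as in Step 1 of Proposition \ref{P7.4}, to deduce that on each enlarged rectangle $\Omega_{n,R}\times\Sigma$, after a suitable gauge transformation, the configuration $\gamma$ is $\SC^k$-close to the $\R_t$-constant continuation of one of the critical $\CG_\C$-orbits $\SO_1,\dots,\SO_k\subset\Crit(L)\cap\mu^{-1}(\vdelta)$. Since $W_\lambda$ is stable and Morse-Bott (Proposition \ref{P7.7}), the extended operator $\widehat D_\kappa$ of \eqref{E4.8} is invertible at each $\kappa\in\Crit(L)$ with a quantitative lower bound $\Lm_1(\TSigma)>0$ on the spectrum. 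By continuity of $\widehat D$ and the $\SC^k$-proximity just established, the same lower bound (up to a factor) persists at $P(t,s)$ for all $(t,s)$, provided $\epsilon$ is small enough.

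\textbf{Step 2: Bochner inequality on $\C\times\Sigma$ and fiber integration.} Next I would derive the analogue of the Bochner identity in Lemma \ref{Bochner} for the gauged Witten equation on $\C$ with target $M(\Sigma,\s)$. Via the dictionary of Proposition \ref{P6.2}, this is a Weitzenb\"ock formula on $\C\times\Sigma$ for the point-wise energy density
\[
e_\gamma(t,s,x)\colonequals |\nabla_A\Phi|^2+|\nabla H|^2+|F|^2+|\vdelta-\mu|^2.
\]
Setting $u(t,s)\colonequals\int_{\{(t,s)\}\times\Sigma}e_\gamma$ and integrating the Bochner identity over the closed surface $\Sigma$ makes all purely tangential Laplacian contributions vanish by Stokes. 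The positive terms $I_1,I_2$ of Lemma \ref{Bochner} persist; the key coercive contribution is $|\widehat D_{P}(\nabla^{\bar A}P,F)|^2\geq \Lm_1^2\cdot u$, using Step 1. The terms analogous to $I_3,I_4,I_5$ (Riemannian curvature of $M(\Sigma,\s)$, covariant derivatives of $\Hess H$, and $\Hess\mu$) are multilinear and are controlled by $C\bigl(\sup|e_\gamma|^{1/2}+\sup|e_\gamma|\bigr)\cdot u$, which is absorbed into the coercive term once $\epsilon$ is chosen sufficiently small.

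\textbf{Step 3: scalar maximum principle.} The previous step yields $(\Delta_{\HH^2_+}+\zeta^2)u\leq 0$ for some $\zeta(\TSigma)>0$. The hypothesis at $R=0$ provides a uniform bound $u(t,0)\leq C\epsilon$ for all $t\in\R_t$ (after fiber integration), so Lemma \ref{maximum2} gives $u(t,s)\leq C\epsilon\, e^{-\zeta s}$. To translate back to the desired statement, I would use interior elliptic regularity, bootstrapping from the pointwise bound on $u$ and the equations \eqref{SWEQ2} to control $\E_{an}(\gamma;\Omega_{n,R})$ by $u$ on a slightly thickened rectangle, giving the conclusion after mild shrinking of $\zeta$.

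\textbf{Main obstacle.} The principal difficulty lies in Step 2: rigorously deriving the Bochner identity in the presence of the extra $\Sigma$-directions and carefully organizing cross-terms between the $\C$ and $\Sigma$ differentiation, so that the positive contribution from $\widehat D_P$ dominates all remaining terms. This requires using both the holomorphy of $W_\lambda$ on $M(\Sigma,\s)$ (to obtain $\nabla L+J\nabla H=0$) and the Morse-Bott condition from Proposition \ref{P7.7} in a uniform way along $\CG_\C$-orbits, not just at a chosen representative. A secondary subtlety is that $P(t,s)$ generally does not lie on $\Crit(L)$, so the invertibility of $\widehat D$ must be established in a neighborhood via the quantitative compactness of Step 1 rather than purely algebraically at critical points.
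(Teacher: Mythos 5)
Your proposal matches the paper's proof of Theorem \ref{T9.1} in all essentials: localize near a single critical $\CG_\C$-orbit via the compactness theorem, integrate the Bochner formula of Lemma \ref{Bochner} over the fiber $\Sigma$ so that stability of $W_\lambda$ (through the quantitative invertibility of $\widehat{D}$ near the critical orbit, the paper's Lemma \ref{L12.4}) yields $(\Delta_{\HH^2_+}+\zeta^2)u\leq 0$ for the fiberwise energy $u(t,s)$, and conclude with the maximum principle of Lemma \ref{maximum2}. The only (harmless) deviation is in how the cubic error terms are absorbed: you use an $L^\infty$ bound on the energy density over $\Sigma$ coming from $\SC^k$-compactness, whereas the paper uses the Sobolev multiplication $L^2_1\times L^2_1\times L^2_1\to L^1(\Sigma)$ so that both the coercive term and the error are measured by the stronger $\SH_1$-based density $w\geq u$, and notes that the curvature term $I_3$ vanishes since $M(\Sigma,\s)$ is flat.
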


\begin{proof} We adapt the proof of Theorem \ref{exponential-decay} and follow the notations from Proposition \ref{P7.7}. For any $\kappa=(\cB,\cPsi)\in M(\Sigma,\s)$, recall that $\SH_k$ is the completion of the tangent space $T_\kappa M$ with respect to the $L^2_{k,\cB}$ norm for any $k\geq 0$:
	\[
	\|(\cvb,\cvpsi)\|^2_{L^2_{k,\cB}}=\sum_{0\leq j\leq k} \int_{\Sigma} |\nabla^k \cvb|^2+|\nabla_{\cB}^k\cvpsi|^2.
	\]
  We claim that the trilinear tensors defined in the proof of Theorem \ref{exponential-decay}:
	\begin{align}\label{F11.1}
	\langle\nabla_\cdot \Hess H(\cdot),\cdot\rangle&:   \SH_1\otimes \SH_1\otimes \SH_1\to \R,\\
	\langle \Hess\mu(\cdot), \cdot\otimes \cdot\rangle&:\SH_1\otimes \SH_1\otimes \g_1\to \R, \nonumber
	\end{align}
	are bounded operators. Indeed, take tangent vectors $v_i=(\cvb_i, \cvpsi_i)\in T_\kappa M$ for $i=1,2$. Using (\ref{F6.4}) and (\ref{F6.5}), we compute that:
	\begin{align*}
	\Hess L(v_1)&=\big(\rho_2^{-1}(\cPsi\delta\cPsi_1^*+\delta\cPsi_1\cPsi^*)_0, D_B^\Sigma \delta\cPsi_1+\rho_2(\cvb_1)\cPsi\big),\\
	(\nabla_{v_2}\Hess L)(v_1)&=\big(\rho_2^{-1}(\delta\cPsi_2\delta\cPsi_1^*+\delta\cPsi_1\delta \cPsi_2^*)_0, \rho_2(\cvb_2) \delta\cPsi_1+\rho_2(\cvb_1)\delta\cPsi_2\big), \\
	\langle \Hess\mu(v_1), v_2\rangle&=i\re \langle i\delta\cPsi_1, \rho_3(ds)\delta\cPsi_2\rangle .
	\end{align*}
	
	Hence, the tensors in (\ref{F11.1}) are independent of $\kappa\in M$ and involve only pointwise multiplications of sections. Since $L^2_{1,\cB}\embed L^3$ in dimension $2$ (with a uniform Sobolev constant independent of $\cB$), and the multiplication map $L^3\times L^3\times L^3\to L^1$ is bounded. Our claim follows.
	
	Now we come to analyze the differential operators
	\begin{align*}
	D_\kappa: \SH_1&\to \g_0\oplus \g_0\oplus  \SH_0,\\
	v=(\cvb,\cvpsi)&\mapsto \big(\langle \nabla \mu, v\rangle_{TM}, \langle J\nabla\mu, v\rangle_{TM},\Hess L(v)\big),
	\end{align*}
	and $J\langle \nabla \mu, \cdot \rangle_\g : \g_1\to \SH_0,\
	\xi\mapsto  (-d\xi, \xi\cPsi). $
	
	\begin{lemma}\label{L12.4} Suppose $\kappa_*=(\cB_*, \cPsi_*)\in M$ is a reference configuration in $ \mu^{-1}(\vdelta)\cap\Crit(L)$. Then for any $r>0$, we can find an $L^r$-neighborhood $U_r(\kappa_*)$ of $\kappa_*$ and some $c>0$ such that for any $\kappa=(\cB,\cPsi)\in U_r(\kappa_*)$, $v\in T_\kappa M$ and $\xi\in \g$, we have
		\begin{equation}\label{F11.2}
		\|D_\kappa(v)\|_{L^2(\Sigma)}\geq c\|v\|_{L^2_{1,\cB}}\text{ and }\|J\langle \nabla\mu, \xi\rangle_\g \|_2\geq c\|\xi\|_{L^2_1}.
		\end{equation}
	\end{lemma} 
	\begin{proof}[Proof of Lemma] If $\kappa=\kappa_*$ is the reference connection, then the estimates \eqref{F11.2} follow from the injectivity of the extended operator $\widehat{D}_\kappa$ in the proof of Proposition \ref{P7.7}. In general, let $\delta\kappa=\kappa-\kappa_*$. Then $\widehat{D}_\kappa(v)=\widehat{D}_{\kappa_*} (v)+I(\delta\kappa,v)$ for a bilinear operator $I(\cdot, \cdot )$ involving only pointwise multiplication, so 
		\[
		\|I(\delta\kappa,v)\|_2\leq \|\delta\kappa\|_p\|v\|_q\leq C(q)\|\delta\kappa\|_p\|v\|_{L^2_{1,\cB}}
		\]
		for any positive $(p,q)$ with $1/p+1/q=1/2$. The constant $C(q)$ comes from the Sobolev embedding $L^2_{1,\cB}\embed L^q$. Similarly, we have 
		\[
		\|v\|_{L^2_{1,\cB_*}}\geq \|v\|_{L^2_{1,\cB}}-C(q)\|\delta\kappa\|_p\|v\|_{L^2_{1,\cB}}. 
		\]
		Thus the estimates (\ref{F11.2}) hold when $\|\delta\kappa\|_p\ll 1$.
	\end{proof}
	
	Back to the proof of Theorem \ref{T9.1} and continue our adaptation of the proof of Theorem \ref{exponential-decay}. Following the notations therein, define
	\begin{align*}
	u(z)&=\|\nabla^A P\|^2_{L^2(\{z\}\times\Sigma)}+\|F_{\bar{A}}\|^2_{L^2(\{z\}\times\Sigma)},\\
	 w(z)&=\|\nabla^A P\|^2_{\SH_1(\{z\}\times\Sigma)}+\|F_{\bar{A}}\|^2_{L^2_1(\{z\}\times\Sigma)}, 
	\end{align*}
	for all $z\in \HH^2_+$. For any $\eta>0$, by the compactness theorem \cite[Theorem 5.2.1]{Bible}, there exists a constant $\epsilon(\eta)>0$ such that for any configuration $\gamma=(\bar{A}, P)$ with 
\[
\E_{an}(\gamma, \Omega_0)=\int_{\Omega_0 } u(z)dz<\epsilon(\eta),
\]
we have the pointwise estimate 
	\[
	0\leq u(z)\leq w(z)\leq \eta, \forall z\in \Omega_0'
	\]
	for a smaller domain $\Omega_0'\subset \Omega_0$. By Proposition \ref{stableW}, $\mu^{-1}(\delta)\cap \Crit(L)$ consists of $\binom{2g-2}{d}$ free $\CG(\Sigma)$-orbits. Let them be $\SO_1,\cdots, \SO_k$ and $\kappa_j\in \SO_j$ be a reference configuration for each $1\leq j\leq k\colonequals\binom{2g-2}{d}$. By taking $\eta\ll 1$, we deduce that there exists some $1\leq j\leq k$ such that for all $z\in \Omega_0'$, $P(z)\in U_4(\kappa_j)$ (after a gauge transformation). Here $U_4(\kappa_j)$ is the neighborhood of $\kappa_j$ obtained in Lemma \ref{L12.4} with $p=4$.  By shrinking the size of $U_4(\kappa_j)$, we ensure that $U_4(\kappa_j)\cap U_4(\kappa_{j'})=\emptyset$ for all $j\neq j'$.
	
	\medskip
	
	Now replace $\Omega_0$ by $ \Omega_{n,R}$ for any $n\in \Z$ and $R\geq 0$. This implies that when $0<\eta\ll 1$ and 
	\[
	\E_{an}(\gamma;\Omega_{n,R})<\epsilon(\eta),
	\]
$P(z)$ lies in some $U_4(\kappa_j)$ (after a suitable gauge transformation) for all $z$ in a slightly smaller subdomain of $\Omega_{n,R}$ and this subscript $j$ is independent of $(n, R)$, since $\HH^2_+$ is connected. Hence, for some $1\leq j\leq k$, $P(z)\in U_4(\kappa_j)$ for all $z\in \R_t\times [1,+\infty)_s$. Using the Bochner-type formula from Lemma \ref{Bochner}, we deduce that 
	\[
	0\geq \half \Delta_{\HH^2_+}u+\zeta^2 w-Cw^{3/2}\geq \half \Delta_{\HH^2_+}u+\frac{\zeta^2}{2} w\geq \half (\Delta_{\HH^2_+}+\zeta^2)u,
	\]
	for some $\zeta>0$. Now we use Lemma \ref{maximum2} to conclude.
\end{proof}

\section{Finite Energy Solutions on $\R_s\times \Sigma$}\label{Sec8}

 Fix an $H$-surface $\TSigma=(\Sigma, g_\Sigma, \lambda,\nu)$. In this section, we study the 3-dimensional Seiberg-Witten equations:
\begin{equation}\label{3DDSWEQ}
\left\{
\begin{array}{r}
\half \rho_3(F_{B^t})-(\Psi\Psi^*)_0-\rho_3(\omega)=0,\\
D_B\Psi=0.
\end{array}
\right.
\end{equation}
on the cylinder $\R_s\times \Sigma$ with $\omega=\nu+ds\wedge \lambda$. In terms of gauged Landau-Ginzburg models, the equations (\ref{3DDSWEQ}) gives rise to a downward gradient flowline of $L=\re W_\lambda$:
\begin{equation}\label{8.2}
\ds p(s)+\nabla L\circ p=0 \text{ and } \mu(p(s))=\vdelta,
\end{equation}
where $p(s)=(\cb(s),\cPsi(s)): \R_s\to M(\Sigma,\s)$ is a smooth path in the K\"{a}hler manifold $M(\Sigma,\s)$. The relation of \eqref{8.2} with (\ref{3DDSWEQ}) can be seen by setting
\[
B=\ds+\cB_0+\cb(s) \text{ and } \Psi=\cPsi(s) \text{ on } \{s\} \times \Sigma,
\]
for a reference \spinc connection $\cB_0$. We require the path $p$ to have finite analytic energy:
\begin{equation}\label{3Denergy}
\E_{an}(p)\colonequals \int_{\R_s} |\ds p|^2+|\nabla L|^2<\infty. 
\end{equation}

By (\ref{3Denergy}) and the stability of $W_\lambda$, the path $p$ is of finite length and the limit
$
q_\pm=\lim_{s\to\pm\infty} p(s)
$
lies in $\mu^{-1}(\vdelta)\cap\Crit(L)$. By the Cauchy-Riemann equation (\ref{CRequation}), $p(s)$ follows the Hamiltonian flow of $H=\im W_\lambda$. Hence, a flowline connecting $q_-$ and $q_+$ exists only if 
\begin{equation}\label{constraint}
L(q_-)\geq L(q_+) \text{ and } H(q_-)=H(q_+). 
\end{equation}

This motivates the following definition:
\begin{definition} An $H$-surface $\TSigma=(\Sigma, g_\Sigma,\lambda,\nu)$ is called \textit{good} if for any critical values $a_1\neq a_2$ of $W_\lambda$, $\im a_1\neq \im a_2$. 
\end{definition}

One can always make $\TSigma$ good by replacing $\lambda$ by $e^{-i\theta}\lambda$ for some $e^{i\theta}\in S^1$, since the set of critical values of $W_\lambda$ is at most countable. Indeed, the relation
\begin{equation}\label{E10.5}
e^{i\theta}W_\lambda(\cb,\cPsi_+,\cPsi_-)=W_0(\cb,\cPsi_+, e^{-i\theta}\cPsi_-)-\langle \cb, e^{-i\theta}\lambda\rangle_{h_M}=W_{e^{-i\theta}\lambda}(\cb,\cPsi_+, e^{-i\theta}\cPsi_-),
\end{equation}
implies that $(\cb,\cPsi_+,\cPsi_-)\in \Crit(\re(W_\lambda))$ if and only if $(\cb,\cPsi_+,e^{-i\theta}\cPsi_-)\in \Crit(\re(W_{{e^{-i\theta}\lambda}}))$, but now critical values are rotated by the angle $e^{i\theta}$. 

\begin{corollary} If an $H$-surface $\TSigma$ is good, then finite energy solutions to \eqref{3DDSWEQ} are necessarily $\R_s$-invariant. 
\end{corollary}

When $g(\Sigma)=1$, we can understand good $H$-surfaces more concretely:

\begin{proposition}\label{8.1} If $g(\Sigma)=1$ and $\lambda \in \Omega^1_h(\Sigma)\cong H^1(\Sigma, i\R)$ is not a real multiple of an integral class, then $\TSigma$ is good. In particular, any finite energy solution of $(\ref{3DDSWEQ})$ has to be $\R_s$-invariant, i.e. $p(s)\equiv q_-\equiv q_+$.
\end{proposition}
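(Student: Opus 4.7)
The plan is to establish the first (goodness) claim under the stated hypothesis; the second ($\R_s$-invariance) then follows immediately from the preceding proposition. Since $g(\Sigma)=1$ forces $d=0$ in the constraint $0\le d\le 2g-2$, Proposition~\ref{stableW} shows that $\Crit(L)\subset M(\Sigma,\s)$ consists of a single free $\CG_\C(\Sigma)$-orbit $\SO$. Because $\lambda$ is harmonic, $W_\lambda$ is $\CG_\C^e$-invariant, so its restriction to $\SO$ descends to a function on
\[
\SO/\CG_\C^e \;\cong\; \CG_\C/\CG_\C^e \;\cong\; \pi_0(\CG) \;\cong\; H^1(\Sigma,\Z)\cong\Z^2.
\]
Fixing a base point $\kappa_0\in\SO$, the set of critical values of $W_\lambda$ is therefore the affine lattice $\Lambda = W_\lambda(\kappa_0) + \Z P_1 + \Z P_2\subset\C$, where $P_j=W_\lambda(u_j\cdot\kappa_0)-W_\lambda(\kappa_0)$ for $u_1,u_2\in\CG(\Sigma)$ representing a basis of $H^1(\Sigma,\Z)$.

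Next I would derive an explicit formula for the periods $P_j$. Starting from $W_\lambda(\cb,\cPsi)=W_0(\cb,\cPsi)-\langle\cb,\lambda\rangle_{h_M}$, the $\CG_\C$-invariance of $W_0$ combined with the gauge action $u\cdot\cb=\cb-u^{-1}du$ yields $P_j=\langle u_j^{-1}du_j,\lambda\rangle_{h_M}$. Writing $u_j^{-1}du_j=2\pi i\eta_j$ with $\eta_j$ a real closed $1$-form of integer periods, and $\lambda=i\mu$ for real harmonic $\mu$, and unpacking $h_M=g_M-i\omega_M$ together with $\omega_M(\cdot,\cdot)=g_M(J\cdot,\cdot)$ and $J=*_\Sigma$, a short computation gives
\[
P_j \;=\; 2\pi\int_\Sigma \eta_j\wedge *\mu \;-\; 2\pi i\int_\Sigma \eta_j\wedge \mu.
\]
The decisive observation is that the imaginary part $\im P_j = -2\pi\int_\Sigma\eta_j\wedge\mu$ is metric-independent: it equals $-2\pi$ times the cohomological intersection pairing of $[\eta_j]\in H^1(\Sigma,\Z)$ with $[\mu]\in H^1(\Sigma,\R)$.

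To conclude, $\TSigma$ is good precisely when $\im\colon\Lambda\to\R$ is injective, which by the previous step is equivalent to $\Q$-linear independence of the two intersection numbers $[\eta_1]\smile[\mu]$ and $[\eta_2]\smile[\mu]$. Modelling $\Sigma=\R^2/\Z^2$ with $\mu=\alpha\,dx+\beta\,dy$ and choosing $[\eta_1]=[dx],[\eta_2]=[dy]$, this reduces to $\Q$-linear independence of $-\beta$ and $\alpha$, which in turn holds if and only if $(\alpha,\beta)\in\R^2$ is not a real scalar multiple of any nonzero integer vector, i.e.\ $\lambda=i\mu$ is not a real multiple of any integral class in $H^1(\Sigma,i\R)$. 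This is exactly the hypothesis of the proposition, so the equivalence is established. The main technical point is the explicit calculation of $P_j$ with the correct sign conventions for $h_M$ and $J$; once the imaginary part of $P_j$ is identified with the cohomological intersection pairing, the translation to the integrality condition on $\lambda$ is a one-line linear-algebra check.
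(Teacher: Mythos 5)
Your proposal is correct and follows essentially the same route as the paper: both rest on the fact that for $g(\Sigma)=1$ the critical locus is a single free $\CG_\C(\Sigma)$-orbit (Proposition \ref{stableW} with $\binom{0}{0}=1$), and on the computation that the period of $W_\lambda$ under a gauge transformation $u$ is $\langle u^{-1}du,\lambda\rangle_{h_M}$, whose imaginary part is the cup product $[\tfrac{u^{-1}du}{2\pi i}]\cup[\lambda]$ evaluated on $[\Sigma]$, so that non-integrality of $\lambda$ forces $u$ into the identity component. The only (cosmetic) differences are that you establish goodness first and then cite the preceding proposition, whereas the paper applies the same period computation directly to the endpoints $q_\pm$ of a flowline, and that your ``precisely when'' slightly overstates the equivalence between goodness and $\Q$-linear independence of $\im P_1,\im P_2$ --- only the implication you actually use is needed, and that implication is valid.
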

\begin{proof} By Proposition \ref{stableW}, $\mu^{-1}(\vdelta)\cap \Crit(L)$ consists of a single $\CG(\Sigma)$-orbit, so $q_+=u\cdot q_-$ for some $u:\Sigma\to S^1$. Hence, 
	\[
	W_\lambda(q_-)-W_\lambda(q_+)=-\int_\Sigma \langle u^{-1}du, \lambda\rangle_{h_M}. 
	\]
	
	In particular, $H(q_-)-H(q_+)=4\pi ^2\big([\frac{u^{-1}du}{2\pi i}]\cup [\frac{\lambda}{2\pi i}]\big)[\Sigma]$. If $\lambda$ is proportional to an integral class over $\R$, then this pairing is non-zero unless $[\frac{u^{-1}du}{2\pi i}]=0\in H^1(\Sigma, \Z)$. This implies that
	\[
	\E_{an}(p)=2\big(L(q_-)-L(q_+)\big)=0,
	\]
so the path $p$ has to be $\R_s$-translation invariant. 
\end{proof}

\begin{remark} A solution of (\ref{3DDSWEQ}) can be viewed as an $S^1$-invariant solution of the 4-dimensional equations (\ref{SWEQ}) on $S^1\times \R_s\times \Sigma$. When $g_\Sigma$ is flat, Proposition \ref{8.1} follows from a theorem of Taubes  \cite[Proposition 4.4]{Taubes01}.
\end{remark}

Taubes' theorems provide another simple condition that precludes non-trivial solutions. 

\begin{proposition}[{\cite[Proposition 4.7]{Taubes01}}]\label{moment-perturbation} Let $(\Sigma, g_\Sigma)$ be a flat 2-torus and consider the $H$-surface $\TSigma=(\Sigma,g_\Sigma,\lambda,\nu)$ with $\nu$ harmonic. If $\langle \nu, [\Sigma]\rangle\neq 0$, then any finite energy solution of $(\ref{3DDSWEQ})$ has to be $\R_s$-invariant, i.e. $p(s)\equiv q_-\equiv q_+$.
\end{proposition}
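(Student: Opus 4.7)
The proof plan is to reduce the problem to the finite-dimensional computation of Proposition \ref{8.1} and then invoke Taubes' \cite[Proposition 4.7]{Taubes01} in a form adapted to our gauged Landau-Ginzburg setup. Since $g(\Sigma) = 1$ forces $d = 0$, and $\binom{2g-2}{d} = \binom{0}{0} = 1$, Proposition \ref{stableW} tells us that $\Crit(L) \cap \mu^{-1}(\vdelta)$ consists of a \emph{single} $\CG(\Sigma)$-orbit. The two limits $q_\pm := \lim_{s\to\pm\infty} p(s)$ therefore satisfy $q_+ = u \cdot q_-$ for some gauge transformation $u: \Sigma \to S^1$, whose cohomology class $[u] \in H^1(\Sigma; \Z)$ is the only topological obstruction to $R_s$-invariance. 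The analytic energy can be rewritten as
\[
\E_{an}(p) = 2(L(q_-) - L(q_+)) = -2\re \langle u^{-1}du, \lambda\rangle_{h_M},
\]
using that $W_\lambda(u\cdot \kappa) - W_\lambda(\kappa) = \langle u^{-1}du,\lambda\rangle_{h_M}$ (the $W_0$ part being $\CG_\C$-invariant). In particular, $\E_{an}(p) = 0$ whenever $[u] = 0$, so the whole task is to show that $[u]$ must vanish.

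First, I would exploit the Hamiltonian nature of the flow. Since $\partial_s H = \langle \nabla H, -\nabla L\rangle = \langle \nabla H, J\nabla H\rangle = 0$, one has $H(q_-) = H(q_+)$, yielding $\im \langle u^{-1}du,\lambda\rangle_{h_M} = 0$. This is a first topological constraint on $[u]$; however, as the proof of Proposition \ref{8.1} shows, it alone only rules out non-trivial $[u]$ when $[\lambda]$ is not proportional to an integral class, so genericity on $\lambda$ is needed to conclude directly. Here we avoid such genericity by bringing in $\nu$ via the 4-dimensional picture.

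Next, by Proposition \ref{P6.2}, a solution $p$ of the gradient flow on $\R_s \times \Sigma$ lifts to an $S^1_t$-invariant solution of the 4D SWEQ \eqref{SWEQ} on $S^1_t \times \R_s \times \Sigma$ with perturbation $\omega = \nu + ds\wedge \lambda$. Using the gauge transformation $u$ to glue the two ends of the $\R_s$-factor, the 4D solution descends to a closed $S^1_t$-invariant solution on $X_u \cong T^2_{t,\sigma} \times \Sigma \cong T^4$, carrying the spin$^c$ structure $\s_u$ whose first Chern class is twisted by the Poincar\'e dual of $[u]\wedge [d\sigma] \in H^2(T^2_{t,\sigma} \times \Sigma)$. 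The perturbation $\omega$ extends (trivially in $t,\sigma$) to a closed 2-form on $X_u$, and the integral $\int_\Sigma \nu \neq 0$ becomes a nondegenerate piece of $[\omega] \in H^2(X_u; i\R)$.

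Finally, I would apply the Seiberg-Witten energy identity on the K\"ahler 4-torus $X_u$, which is exactly the content of \cite[Proposition 4.7]{Taubes01}: the non-negativity of the analytic energy together with the topological identity
\[
\E_{an}^{(4D)} = -4\pi^2 \langle c_1(\s_u)\cup [\omega],[X_u]\rangle + (\text{bounded pointwise curvature and spinor terms})
\]
combined with the moment map constraint $\mu = \vdelta$ (which fixes the pointwise discrepancy $|\cPsi_+|^2 - |\cPsi_-|^2$ proportional to $\langle\nu,[\Sigma]\rangle$), forces the twisting class $[u]$ to vanish. With $[u] = 0$, $u$ is null-homotopic, $q_\pm$ lie in the same $\CG^e(\Sigma)$-orbit, and the $\CG_\C^e$-invariance of $W_\lambda$ (valid since $\lambda$ is harmonic) gives $L(q_-) = L(q_+)$, whence $\E_{an}(p) = 0$ and $p$ is constant. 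The main obstacle is Step 4: carefully matching our sign conventions for $\mu$, $\vdelta$ and the spin$^c$ twist with Taubes' normalizations so that the Weitzenb\"ock-type inequality on flat $T^4$ genuinely has the right sign from $\langle\nu,[\Sigma]\rangle\neq 0$; once that identification is made, the conclusion is essentially a citation.
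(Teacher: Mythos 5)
There is a genuine gap, and it sits exactly where the hypothesis $\langle \nu,[\Sigma]\rangle\neq 0$ has to do its work. Your Steps 1--3 are fine and reproduce the bookkeeping of Proposition \ref{8.1}: $q_+=u\cdot q_-$, $\E_{an}(p)=2(L(q_-)-L(q_+))$, and $H(q_-)=H(q_+)$. But Step 4 fails for two reasons. First, a finite-energy flowline from $q_-$ to $u\cdot q_-$ is not periodic in $s$ (even modulo gauge), so you cannot glue the two ends of the $\R_s$-factor to obtain an honest solution on a closed $4$-torus $X_u$; the configuration only converges to $q_\pm$ asymptotically. Second, and more fundamentally, no topological energy identity can detect $\nu$ here: the topological energy of a flowline from $q_-$ to $u\cdot q_-$ is $-2\re\langle u^{-1}du,\lambda\rangle_{h_M}$, which depends only on $[u]$ and $[\lambda]$, and turning on $\nu$ merely moves the critical points within their $\CG_\C^e$-orbits (where $L$ is invariant, since $\lambda$ is harmonic) without changing $L(q_\pm)$. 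Indeed, the paper points out immediately after the proposition that when $\nu=0$ and $\lambda$ is a multiple of an integral class, non-trivial flowlines with $[u]\neq 0$ and strictly positive energy do exist. So an argument that ``forces $[u]=0$'' from an energy identity would also rule out those solutions, which is false; the mechanism must be pointwise, not cohomological.

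The paper's proof (following Taubes) is exactly such a pointwise argument. Since $g_\Sigma$ is flat and $\nu$ is harmonic, $\omega=\nu+ds\wedge\lambda$ is parallel on $\R_s\times\Sigma$, so $S^+$ splits into the $\pm m$ eigenbundles of $\rho_3(\omega)$ with $m=\sqrt{|\nu|^2+|\lambda|^2}>0$, and Witten's vanishing argument kills the component $\beta$ of the spinor. The curvature equation becomes $\half F_{B^t}=(1-|\alpha|^2)\omega$; closedness of $F_{B^t}$ gives $\langle d|\alpha|^2, *_3\omega\rangle=0$, so $|\alpha|^2$ is constant along the flow of the vector field dual to $i*_3\omega$. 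Precisely because $\langle\nu,[\Sigma]\rangle\neq 0$, this vector field has a non-vanishing $\partial_s$-component, so the flow sweeps out all of $\R_s$; combined with $|\alpha|\to 1$ as $s\to\pm\infty$ this forces $|\alpha|\equiv 1$ and hence $\R_s$-invariance. If you want to salvage your outline, you should replace Step 4 by this parallel-form/spinor-splitting argument rather than any gluing-plus-energy computation.
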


\begin{proof}  The proof is borrowed from \cite[P. 486-487]{Taubes01}. Since $g_\Sigma$ is flat, the closed 2-form $
	\omega=ds\wedge\lambda+\nu
	$ in (\ref{3DDSWEQ}) 
 is parallel on $\R_s\times\Sigma$. The spin bundle $S^+$ splits as
 \begin{equation}\label{8.4}
L^+_\omega\oplus L^-_\omega
 \end{equation}  with $\rho_3(\omega)$ acting on by a diagonal matrix
 \[
m \begin{pmatrix}
-1 & 0\\
0 & 1
 \end{pmatrix}
 \]
where $m=\sqrt{|\nu|^2+|\lambda|^2}$ is a positive number. The splitting (\ref{8.4}) is parallel. Let $p(s)=(b(s),\Psi(s))$ be a solution of (\ref{8.2}) on $\R_s\times \Sigma$. Write $\Psi(s)=\sqrt{2m}(\alpha(s),\beta(s))$ with respect to the decomposition (\ref{8.4}). By Witten's  vanishing spinor argument \cite[Section 4]{Witten94}, $\beta\equiv 0$. The first equation of (\ref{3DDSWEQ}) becomes
\[
\half F_{B^t}=(1-|\alpha|^2)\omega. 
\]

The curvature form  $F_{B^t}$ is closed, so $d(1-|\alpha|^2)\wedge\omega=0$, which is also $\langle d |\alpha|^2, *_3 \omega\rangle=0$. The dual tangent vector of $i*_3\omega$ generates a flow on $\R_s\times \Sigma$ along which $|\alpha|^2$ stays constant. Since $\nu\neq 0$,  this flow translates the spatial coordinate $s$ as time varies. Since $|\alpha|\to 1$ as $s\to\pm \infty$, $|\alpha|\equiv 1$. This completes the proof. 
\end{proof}

When $(\Sigma, g_\Sigma)$ is flat, $\nu=0$ and $\lambda$ is a multiple of an integral class, there is a non-trivial moduli space of flowlines for any pair $(q_-,q_+)$ subject to (\ref{constraint}). They are pulled back from vortices on the cylinder $\R_s\times S^1$. These moduli spaces are not regular; their expected dimensions are always zero by the index computation. For more details, see \cite[Section 4(d)(e)]{Taubes01}. Here is an immediate corollary of Proposition \ref{moment-perturbation}.

\begin{corollary}\label{C9.4} Let $(\Sigma, g_\Sigma)$ be a flat 2-torus and consider the $H$-surface $\TSigma=(\Sigma,g_\Sigma,\lambda,\nu)$ with $\nu$ harmonic. If $\langle \nu, [\Sigma]\rangle\neq 0$, then for any $e^{i\theta}\in S^1$, any downward gradient flowline of the functional $\re(e^{i\theta}W_\lambda)$:
\[
\ps p(s)+\nabla \re(e^{i\theta}W_\lambda)=0,
\]
has to be a constant path.
\end{corollary}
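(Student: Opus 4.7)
The plan is to reduce the statement to Proposition \ref{moment-perturbation} via two successive rotations of the complex plane $\C = \R_t \times \R_s$. First, I would extend the down-ward gradient flowline $p : \R_s \to M$ of $Z_\theta := \re(e^{i\theta} W_\lambda)$ trivially in the $t$-direction to $(\bar{A} = d,\ P(t,s) = p(s))$ on $\C$; by Proposition \ref{P3.2} this $t$-invariant configuration solves the gauged Witten equations \eqref{generalized-vortex-equation} on $\C$ with superpotential $e^{i\theta} W_\lambda$.

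Next, I would pass to rotated coordinates $z' := e^{i\theta} z$, in which the identity $\pt + J \ps = (\cos\theta + J\sin\theta)(\partial_{t'} + J\partial_{s'})$ together with the Cauchy--Riemann identity $(\cos\theta - J\sin\theta)\nabla H = \nabla\im(e^{i\theta}W_\lambda)$ (itself a consequence of $\nabla H = J \nabla L$) converts the CR equation of the gauged Witten system for $e^{i\theta} W_\lambda$ into the CR equation for the unrotated $W_\lambda$. The moment-map equation is rotationally invariant, so Proposition \ref{P6.2} identifies the configuration $(\bar A, P)$, viewed in primed coordinates, with a solution of the 4-dimensional Seiberg--Witten equations \eqref{SWEQ} on $\C \times \Sigma$ with $\omega = \nu + ds' \wedge \lambda$, now invariant along the tilted vector field $\cos\theta\, \partial_{t'} + \sin\theta\, \partial_{s'}$.

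A second rotation to orthonormal coordinates $(T, S)$ with $\partial_T = \cos\theta\, \partial_{t'} + \sin\theta\, \partial_{s'}$ aligns this invariance direction with the $T$-axis. Setting $\lambda_\theta := \cos\theta\, \lambda - \sin\theta\, *_\Sigma \lambda$, one has $*_\Sigma \lambda_\theta = \sin\theta\, \lambda + \cos\theta\, *_\Sigma \lambda$, and a short calculation based on $(ds' \wedge \lambda)^+ = \tfrac{1}{2}(ds' \wedge \lambda + dt' \wedge *_\Sigma \lambda)$ yields
\[
(\nu + ds' \wedge \lambda)^+ \;=\; (\nu + dS \wedge \lambda_\theta)^+.
\]
Since only $\omega^+$ enters \eqref{SWEQ}, the solution also satisfies the 4D SW equations with $\omega_\theta := \nu + dS \wedge \lambda_\theta$, which is pulled back from $\R_S \times \Sigma$; the $T$-invariance then reduces the solution to one of the 3D equations \eqref{3DDSWEQ} on $\R_S \times \Sigma$ with perturbation $\omega_\theta$.

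To close the argument, I would observe that $\TSigma_\theta := (\Sigma, g_\Sigma, \lambda_\theta, \nu)$ is a valid $H$-surface: $\lambda_\theta$ is a nonzero imaginary-valued harmonic $1$-form with $\lambda_\theta^{1,0} = e^{i\theta} \lambda^{1,0}$ nowhere vanishing on the flat torus. Proposition \ref{moment-perturbation} applied to $\TSigma_\theta$ then forces the 3D solution to be $\R_S$-invariant; combined with the $T$-invariance, this makes the 4D configuration, and hence $p$, constant. The main delicate point is the identity $(ds' \wedge \lambda)^+ = (dS \wedge \lambda_\theta)^+$ after the second rotation, which is precisely what allows the apparent $dT \wedge \lambda$ term generated by the rotation to be absorbed into the $\Sigma$-directed data via the Hodge star on $\Sigma$, so that the 4D-to-3D reduction applies and feeds into the already-proved Proposition \ref{moment-perturbation}.
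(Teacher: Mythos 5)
Your argument is correct (modulo the finite-energy and moment-map hypotheses that the corollary leaves implicit -- to invoke Proposition \ref{moment-perturbation} the path must actually solve \eqref{3DDSWEQ}, i.e.\ satisfy $\mu\circ p\equiv\vdelta$ as well as the flow equation, and converge at $s=\pm\infty$; the paper's own proof makes the same tacit assumptions), but it reaches Proposition \ref{moment-perturbation} by a genuinely different route. The paper's proof is a one-line identity on the \emph{target}: multiplying $W_\lambda$ by $e^{i\theta}$ is undone by the substitution $\cPsi_-\mapsto e^{-i\theta}\cPsi_-$, a holomorphic isometry of $M(\Sigma,\s)$ that commutes with $\CG(\Sigma)$ and preserves $\mu$ and $W_0$ up to the phase, at the cost of replacing $\lambda$ by its rotation under the complex structure $J=*_\Sigma$ on the $1$-form factor, namely $\lambda_\theta=\cos\theta\,\lambda-\sin\theta\,*_\Sigma\lambda$. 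Flowlines of $\re(e^{i\theta}W_\lambda)$ are thereby carried directly to solutions of \eqref{3DDSWEQ} for the $H$-surface $(\Sigma,g_\Sigma,\lambda_\theta,\nu)$. You instead rotate the \emph{domain}: suspend the flowline to a $t$-invariant solution of the gauged Witten equations for $e^{i\theta}W_\lambda$ on $\C$, absorb the phase by the coordinate rotation via $\partial_t+J\partial_s=(\cos\theta - J\sin\theta)(\partial_{t'}+J\partial_{s'})$ and $\nabla\im(e^{i\theta}W_\lambda)=(\cos\theta - J\sin\theta)\nabla H$, pass to the $4$-dimensional equations through Proposition \ref{P6.2}, and then use the self-duality identity $(ds'\wedge\lambda)^+=(dS\wedge\lambda_\theta)^+$ to recognize a $T$-invariant solution with a $2$-form pulled back from $\R_S\times\Sigma$; this computation is correct with the conventions of Section \ref{Sec6}, since $*(dS\wedge\lambda)=dT\wedge *_\Sigma\lambda$ and $*(dT\wedge\lambda)=-dS\wedge *_\Sigma\lambda$. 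Both roads produce the same $\lambda_\theta$, whose $(1,0)$-part is a unit multiple of $\lambda^{1,0}$ and hence nowhere vanishing on the flat torus, and both conclude by the same appeal to Proposition \ref{moment-perturbation}. Yours is longer, but it makes explicit the geometric mechanism -- rotation of $\C$ acting only through $\omega^+$ -- that the paper's algebraic substitution encodes in a single identity.
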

\begin{proof} This corollary follows from \eqref{E10.5} and Proposition \ref{moment-perturbation}.
\end{proof}

\newpage
\appendix

\section{The Maximum Principle}\label{max}

In this appendix, we prove a version of maximum principle. The author is greatly indebted to Ao Sun for teaching him the proof of Lemma \ref{maximumprinciple}. The Laplacian operator is always assumed to have a non-negative spectrum. In particular, over the complex plane,
\[
\Delta_{\C}\colonequals -\partial_t^2-\partial_s^2.
\]

This sign convention is adopted throughout this paper. 
\begin{proposition}\label{maximumprinciple} Take $\Lm>0$. Suppose $u: \HH^2_+=\R_t\times [0,\infty)_s\to \R$ is  a bounded $\SC^2$-function on the upper half plane such that 
	\begin{enumerate}
\item $(\Delta_\C+\Lm^2)u\leq 0$, and
\item $u(t,0)\leq 0$ for any $t\in \R_t$. 
	\end{enumerate}
Then $u(t,s)\leq 0$ for any $(t,s)\in \HH^2_+$. 
\end{proposition}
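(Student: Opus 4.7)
The plan is a barrier argument in the style of Phragmén–Lindelöf, exploiting the zeroth-order term $\Lm^2 u$ to construct super-solutions that dominate $u$ at infinity. First I recall the strong max principle at interior points: if $u$ attains a positive local maximum at $(t_0, s_0)$ with $s_0 > 0$, then $\Delta_\C u(t_0, s_0) \geq 0$ (since $\Delta_\C = -\partial_t^2 - \partial_s^2$ and the second derivatives are non-positive), and so $(\Delta_\C + \Lm^2) u(t_0, s_0) \geq \Lm^2 u(t_0, s_0) > 0$, contradicting assumption (1). Hence $u$ has no positive interior local maxima. The only difficulty is that $\sup u$ may fail to be attained because $\HH^2_+$ is non-compact.

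To rule out escape to infinity, fix $\alpha, \beta \in (0, \Lm)$ and introduce the barrier
\[
\psi(t, s) \colonequals \cosh(\alpha t) + \cosh(\beta s),
\]
which satisfies the reverse inequality
\[
(\Delta_\C + \Lm^2)\psi = (\Lm^2 - \alpha^2)\cosh(\alpha t) + (\Lm^2 - \beta^2)\cosh(\beta s) > 0.
\]
For any $\epsilon > 0$, set $w_\epsilon \colonequals u - \epsilon \psi$. Then $(\Delta_\C + \Lm^2) w_\epsilon < 0$ pointwise on $\HH^2_+$; on the boundary $\{s = 0\}$, we have $w_\epsilon(t, 0) \leq -\epsilon(\cosh(\alpha t) + 1) < 0$ using assumption (2); and since $u$ is bounded while $\psi \to \infty$ as $(t, s) \to \infty$ in $\HH^2_+$, we get $w_\epsilon(t, s) \to -\infty$ as $t^2 + s^2 \to \infty$.

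Consequently $\sup_{\overline{\HH^2_+}} w_\epsilon$ is achieved at some point. If this supremum were strictly positive it would have to occur at an interior point $(t_0, s_0)$ with $s_0 > 0$ (the boundary values of $w_\epsilon$ are negative), at which the interior maximum principle recalled above forces $(\Delta_\C + \Lm^2) w_\epsilon(t_0, s_0) > 0$, contradicting $(\Delta_\C + \Lm^2) w_\epsilon < 0$. Therefore $w_\epsilon \leq 0$ throughout $\HH^2_+$, i.e.\ $u(t, s) \leq \epsilon \psi(t, s)$ for every $\epsilon > 0$. Fixing $(t, s)$ and letting $\epsilon \to 0^+$ yields $u(t, s) \leq 0$, which is the desired conclusion.

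The argument is essentially mechanical; the only conceptual point is the choice of barrier. It needs to grow in both the $t$ and $s$ directions (so that $w_\epsilon$ goes to $-\infty$ at spatial infinity of $\HH^2_+$, including as $s \to \infty$), yet its Laplacian has to be controlled strictly below $\Lm^2 \psi$ — which forces the exponential rate of growth of the barrier to lie strictly below $\Lm$. This is exactly what determines the exponential decay rate $\Lm$ appearing in the strengthened form Corollary \ref{coro-max}, where one instead compares $u$ with the barrier $K e^{-\Lm s}$ and runs the same interior-maximum argument.
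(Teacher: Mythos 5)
Your proof is correct, and it takes a genuinely different route from the paper's. The paper handles the non-compactness of $\HH^2_+$ by multiplying $u$ by a compactly supported cutoff $\phi_R(\cdot - z_0)$, forcing the localized function $u_R$ to attain its maximum, and then deriving a contradiction from the first- and second-derivative conditions at that maximum together with the decay estimates $|\nabla\phi_R|\lesssim 1/R$, $|\Delta_\C\phi_R|\lesssim 1/R^2$ as $R\to\infty$; this requires the slightly delicate bookkeeping of the term $|\nabla\phi_R|^2/\phi_R$ and the inequality relating $\phi_R(z_1-z_0)$ to $u(z_0)/\|u\|_\infty$. You instead subtract a small multiple of the strict supersolution $\psi=\cosh(\alpha t)+\cosh(\beta s)$ with $\alpha,\beta<\Lm$, which blows up at infinity and is strictly positive on $\{s=0\}$, so that $w_\epsilon=u-\epsilon\psi$ attains its supremum, is negative on the boundary, and satisfies the strict inequality $(\Delta_\C+\Lm^2)w_\epsilon<0$ that kills any positive interior maximum; letting $\epsilon\to 0^+$ finishes. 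This is the classical Phragm\'en--Lindel\"of barrier argument. It is additive rather than multiplicative, avoids the quotient estimates entirely, and makes transparent why the admissible growth/decay rate is pinned at $\Lm$ --- which, as you note, is exactly the comparison-function mechanism the paper itself uses to pass from Proposition \ref{maximumprinciple} to Corollary \ref{coro-max} via $v=Ke^{-\Lm s}$. Both arguments are complete; yours is arguably the cleaner of the two.
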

\begin{proof} Choose a smooth cut-off function $\psi: [0,\infty)\to [0,\infty)$ such that $0\leq \psi\leq 1$, $\psi \equiv 1$ on $[0,1]$ and $\equiv 0$ on $[2,\infty)$. Define $\phi_R(z)\colonequals \psi(|z|/R)$ for $z\in \C$. Then 
\begin{itemize}
\item $\phi_R\equiv 1$ when $|z|<R$ and $\equiv 0$ when $|z|>2R$;
\item $\nabla\phi_R=0$ and $\Delta_{\C}\phi_R\equiv 0$ when $|z|<R$ or $|z|>2R$;
\item for some $C>0$, $|\nabla \phi_R|<\frac{C}{R}$ and $|\Delta_{\C}\phi_R|<\frac{C}{R^2}$.
\end{itemize}

Only the last property requires some explanation. In general, we have 
\begin{align*}
\nabla\phi_R(z) &=\frac{1}{R} \phi' (\frac{|z|}{R})\partial_r, \\
\Delta_{\C}\phi_R(z)&=-(\partial_r^2\phi_R+\frac{1}{|z|}\partial_r\phi_R)=-\frac{1}{R^2}\psi''(\frac{|z|}{R})-\frac{1}{|z|R}\psi'(\frac{|z|}{R}). 
\end{align*}

Suppose $u(z_0)>0$ at some $z_0\in \HH^2_+$. Consider the function $u_R(z)\colonequals u(z)\cdot\phi_R(z-z_0) $. Then $u_R(z)\equiv 0$ when $|z-z_0|>2R$ and $
u_R(t,0)\leq 0$ for all $t\in \R_t$. Hence, $\max u_R$ is attained at some $z_1$ in the interior of $\HH^2_+$. Let $N=\|u\|_\infty$, so
\begin{equation}\label{C.1}
0<u(z_0)=u_R(z_0)\leq u_R(z_1)\leq N \phi_R(z_1-z_0).
\end{equation}

At $z_1\in \HH^2_+$, we have 
\[
0=(\nabla u_R)(z_1)=\nabla u(z_1)\cdot \phi_R(z_1-z_0)+u(z_1)\cdot\nabla\phi_R(z_1-z_0),
\]
so $\nabla u(z_1)=-u(z_1)\nabla\phi_R (z_1-z_0)/\phi_R(z_1-z_0)$. The relation $\Delta_{\C} u\leq -\Lm^2u$ then implies:
\begin{align*}
0\leq (\Delta_{\C} u_R)(z_1)&=\phi_R(z_1-z_0)\cdot\Delta_{\C} u(z_1)+u(z_1)\cdot\Delta_{\C}\phi_R(z_1-z_0)-2\nabla\phi_R(z_1-z_0)\cdot \nabla u(z_1),\\
&\leq u(z_1)\cdot\bigg(-\Lm^2\phi_R+\Delta_{\C}\phi_R+\frac{2|\nabla\phi_R|^2}{\phi_R}\bigg)(z_1-z_0).
\end{align*}

However, this inequality can not hold when $R\gg 1$. Indeed, by (\ref{C.1}),
\[
\bigg|\Delta_{\C}\phi_R+\frac{2|\nabla\phi_R|^2}{\phi_R}\bigg|(z_1-z_0)\leq \frac{C}{R^2}+\frac{2C^2}{R^2}\cdot \frac{N}{u(z_0)}<\frac{\Lm^2 u(z_0)}{N}\leq \Lm^2\phi_R(z_1-z_0),
\]
when $R\gg 1$. In the meantime, $u(z_1)>0$. A contradiction. This completes the proof of Proposition \ref{maximumprinciple}.
\end{proof}

\begin{corollary} \label{coro-max} Take $\Lm>0$. Suppose $u: \HH^2_+=\R_t\times [0,\infty)_s\to \R$ is  a bounded $\SC^2$-function on the upper half plane such that 
	\begin{enumerate}
		\item $(\Delta_\C+\Lm^2)u\leq 0$, and
		\item for some $K>0$, $u(t,0)\leq K$ for all $t\in \R_t$. 
	\end{enumerate}
	Then $u(t,s)\leq Ke^{-\Lm s}$ for any $(t,s)\in \HH^2_+$. 
\end{corollary}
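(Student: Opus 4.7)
The plan is to reduce Corollary \ref{coro-max} to Proposition \ref{maximumprinciple} by subtracting off an explicit exponential barrier. Define
\[
v(t,s) \colonequals u(t,s) - K e^{-\Lm s}.
\]
I will verify that $v$ satisfies the hypotheses of Proposition \ref{maximumprinciple} and then conclude $v \leq 0$ on $\HH^2_+$, which is exactly the desired estimate on $u$.

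First, $v$ is clearly bounded and $\SC^2$ since $u$ is bounded and $\SC^2$ and $K e^{-\Lm s}$ is smooth and bounded on $\HH^2_+$. Next, a direct computation gives
\[
(\Delta_\C + \Lm^2)(K e^{-\Lm s}) = -\partial_s^2(K e^{-\Lm s}) + \Lm^2 K e^{-\Lm s} = -K \Lm^2 e^{-\Lm s} + K \Lm^2 e^{-\Lm s} = 0,
\]
so the exponential $K e^{-\Lm s}$ is an exact solution of the associated elliptic equation. Combined with the hypothesis $(\Delta_\C + \Lm^2) u \leq 0$, this yields $(\Delta_\C + \Lm^2) v \leq 0$.

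For the boundary condition at $s = 0$, we use $u(t,0) \leq K$ to get
\[
v(t,0) = u(t,0) - K \leq 0 \qquad \text{for all } t \in \R_t.
\]
Applying Proposition \ref{maximumprinciple} to $v$ yields $v(t,s) \leq 0$ throughout $\HH^2_+$, which rearranges to $u(t,s) \leq K e^{-\Lm s}$. There is no genuine obstacle here; the whole proof is the observation that $K e^{-\Lm s}$ is a supersolution matching the boundary data, so that the comparison principle encoded in Proposition \ref{maximumprinciple} applies directly.
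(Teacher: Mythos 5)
Your proof is correct and is exactly the argument the paper gives: subtract the barrier $Ke^{-\Lm s}$, note it solves $(\Delta_\C+\Lm^2)v=0$ with boundary value $K$, and apply Proposition \ref{maximumprinciple} to the difference. Nothing to add.
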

\begin{proof} Let $v(t,s)=Ke^{-\Lm s}$. Then $(\Delta_{\C}+\Lm^2)v=0$ and $v(t,0)=K$ for any $t\in \R_t$. Apply Proposition \ref{maximumprinciple} to $u-v$ to conclude.
\end{proof}

There are analogous statements for a strip of finite length. Their proofs are similar and omitted here. 
\begin{proposition}
	Take $\Lm>0$. Suppose $u: \R_t\times [0,2R]_s\to \R$ is  a bounded $\SC^2$-function such that 
	\begin{enumerate}
		\item $(\Delta_\C+\Lm^2)u\leq 0$, and
		\item $u(t,s)\leq 0$ for all $t\in \R_t$ and $s\in \{0, 2R\}$. 
	\end{enumerate}
	Then $u(t,s)\leq 0$ for all $(t,s)\in \R_t\times [0,2R]_s$. 
\end{proposition}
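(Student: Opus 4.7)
The plan is to mirror the proof of Proposition \ref{maximumprinciple}, adapting the cutoff so that it localizes only in the unbounded $t$-direction (the $s$-direction is already compact). First I would suppose for contradiction that $u(z_0) > 0$ at some interior point $z_0 = (t_0, s_0) \in \R_t \times (0, 2R)_s$. Using the same smooth bump $\psi$ as in Proposition \ref{maximumprinciple}, I would define the one-dimensional cutoff $\phi_R(t,s) := \psi(|t-t_0|/R)$, which depends only on $t$. Because it is constant in $s$, the bounds $|\nabla\phi_R| \leq L/R$ and $|\Delta_\C \phi_R| = |\partial_t^2 \phi_R| \leq L/R^2$ carry over verbatim, and $\phi_R$ vanishes on the region $|t-t_0| \geq 2R$.

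Next I would set $u_R(t,s) := u(t,s)\phi_R(t,s)$ and check that $u_R \leq 0$ on the entire boundary of its effective support: on $s = 0$ and $s = 2R$ this follows from hypothesis (2), while on the vertical edges $|t-t_0| = 2R$ it follows from the vanishing of $\phi_R$. Since $u_R(z_0) = u(z_0) > 0$, the maximum of $u_R$ over the compact set $[t_0-2R, t_0+2R] \times [0, 2R]$ is attained at some interior point $z_1$. At $z_1$ we have $\nabla u_R(z_1) = 0$, which gives $\nabla u(z_1) = -u(z_1)\,\nabla\phi_R(z_1)/\phi_R(z_1)$, and $\Delta_\C u_R(z_1) \geq 0$.

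Expanding $\Delta_\C u_R = \phi_R(\Delta_\C u) + u(\Delta_\C \phi_R) - 2\nabla u \cdot \nabla\phi_R$ and invoking $\Delta_\C u \leq -\Lm^2 u$, I would arrive at the inequality
\[
0 \;\leq\; u(z_1)\Bigl(-\Lm^2 \phi_R + \Delta_\C \phi_R + \tfrac{2|\nabla\phi_R|^2}{\phi_R}\Bigr)(z_1).
\]
With $N := \|u\|_\infty$, the chain $u(z_0) \leq u_R(z_1) \leq N\phi_R(z_1)$ yields the crucial lower bound $\phi_R(z_1) \geq u(z_0)/N > 0$, and the derivative bounds give
\[
\Bigl|\Delta_\C\phi_R + \tfrac{2|\nabla\phi_R|^2}{\phi_R}\Bigr|(z_1) \;\leq\; \tfrac{L}{R^2} + \tfrac{2L^2 N}{R^2\, u(z_0)}.
\]
For $R$ sufficiently large the right side is strictly less than $\Lm^2\, u(z_0)/N \leq \Lm^2 \phi_R(z_1)$, forcing the bracketed quantity to be negative while $u(z_1) > 0$, a contradiction.

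There is no real obstacle here: the argument is structurally identical to Proposition \ref{maximumprinciple}, and the switch from a radial cutoff on $\HH^2_+$ to a cutoff in the unbounded direction only simplifies the derivative estimates. The one point worth flagging is that we genuinely use the boundedness of $u$ (to extract $N$) and that both boundary components $\{s=0\}$ and $\{s=2R\}$ carry the sign hypothesis — if only one did, the maximum of $u_R$ could slide onto the other boundary and the interior critical point argument would fail. As a corollary, the same substitution $v(t,s) = K\sinh(\Lm(2R-s))/\sinh(2\Lm R) + K\sinh(\Lm s)/\sinh(2\Lm R)$ (or any explicit solution of $(\Delta_\C+\Lm^2)v = 0$ interpolating the boundary data) combined with this proposition would yield the strip analogue of Corollary \ref{coro-max}.
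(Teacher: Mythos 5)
Your argument is correct and is precisely the adaptation of Proposition \ref{maximumprinciple} that the paper has in mind when it states that the strip versions have ``similar'' proofs and omits them: replacing the radial cutoff by a cutoff in the $t$-direction only, checking the sign of $u_R$ on all four boundary pieces, and running the same interior-maximum computation. The only cosmetic issue is that you reuse the letter $R$ both for the fixed strip half-width and for the cutoff scale that is sent to infinity; these should be given distinct names, but this does not affect the argument.
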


\begin{corollary}\label{CA.4}  	Take $\Lm>0$. Suppose $u: \R_t\times [0,2R]_s\to \R$ is  a bounded $\SC^2$-function such that 
	\begin{enumerate}
		\item $(\Delta_\C+\Lm^2)u\leq 0$, and
		\item for some $K>0$, $u(t,s)\leq K$ for all $t\in \R_t$ and $s\in \{0, 2R\}$. 
	\end{enumerate}
	Then $u(t,s)\leq K\cdot \frac{\cosh(\Lm(s-R))}{\cosh(\Lm R)}$ for all $(t,s)\in \R_t\times [0,2R]_s$. 
\end{corollary}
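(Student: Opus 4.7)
The plan is to mimic the proof of Corollary A.2, replacing the one-sided exponential barrier $Ke^{-\Lambda s}$ with a barrier tailored to the two-sided boundary conditions on the strip. Define
\[
v(t,s) \;\colonequals\; K\cdot \frac{\cosh(\Lambda(s-R))}{\cosh(\Lambda R)}.
\]
I would first check that $v$ has exactly the right properties to serve as a comparison function: namely, $v$ is independent of $t$, a direct computation gives $-\partial_s^2 v = -\Lambda^2 v$, so $(\Delta_{\C} + \Lambda^2)v = 0$, and the hyperbolic cosine is even so $v(t,0) = K\cosh(\Lambda R)/\cosh(\Lambda R) = K$ and similarly $v(t,2R) = K$.

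Next, I would set $w \colonequals u - v$. By linearity, $(\Delta_{\C} + \Lambda^2)w \leq 0$ on $\R_t \times [0,2R]_s$, and on both horizontal boundaries $w(t,0) \leq 0$ and $w(t,2R) \leq 0$. Moreover $w$ is bounded, since both $u$ and $v$ are bounded (the latter because $\cosh(\Lambda(s-R))\leq \cosh(\Lambda R)$ on the strip). Thus $w$ satisfies the hypotheses of the strip version of the maximum principle (the Proposition immediately preceding Corollary A.4), which yields $w \leq 0$ on the entire strip; this is the desired inequality.

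The one thing to note is that the preceding Proposition on the strip is stated but not proved in the appendix, being asserted as having a proof similar to Proposition A.1. Morally, one runs the same cut-off argument as in Proposition A.1 but only cuts off in the $t$-direction, using $\phi_R(z-z_0) = \psi(|t-t_0|/R)$ for a point $z_0 = (t_0,s_0)$ in the interior of the strip: the strip is compact in $s$, so no cutoff in $s$ is needed, and the boundary condition on both horizontal sides together with the cutoff on the vertical sides forces the maximum of the cutoff product to occur in the interior, at which point the same contradiction with $(\Delta_\C + \Lambda^2)u \leq 0$ as in Proposition A.1 applies once $R$ is large enough.

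No step here should be a real obstacle: the only subtlety is guessing the correct barrier function, and $\cosh(\Lambda(s-R))/\cosh(\Lambda R)$ is precisely the unique solution of $(-\partial_s^2 + \Lambda^2)v = 0$ on $[0,2R]$ with boundary values $1$ at both endpoints, so it is forced by the symmetry of the problem in $s \mapsto 2R - s$.
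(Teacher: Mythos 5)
Your proof is correct and is precisely the argument the paper intends: the paper omits the proof with the remark that it is ``similar'' to the half-plane case, and your comparison function $v(t,s)=K\cosh(\Lm(s-R))/\cosh(\Lm R)$ is the exact analogue of the barrier $Ke^{-\Lm s}$ used in Corollary \ref{coro-max}, with the reduction to the strip maximum principle carried out in the same way. Your sketch of the strip maximum principle itself (cutting off only in the $t$-direction) is also the right adaptation of Proposition \ref{maximumprinciple}.
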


\section{A Bochner-Type Formula}\label{B}

The purpose of this appendix is to summarize some formulae for a gauged Landau-Ginzburg model $(M,W,G,\rho)$ from Riemannian geometry. In particular, we will prove a Bochner-type formula for a solution $(A,P)$ to the gauged Witten equations \eqref{generalized-vortex-equation} on $\HH^2_+$. For our primary application, we will take $M$ to be a complex linear space, in which case most formulae will become simpler.

\subsection{Some Useful Formulae} Recall that $(M,\omega, J, g)$ is a K\"{a}hler manifold and $(G,\rho)$ is a compact abelian Lie group acting on $M$ preserving the K\"{a}hler structure. $(G,\rho)$ admits a moment map $\mu: M\to \g$ which is $G$-invariant. $W=L+i H$ is a $G_\C$-invariant holomorphic function on $M$, called the superpotential. 

For any $\xi\in \g$, let $\tilde{\xi}$ be the vector field on $M$ induced from the action $(G,\rho)$. The convention of the moment map used in our paper is that 
\[
\iota(\tilde{\xi})\omega=-d\langle \mu, \xi\rangle_\g,
\]
or equivalently,
\begin{equation}\label{convention1}
\tilde{\xi}=J\langle \nabla\mu, \xi\rangle_\g,
\end{equation}
since $\omega(\cdot,\cdot)=g(J\cdot, \cdot)$. Here $\nabla\mu\in \Gamma(M, TM\otimes \g)$ is viewed as a $\g$-valued vector field on $M$. 

\begin{lemma}\label{identities} For a gauged Landau-Ginzburg model $(M,W, G,\rho)$ defined as in Definition \ref{LGmodels},  we have the following identities:
	\begin{enumerate}
		\item\label{1} $\nabla L+J\nabla H=0. $
		\item\label{2} $\Hess L+ J\circ\Hess H=0$.
		\item\label{3} $J\circ \Hess H+\Hess H \circ J=0$, i.e. the Hessian $\Hess H$ of $H$ anti-commutes with $J$.
		\item\label{4} $J\circ \Hess \mu=\Hess \mu \circ J$, i.e. the Hessian $\Hess \mu$ commutes with $J$. 
		\item\label{5} $\langle \nabla\mu, \nabla H\rangle_{TM}=\langle J\nabla\mu, \nabla H\rangle_{TM}=0$.
		\item \label{6} $\langle \nabla \mu, \tilde{\xi}\rangle_{TM}=0$ for any $\xi\in\g$.
	\end{enumerate}
\end{lemma}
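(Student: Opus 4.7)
The plan is to derive all six identities from two principles: the holomorphicity of $W$ (which yields the Cauchy--Riemann equation) and the invariance properties induced by the holomorphic isometric Hamiltonian action of $G$ and $G_\C$.

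For (1), I would translate the Cauchy--Riemann equation $dW \circ J = i\, dW$ into a statement about gradients using that $J^* = -J$ with respect to $g$: writing out the real and imaginary parts gives $dL \circ J = -dH$, which rearranges to $\nabla L + J\nabla H = 0$. For (2), since $M$ is K\"ahler the complex structure $J$ is parallel, so applying the Levi-Civita connection $\nabla$ to (1) commutes past $J$ and produces $\Hess L + J \circ \Hess H = 0$.

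Identity (3) then follows from a purely algebraic manipulation: taking the adjoint of (2) with respect to $g$, using the symmetry of Hessians and $J^* = -J$, yields $\Hess L = \Hess H \circ J$; combining this with $\Hess L = -J \circ \Hess H$ from (2) forces $J \circ \Hess H + \Hess H \circ J = 0$. For (4), I would use that for each $\xi \in \g$ the Hamiltonian vector field $\tilde{\xi} = J\langle \nabla\mu, \xi\rangle$ is Killing, hence the endomorphism $\nabla \tilde{\xi} = J \circ \Hess \mu^\xi$ (where $\mu^\xi = \langle \mu,\xi\rangle$) is $g$-skew-symmetric; combining this skew-symmetry with the symmetry of $\Hess \mu^\xi$ and with $J^* = -J$ yields that $\Hess \mu^\xi$ commutes with $J$.

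For (5), the $G_\C$-invariance of $W$ forces both $L$ and $H$ to be invariant under the imaginary direction of the complexified action. The vector field generating this direction is $J\tilde{\xi} = -\langle \nabla \mu, \xi\rangle$, so vanishing of $(J\tilde{\xi})H$ gives $\langle \nabla\mu, \nabla H\rangle = 0$, while vanishing of $\tilde{\xi}H$ combined with skew-symmetry of $J$ yields $\langle J\nabla\mu, \nabla H\rangle = 0$. Finally (6) is immediate from the fact that $G$ is abelian: each component $\mu^\eta$ is then $G$-invariant, so $\tilde{\xi}(\mu^\eta) = g(\nabla \mu^\eta, \tilde{\xi}) = 0$ for all $\xi, \eta \in \g$. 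I expect the main bookkeeping obstacle to be keeping signs straight in (3) and (4); once those are settled the remaining identities follow by unpacking the invariance hypotheses of Definition \ref{LGmodels} together with the sign convention \eqref{convention1}.
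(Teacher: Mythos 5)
Your proposal is correct and follows essentially the same route as the paper: (1) is the Cauchy--Riemann equation, (2) comes from covariant differentiation using that $J$ is parallel on a K\"ahler manifold, (3) is the transpose argument with symmetric Hessians and skew $J$, (4) uses that $\tilde{\xi}$ is Killing so $\nabla\tilde{\xi}=J\circ\Hess\langle\mu,\xi\rangle$ is skew, and (6) is the $G$-invariance of $\mu$ for abelian $G$. The only cosmetic difference is in (5), where you invoke $G_\C$-invariance along the $J\tilde{\xi}$-direction for the first identity while the paper derives it from $G$-invariance of $L$ together with (1); these are interchangeable.
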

\begin{proof} The first identity (\ref{1}) is the Cauchy-Riemann equation \eqref{CRequation}. Since $M$ is K\"{a}hler, the complex structure $J$ is parallel, i.e. $
	\nabla J=0$, so (\ref{2}) follows from (\ref{1}) by taking the covariant derivative.
	
	Both $\Hess L$ and $\Hess H$ are symmetric operators with respect to the metric $g$, so by (\ref{2}), we have
	\[
	J\circ \Hess H=(J\circ \Hess H)^T= (\Hess H)^T\circ J^T=-\Hess H\circ J. 
	\]
This proves \eqref{3}.	Since the metric $g$ is $G$-invariant, for any $\xi\in \g$, the Lie derivative $\CL_{\tilde{\xi}} g$ is zero. This implies that for any vector fields 
		$v,w$ on $M$,
		\begin{equation}\label{EB.2}
\langle \nabla_v \tilde{\xi}, w\rangle+\langle \nabla_w\tilde{\xi}, v\rangle=0.
		\end{equation}
	Using the defining equation (\ref{convention1}), we conclude that $\langle J\Hess \mu, \xi\rangle_\g$ is an anti-symmetric operator, so $\Hess \mu$ commutes with $J$. This proves (\ref{4})
	
	Finally, since $H$ is $G$-invariant, $\langle \tilde{\xi}, \nabla H\rangle=0$. By (\ref{convention1}), $\langle J\nabla \mu, \nabla H\rangle_{TM}=0$. The other identity in (\ref{5}) follows from the $G$-invariance of $L$ and the first identity (\ref{1}).
	
	(\ref{6}) follows from the fact that $\mu: M\to \g$ is $G$-invariant. 
\end{proof}

A smooth connection $A=d+a$ in the trivial principal bundle $\HH^2_+\times G\to \HH^2_+$ allows us to take covariant derivatives of a map $P: \HH^2_+\to M$. It is also important to know the covariant derivative of a vector field $v$ along $P$, i.e. a smooth map $v: \HH^2_+\to TM$ with $v(z)\in T_{P(z)}M$. Recall that for any tangent vector $(z,V)\in T\HH^2_+$, the covariant derivative $\nabla^A_VP\in T_{P(z)}M$ is defined by the property:
\begin{equation}\label{covariant2}
\nabla^A_VP=\frac{d}{dt} \rho(e^{t\cdot a(V)}) P\big(\gamma(t)\big)\bigg|_{t=0}
\end{equation}
where $\gamma: [0,1]\to \HH^2_+$ is a path with $\gamma(0)=z$ and $\gamma'(0)=V$. The action of $G$ extends to the tangent bundle $TM$ of $M$:
\[
\rho_{TM}(g)(p,v)\mapsto \big(\rho(g)p, \rho(g)_*v\big). 
\]
If $v$ is a vector field along $P\circ \gamma(t)$, it is reasonable to define its derivative $\nabla^A_Vv\in T_{P(z)}M$ as:
\begin{equation}\label{covariant3}
\nabla^A_Vv\colonequals \nabla_{\nabla^A_VP}\rho(e^{t\cdot a(V)})_*v. 
\end{equation}
\begin{remark} If the connection 1-form $a\equiv 0$ is trivial, the formula \eqref{covariant3} reduces to $\nabla_{P_*V}v$ and should be understood as follows: pull back the bundle $TM$ along with the Levi-Civita connection via $P\circ \gamma$ and differentiate the pullback section $(P\circ\gamma)^*v$ at $t=0$. In other words, $\nabla_{P_*V}v\colonequals \nabla_V (P^*v)$, which might be non-zero even if $P\circ \gamma(t)\equiv P(z)$ is a constant path. In terms of local coordinates $\{y^i\}_{1\leq i\leq n}$, 
	\begin{equation}
	\nabla_{P_*V} v\colonequals \pt v^i(t)\partial_i+\frac{\partial y^j\big(P\circ\gamma(t)\big)}{\partial t} v^i(t)\Gamma_{ij}^k\partial_k,
	\end{equation}
	where $\Gamma_{ij}^k$ are Christoffel symbols and $v=v^i\partial_i: [0,1]_t\to TM$ is the smooth section lying over $P\circ \gamma$. If $a$ is non-trivial,  first apply the transformation $\{\rho(e^{t\cdot a(V)})\}_{t\in [0,1]}$ to kill $a$ at $t=0$ and then compute the derivative. The formula  \eqref{covariant3} is the practical formula only when $\nabla^A_V P\neq 0$. 
\end{remark}

It is enlightening to find a concrete formula of $\nabla^A_Vv$ without using the group action. By the defining property of the moment map (\ref{convention1}), we have 
\begin{equation}\label{covariant1}
\nabla^A_VP=V\cdot P+\tilde{a}(V)=P_*(V)+J\langle \nabla\mu, a(V)\rangle_\g. 
\end{equation}
\begin{lemma}\label{covariant4} The covariant derivative of a vector field $v$ equals:
	\[
	\nabla^A_Vv=\nabla_{P_*V}v+ J\langle \Hess \mu(v), a(V)\rangle_\g. 
	\]
	where $\nabla_{P_*V}v$ denotes the covariant derivative with respect to the Levi-Civita connection. 
\end{lemma}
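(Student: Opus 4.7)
My plan is to unpack definition (\ref{covariant3}) and split the resulting $t$-derivative by the Leibniz rule. Set $\xi := a(V) \in \g$ and let $\psi_t := \rho(e^{t\xi})$ be the one-parameter subgroup it generates, whose fundamental vector field at $P(x)$ is $\tilde{\xi} = J\langle \nabla\mu, \xi\rangle$ by (\ref{convention1}). Pick a smooth path $\gamma:(-\epsilon,\epsilon)\to \HH^2_+$ with $\gamma(0)=x$, $\gamma'(0)=V$. Then (\ref{covariant3}) interprets $\nabla^A_V v$ as the covariant derivative $\tfrac{D}{dt}\big|_{t=0}$ of the vector field
\[
\sigma(t) \;=\; (\psi_t)_* \, v(\gamma(t))
\]
along the curve $c(t) = \psi_t\bigl(P(\gamma(t))\bigr)$ in $M$, whose initial velocity is exactly $\nabla^A_V P = P_* V + \tilde\xi|_{P(x)}$ by (\ref{covariant1}).

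To split this derivative, I would introduce the auxiliary two-parameter family $\Sigma(s,t) := (\psi_t)_* v(\gamma(s))$ lying over the base points $C(s,t) := \psi_t(P(\gamma(s)))$, and apply the chain rule along the diagonal $s=t$ at the origin:
\[
\nabla^A_V v \;=\; \tfrac{D\Sigma}{ds}\Big|_{(0,0)} + \tfrac{D\Sigma}{dt}\Big|_{(0,0)} \;=:\; T_2 + T_1.
\]
The piece $T_2$ freezes the group parameter: since $C(s,0)=P(\gamma(s))$ and $\Sigma(s,0)=v(\gamma(s))$, the very definition of covariant differentiation along the curve $P\circ\gamma$ gives $T_2 = \nabla_{P_*V} v$ directly, essentially by the coordinate formula in the Remark after (\ref{covariant3}).

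The substantive computation is $T_1$. Here the base curve degenerates to the orbit $C(0,t) = \psi_t(P(x))$ with initial velocity $\tilde\xi|_{P(x)}$, and $\Sigma(0,t) = (\psi_t)_* v(x)$ is the flow pushforward of the single vector $v(x)$. I will extend $v(x)$ arbitrarily to a smooth local vector field $Y$ on $M$ with $Y(P(x)) = v(x)$ and invoke the Lie-derivative identity
\[
(\psi_t)_* v(x) \;=\; Y\bigl(\psi_t(P(x))\bigr) - t\,[\tilde\xi, Y]\big|_{P(x)} + O(t^2).
\]
Taking the covariant derivative along $C(0,t)$ at $t=0$ and using the torsion-free property of the Levi-Civita connection yields
\[
T_1 \;=\; \nabla_{\tilde\xi} Y\big|_{P(x)} - [\tilde\xi, Y]\big|_{P(x)} \;=\; \nabla_Y \tilde\xi\big|_{P(x)} \;=\; \nabla_v \tilde\xi,
\]
which is manifestly intrinsic in $v$ (independent of the extension $Y$).

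To finish, I will use that $J$ is parallel (since $M$ is K\"ahler) together with $\tilde\xi = J\langle\nabla\mu, \xi\rangle$ to get
\[
\nabla_v \tilde\xi \;=\; J\,\langle \nabla_v \nabla\mu,\, \xi\rangle \;=\; J\,\langle \Hess\mu(v),\, a(V)\rangle,
\]
so adding $T_1+T_2$ produces the asserted formula. The only delicate point I foresee is justifying the Leibniz split rigorously, since $\sigma(t)$ is a vector field along the moving curve $c(t)$; this reduces to a standard chain-rule identity for covariant derivatives on two-parameter families, readily verified from the local-coordinate expression of the Remark after (\ref{covariant3}). Beyond this bookkeeping, the argument is a direct application of the Lie-derivative formalism and torsion-freeness.
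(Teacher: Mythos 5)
Your proposal is correct and follows essentially the same route as the paper's proof: both decompose the derivative along the diagonal of a two-parameter family into a ``frozen group'' piece giving $\nabla_{P_*V}v$ and a ``group flow'' piece, and both use torsion-freeness of the Levi-Civita connection to convert the latter into $\nabla_v\tilde{\xi}=J\langle \Hess\mu(v), a(V)\rangle$. The only cosmetic difference is that you compute the flow piece via the Lie-derivative expansion of $(\psi_t)_*Y$, whereas the paper realizes the same identity as $\nabla_{U_4}U_2=\nabla_{U_2}U_4$ for commuting coordinate fields of an auxiliary family $Q(r,t,s)$.
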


\begin{remark}
	The correction term $J\langle \Hess \mu(v), a(V)\rangle$ reflects the dependence on the connection 1-form $a$. It is linear in $a$, $v$ and $V$ as expected. 
\end{remark}

\begin{proof}
	The formal proof is to linearize the equation (\ref{covariant1}) along the tangent vector $v(z)\in T_{P(z)}M$. Let us make this intuition precise. Consider a variation of $P\circ \gamma$ along the vector field $v$:
	\[
	Q(r, t,s)=\rho\big(e^{r\cdot a(V)}\big)\exp_{P\circ \gamma(t)}\big(sv(t)\big).
	\]
	When $r\equiv 0$, $Q(0,t,s)$ is a variation of the path $P\circ\gamma(t)$. Indeed, $Q(0,t,0)\equiv P\circ \gamma(t)$. When $s\equiv 0$, the covariant derivative of $P$ is defined as (comparing (\ref{covariant2})):
	\begin{align*}
	\frac{d}{dt} Q(t,t,0)\bigg|_{t=0}=\nabla^A_VP&=\frac{d}{dt} Q(0,t,0)+\frac{d}{dt} Q(t,0,0)\bigg|_{t=0}\\
	&=P_*(V)+J\langle \nabla\mu, a(V)\rangle_\g. 
	\end{align*}
	
	Let $U_1=\frac{d}{dt} Q(t,t,s)$ and $U_2=\frac{d}{ds} Q(t,t,s)$. Then $U_1=U_3+U_4$ with
	\[
	U_3=(\partial_2Q)(t,t,s), U_4=(\partial_1Q)(t,t,s).
	\] 
	$U_4=J\langle \nabla\mu, a(V)\rangle_\g$. When $t=s=0$, $U_3=P_*V$.  Moreover, $[U_2,U_4]=0$.
	By (\ref{covariant3}), we have 
	\begin{align*}
	\nabla^A_Vv=\nabla_{U_1}U_2\bigg |_{t=s=0}&=\nabla_{U_3}U_2+\nabla_{U_4}U_2\bigg |_{t=s=0}=\nabla_{P_*V}v+\nabla_{U_2}U_4\bigg |_{t=s=0}\\
	&=\nabla_{P_*V}v+ J\langle \Hess \mu(v), a(V)\rangle_\g. \qedhere
	\end{align*}
\end{proof}

The next lemma concerns the curvature tensor of $\nabla^A$. Since we are merely interested in the manifold $\HH^2_+$, it suffices to work with vector fields $\partial_t$ and $\partial_s$.
\begin{lemma}\label{identities2} Write $T=\nabla^A_{\partial_t} P$ and $S=\nabla^A_{\partial_s}P$ for short. The following properties hold for any configuration $(A,P)$ and any vector field $v$ along $P$:
	\begin{enumerate}
		\item The connection $\nabla^A$ is equivariant under the gauge transformation $u(A,P)=(A-u^{-1}du, u\cdot P)$, i.e.
		\[
		u_*(\nabla^A_VP)=\nabla^{u(A)}_V u(P),\ u_*(\nabla^A_Vv)=\nabla^{u(A)}_Vu_*v,
		\]
		where $u_*v$ is the vector field along $u(P)$. 
		\item If $v$ is the pull-back of a $G$-invariant vector field on $M$, then $\nabla^A_{\partial_s} v=\nabla_Sv$. 
		\item $(\nabla^A_{\partial_t}\nabla^A_{\partial_s}-\nabla^A_{\partial_s}\nabla^A_{\partial_t})P=J\langle \nabla\mu, F_A(\partial_t,\partial_s)\rangle_\g=-\tilde{F}$ where $F=-*_2F_A$.
		\item For any vector fields $v,w$ on $\im P\subset M$, 
		\[
		\partial_s\langle v,w\rangle=\langle \nabla^A_{\partial_s} v, w\rangle +\langle v, \nabla^A_{\partial_s} w\rangle,
		\]
		i.e. the connection $\nabla^A$ is unitary. 
		\item\label{Id5} The curvature tensor of $\nabla^A$ is given by $$(\nabla^A_{\partial_t}\nabla^A_{\partial_s}-\nabla^A_{\partial_s}\nabla^A_{\partial_t})v=R_M(T,S)v+J\langle \Hess \mu(v), F_A(\partial_t,\partial_s)\rangle_\g,$$
		where $R_M(\cdot,\cdot)\cdot $ denotes the Riemannian curvature tensor on $M$.  
	\end{enumerate}
\end{lemma}
\begin{proof}
	
	The property (1) follows from the defining property (\ref{covariant2}) and  (\ref{covariant3}) of $\nabla_A$. 
	
	If $v$ is induced from a $G$-variant vector field on $M$, then for any $g\in G$, $\rho(g)_*v=v$. By (\ref{covariant3}),
	\[
	\nabla^A_Vv\colonequals \nabla_{\nabla^A_VP}\rho(e^{t\cdot a(V)})_*v= \nabla_{\nabla^A_VP}v. 
	\]
	
	This proves (2). For (3), if $F_A\equiv 0$ near a point $x\in \HH^2_+$, then we apply a gauge transformation $u$ so that the connection 1-form $a\equiv 0$ near $x$. Thus,
	\[
	u_*(\nabla^A_{\partial_t}\nabla^A_{\partial_s}-\nabla^A_{\partial_s}\nabla^A_{\partial_t})P=\nabla_{\partial_t}\partial_s u( P)-\nabla_{\partial_s}\partial_t u (P)=0.
	\]
	This shows the commutator is at least proportional to $*F_A$. To work out the general case, we apply (\ref{covariant1}) and Lemma \ref{covariant4}:
	\begin{align*}
	\nabla^A_{\partial_t}\nabla^A_{\partial_s}P&=\nabla_{\partial_t P} S+J\langle \Hess \mu(S),a(\partial_t)\rangle_\g \\
	&=\nabla_{\partial_t P}\partial_sP+\nabla_{\partial_tP} J\langle \nabla\mu, a(\partial_s)\rangle _\g+J\langle \Hess \mu(S),a(\partial_t)\rangle_\g \\
	&=\nabla_{\partial_t P}\partial_sP+J\langle \nabla\mu, \partial_t a(\partial_s)\rangle_\g-\langle \Hess \mu(\langle \nabla\mu, a(\partial_s)\rangle_\g), a(\partial_t)\rangle_\g\\
	&\qquad +J\langle \Hess \mu(\partial_t P), a(\partial_s)\rangle_\g+J\langle \Hess \mu(\partial_s P),a(\partial_t)\rangle_\g.
	\end{align*}
	
	At this point, we need the following fact. For any $\xi,\eta\in \g$, 
	\[
	\big\langle \Hess \mu\big(\langle \nabla\mu, \xi\rangle_\g \big), \eta\big\rangle_\g= \big\langle \Hess \mu\big(\langle \nabla\mu, \eta\rangle_\g \big), \xi \big\rangle_\g.
	\]
This follows from the relation $\nabla_{\tilde{\xi}}\tilde{\eta}-\nabla_{\tilde{\xi}}\tilde{\eta}=[\tilde{\xi},\tilde{\eta}]=\widetilde{[\xi,\eta]}=0$ (since $G$ is abelian). Now compare the formula of $	\nabla^A_{\partial_t}\nabla^A_{\partial_s}P$ with that of $	\nabla^A_{\partial_s}\nabla^A_{\partial_t}P$. This proves (3).
	
	As for (4), we use the gauge invariance of $\nabla^A$. Alternatively, one uses Lemma \ref{covariant4} and the fact that 
	\begin{equation}\label{J-Hess}
	\langle J\Hess \mu(v),w\rangle_{TM}+\langle J\Hess \mu(w),v\rangle_{TM}=0,
	\end{equation}
which follows from \eqref{EB.2}.
	
	The expression of the curvature tensor $(\ref{Id5})$ requires some work. Again, if $F_A\equiv 0$, we use the gauge invariance of $\nabla_A$, and $(\ref{Id5})$ follows from the definition of $R_M$. The actually proof is not very tidy. We follow the strategy of (3):
	\begin{align*}
	\nabla^A_{\partial_t}\nabla^A_{\partial_s}v&=\nabla_{\partial_t P} \nabla^A_{\partial_s}v+J\langle \Hess \mu(\nabla^A_{\partial_s}v), a(\partial_t)\rangle_\g\\
	&=\nabla_{\partial_t P} \nabla_{\partial_s P}v+J\nabla_{\partial_t P}\langle \Hess \mu(v), a(\partial_s)\rangle_\g+J\langle \Hess \mu(\nabla^A_{\partial_s}v), a(\partial_t)\rangle_\g\\
	&=\nabla_{\partial_t P} \nabla_{\partial_s P}v+J\langle \Hess \mu(v), \partial_t a(\partial_s)\rangle_\g+J\langle (\nabla_{\partial_t P}\Hess \mu)(v), a(\partial_s)\rangle_\g\\
	&\qquad +J\langle \Hess \mu(\nabla_{\partial_tP}v), a(\partial_s)\rangle_\g+J\langle \Hess \mu(\nabla_{\partial_sP}v), a(\partial_t)\rangle_\g\\
	&\qquad + J\big\langle \Hess \mu\big(\langle J\Hess \mu(v),a(\partial_s)\rangle\big), a(\partial_t)\big\rangle_\g.
	\end{align*} 
	There are six terms in the expression. The fourth and fifth ones will also occur in that of $\nabla^A_{\partial_s}\nabla^A_{\partial_t}v$. Hence, $\nabla^A_{\partial_t}\nabla^A_{\partial_s}v-\nabla^A_{\partial_s}\nabla^A_{\partial_t}v$ can be written as $N_1+N_2+N_3+N_6$ with 
	\begin{align*}
N_1&\colonequals\nabla_{\partial_t P} \nabla_{\partial_s P}v-\nabla_{\partial_s P} \nabla_{\partial_t P}v=R_M(\partial_tP,\partial_s P)v,\\
N_2&\colonequals J\langle \Hess \mu(v), \partial_t a(\partial_s)\rangle_\g-J\langle \Hess \mu(v), \partial_s a(\partial_t)\rangle_\g=	J\langle \Hess \mu(v), F_A(\partial_t,\partial_s)\rangle_\g,\\
	N_3&\colonequals J\langle (\nabla_{\partial_t P}\Hess \mu)(v), a(\partial_s)\rangle_\g-J\langle (\nabla_{\partial_s P}\Hess \mu)(v), a(\partial_t)\rangle_\g,\\
	N_6&\colonequals J\big\langle \Hess \mu\big(\langle J\Hess \mu(v),a(\partial_s)\rangle_\g\big), a(\partial_t)\big\rangle_\g-J\big\langle \Hess \mu\big(\langle J\Hess \mu(v),a(\partial_t)\rangle_\g\big), a(\partial_s)\big\rangle_\g. 
	\end{align*}
On the other hand, we have 
	\[
	R_M(T,S)-R_M(\partial_tP,\partial_s P)=R_M\big(\partial_tP,\tilde{a}(\partial_s)\big)+R_M\big(\tilde{a}(\partial_t), \partial_sP\big)+R_M\big(\tilde{a}(\partial_t),\tilde{a}(\partial_s)\big). 
	\]
	To complete the proof of \eqref{Id5}, it remains to verify that
\begin{align*}
N_3&= R_M\big(\partial_tP,\tilde{a}(\partial_s))v-R_M( \partial_sP,\tilde{a}(\partial_t)\big)v,\\
N_6&=R_M\big(\tilde{a}(\partial_t),\tilde{a}(\partial_s)\big)v,
\end{align*}
which is the content of Lemma \ref{BL.6} and \ref{BL.7}.
\end{proof}
\begin{lemma}\label{BL.6}For any $\xi\in \g$ and vector fields $v,w$ on $M$, we have 
	$$R_M(w,\tilde{\xi})v=\langle \nabla_w (J \Hess \mu)(v),\xi\rangle_\g.$$
\end{lemma}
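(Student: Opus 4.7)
The plan is to recognize the lemma as an instance of a classical identity for Killing vector fields, which is applicable because $\tilde{\xi}$ is Killing by the assumption that $(G,\rho)$ acts isometrically on $M$.

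First, using only the K\"ahler condition $\nabla J = 0$ together with the defining relation $\tilde{\xi} = J\langle \nabla\mu,\xi\rangle$, a direct computation gives
\[
\nabla_Z \tilde{\xi} \;=\; J \nabla_Z \langle \nabla\mu,\xi\rangle \;=\; \langle J\Hess\mu(Z),\xi\rangle.
\]
Applying $\nabla_Y$ and using the Leibniz rule for the $(1,1)$-tensor $\nabla \tilde{\xi}$ then yields
\[
\nabla^2_{Y,Z}\tilde{\xi} \;:=\; \nabla_Y(\nabla_Z \tilde{\xi}) - \nabla_{\nabla_Y Z} \tilde{\xi} \;=\; \langle \nabla_Y(J\Hess\mu)(Z),\xi\rangle.
\]

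Next, I would invoke the classical identity, valid for any Killing vector field $X$ on a Riemannian manifold,
\[
\nabla^2_{Y,Z} X \;=\; R(Y,X)Z.
\]
A self-contained derivation proceeds by showing that the tensor $A(Y,Z) := \nabla^2_{Y,Z}X$ is uniquely determined by two properties: (i) the curvature commutator $A(Y,Z) - A(Z,Y) = R(Y,Z)X$, and (ii) the metric skew-symmetry $\langle A(Y,Z), W\rangle + \langle A(Y,W), Z\rangle = 0$, which follows by covariantly differentiating the Killing relation $\langle \nabla_U X, V\rangle + \langle \nabla_V X, U\rangle = 0$ in a third direction and cancelling the cross terms by a second application of the same Killing relation. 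A direct verification using the first Bianchi identity and the skew-symmetry of $R$ in its last two arguments shows that $B(Y,Z) := R(Y,X)Z$ also satisfies (i) and (ii); since any tensor satisfying both (i) with zero right-hand side and (ii) is forced to vanish identically, one concludes $A = B$.

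Finally, applying the Killing identity with $X = \tilde{\xi}$, $Y = w$, $Z = u$ gives
\[
R(w,\tilde{\xi})u \;=\; \nabla^2_{w,u}\tilde{\xi} \;=\; \langle \nabla_w(J\Hess\mu)(u), \xi\rangle,
\]
which is the statement of the lemma. The only non-routine step is the derivation of the classical Killing identity $\nabla^2_{Y,Z}X = R(Y,X)Z$; once it is in hand, the rest reduces to the computation of $\nabla^2\tilde{\xi}$ via $\nabla J = 0$ together with the defining formula for $\tilde{\xi}$.
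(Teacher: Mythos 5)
Your proof is correct. The key identity you invoke, $\nabla^2_{Y,Z}X = R(Y,X)Z$ for a Killing field $X$, is valid with the paper's curvature conventions (I checked the sign: the six-step cycling through properties (i) and (ii) together with the first Bianchi identity gives $2\langle \nabla^2_{Y,Z}X,W\rangle = -2\langle R(X,Y)Z,W\rangle$, i.e.\ $\nabla^2_{Y,Z}X=R(Y,X)Z$), and $\tilde{\xi}$ is indeed Killing since $(G,\rho)$ acts isometrically. The computation $\nabla_Z\tilde{\xi}=\langle J\Hess\mu(Z),\xi\rangle$ uses $\nabla J=0$ and the fact that $\xi\in\g$ is a fixed element, both of which are available, and the verification that $B(Y,Z)=R(Y,X)Z$ satisfies (i) and (ii) and that a tensor symmetric in $(Y,Z)$ and skew in $(Z,W)$ vanishes are both routine and correct. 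Your route differs from the paper's in packaging rather than substance: the paper never names the Killing-field identity, but instead works directly with the trilinear form $\langle \nabla_u(J\Hess\mu)(v),w\rangle$, establishes the ``key observation'' $\langle R_M(u,v)w,J\nabla\mu\rangle=-\langle\nabla_u(J\Hess\mu)(v),w\rangle-\langle\nabla_v(J\Hess\mu)(w),u\rangle$, and then exploits its cyclic invariance together with the first Bianchi identity; the raw ingredients (skew-symmetry of $\nabla\tilde{\xi}$ differentiated once more, the curvature commutator, Bianchi) are identical. What your version buys is modularity and transparency --- the statement is recognized as a specialization of a classical fact, so the $\g$-valued bookkeeping is confined to the two-line computation of $\nabla^2\tilde{\xi}$; what the paper's version buys is that the intermediate identity $\langle R_M(u,v)w,J\nabla\mu\rangle=\langle\nabla_w(J\Hess\mu)(u),v\rangle$ is stated in exactly the contracted form it reuses in the surrounding curvature computation for $\nabla^A$.
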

\begin{proof}[Proof of Lemma] Differentiating (\ref{J-Hess}) along a vector field $u$ on $M$ yields
	\[
	\langle \nabla_{u}(J\Hess \mu)(v),w\rangle_{TM}+\langle \nabla_{u}(J\Hess \mu)(w),v\rangle_{TM}=0.
	\]
	
	The key observation is that for any vector fields $u,v,w$, we have 
	\[
	\langle R_M(u,v)w,J\nabla\mu\rangle_{TM}=-\langle \nabla_u(J\Hess \mu)(v),w\rangle_{TM}-\langle \nabla_v(J\Hess \mu)(w),u\rangle_{TM}. 
	\]
	Indeed, we use the symmetry of curvature tensor to compute:
	\begin{align*}
	\langle R_M(u,v)w,J\nabla\mu\rangle_{TM}&=-\langle R_M(u,v)J\nabla\mu,w\rangle_{TM}\\
	&=-\langle \nabla_u(J\Hess \mu)(v),w\rangle_{TM}+\langle \nabla_v(J\Hess \mu)(u),w\rangle_{TM}\\
	&=-\langle \nabla_u(J\Hess \mu)(v),w\rangle_{TM}-\langle \nabla_v(J\Hess \mu)(w),u\rangle_{TM}.
	\end{align*}
	This expression is unchanged if we permute $(u,v,w)$. Using the symmetry
	\[
	R_M(u,v)w+R_M(v,w)u+R_M(w,u)v=0
	\]
	from Riemannian geometry, we conclude that 
	\[
	\langle \nabla_u(J\Hess \mu)(v),w\rangle_{TM}+\langle \nabla_v(J\Hess \mu)(w),u\rangle_{TM}+\langle \nabla_w(J\Hess \mu)(u),v\rangle_{TM}=0.
	\]
	In particular, this implies $\langle R_M(u,v)w,J\nabla\mu\rangle_{TM}=\langle \nabla_w(J\Hess \mu)(u),v\rangle_{TM}.$ Finally, note that $\tilde{\xi}=\langle \nabla\mu, \xi\rangle_\g$ and 
	\[
	\langle R_M(w,\tilde{\xi})u,v\rangle =\langle R_M(u,v)w,\tilde{\xi}\rangle=\langle \nabla_w (J \Hess \mu)(u),v\otimes \xi\rangle_{TM\otimes \g}.\qedhere
	\]
\end{proof}
\begin{lemma}\label{BL.7}
	For any $\xi,\eta\in\g$ and any vector field $v$ on $M$, 
	\[
	R_M(\tilde{\xi},\tilde{\eta})v=J\big\langle \Hess \mu\big(\langle J\Hess \mu(v),\eta\rangle_\g\big), \xi\big\rangle_\g-J\big\langle \Hess \mu\big(\langle J\Hess \mu(v),\xi\rangle_\g\big), \eta\big\rangle_\g.
	\]
\end{lemma}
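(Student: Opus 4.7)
My plan is to reduce this to the previous lemma via the first Bianchi identity. By the algebraic Bianchi identity,
\[
R_M(\tilde\xi,\tilde\eta)v + R_M(\tilde\eta,v)\tilde\xi + R_M(v,\tilde\xi)\tilde\eta=0,
\]
so I can rewrite $R_M(\tilde\xi,\tilde\eta)v = -R_M(v,\tilde\xi)\tilde\eta + R_M(v,\tilde\eta)\tilde\xi$ (using skew-symmetry in the first two slots). Each term on the right is already in the form handled by the previous lemma, giving
\[
R_M(\tilde\xi,\tilde\eta)v = \langle \nabla_v(J\Hess\mu)(\tilde\xi),\eta\rangle_\g - \langle \nabla_v(J\Hess\mu)(\tilde\eta),\xi\rangle_\g.
\]

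Next I would expand the tensorial covariant derivative using the Leibniz rule: for fixed $\eta\in\g$,
\[
\langle \nabla_v(J\Hess\mu)(\tilde\xi),\eta\rangle_\g = \nabla_v\langle J\Hess\mu(\tilde\xi),\eta\rangle_\g - \langle J\Hess\mu(\nabla_v\tilde\xi),\eta\rangle_\g.
\]
The main identity I would use at this step is the observation, already implicit in the paper's convention $\tilde\xi=J\langle\nabla\mu,\xi\rangle_\g$, that for any vector $w\in T_pM$,
\[
\langle J\Hess\mu(w),\eta\rangle_\g = J\,\nabla_w\langle\nabla\mu,\eta\rangle_\g = \nabla_w\tilde\eta,
\]
because $J$ is parallel. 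Applying this to both arguments, the displayed expression becomes $\nabla_v(\nabla_{\tilde\xi}\tilde\eta)-\nabla_{\nabla_v\tilde\xi}\tilde\eta$, and the symmetric piece is $\nabla_v(\nabla_{\tilde\eta}\tilde\xi)-\nabla_{\nabla_v\tilde\eta}\tilde\xi$. Substituting back gives
\[
R_M(\tilde\xi,\tilde\eta)v = \nabla_v\bigl(\nabla_{\tilde\xi}\tilde\eta-\nabla_{\tilde\eta}\tilde\xi\bigr) + \nabla_{\nabla_v\tilde\eta}\tilde\xi - \nabla_{\nabla_v\tilde\xi}\tilde\eta.
\]

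Here is where $G$ being abelian enters decisively: it forces $[\tilde\xi,\tilde\eta]=0$, so the torsion-free identity $\nabla_{\tilde\xi}\tilde\eta-\nabla_{\tilde\eta}\tilde\xi = [\tilde\xi,\tilde\eta] = 0$ kills the first term. Finally I would unwind the RHS of the stated identity using the same key identity in the form $J\langle\Hess\mu(w),\xi\rangle_\g = \langle J\Hess\mu(w),\xi\rangle_\g = \nabla_w\tilde\xi$ (since $J$ commutes with $\Hess\mu$ by Lemma~\ref{identities}(4)), applied with $w=\langle J\Hess\mu(v),\eta\rangle_\g=\nabla_v\tilde\eta$; this shows the RHS of the lemma equals exactly $\nabla_{\nabla_v\tilde\eta}\tilde\xi - \nabla_{\nabla_v\tilde\xi}\tilde\eta$, matching what we just derived.

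I do not expect a genuine obstacle here; the proof is a bookkeeping exercise once one recognizes $\nabla_v\tilde\xi$ as the natural bilinear object appearing in both the previous lemma (after expanding the tensor derivative) and in the stated RHS. The only point requiring care is the sign/convention check when invoking the previous lemma for $R_M(v,\tilde\eta)\tilde\xi$ — one must swap slots and rename $\xi\leftrightarrow\eta$ consistently — and the use of the parallelism of $J$ to freely move $J$ past $\Hess\mu$ and past $\nabla$.
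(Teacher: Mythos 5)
Your proof is correct, but it takes a genuinely different route from the paper's. The paper never invokes the first Bianchi identity or the preceding lemma: it passes directly to the equivalent four-tensor identity $\langle R_M(\tilde{\xi},\tilde{\eta})v,w\rangle=-\langle \nabla_v\tilde{\eta},\nabla_w\tilde{\xi}\rangle+\langle \nabla_v\tilde{\xi},\nabla_w\tilde{\eta}\rangle$, expands the right-hand side using the Killing-field antisymmetry $\langle\nabla_v\tilde{\xi},w\rangle=-\langle\nabla_w\tilde{\xi},v\rangle$ together with $\nabla_{\tilde{\xi}}\tilde{\eta}=\nabla_{\tilde{\eta}}\tilde{\xi}$, and recognizes the result as $\langle R_M(v,w)\tilde{\xi},\tilde{\eta}\rangle$, concluding by the pair symmetry of the curvature tensor. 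You instead reduce $R_M(\tilde{\xi},\tilde{\eta})v$ to $R_M(v,\tilde{\eta})\tilde{\xi}-R_M(v,\tilde{\xi})\tilde{\eta}$ via the algebraic Bianchi identity, feed each term into the previous lemma, and identify both sides of the claimed identity with $\nabla_{\nabla_v\tilde{\eta}}\tilde{\xi}-\nabla_{\nabla_v\tilde{\xi}}\tilde{\eta}$ through the observation $\nabla_w\tilde{\eta}=\langle J\Hess\mu(w),\eta\rangle_\g$; I have checked the signs and the cancellation of $\nabla_v(\nabla_{\tilde{\xi}}\tilde{\eta}-\nabla_{\tilde{\eta}}\tilde{\xi})$, and they work out. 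Your route buys a reuse of the previous lemma and makes the geometric meaning of the right-hand side transparent, while the paper's is self-contained and trades the Bianchi identity for the pair symmetry $\langle R(X,Y)Z,W\rangle=\langle R(Z,W)X,Y\rangle$; both arguments use $[\tilde{\xi},\tilde{\eta}]=0$ (abelianness plus torsion-freeness) in an essential way. One small mis-attribution: the step $J\langle\Hess\mu(w),\xi\rangle_\g=\langle J\Hess\mu(w),\xi\rangle_\g$ holds simply because $J$ acts on the $TM$-factor while the pairing contracts the $\g$-factor, not because of Lemma~\ref{identities}(4); this does not affect correctness.
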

\begin{proof}[Proof of Lemma] This identity is equivalent to that
	\begin{equation}\label{B.6}
	\langle R_M(\tX,\tY)v,w\rangle=-\langle \nabla_v\tY,\nabla_w\tX\rangle+\langle \nabla_v\tX,\nabla_w\tY\rangle,
	\end{equation}
	for any vector field $w\in \Gamma(M, TM)$. We first rewrite the right hand side of (\ref{B.6}):
	\begin{align*}
	I&\colonequals-\langle \nabla_v\tY,\nabla_w\tX\rangle+\langle \nabla_v\tX,\nabla_w\tY\rangle\\
	&=-v\cdot \langle \tY, \nabla_w\tX\rangle+\langle \tY,\nabla_v\nabla_w\tX\rangle +w\cdot \langle \tY,\nabla_v\tX\rangle-\langle \tY, \nabla_w\nabla_v\tX\rangle.
	\end{align*}
	By \eqref{EB.2} and the relation $\nabla_{\tX}\tY=\nabla_{\tY}\tX$, we have 
	\[
	\langle \nabla_v\tX,\tY\rangle=-\langle \nabla_{\tY}\tX,v\rangle=-\langle \nabla_{\tX}\tY,v\rangle=\langle \nabla_v\tY,\tX\rangle. 
	\]
	Therefore, $v\cdot\langle \tY,\tX\rangle=2\langle\nabla_v \tX,\tY\rangle$. Moreover,
	\begin{align*}
	-v\cdot \langle \tY, \nabla_w\tX\rangle+w\cdot \langle \tY,\nabla_v\tX\rangle&=-\half v\cdot w\langle \tY,\tX\rangle+\half w\cdot v\langle \tY,\tX\rangle,\\
	&=-\half [v,w]\langle \tY,\tX\rangle =-\langle \tY,\nabla_{[v,w]} \tX\rangle. 
	\end{align*}
	Finally, we conclude that $I=\langle R_M(v,w)\tX,\tY\rangle=\langle R_M(\tX,\tY)v,w\rangle$.
\end{proof}
\subsection{A Bochner-Type Formula} By abuse of notations, we denote the trivial bundle $\g\times M\to M$ also by $\g$. Define a bundle map $D: TM\to \g\oplus\g\oplus TM$ as follows: for any tangent vector $v\in T_xM$, define its image as:
\[
D_x(v)\colonequals \big( \langle \nabla \mu, v\rangle_{TM}, \langle \nabla\mu, Jv\rangle_{TM},\Hess_x L(v)\big)\in \g\oplus \g\oplus T_xM. 
\]

Bochner's formula \cite[P. 334]{GTM171} was originally stated for a harmonic function $u: N\to \R$ on a Riemannian manifold $N$. It computes the Laplacian of $|\nabla u|^2$:
\[
0=\half \Delta_N |\nabla u|^2+|\Hess u|^2+\Ric(\nabla u,\nabla u). 
\]

We provide a formula in the same spirit for a solution $(A,P)$ to the gauged Witten equation \eqref{generalized-vortex-equation} on $\HH^2_+$, with the bundle map $D$ playing the role of $\Ric(\cdot, \cdot )$. The Laplacian operator $\Delta_N$ or $\Delta_{\HH^2_+}$ is always assumed to have a non-negative spectrum. In particular, 
\[
\Delta_{\HH^2_+}=-(\partial_t^2+\partial_s^2). 
\]

\begin{theorem}\label{BochnerFormula} Write $T=\Dt P$, $S=\Ds P$ and $F=-*_2 F_A$ for short. For a solution $(A,P)$ to the gauged Witten equation $(\ref{generalized-vortex-equation})$ on $\HH^2_+$, we have identities:
	\begin{enumerate}
		\item  The Laplacian $\half (-\Delta_{\HH^2_+})|T|^2=\half (\partial_s^2+\partial_t^2)|T|^2$ of $|T|^2$ is equal to
		\begin{align*}
		&|\Ds T|^2+|\Dt T|^2+|D_P(T)|^2+\langle R_M(S,T)S,T\rangle\\
		&\qquad+\langle(\nabla_T\Hess H)(\nabla H), T\rangle+\langle \Hess \mu(2JS-T), T\otimes F\rangle_{TM\otimes\g}.
		\end{align*}
		\item  Similarly, $\half (-\Delta_{\HH^2_+})|S|^2=\half (\partial_s^2+\partial_t^2)|S|^2$ is equal to
		\begin{align*}
		&|\Dt S|^2+|\Ds S|^2+|D_P(S)|^2+\langle R_M(T,S)T,S\rangle\\
		&\qquad+\langle(\nabla_S\Hess H)(\nabla H), S\rangle+\langle \Hess \mu(-2JT-S), S\otimes F\rangle_{TM\otimes\g}.
		\end{align*}
		\item The Laplacian $\half (-\Delta_{\HH^2_+})|F|_\g^2$ of $|F|_\g^2$ is equal to
		\begin{align*}
		&|\partial_sF|^2_\g+|\partial_tF|^2_\g+|\langle \nabla\mu, F\rangle_\g |^2+2\langle \Hess \mu(JS),T\otimes F\rangle_{TM\otimes\g}. 
		\end{align*}
	\end{enumerate}
\end{theorem}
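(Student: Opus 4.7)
My plan is to prove the three identities by direct calculation, drawing on three ingredients: (i) the unitarity of the connection $\nabla^A$ (Lemma \ref{identities2}(4)), which for any section $X$ gives $\half \partial^2_{\cdot}|X|^2 = |\nabla^A_{\cdot} X|^2 + \langle \nabla^A_{\cdot}\nabla^A_{\cdot} X, X\rangle$; (ii) the gauged Witten equations $T + JS + \nabla H = 0$ and $F = \vdelta - \mu\circ P$; (iii) the pointwise algebraic identities from Lemmas \ref{identities}--\ref{identities2}, most importantly the curvature commutators $\Dt S - \Ds T = -J\langle \nabla\mu, F\rangle$ and its vector-field analogue.

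I would dispose of (3) first, since it is the cleanest. Because $F = \vdelta - \mu\circ P$ is a $\g$-valued function on $\HH^2_+$, Lemma \ref{identities}(6) kills the connection contribution and yields $\partial_t F = -\langle \nabla\mu, T\rangle$ and $\partial_s F = -\langle \nabla\mu, S\rangle$; pairing these with $F$ produces the $|\langle \nabla\mu, F\rangle|^2$ factor after one more differentiation. A second derivative expresses $\partial_t^2 F + \partial_s^2 F$ in terms of $\Hess\mu(T,T)$, $\Hess\mu(S,S)$ and $\langle \nabla\mu, \Dt T + \Ds S\rangle$. Applying $\Dt$ and $\Ds$ to the J-holomorphic equation, then exchanging $\Dt S$ for $\Ds T$ via the curvature identity, converts the last term into a combination of $J\Ds T$, $\langle \nabla\mu, F\rangle$ and $\Hess H$ contributions. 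Pairing with $F$, using $\langle \nabla\mu, \nabla H\rangle = 0$ (Lemma \ref{identities}(5)) and the commutation $J\Hess\mu = \Hess\mu J$ (Lemma \ref{identities}(4)), collects everything into the target form $|\langle \nabla\mu, F\rangle|^2 + 2\langle \Hess\mu(JS), T\otimes F\rangle$.

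Parts (1) and (2) follow the same template, but the bookkeeping is heavier. By unitarity, everything reduces to computing $\langle \Dt^2 T + \Ds^2 T, T\rangle$. First I would derive the first-order identity $\Dt T + J\Ds T + \langle \nabla\mu, F\rangle + \Hess H(T) = 0$ by combining the $\Dt$-derivative of the J-holomorphic equation with the curvature identity of Lemma \ref{identities2}(3), together with its companion obtained from $\Ds$. Differentiating these once more and commuting covariant derivatives using the vector-field curvature formula (Lemma \ref{identities2}(5)) produces four structural families: the Riemann curvature term $\langle R_M(S,T)S, T\rangle$ from the $[\Dt, \Ds]$ commutator; a trilinear $\Hess\mu$ term from the accompanying $F$-piece; the derivative $(\nabla_T \Hess H)(\nabla H)$ term, which emerges after applying the Cauchy--Riemann identities in Lemma \ref{identities}(1)--(2); and the norm $|D_P(T)|^2$, which by definition absorbs precisely the $\Hess H$, $\langle \nabla\mu, \cdot\rangle$, and $\langle \nabla\mu, J\cdot\rangle$ contributions. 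Formula (2) then follows by exchanging the roles of $t$ and $s$.

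The main obstacle will be organizing the numerous cross terms in (1) and (2). They involve $J$, $\Hess H$, $\Hess\mu$, and $\nabla\mu$ in combinations that look deceptively similar; reducing them to the target form requires simultaneous and repeated use of the anti-commutation $J\Hess H + \Hess H\,J = 0$ (Lemma \ref{identities}(3)) and the commutation $J\Hess\mu = \Hess\mu J$ (Lemma \ref{identities}(4)). A useful guiding principle is to classify each intermediate expression by its parity under $J$: the terms odd under $J$ can only combine with $\Hess H$ or Cauchy--Riemann pieces, while the even terms combine with $\Hess\mu$; this decouples the two families and makes it transparent which contributions assemble into $|D_P(T)|^2$ versus the curvature or Hessian-derivative terms.
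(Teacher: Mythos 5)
Your proposal is correct and follows essentially the same route as the paper: unitarity of $\nabla^A$ to reduce to second covariant derivatives, the curvature commutators of Lemma \ref{identities2}, and a Weitzenb\"ock-type consequence of the equations (your first-order identity $\Dt T+J\Ds T+\langle\nabla\mu,F\rangle+\Hess H(T)=0$ is equivalent to the paper's Lemma \ref{connectionlaplacian}, $-\Delta_AP=J\tilde F+\Hess H(\nabla H)$), with the algebraic identities of Lemma \ref{identities} assembling the cross terms into $|D_P(T)|^2$ and the stated curvature and $\Hess\mu$ terms. The only cosmetic difference is that the paper works directly with the connection Laplacian $\Delta_AP$ rather than your first-order equation for $T$, but these are interchangeable.
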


Define $\nabla^AP\colonequals dt\otimes T+ds\otimes S$ and $\Hess_A P\colonequals\sum_{1\leq i,j\leq 2}\omega_i\otimes \omega_j\otimes \nabla^A_{e_i}\nabla^A_{e_j}P$ with $(e_1,e_2)=(\partial_t,\partial_s)$ and $(\omega_1,\omega_2)=(dt,ds)$. 

\begin{corollary} \label{B.8}We have the following identity for $\Delta_{\HH^2_+}\big(|\nabla^A P|^2+|F|^2_\g\big)$:
	\begin{align*}
	0&=\half \Delta_{\HH^2_+}\big(|\nabla^A P|^2+|F|^2_\g\big)+|\Hess_A P|^2+|\nabla F|^2_\g+|D_P(\nabla^A P)|^2+|\langle \nabla\mu, F\rangle_\g |^2\\
	&\qquad+ 2\langle R(S,T)S,T\rangle+ \langle(\nabla_T\Hess H)(\nabla H), T\rangle+\langle(\nabla_S\Hess H)(\nabla H), S\rangle\\
	&\qquad+ 6\langle \Hess \mu(JS),T\otimes F\rangle_{TM\otimes\g}-\langle \Hess \mu(T),T\otimes F\rangle_{TM\otimes\g}-\langle \Hess \mu(S),S\otimes F\rangle_{TM\otimes\g}.
	\end{align*}
	
\end{corollary}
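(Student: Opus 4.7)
The strategy is to derive the corollary as a direct algebraic consequence of the three identities stated in Theorem \ref{BochnerFormula} by summing them. Since $|\nabla^AP|^2=|T|^2+|S|^2$, the left-hand side expands as
\[
\tfrac{1}{2}\Delta_{\HH^2_+}(|\nabla^AP|^2+|F|^2_\g)=\tfrac{1}{2}\Delta_{\HH^2_+}|T|^2+\tfrac{1}{2}\Delta_{\HH^2_+}|S|^2+\tfrac{1}{2}\Delta_{\HH^2_+}|F|^2_\g,
\]
so it suffices to add the three right-hand sides from parts (1)--(3) of Theorem \ref{BochnerFormula} and reorganize. The purely ``square'' contributions $|\Dt T|^2+|\Ds T|^2+|\Dt S|^2+|\Ds S|^2$, $|\partial_tF|^2_\g+|\partial_sF|^2_\g$, and $|D_P(T)|^2+|D_P(S)|^2$ are exactly $|\Hess_AP|^2$, $|\nabla F|^2_\g$, and $|D_P(\nabla^AP)|^2$ respectively; the term $|\langle\nabla\mu,F\rangle_\g|^2$ carries over verbatim from (3), and the two $(\nabla_\bullet\Hess H)(\nabla H)$ summands are produced by (1) and (2).

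The curvature term requires the standard symmetry $\langle R_M(S,T)S,T\rangle=\langle R_M(T,S)T,S\rangle$, which immediately combines the single contributions from (1) and (2) into the factor $2\langle R(S,T)S,T\rangle$. The interesting step is the consolidation of the $\Hess\mu$ terms. Expanding $\langle\Hess\mu(2JS-T),T\otimes F\rangle$ from (1), $\langle\Hess\mu(-2JT-S),S\otimes F\rangle$ from (2), and $2\langle\Hess\mu(JS),T\otimes F\rangle$ from (3), and collecting, yields
\[
4\langle\Hess\mu(JS),T\otimes F\rangle-2\langle\Hess\mu(JT),S\otimes F\rangle-\langle\Hess\mu(T),T\otimes F\rangle-\langle\Hess\mu(S),S\otimes F\rangle,
\]
so one needs the identity
\[
-\langle\Hess\mu(JT),S\otimes F\rangle=\langle\Hess\mu(JS),T\otimes F\rangle
\]
to promote the coefficient $4$ to $6$. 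This is where the main (mild) obstacle lies, and it is resolved by invoking two facts already recorded in Lemma \ref{identities}: the commutation relation $\Hess\mu\circ J=J\circ\Hess\mu$ from part (4), and the antisymmetry $\langle J\Hess\mu(V),U\otimes\xi\rangle=-\langle J\Hess\mu(U),V\otimes\xi\rangle$ that follows from differentiating the Killing identity $\mathcal{L}_{\tilde\xi}g=0$ (equation \eqref{EB.2}). Combining these two relations with the substitution $V=T$, $U=S$, $\xi\mapsto F$ gives exactly the desired identity, after which all terms match those in the statement of the corollary. No new analytic input beyond Theorem \ref{BochnerFormula} and Lemma \ref{identities} is needed.
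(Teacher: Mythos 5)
Your proposal is correct and is exactly the argument the paper intends (the corollary is left as an immediate consequence of summing the three identities in Theorem \ref{BochnerFormula}): the square terms assemble into $|\Hess_AP|^2$, $|\nabla F|^2_\g$ and $|D_P(\nabla^AP)|^2$, the pair-symmetry of the Riemann tensor doubles the curvature term, and the coefficient $6$ on $\langle\Hess\mu(JS),T\otimes F\rangle$ comes precisely from combining $\Hess\mu\circ J=J\circ\Hess\mu$ with the antisymmetry \eqref{J-Hess} of $J\Hess\mu$, as you say. No gaps.
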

\begin{proof}[Proof of Corollary \ref{B.8}] It follows by adding up the three identities from Theorem \ref{BochnerFormula}. 
\end{proof}

Define the connection Laplacian of $P$ as:
\[
\Delta_A P\colonequals -\nabla^A_{\partial_t}\nabla^A_{\partial_t}P-\nabla^A_{\partial_s}\nabla^A_{\partial_s}P=-(\nabla^A_{\partial_t}T+\nabla^A_{\partial_s}S).
\]

To prove Theorem \ref{BochnerFormula}, we start with a useful formula of  $\Delta_A P$.
\begin{lemma}\label{connectionlaplacian}
	$-\Delta_A P=J\tilde{F}+\Hess H(\nabla H)$. 
\end{lemma}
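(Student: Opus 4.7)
The plan is to differentiate the Floer-type equation $T+JS+\nabla H=0$ in both directions $\partial_t$ and $\partial_s$, add the resulting identities, and use the curvature of $\nabla^A$ to convert $\nabla^A_{\partial_t}\nabla^A_{\partial_s}-\nabla^A_{\partial_s}\nabla^A_{\partial_t}$ into the term $-\tilde F$. The rest is bookkeeping with the K\"ahler identities from Lemma~\ref{identities} and the fact that $\nabla H$ is a $G$-invariant vector field on $M$ (since $H=\im W$ is $G$-invariant and the metric is $G$-invariant), so that Lemma~\ref{identities2}(2) applies.

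Concretely, from $T=-JS-\nabla H$ I would compute
\[
\nabla^A_{\partial_t}T=-J\,\nabla^A_{\partial_t}S-\nabla^A_{\partial_t}(\nabla H),
\]
using that $\nabla^A$ is $J$-linear (since $J$ is parallel and $G$ acts holomorphically). The commutator identity in Lemma~\ref{identities2}(3) rewrites $\nabla^A_{\partial_t}S=\nabla^A_{\partial_s}T-\tilde F$, which produces the $J\tilde F$ term upon adding $\nabla^A_{\partial_s}S$. Thus
\[
\nabla^A_{\partial_t}T+\nabla^A_{\partial_s}S
= J\tilde F-\nabla^A_{\partial_t}(\nabla H)+J\,\nabla^A_{\partial_s}(\nabla H).
\]
Because $\nabla H$ is $G$-invariant, Lemma~\ref{identities2}(2) gives $\nabla^A_{\partial_t}(\nabla H)=\Hess H(T)$ and $\nabla^A_{\partial_s}(\nabla H)=\Hess H(S)$. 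Applying the anti-commutativity $J\circ\Hess H+\Hess H\circ J=0$ from Lemma~\ref{identities}(3) converts $J\,\Hess H(S)$ to $-\Hess H(JS)$, and substituting $JS=-T-\nabla H$ from the Floer equation yields
\[
J\,\Hess H(S)=\Hess H(T)+\Hess H(\nabla H),
\]
which cancels the $-\Hess H(T)$ term, leaving $-\Delta_A P=J\tilde F+\Hess H(\nabla H)$ as claimed.

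No step is really hard here; the only thing to watch is the sign convention for $F$ and $\tilde F$ (which are tied together by Lemma~\ref{identities2}(3) through $\tilde F=J\langle\nabla\mu,F\rangle$) and the ``compatibility'' of $\nabla^A$ with $J$, which is what allows us to pull $J$ in and out of $\nabla^A_V$. Everything else is an algebraic manipulation using the four identities from Lemma~\ref{identities} plus the curvature formula from Lemma~\ref{identities2}(3). The main conceptual content is the interplay between the Cauchy-Riemann equation on $W$ (which gave us Lemma~\ref{identities}) and the equivariance of the covariant derivative $\nabla^A$ built from a $G$-connection on the trivial bundle over $\HH^2_+$.
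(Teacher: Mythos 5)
Your proposal is correct and follows essentially the same route as the paper: the paper applies the single operator $-\nabla^A_{\partial_t}+J\nabla^A_{\partial_s}$ to the equation $T+JS+\nabla H=0$ and invokes exactly the same three ingredients (the curvature commutator from Lemma \ref{identities2}(3), the $G$-invariance of $\nabla H$ via Lemma \ref{identities2}(2), and the anti-commutation of $J$ with $\Hess H$ from Lemma \ref{identities}(3)), which is just a repackaging of your two-step differentiation. The signs all check out, and your remark that $\nabla^A$ commutes with $J$ is the same implicit fact the paper uses (it follows from Lemma \ref{covariant4} together with Lemma \ref{identities}(4) and $\nabla J=0$).
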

\begin{proof}
	Apply the operator $-\nabla^A_{\partial_t}+J\nabla^A_{\partial_s}$ to the equation (\ref{firsteq}). Using the fact that $\Hess H$ anti-commutes with $J$ (Lemma \ref{identities} (3)) and Lemma \ref{identities2} (2)(3), we deduce that
	\begin{align*}
	0&=(-\nabla^A_{\partial_t}+J\nabla^A_{\partial_s})(T+JS+\nabla H)\\
	&=-(\nabla^A_{\partial_t}T+\nabla^A_{\partial_s}S)-J(\nabla^A_{\partial_t}S-\nabla^A_{\partial_s}T)-\Hess H(T+JS)\\
	&=\Delta_A P+J(\tilde{F})+\Hess H(\nabla H).\qedhere
	\end{align*}
	\begin{remark}\label{B.10} It is enlightening to work out Lemma \ref{connectionlaplacian} and Theorem \ref{BochnerFormula} concretely in the special case of Example \ref{example1}, with $M=\C, W\equiv 0$ and $\mu=\frac{i}{2}|x|^2$. The metric on $\C$ is flat, and $T_xM$ is identified with $\C$ for each $x\in M$. Hence, 
		\[
		\nabla\mu(x)=x\otimes i \text{ and } \Hess \mu=1_\C\otimes i. 
		\]
		Take $\delta=\frac{i}{2}\in \g=i\R$. In this case, the equation (\ref{generalized-vortex-equation}) recovers the vortex equation on $\HH^2_+\subset\C$: 
		\begin{equation*}
		\left\{\begin{array}{rl}
		\bpartial_A P&=0\\
		-*F_A+\mu&=\frac{i}{2}
		\end{array}
		\right.
		\end{equation*}
		where $A$ is a unitary connection and $P: \HH^2_+\to \C$ is a smooth function. Then Lemma \ref{connectionlaplacian} says
		\[
		\Delta_A P=\langle F, i\rangle_\g P,
		\]
		which follows from the Weitzenb\"{o}ck formula. Since $\nabla H\equiv 0$, $T=-JS$. The first two identities in Theorem \ref{BochnerFormula} yield:
		\[
		0=\half \Delta_{\HH^2_+}|\nabla_A P|^2+|\Hess_A P|^2+|D(\nabla_A P)|^2-3\langle F, i\rangle_\g  |\nabla_A P|^2,
		\]
		where $D(v)=\big(\langle \nabla\mu ,v\rangle, \langle \nabla\mu ,Jv\rangle\big)$ and so $|D(v)|^2_\g=|P|^2|v|^2$. In the meanwhile, we have 
		\[
		0=\half \Delta_{\HH^2_+}|F|^2+|\nabla F|^2_\g+|P|^2|F|^2_\g-|\nabla_AP|^2\langle F,i\rangle_\g. 
		\]
		
		These formulae were first proved in \cite[Proposition 6.1]{JT}.
	\end{remark}
	
\end{proof}
\begin{proof}[Proof of Theorem \ref{BochnerFormula}] Let us start with $\partial_s^2|T|^2$. By Lemma \ref{identities2} (4), 
	\begin{align*}
	\half \partial_s^2|T|^2&=\partial_s \langle \nabla^A_{\partial_s}T,T\rangle=| \nabla^A_{\partial_s}T|^2+\langle T,\nabla^A_{\partial_s}\nabla^A_{\partial_s}T\rangle.
	\end{align*}
	By Lemma \ref{identities2} (3)(5), we have 
	\begin{align*}
	\nabla^A_{\partial_s}\nabla^A_{\partial_s}T&=\nabla^A_{\partial_s}(\nabla^A_{\partial_t}S+\tilde{F})\\
	&=\nabla^A_{\partial_t}\nabla^A_{\partial_s}S+R_M(S,T)S+\langle J\Hess \mu(S), F\rangle_\g+\nabla^A_{\partial_s}\tilde{F},
	\end{align*}
which implies that 
	\begin{align}\label{EB.9}
	\half (\partial_t^2+\partial_s^2)|T|^2&=|\Ds T|^2+|\Dt T|^2+\langle \Dt(-\Delta_AP), T\rangle+\langle R_M(S,T)S,T\rangle\\
	&\qquad+ \langle J\Hess \mu(S), T\otimes F\rangle_{TM\otimes\g}+\langle \nabla^A_{\partial_s}\tilde{F},T\rangle.\nonumber 
	\end{align}
	
	By using the equation (\ref{secondeq}), we extract some positivity from the last term:
	\begin{align*}
	\langle \nabla^A_{\partial_s}\tilde{F},T\rangle&= \langle \Ds \langle J\nabla \mu, F\rangle_\g,T\rangle \\
	&=\langle J\Hess \mu(S), T\otimes F\rangle_{TM\otimes\g}+\big\langle\langle J\nabla \mu, \partial_sF\rangle_{\g},T\big\rangle. \\
	&=\langle J\Hess \mu(S), T\otimes F \rangle_{TM\otimes\g}+\langle \langle \nabla\mu, JT\rangle_{TM}, \langle \nabla\mu, S\rangle_{TM}\rangle_\g.
	\end{align*}
	
	By Lemma \ref{identities} (5), $\langle \nabla\mu,J\nabla H\rangle_{TM}=0$. Using (\ref{firsteq}), we have 
	$$ \langle \nabla\mu, JT\rangle_{TM}=\langle \nabla\mu, S-J\nabla H\rangle_{TM} =\langle \nabla\mu, S\rangle_{TM} .$$
	Hence, $\langle \nabla^A_{\partial_s}\tilde{F},T\rangle_{TM}=\langle J\Hess \mu(S), T\otimes F \rangle_{TM\otimes\g}+|\langle \nabla\mu, JT\rangle_{TM} |_\g^2$.
	
	Now we deal with the term involving $\Delta_A P$ in \eqref{EB.9}, using Lemma \ref{connectionlaplacian}. We exploit the fact that $\Hess H(\nabla H)$ is a $G$-invariant vector field on $M$ and Lemma \ref{identities2} (2): 
	\begin{align*}
	\Dt(-\Delta_AP)&=\Dt\big(J\tilde{F}+\Hess H(\nabla H)\big)\\
	&=-\Dt\langle \nabla\mu, F\rangle_\g +(\nabla_T\Hess H)(\nabla H)+\Hess H^2(T),\\
	&=-\langle \Hess \mu(T), F\rangle_\g -\langle \nabla\mu, \partial_t F\rangle_\g+(\nabla_T\Hess H)(\nabla H)+\Hess H^2(T).
	\end{align*}
	Note that by \eqref{secondeq}, $-\big\langle \langle \nabla\mu, \partial_t F\rangle_\g, T\big\rangle =\big\langle \langle \nabla\mu, \langle \nabla\mu, T\rangle_{TM}\rangle_{\g}, T\big\rangle =|\langle \nabla\mu,T\rangle_{TM}|_\g^2$. 
	
	Combining all these together, we obtain that
	\begin{align*}
	\half (\partial_t^2+\partial_s^2)|T|^2&=|\Ds T|^2+|\Dt T|^2+\langle R_M(S,T)S,T\rangle\\
	&\qquad+ |\langle \nabla\mu,T\rangle_{TM}|_\g^2+|\langle \nabla\mu,JT\rangle_{TM}|_\g^2+|\Hess H(T)|^2\\
	&\qquad+\langle(\nabla_T\Hess H)(\nabla H), T\rangle+\langle \Hess \mu(2JS-T),  T\otimes\g\rangle_{TM\otimes\g}.
	\end{align*}
	The formula of $\half (\partial_t^2+\partial_s^2)|S|^2$ is proved in a similar may. Finally, we deal with the Laplacian of $|F|^2_\g$. By (\ref{secondeq}), we have 
	\begin{align*}
	\half \partial_s^2|F|^2_\g&=|\partial_sF|^2_\g+\langle F,\partial_s^2(-\mu)\rangle_\g,\\
	&=|\partial_sF|^2_\g-\langle\Hess \mu(S),S\otimes F\rangle_{TM\otimes\g}-\langle \nabla\mu, \Ds S\otimes F\rangle_{TM\otimes\g}. 
	\end{align*}
	By Lemma \ref{connectionlaplacian}, we have 
	\begin{align*}
	\langle -\nabla\mu, (\Dt T+\Ds S)\otimes F\rangle_{TM\otimes\g} &= \langle-\nabla\mu, J\tilde{F}\otimes F\rangle_{TM\otimes\g}+\langle -\nabla\mu, \Hess H(\nabla H)\otimes F\rangle_{TM\otimes\g} \\
	&=|\langle \mu,F\rangle_\g|^2+\langle \Hess \mu(\nabla H), \nabla H\otimes F\rangle_{TM\otimes\g}.
	\end{align*}
	At the last step, we used the identity:
	\begin{equation}\label{123}
	\langle \Hess \mu (X),\nabla H\rangle_{TM}+\langle \nabla\mu,\Hess H(X)\rangle_{TM}=0
	\end{equation}
	with $X=\nabla H$. Note that by Lemma \ref{identities} (5), $\langle \nabla\mu, \nabla H\rangle_{TM}\equiv 0$. Expanding the expression $X\cdot \langle \nabla\mu, \nabla H\rangle_{TM}\equiv 0$ yields (\ref{123}). This completes the proof of Theorem \ref{BochnerFormula}.
\end{proof}
\section{Surfaces with Cylindrical Ends}\label{A:higher}

In this appendix, we generalize Proposition \ref{P6.2} for oriented surfaces with cylindrical ends. To start, we fix a conformal structure on a closed oriented surface $\Sigma_g$ of genus $g$ and consider the punctured surface 
 \[
 \Sigma_{g,n}\colonequals \Sigma_g\setminus\{p_1,\cdots,p_n\},
 \]
 where $p_j,\ 1\leq j\leq n$ are distinct points on $\Sigma_g$ and let $\D\colonequals \sum p_j$ be the associated positive divisor. Later, we will specify the metric on $\Sigma_{g,n}$ compatible with this complex structure. We require that 
 \[
 \chi(\Sigma_{g,n})=2-2g-n\leq 0.
 \]
 Thus one may take $g=0$ if $n\geq 2$. The harmonic 1-form $\lambda$ is allowed to have poles at each puncture $p_j, 1\leq j\leq n$, but further conditions are required:
 \begin{assumpt}\label{AC.5}  Take $\lambda\in \Omega^1(\Sigma_{g,n}, i\R)$ and assume that its $(1,0)$-part $\lambda^{1,0}\in H^0(\Sigma_g, \K(\D))$ extends to a holomorphic section of the canonical bundle $\K\colonequals \Lambda^{1,0}\Sigma_g \to \Sigma_g$ twisted by $\D$. We require that
 	\begin{itemize}
 		 	\item  the residue of $\lambda^{1,0}$ at each puncture $p_j\in \D$ is non-zero, and
 	\item  $\lambda^{1,0}$ has $2g-2+n$ simple zeros on $\Sigma_{g,n}$.
 	\end{itemize}
For such a 1-form $\lambda$ to exist, we must have $n\geq 2$. 
 \end{assumpt}

In particular, for some holomorphic coordinate $z_j$ near $p_j\in \Sigma_g$ and $a_j\neq 0\in \C$, 
\[
\lambda^{1,0}=-\frac{a_j dz_j}{z_j},
\]
with $z_j(p_j)=0$. By rescaling the coordinate function $z_j$ if necessary, we assume that the neighborhood $z_j^{-1}(B(0, 1)), 1\leq j\leq n$ are pairwise disjoint in $\Sigma_g$. In terms of the polar coordinate $z_j=\exp(-(s_j+i\theta_j))$ with  $s_i\in [0,\infty)$ and $\theta_j\in \R/2\pi\Z$, we have 
\[
\lambda^{1,0}=a_j (ds_j+id\theta_j). 
\]
 Pick a Riemannian metric of $\Sigma_{g,n}$ which restricts to the product metric on each cylindrical end  
\[
U_j\colonequals [0,\infty)_{s_j}\times (\R/2\pi \Z)_{\theta_j}\subset \Sigma_{g,n}, 1\leq j\leq n,
\]
and is compatible with the given complex structure. Then on each $U_j$, the 1-form $\lambda=2i(\im a_j d s_j+\re a_j d\theta_j)$  is covariantly constant.

\medskip

Because the surface $\Sigma_{g,n}$ is not compact, we have to work instead with a relative \spinc structure $\bs=(\s,\varphi)$ on $\Sigma_{g,n}$, where $\s=(S^+=L^+\oplus L^-, \rho_2)$ is a \spinc structure on $\Sigma_{g,n}$ and 
\[
\varphi: L^+|_{\coprod U_j}\cong \C
\]
is a trivialization of $L^+$ over the cylindrical ends $\coprod U_j$. As in Section \ref{Sec6}, we still have $L^-=L^+\otimes \Lambda^{0,1}\Sigma_{g,n}$. In this case, the relative Chern classes and the relative degrees of $\bs, L^+$ and $\Lambda^{0,1}\Sigma_{g,n}$ are well-defined. In particular, $\deg \Lambda^{0,1}\Sigma_{g,n}=-(2g-2+n)\leq 0$ and
\[
 c_1(\bs)[\Sigma_{g,n},\partial \Sigma_{g,n}]=2 \deg L^++\deg \Lambda^{0,1}\Sigma_{g,n}.
\]
We are interested in the case when $ d\colonequals \deg L^+\in [0,2g-2+n]$.

\medskip

When $g=0$ and $n=2$, we simply take $\Sigma_{0,2}=\R_s\times (\R/2\pi\Z)_\theta$ to be the product manifold. For the standard relative \spinc structure $\bs_{std}=(S^+_{std},\rho_2)$, we have 
\[
S^+_{std}=\C\oplus \Lambda^{0,1} \Sigma_{0,2}.
\]

We take $(d,\nabla^{LC})$ as the reference \spinc connection. Then for any $\delta\in i\R$ and $\lambda=2i((\im a) ds+(\re a) d\theta)$, there is a unique covariantly constant spinor $(\cPsi_+,\cPsi_-)\in \Gamma(\Sigma_{0,2}, S^+_{std})$ (up to the action of $S^1$) that solves the equations
\begin{equation}\label{EC.3}
\left\{
\begin{array}{rl}
\cPsi_+\otimes\cPsi_-^*&=-\sqrt{2}\lambda^{1,0},\\
\frac{i}{2}\big(|\cPsi_+|^2-|\cPsi_-|^2\big)&\equiv \delta. 
\end{array}
\right.
\end{equation}

\medskip

Back to the case of $\Sigma_{g,n}$, the gauged Landau-Ginzburg model 
\[
(M(\Sigma_{g,n},\bs;\vdelta_*), W_{\lambda},\CG(\Sigma_{g,n}))
\]
now relies on an auxiliary function $\vdelta_*$ and is constructed as follows:
\begin{itemize}
\item $\vdelta_*: \Sigma_{g,n}\to i\R$ is a smooth function such that $\vdelta_*\equiv \delta_j$ for some constant $\delta_j\in i\R$ on each end $U_j\subset \Sigma_{g,n}$;
\item the reference \spinc connection $\cB_*$ is identified with $
(d,\nabla^{LC})
$ under the trivialization $\varphi$ on each cylindrical end $U_j$;
\item the reference spinor $\cPsi_*$ is identified  with the solution $(\cPsi_{+,j},\cPsi_{-,j})$ of \eqref{EC.3} associated to the data $(\delta_j, a_j)$ on each cylindrical end $U_j$ under the trivialization $\varphi$. 

\medskip

\item The K\"{a}hler manifold $M(\Sigma,\bs;\vdelta_*)$ is the configuration space on $\Sigma$:
\[
(\cB_*,\cPsi_*)+L^2_k(\Sigma_{g,n}, iT^*\Sigma_{g,n}\oplus S^+),
\]
for some $k\geq 1$, where $\kappa_*\colonequals (\cB_*,\cPsi_*)$ is the reference configuration defined above.  
\item The complex gauge group acting on $M$ is $\CG_\C(\Sigma_{g,n})=\{u:\Sigma_{g,n}\to \C^*: u-1\in L^2_{k+1}(\Sigma_{g,n})\}$ whose Lie algebra is $L^2_k(\Sigma_{g,n}, \C)$. 
\item The moment map $\mu: M(\Sigma_{g,n},\bs;\vdelta_*)\to L^2_{k-1}(\Sigma_{g,n},i\R)$ is defined by the formula
\begin{align*}
\mu(\cB,\cPsi)&=-\half *_\Sigma F_{\cB^t}+\frac{i}{2}(|\cPsi_+|^2-|\cPsi_-|^2)-\vdelta_*.
\end{align*}
\item the superpotential $W_\lambda$ is the Dirac functional perturbed by the harmonic 1-form $\lambda$, cf. Subsection \ref{Subsec6.2}.
\end{itemize}

This setup will allow us to generalize Propositions \ref{stableW}.

\begin{proposition}\label{stableW3} Suppose that the harmonic 1-form $\lambda \neq 0\in \Omega^1_h(\Sigma_{g,n}, i\R)$ is chosen as above and Assumption \ref{AC.5} holds. Then for the gauged Landau-Ginzburg model 
	$$
	\big(M(\Sigma_{g,n},\bs; \vdelta_*), W_\lambda, \CG(\Sigma_{g,n})\big)
	$$
defined above, the critical locus $\Crit(L)$ consists of 
	$
	\binom{2g-2+n}{d}
	$
	closed free $\CG_\C$-orbits with $d=\deg L^+$. For any $\vdelta\in L^2_{k-1}(\Sigma_{g,n}, i\R)$ and any free $\CG_\C$-orbit $\SO\subset\Crit(L)$, $\mu^{-1}(\vdelta)\cap \SO$ consists of a unique $\CG(\Sigma)$-orbit. Moreover, $W_\lambda$ is a Morse-Bott function. In this sense, we say that $W_\lambda$ is stable and any $\vdelta\in \Lie(\CG)$ is $W_\lambda$-stable. 
\end{proposition}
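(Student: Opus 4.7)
The plan is to parallel the proof of Proposition~\ref{stableW}, adapting each of its three components—classifying $\Crit(L)$ as free $\CG_\C$-orbits, solving the moment-map fitting equation within each orbit, and verifying the Morse--Bott property—to the non-compact geometry of $\Sigma_{g,n}$ and the $L^2_k$-based configuration space.

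For the classification, I would start from the equation $\nabla L=0$ which, exactly as in \eqref{F6.5}, reads
\begin{equation*}
(\rho_2^{-1}(\cPsi\cPsi^*)_\Pi-\lambda,\ D_{\cB}^-\cPsi_-,\ D_{\cB}^+\cPsi_+)=0,
\end{equation*}
forcing $\cPsi_+$ and $\cPsi_-^*$ to be holomorphic and $\cPsi_+\otimes\cPsi_-^*=-\sqrt{2}\lambda^{1,0}$. Because $(\cB,\cPsi)-(\cB_*,\cPsi_*)\in L^2_k$, elliptic regularity near each puncture combined with the asymptotic non-vanishing of $\cPsi_{\pm,j}$ shows that $\cPsi_\pm$ are smooth and nowhere vanishing on every end $U_j$. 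Consequently $Z(\cPsi_+)$ and $Z(\cPsi_-^*)$ lie in a compact subset of $\Sigma_{g,n}$, and the identity $\cPsi_+\otimes\cPsi_-^*=-\sqrt{2}\lambda^{1,0}$ partitions the $(2g-2+n)$ simple zeros of $\lambda^{1,0}$ into these two zero sets. The relative degree forces $|Z(\cPsi_+)|=d$, yielding exactly $\binom{2g-2+n}{d}$ free $\CG_\C$-orbits.

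For $W_\lambda$-stability, fix a representative $\kappa$ of such an orbit and parametrize its $\CG_\C$-translates modulo $\CG$ by $\alpha\in L^2_k(\Sigma_{g,n},\R)$. The equation $\mu(e^\alpha\cdot\kappa)=\vdelta$ reduces, as in \eqref{criticalorbit}, to a non-linear elliptic equation of the form
\begin{equation*}
\eta(\alpha)\colonequals\Delta_\Sigma\alpha+\tfrac12(e^{2\alpha}|\cPsi_+|^2-e^{-2\alpha}|\cPsi_-|^2)=g,
\end{equation*}
with $g\in L^2_{k-2}$ determined by $\vdelta$ and $\kappa$. I would prove $\eta:L^2_k\to L^2_{k-2}$ is a bijection by adapting Lemma~\ref{L6.10}: the weights $|\cPsi_\pm|^2$ are uniformly bounded below by positive constants on the ends (since $\cPsi_\pm\to\cPsi_{\pm,j}\neq 0$ by the non-zero residue condition of Assumption~\ref{AC.5}), so the coercivity estimate $\|\eta(\alpha)\|_{L^2}\|\alpha\|_{L^2}\ge c\|\alpha\|_{L^2_1}^2$ still holds on the non-compact surface; Trudinger's inequality and elliptic bootstrapping then give $L^2_k$ control, a minimization argument provides existence, and the maximum principle yields uniqueness.

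The Morse--Bott condition would be verified by repeating the kernel analysis in the proof of Proposition~\ref{P7.7}: since $Z(\cPsi_+)$ and $Z(\cPsi_-^*)$ are disjoint and $\cPsi_\pm$ are nowhere zero on the ends, $\cPsi=(\cPsi_+,\cPsi_-)$ is nowhere vanishing on $\Sigma_{g,n}$, so \eqref{E9.5} is pointwise solved by $\delta\cPsi=(h\cPsi_+,-\bar h\cPsi_-)$; then \eqref{E9.6} forces $\delta\cb=i(*_\Sigma d\alpha-d\beta)$ and the gauge-fixing conditions \eqref{E9.3}, \eqref{E9.4} force $\alpha\equiv\beta\equiv 0$. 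The hard part will be the underlying Fredholm theory on the non-compact surface: showing that the extended Hessian $\widehat D_\kappa$ is self-adjoint Fredholm on the relevant Sobolev completions requires verifying that its asymptotic operator on each cylindrical end $U_j$ is invertible, which in turn reduces to non-degeneracy of the translation-invariant critical configuration $(\cB_{*,j},\cPsi_{*,j})$---again a consequence of the non-zero residue assumption on $\lambda^{1,0}$.
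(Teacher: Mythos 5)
Your proposal is correct and follows essentially the same route as the paper: the paper's proof also reduces everything to the argument for Proposition \ref{stableW}, with the only modification being the coercivity estimate of Lemma \ref{L6.10}, which it recovers exactly as you do—by choosing the representative $\kappa$ to agree with the reference configuration on the ends so that $w_\pm$ are bounded below there, and invoking the non-compact version of Trudinger's inequality. You are in fact slightly more thorough than the paper, which does not explicitly address the Fredholm theory for $\widehat{D}_\kappa$ on the cylindrical ends nor the confinement of $Z(\cPsi_\pm)$ to a compact region; both points you raise are genuine and handled correctly by the invertibility of the asymptotic operator coming from the non-zero residues.
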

\begin{proof} The proof follows from the same line of arguments as in Proposition \ref{stableW}, and we point out where the proof is modified. To construct a configuration in $\mu^{-1}(\vdelta)\cap \SO$, we have applied an a priori estimate from Lemma \ref{L6.10} with 
	\[
	w_+=|\cPsi_+| \text{ and } w_-=|\cPsi_-|.
	\]
	Here $\kappa=(\cB, \cPsi_+,\cPsi_-)$ is some representative in a free $\CG_\C$-orbit of $\Crit(L)$. However, Lemma \ref{L6.10} is stated only for closed surfaces. We must adapt its proof to the non-compact surface $\Sigma_{g,n}$. To start, we choose $\kappa$ (within the $\CG_\C$-orbit) such that 
	\[
\kappa|_{U_j}=(\cB_*,\cPsi_*),
	\]
	i.e. it agrees with the reference configuration on each end $U_j\subset \Sigma_{g,n}$. As a result, for some $c>0$, we have  
$
w_+,w_->c
$
on the union $\coprod U_j$. This allows us to derive the estimate:
\[
\|d\alpha_\pm\|_2^2+\int_{\Sigma_{g,n}} \alpha_\pm^2w_\pm^2\geq c_\pm \|\alpha_\pm\|_{L^2_1}^2,
\]
as required in the proof of Lemma \ref{L6.10}. Finally, Trudinger's inequality also holds for non-compact spaces; cf. \cite[Chapter 13, Proposition 4.2]{PDEIII} or \cite[Proposition A.3]{W18}. Now the proof of Proposition \ref{stableW} can proceed with no difficulty.
\end{proof}

\begin{remark} Proposition \ref{stableW3} will allow us to compute the monopole Floer homology of the product manifold $\Sigma_{g,n}\times S^1$ in the second paper \cite{Wang20}. Indeed, the group
	\[
	\HM_*(\Sigma_{g,n}\times S^1,\omega;\bs)
	\]
	will have rank $\binom{2g-2+n}{d}$ for a suitable closed 2-form $\omega$  on $\Sigma_{g,n}\times S^1$ and for the relative \spinc structure $\bs$ with 
	\[
	c_1(\bs)=(2d-2g+2-n)\cdot k\in H^2(\Sigma_{g,n}\times S^1,\partial \Sigma_{g,n}\times S^1;\Z),
	\]
	where $k$ is the Poincar\'{e} dual of $\{pt\}\times S^1$. 
\end{remark}
\begin{remark} The idea of \cite{Wang20} is to complete a 3-manifold with toroidal boundary into a manifold with cylindrical ends. Likewise, one may complete a balanced sutured manifold, which is a 3-manifold with corners, into a manifold with planar ends. Then Proposition \ref{stableW3} would be the replacement of Proposition \ref{stableW}, if one attempts to  construct the sutured Floer homology analytically.
\end{remark}

Finally, we explain why Assumption \ref{AC.5} is a generic condition. Let $\K\colonequals \Lambda^{1,0}\Sigma_g$ be the canonical bundle of $\Sigma_g$. Then we have a long exact sequence:
\begin{equation}\label{EC.1}
\begin{array}{rcccl}
0\to H^0(\Sigma_g, \K)\embed &H^0(\Sigma_g, \K(\D))&\xrightarrow{\Res} &\C^n&\xrightarrow{\sigma} \C\to 0,\\
&\eta &\mapsto &(\Res_{p_j}(\eta))_{1\leq j\leq n}&
\end{array}
\end{equation}
where $\eta$ is a holomorphic 1-form with poles possibly along $\D$ and $\Res_{p_j}(\eta)$ is the residue of $\eta$ at $p_j\in \D$. The penultimate map $\sigma$ takes the sum of all residues. This sequence is exact, because $\dim_\C H^0(\Sigma_g, \K(\D))=g+n-1$ by the Riemann-Roch theorem.
\begin{lemma}\label{LC.9} If $g\geq 1$, then for any vector $v=(b_1,\cdots, b_n)\in \C^n$ with $b_j\neq 0$, $1\leq j\leq n$ and $\sigma(v)=0$, there is a dense open subset of $\Res^{-1}(v)$ such that any $\eta$ in this subset has $2g-2+n$ simple zeros.
\end{lemma}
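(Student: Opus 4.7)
The plan is to deduce the lemma from a standard Bertini-type argument, using the base-point-freeness of the canonical bundle $\K \colonequals \Lambda^{1,0}\Sigma_g$ for $g \geq 1$.

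Since $\sigma(v) = 0$, the exact sequence preceding the lemma guarantees some $\eta_0 \in \Res^{-1}(v)$, after which $\Res^{-1}(v) = \eta_0 + H^0(\Sigma_g, \K)$. As $v \neq 0$, we have $\eta_0 \notin H^0(\Sigma_g, \K)$, so the subspace $W \colonequals \C\eta_0 \oplus H^0(\Sigma_g, \K) \subset H^0(\Sigma_g, \K(\D))$ has dimension $g+1$, and $\Res^{-1}(v)$ sits inside $\Pj(W)$ as the affine chart $\{c = 1\}$, i.e.\ the complement of the hyperplane $\Pj(H^0(\Sigma_g, \K))$. Openness of the ``simple zeros'' condition is elementary: if $\eta$ has $2g-2+n$ distinct simple zeros on $\Sigma_{g,n}$, Rouch\'{e}'s theorem shows that the zeros deform holomorphically under small perturbations of $\eta$ and remain simple, while the residue hypothesis $a_j \neq 0$ keeps them away from $\D$.

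For density, the key step is to verify that $|W|$ is base-point-free on $\Sigma_g$, after which Bertini's theorem produces a Zariski dense open subset $V \subset \Pj(W)$ of sections whose zero divisor on $\Sigma_g$ is smooth, i.e.\ all $2g-2+n = \deg \K(\D)$ zeros are simple. At $p_j \in \D$, $\eta_0$ has residue $a_j \neq 0$, hence does not vanish at $p_j$ when viewed as a section of $\K(\D)$ via the trivialization $dz_j / z_j$. At any $p \in \Sigma_{g,n}$, one invokes the classical base-point-freeness of $\K$ itself: for $g = 1$ the unique nonzero holomorphic 1-form is nowhere vanishing, and for $g \geq 2$ Riemann--Roch applied to $\K - p$ together with $h^0(p) = 1$ yields $h^0(\K - p) = h^0(\K) - 1$, so the evaluation $H^0(\Sigma_g, \K) \to \K_p$ is surjective. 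Consequently $V \cap \Res^{-1}(v)$ is the intersection of two nonempty Zariski open subsets of the irreducible variety $\Pj(W)$, hence Zariski dense in $\Res^{-1}(v)$; on this subset $\eta$ has $2g-2+n$ simple zeros, all lying in $\Sigma_{g,n}$ since the nonzero residues preserve simple poles at $\D$.

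The hypothesis $g \geq 1$ enters exactly through base-point-freeness of $\K$, and is essential: when $g = 0$ the residue map is injective onto $\ker\sigma$ and $\Res^{-1}(v)$ reduces to a single point, leaving no room for a genericity argument. The main (mild) technical point is checking base-point-freeness of $|W|$ at the punctures, which relies on the residue/trivialization identification recalled above.
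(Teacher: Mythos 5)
Your proposal is correct. It rests on the same key input as the paper's proof --- base-point-freeness of $\K$ for $g\geq 1$, established by the same Riemann--Roch/Serre-duality computation $h^0(\K-p)=g-1<g$ --- but it routes the density step differently. The paper argues by hand: given $\eta\in\Res^{-1}(v)$ with multiple zeros at $q_1,\dots,q_m$, it builds $\nu\in H^0(\Sigma_g,\K)$ with $\nu(q_j)\neq 0$ and checks that $\eta+\epsilon\nu$ has only simple zeros for small $\epsilon\neq 0$; since $\nu$ is holomorphic the residues are unchanged, so the perturbation stays inside $\Res^{-1}(v)$ and no zeros migrate to $\D$. You instead embed $\Res^{-1}(v)$ as the affine chart $\Pj(W)\setminus\Pj(H^0(\Sigma_g,\K))$ with $W=\C\eta_0\oplus H^0(\Sigma_g,\K)$, verify that $|W|$ is base-point-free (using the nonzero residues at the punctures, a point the paper's argument sidesteps because its perturbations fix the residues), and invoke Bertini in characteristic zero to get a Zariski-dense reduced locus, then intersect two nonempty Zariski opens in the irreducible $\Pj(W)$. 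Your packaging is cleaner in that it quotes a standard theorem rather than re-proving a transversality statement locally, at the cost of the extra bookkeeping at $\D$ and the (correct, but worth stating) passage from Zariski density to classical density; the paper's version is more elementary and self-contained. Both are complete proofs.
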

\begin{proof}[Proof of Lemma] Let $V$ be the subset of $\Res^{-1}(v)$ consisting of sections with simples zeros. $V$ is clearly open. To show that $V$ is dense, consider a form $\eta\in \Res^{-1}(v)$ with zeros at $x_1,\cdots,x_m\subset \Sigma_g\setminus \D$ (possibly with multiplicities). It suffices to find some $\eta'\in H^0(\Sigma_g, \K)$ such that $\eta'(x_j)\neq 0$ for any $1\leq j\leq m$; then the sum $
	\eta+\epsilon\eta'$ lies in $V$
	for any sufficiently small $\epsilon$. Since $g\geq 1$, the canonical bundle $\K$ is base-point free. Indeed, by the Riemann-Roch theorem and Serre duality, for any $q\in \Sigma_g$,
	\[
		\dim_\C H^0(\Sigma_g, \K(-q))=g-1<\dim H^0(\Sigma_g, \K).
	\]
	Now we set $\eta'=\sum \epsilon_j \eta_j'$ for some $\epsilon_j\in \C$ and $\eta_j'\in H^0(\Sigma_g, \K)$ with $\eta_j'(q_j)\neq 0$, $1\leq j\leq m$. 
\end{proof}

The case when $g=0$ is more rigid, since the map $\Res$ in \eqref{EC.1} is injective. For any $v\in \C^n$, one may vary the positive divisor $\D$ instead to achieve Assumption \ref{AC.5}:
\begin{lemma} If $g=0$, then for any vector $v=(b_1,\cdots, b_n)\in \C^n$ with $b_j\neq 0$, $1\leq j\leq n$ and $\sigma(v)=0$, the section $\eta=\Res^{-1}(v)$ has $n-2$ simple zeros in $\Sigma_{0,n}=\CP^1\setminus \D$ for a generic choice of $\D$.
\end{lemma}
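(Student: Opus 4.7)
The plan is to reduce the claim to a concrete polynomial computation on $\CP^1$ and then establish generic simplicity of zeros via a dimension count on an incidence variety. Since $\sigma(v)=0$, applying an automorphism of $\CP^1$ arranges $\infty\notin\D$, so $\D=\{p_1,\dots,p_n\}\subset\C$, and the unique preimage is the explicit meromorphic $1$-form
\[
\eta_\D\colonequals\Res^{-1}(v)=\sum_{j=1}^n\frac{a_j\,dz}{z-p_j}.
\]
The relation $\sum_j a_j=0$ makes $\eta_\D$ regular at $\infty$, and by Riemann-Roch $\eta_\D$ has $\deg\K(\D)=n-2$ zeros on $\CP^1\setminus\D$ counted with multiplicity.

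Clearing denominators identifies the zeros of $\eta_\D$ in $\C$ with those of the polynomial
\[
Q_\D(z)\colonequals\sum_{j=1}^n a_j\prod_{k\neq j}(z-p_k).
\]
Its $z^{n-1}$-coefficient vanishes since $\sum_j a_j=0$, so $\deg Q_\D\leq n-2$, while its $z^{n-2}$-coefficient is $\sum_j a_j p_j$, nonzero for generic $\D$; thus generically $\deg Q_\D=n-2$ and no zero of $\eta_\D$ escapes to $\infty$. The identity $Q_\D(p_i)=a_i\prod_{k\neq i}(p_i-p_k)\neq 0$ shows the zeros never collide with $\D$.

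The simplicity of the $n-2$ zeros for generic $\D$ amounts to the non-vanishing of the discriminant of $Q_\D$, a polynomial in $p_1,\dots,p_n$. I would establish this by a dimension count on the incidence variety
\[
I=\{(\D,z_0)\in\Conf^n(\C)\times\C:Q_\D(z_0)=\partial_zQ_\D(z_0)=0\},
\]
whose projection to $\Conf^n(\C)$ covers the bad locus. After clearing denominators, the fiber over a fixed $z_0$ is cut out in $\Conf^n(\C)$ by the two equations
\[
F_1=\sum_{j=1}^n\frac{a_j}{z_0-p_j}=0,\qquad F_2=\sum_{j=1}^n\frac{a_j}{(z_0-p_j)^2}=0,
\]
whose gradients with respect to $(p_1,\dots,p_n)$ are the vectors $\bigl(a_j/(z_0-p_j)^2\bigr)_j$ and $\bigl(2a_j/(z_0-p_j)^3\bigr)_j$. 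Since $a_j\neq 0$ and the $p_j$ are distinct, these two vectors in $\C^n$ can never be proportional, so the fiber has codimension exactly $2$ in $\Conf^n(\C)$, yielding $\dim I\leq n-1<n$ and hence a proper bad locus.

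The hard part is the non-proportionality of these two gradient vectors, which above reduces to the elementary observation that if $(z_0-p_j)^{-2}$ and $(z_0-p_j)^{-3}$ were proportional as $j$ varies, all $p_j$ would be equal. An alternative route more in the spirit of Lemma \ref{LC.9} would degenerate $p_n\to\infty$: a Laurent expansion gives $Q_\D\sim -p_n\,\widetilde Q_{\D'}$ for large $|p_n|$, where $\widetilde Q_{\D'}(z)=\sum_{j<n}a_j\prod_{k<n,\,k\neq j}(z-p_k)$ has degree $n-2$ with leading coefficient $-a_n\neq 0$, and the implicit function theorem transfers simplicity of zeros from $\widetilde Q_{\D'}$ to $Q_\D$, reducing to a configuration with fewer punctures; the base case $n=3$ is immediate since then $Q_\D$ is linear with a single simple zero.
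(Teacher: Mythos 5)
Your proof is correct, but it takes a genuinely different route from the paper. The paper normalizes $p_n=\infty$, writes $\eta=dz\sum_{j<n}a_j/(z-p_j)$, and perturbs the single puncture $p_1$: the variation $\partial_{p_1}\eta=a_1\,dz/(z-p_1)^2$ is nowhere vanishing on $\C$, so the same separation-of-zeros argument as in Lemma \ref{LC.9} applies, with the moving puncture playing the role that the base-point-free system $H^0(\Sigma_g,\K)$ played for $g\geq 1$. You instead keep all punctures finite, reduce to the polynomial $Q_\D$, and run a dimension count on the incidence variety $\{Q_\D(z_0)=\partial_zQ_\D(z_0)=0\}$; the key transversality input is the non-proportionality of the gradients $(a_j(z_0-p_j)^{-2})_j$ and $(a_j(z_0-p_j)^{-3})_j$, which is correct since the $p_j$ are distinct and the $a_j$ are nonzero. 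Your computation of the $z^{n-2}$-coefficient $\sum_ja_jp_j$ is right and properly handles the possibility of a zero escaping to $\infty$ (a point the paper's normalization $p_n=\infty$ sidesteps, since there $\infty\in\D$). The paper's argument is shorter and needs only one non-vanishing derivative; yours is more systematic, exhibits the bad locus as the image of an explicit codimension-one (or smaller) set, and your secondary degeneration $p_n\to\infty$ with base case $n=3$ gives yet a third, inductive, option. Any of the three suffices; just make sure, in the incidence-variety version, to note that the projection of an algebraic set of dimension at most $n-1$ is contained in a proper subvariety (Chevalley), so that ``generic'' can be read as ``open and dense'' rather than merely ``full measure.''
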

\begin{proof}[Proof of Lemma] By an automorphism of $\CP^1$, we set $p_n=\infty$. Then the holomorphic 1-form $\eta=\Res^{-1}(v)$ can be constructed explicitly as 
	\[
	\eta=dz\cdot \sum_{j=1}^{n-1}\frac{b_j}{z-p_j}.
	\]
As we vary $p_1$ in a small neighborhood, the difference
\[
\frac{1}{z-p_1}-\frac{1}{z-p_1'}=\frac{p_1-p_1'}{(z-p_1)(z-p_1')}, p_1'\in \C,
\]
is non-vanishing on $\C$. Now one can argue as in the proof of Lemma \ref{LC.9}.
\end{proof}

\bibliographystyle{alpha}
\bibliography{sample}

\end{document}